\newtheorem{theorem}{Theorem}[section]
\newtheorem{proposition}[theorem]{Proposition}
\newtheorem{lemma}[theorem]{Lemma}
\newtheorem{corollary}[theorem]{Corollary}
\newtheorem{remark}[theorem]{Remark}
\newtheorem{definition}[theorem]{Definition}
\newtheorem{example}[theorem]{Example}
\newtheorem{main result}[theorem]{Main Result}
\newtheorem*{rep@theorem}{\rep@title}
\newcommand{\newreptheorem}[2]{%
\newenvironment{rep#1}[1]{%
 \def\rep@title{#2 \ref{##1}}%
 \begin{rep@theorem}}%
 {\end{rep@theorem}}}
\newtheorem*{theorem*}{Theorem}
\newcommand{\alert}[1]{{#1}}
\newcommand{\bs}{\boldsymbol}
\newcommand{\mc}{\mathcal}
\newcommand{\R}{\mathbb{R}}
\newcommand{\N}{\mathbb{N}}
\renewcommand{\P}{\mathbb{P}}
\newcommand{\Lp}[1][p]{\ensuremath{L^{#1}}}
\newcommandx{\Lpq}[2][1=p, 2=q, usedefault=@]{\ensuremath{L^{#1, #2}}}
\newcommand{\lp}[1][p]{\ensuremath{\ell^{#1}}}
\newcommandx{\Bsqp}[3][1=\alpha, 2=q, 3=p, usedefault=@]{\ensuremath{B^{#1}_{#3,#2}}}
\newcommandx{\Lipsp}[2][1=\alpha, 2=p, usedefault=@]{\ensuremath{\mathrm{Lip}(#1,#2)}}
\newcommandx{\Wkp}[2][1=\alpha, 2=p, usedefault=@]{\ensuremath{W^{#1, #2}}}
\newcommandx{\XY}[2][1=\theta, 2=q, usedefault=@]{\ensuremath{(X, Y)_{#1,#2}}}
\newcommandx{\tbd}[2][1=b, 2=d, usedefault=@]{\ensuremath{t_{#1,#2}}}
\newcommandx{\Tbd}[2][1=b, 2=d,  usedefault=@]{\ensuremath{T_{#1,#2}}}
\newcommand{\Ib}[1][b]{\ensuremath{I_{#1}}}
\newcommand{\set}[1]{\ensuremath{\left\{#1\right\}}}
\newcommand{\tensor}[1]{\ensuremath{\boldsymbol{#1}}}
\newcommandx{\norm}[2][2=]{\ensuremath{\left\| #1 \right\|_{#2}}}
\newcommandx{\snorm}[2][2=]{\ensuremath{\left| #1 \right|_{#2}}}
\newcommandx{\Vbd}[3][1=b, 2=d, usedefault=@]{\ensuremath{V_{#1,#2,#3}}}
\newcommandx{\Vb}[2][1=b, usedefault=@]{\ensuremath{V_{#1, #2}}}
\renewcommand*\d{\mathop{}\!\mathrm{d}}
\newcommand{\Umin}[1]{\ensuremath{U^{\mathrm{min}}_{#1}}}
\newcommand{\linspan}[1]{\ensuremath{\operatorname{span}{\left\{#1\right\}}}}
\newcommand{\TT}[2]{\ensuremath{\mathcal{TT}_{#2}\left(#1\right)}}
\newcommandx{\res}[3][1=d, 2=\bar{d}]{\ensuremath{\mathcal{R}_{#1\rightarrow #2}#3}}
\newcommandx{\ext}[3][1=d, 2=\bar{d}]{\ensuremath{\mathcal{E}_{#1\rightarrow #2}#3}}
\newcommandx{\E}[3][1=n,3=,usedefault=@]{\ensuremath{E_{#1}\left(#2\right)_{#3}}}
\newcommandx{\tool}[1][1=n, usedefault=@]{\ensuremath{\Phi_{#1}}}
\newcommandx{\Asq}[2][1=\alpha, 2=q, usedefault=@]{\ensuremath{A^{#1}_{#2}}}
\newcommand{\rJ}{\ensuremath{r_{\mathrm{J}}}}
\newcommand{\rB}{\ensuremath{r_{\mathrm{B}}}}
\DeclareMathOperator{\cost}{compl}
\newcommand{\growth}{\ensuremath{\gamma}}
\newcommand{\rmax}{\ensuremath{r_{\max}}}
\newcommand{\indicator}[1]{\ensuremath{\mathds{1}_{#1}}}
\newcommandx{\interpol}[2][1=b, 2=d, usedefault=@]{\ensuremath{\mathcal{I}_{#1,#2}}}
\newcommandx{\li}[4][2=b, 3=l, 4=j, usedefault=@]{\ensuremath{#1_{#2,#3,#4}}}
\newcommand{\diff}{\ensuremath{\Delta}}
\renewcommand{\mod}{\ensuremath{\omega}}
\newcommand{\w}{\ensuremath{\mathrm{w}}}
\newcommand{\nn}{\ensuremath{\Psi}}
\newcommand{\real}{\ensuremath{\mathcal{R}}}
\DeclareMathOperator*{\esssup}{ess\,sup}
\newcommand{\fbar}{\ensuremath{\bar{f}}}
\newcommand{\abar}{\ensuremath{\bar{a}}}
\newcommand{\bbar}{\ensuremath{\bar{b}}}
\newcommand{\xbar}{\ensuremath{\bar{x}}}
\newcommand{\tbar}{\ensuremath{\bar{t}}}
\renewcommand{\hbar}{\ensuremath{\bar{h}}}
\title{Approximation Theory of Tree Tensor Networks:
Tensorized Univariate Functions -- Part I}
\author{Mazen Ali and Anthony Nouy}
\address{Fraunhofer ITWM, 67663 Kaiserslautern, Germany}
\address{Centrale Nantes, Nantes Universit\'e, LMJL UMR CNRS 6629, France}
\email{mazen.ali@itwm.fraunhofer.de}
\email{anthony.nouy@ec-nantes.fr}
\thanks{Acknowledgments: The authors acknowledge AIRBUS Group for the financial support with the project AtRandom.}
\date{\today}
\keywords{Tensor Networks, Tensor Trains, Matrix Product States,
Neural Networks, Approximation Spaces, Besov Spaces,
direct (Jackson) and inverse (Bernstein) inequalities}
\subjclass[2010]{41A65, 41A15, 41A10 (primary); 68T05, 42C40, 65D99 (secondary)}
\begin{document}

\begin{abstract}
    We study the approximation of functions
    by tensor networks (TNs).
    We show that Lebesgue $\Lp$-spaces in one dimension
    can be identified with tensor product spaces of
    arbitrary order through tensorization.
    We use this tensor product structure
    to define subsets of $\Lp$
    of rank-structured functions of finite representation
    complexity.
    These subsets are then used to define
    different
    approximation classes of tensor networks, associated with different measures of complexity. These approximation classes 
 are shown to be quasi-normed linear
    spaces. We study some elementary properties
    and relationships of said spaces.
    
    In part II of this work,
    we will show that classical smoothness (Besov) spaces
    are continuously embedded into these approximation classes.
    We will also show that functions in these approximation
    classes do not possess any Besov smoothness, unless
    one restricts the \emph{depth} of the tensor networks.
    
    The results of this work are both an analysis of the
    approximation spaces of TNs and
    a study of the expressivity of a particular type of neural
    networks (NN) -- namely feed-forward sum-product networks with sparse architecture. The input variables of this network
    result from the tensorization step, interpreted as a particular featuring step which can also be implemented  with a neural network with a specific architecture.
    We point out interesting parallels to recent results
    on the expressivity of rectified linear unit (ReLU) networks
    -- currently one of the most popular type of NNs.
\end{abstract}

\maketitle

\section{Introduction}


\subsection{Approximation of Functions}

We present a new perspective and therewith a
new tool for approximating
one-dimensional real-valued functions $f:\Omega\rightarrow\R$
on bounded intervals $\Omega\subset\R$.
We focus on the one-dimensional setting to keep
the presentation comprehensible,
but we intend to address the multi-dimensional
setting in a forthcoming part III.

The approximation of general functions by
simpler ``building blocks" has been a central topic in mathematics
for centuries with many arising methods:
algebraic polynomials,
trigonometric polynomials, splines, wavelets or rational functions
are among some of the by now established tools.
Recently,
more sophisticated tools such as tensor networks (TNs) or
neural networks (NNs) have proven to be powerful techniques.
Approximation methods find application in various
areas: signal processing, data compression,
pattern recognition, statistical learning, differential equations, uncertainty quantification, and so on.
See
\cite{Braess2007,
Bartlett2009, Mallat2009, MichaelMarcellin2013, Boche2015,  Bachmayr2016, nouy:2017_morbook, Orus2019, grelier:2018,Grelier:2019,Michel2020}
for examples.

In the 20th century a deep mathematical theory of approximation
has been established. It is by now well understood
that approximability properties of a function by more standard
tools -- such as polynomials or splines -- are closely related to
its smoothness.
Moreover, functions that can be approximated with
a certain rate can be grouped to form quasi-normed linear spaces.
Varying the approximation rate then generates an entire
scale of spaces that turn out to be so-called interpolation spaces.
See \cite{DeVore93, DeVore98} for more details.

In this work, we address the classical question of one-dimensional function approximation
but with a new set of tools relying on tensorization of functions and the use of rank-structured tensor formats (or tensor networks).
We analyze the resulting approximation classes.
We will show in part II \cite{partII} that many known classical spaces of smoothness
are embedded in these newly defined approximation classes.
On the other hand, we will also show that these classes are, in a sense,
much larger than classical smoothness spaces.


\subsection{Neural Networks}

Our work was partly motivated by current developments in the field of deep learning. Originally developed in \cite{McCulloch1943}, artificial neural networks (NNs) were inspired by models in theoretical neurophysiology
of the nervous system that constitutes a brain. The intention behind NNs was
to construct a mathematical
(and ultimately digital) analogue
of a biological neural network. The increase in computational power in recent decades has led to many successful applications of NNs in various fields
\cite{Schmidhuber2015}. This in turn has sparked interest in a better mathematical foundation for NNs. One flavor of this research is the analysis of the approximation power of feed-forward NNs. In particular, relevant for this work is the recent paper \cite{Gribonval2019}, where the authors analyzed the approximation spaces of
deep ReLU and rectified power unit (RePU) networks. In \Cref{fig:ann}, we sketch an example of a feed-forward NN.
\begin{figure}[ht]
    \center
    \tikzset{every picture/.style={line width=0.75pt}} 

\begin{tikzpicture}[x=0.75pt,y=0.75pt,yscale=-.7,xscale=.7]

\draw    (355.33,27.33) -- (503.73,66.49) ;
\draw [shift={(505.67,67)}, rotate = 194.78] [color={rgb, 255:red, 0; green, 0; blue, 0 }  ][line width=0.75]    (10.93,-3.29) .. controls (6.95,-1.4) and (3.31,-0.3) .. (0,0) .. controls (3.31,0.3) and (6.95,1.4) .. (10.93,3.29)   ;
\draw    (355.33,124.33) -- (503.73,84.85) ;
\draw [shift={(505.67,84.33)}, rotate = 525.1] [color={rgb, 255:red, 0; green, 0; blue, 0 }  ][line width=0.75]    (10.93,-3.29) .. controls (6.95,-1.4) and (3.31,-0.3) .. (0,0) .. controls (3.31,0.3) and (6.95,1.4) .. (10.93,3.29)   ;
\draw    (355.33,233.33) -- (503.75,187.92) ;
\draw [shift={(505.67,187.33)}, rotate = 522.99] [color={rgb, 255:red, 0; green, 0; blue, 0 }  ][line width=0.75]    (10.93,-3.29) .. controls (6.95,-1.4) and (3.31,-0.3) .. (0,0) .. controls (3.31,0.3) and (6.95,1.4) .. (10.93,3.29)   ;
\draw    (355.33,233.33) -- (512.87,86.86) ;
\draw [shift={(514.33,85.5)}, rotate = 497.08] [color={rgb, 255:red, 0; green, 0; blue, 0 }  ][line width=0.75]    (10.93,-3.29) .. controls (6.95,-1.4) and (3.31,-0.3) .. (0,0) .. controls (3.31,0.3) and (6.95,1.4) .. (10.93,3.29)   ;
\draw    (355.33,124.33) -- (503.75,169.42) ;
\draw [shift={(505.67,170)}, rotate = 196.9] [color={rgb, 255:red, 0; green, 0; blue, 0 }  ][line width=0.75]    (10.93,-3.29) .. controls (6.95,-1.4) and (3.31,-0.3) .. (0,0) .. controls (3.31,0.3) and (6.95,1.4) .. (10.93,3.29)   ;
\draw    (231.33,84.33) -- (343.85,33.33) ;
\draw [shift={(345.67,32.5)}, rotate = 515.61] [color={rgb, 255:red, 0; green, 0; blue, 0 }  ][line width=0.75]    (10.93,-3.29) .. controls (6.95,-1.4) and (3.31,-0.3) .. (0,0) .. controls (3.31,0.3) and (6.95,1.4) .. (10.93,3.29)   ;
\draw    (231.33,84.33) -- (342.12,123.66) ;
\draw [shift={(344,124.33)}, rotate = 199.55] [color={rgb, 255:red, 0; green, 0; blue, 0 }  ][line width=0.75]    (10.93,-3.29) .. controls (6.95,-1.4) and (3.31,-0.3) .. (0,0) .. controls (3.31,0.3) and (6.95,1.4) .. (10.93,3.29)   ;
\draw    (231.33,167.33) -- (342.27,232.32) ;
\draw [shift={(344,233.33)}, rotate = 210.36] [color={rgb, 255:red, 0; green, 0; blue, 0 }  ][line width=0.75]    (10.93,-3.29) .. controls (6.95,-1.4) and (3.31,-0.3) .. (0,0) .. controls (3.31,0.3) and (6.95,1.4) .. (10.93,3.29)   ;
\draw    (231.33,84.33) -- (347.38,221.97) ;
\draw [shift={(348.67,223.5)}, rotate = 229.87] [color={rgb, 255:red, 0; green, 0; blue, 0 }  ][line width=0.75]    (10.93,-3.29) .. controls (6.95,-1.4) and (3.31,-0.3) .. (0,0) .. controls (3.31,0.3) and (6.95,1.4) .. (10.93,3.29)   ;
\draw    (231.33,167.33) -- (342.76,131.12) ;
\draw [shift={(344.67,130.5)}, rotate = 522] [color={rgb, 255:red, 0; green, 0; blue, 0 }  ][line width=0.75]    (10.93,-3.29) .. controls (6.95,-1.4) and (3.31,-0.3) .. (0,0) .. controls (3.31,0.3) and (6.95,1.4) .. (10.93,3.29)   ;
\draw    (231.33,167.33) -- (347.33,38.98) ;
\draw [shift={(348.67,37.5)}, rotate = 492.1] [color={rgb, 255:red, 0; green, 0; blue, 0 }  ][line width=0.75]    (10.93,-3.29) .. controls (6.95,-1.4) and (3.31,-0.3) .. (0,0) .. controls (3.31,0.3) and (6.95,1.4) .. (10.93,3.29)   ;
\draw    (80.34,209.67) -- (224.09,96.74) ;
\draw [shift={(225.67,95.5)}, rotate = 501.85] [color={rgb, 255:red, 0; green, 0; blue, 0 }  ][line width=0.75]    (10.93,-3.29) .. controls (6.95,-1.4) and (3.31,-0.3) .. (0,0) .. controls (3.31,0.3) and (6.95,1.4) .. (10.93,3.29)   ;
\draw    (80.34,126.67) -- (217.73,90.02) ;
\draw [shift={(219.67,89.5)}, rotate = 525.06] [color={rgb, 255:red, 0; green, 0; blue, 0 }  ][line width=0.75]    (10.93,-3.29) .. controls (6.95,-1.4) and (3.31,-0.3) .. (0,0) .. controls (3.31,0.3) and (6.95,1.4) .. (10.93,3.29)   ;
\draw    (80.34,47.67) -- (218.07,83.83) ;
\draw [shift={(220,84.33)}, rotate = 194.71] [color={rgb, 255:red, 0; green, 0; blue, 0 }  ][line width=0.75]    (10.93,-3.29) .. controls (6.95,-1.4) and (3.31,-0.3) .. (0,0) .. controls (3.31,0.3) and (6.95,1.4) .. (10.93,3.29)   ;
\draw    (80.34,211.67) -- (219.74,174.02) ;
\draw [shift={(221.67,173.5)}, rotate = 524.89] [color={rgb, 255:red, 0; green, 0; blue, 0 }  ][line width=0.75]    (10.93,-3.29) .. controls (6.95,-1.4) and (3.31,-0.3) .. (0,0) .. controls (3.31,0.3) and (6.95,1.4) .. (10.93,3.29)   ;
\draw    (80.34,126.67) -- (218.08,166.77) ;
\draw [shift={(220,167.33)}, rotate = 196.23] [color={rgb, 255:red, 0; green, 0; blue, 0 }  ][line width=0.75]    (10.93,-3.29) .. controls (6.95,-1.4) and (3.31,-0.3) .. (0,0) .. controls (3.31,0.3) and (6.95,1.4) .. (10.93,3.29)   ;
\draw  [draw opacity=0][fill={rgb, 255:red, 208; green, 2; blue, 27 }  ,fill opacity=1 ] (80.34,40) -- (88.68,47.67) -- (80.34,55.33) -- (72,47.67) -- cycle ;
\draw  [draw opacity=0][fill={rgb, 255:red, 208; green, 2; blue, 27 }  ,fill opacity=1 ] (80.34,119) -- (88.68,126.67) -- (80.34,134.33) -- (72,126.67) -- cycle ;
\draw  [draw opacity=0][fill={rgb, 255:red, 208; green, 2; blue, 27 }  ,fill opacity=1 ] (80.34,204) -- (88.68,211.67) -- (80.34,219.33) -- (72,211.67) -- cycle ;
\draw  [draw opacity=0][fill={rgb, 255:red, 248; green, 231; blue, 28 }  ,fill opacity=1 ] (220,84.33) .. controls (220,78.07) and (225.07,73) .. (231.33,73) .. controls (237.59,73) and (242.67,78.07) .. (242.67,84.33) .. controls (242.67,90.59) and (237.59,95.67) .. (231.33,95.67) .. controls (225.07,95.67) and (220,90.59) .. (220,84.33) -- cycle ;
\draw  [draw opacity=0][fill={rgb, 255:red, 248; green, 231; blue, 28 }  ,fill opacity=1 ] (220,167.33) .. controls (220,161.07) and (225.07,156) .. (231.33,156) .. controls (237.59,156) and (242.67,161.07) .. (242.67,167.33) .. controls (242.67,173.59) and (237.59,178.67) .. (231.33,178.67) .. controls (225.07,178.67) and (220,173.59) .. (220,167.33) -- cycle ;
\draw  [draw opacity=0][fill={rgb, 255:red, 248; green, 231; blue, 28 }  ,fill opacity=1 ] (344,27.33) .. controls (344,21.07) and (349.07,16) .. (355.33,16) .. controls (361.59,16) and (366.67,21.07) .. (366.67,27.33) .. controls (366.67,33.59) and (361.59,38.67) .. (355.33,38.67) .. controls (349.07,38.67) and (344,33.59) .. (344,27.33) -- cycle ;
\draw  [draw opacity=0][fill={rgb, 255:red, 248; green, 231; blue, 28 }  ,fill opacity=1 ] (344,124.33) .. controls (344,118.07) and (349.07,113) .. (355.33,113) .. controls (361.59,113) and (366.67,118.07) .. (366.67,124.33) .. controls (366.67,130.59) and (361.59,135.67) .. (355.33,135.67) .. controls (349.07,135.67) and (344,130.59) .. (344,124.33) -- cycle ;
\draw  [draw opacity=0][fill={rgb, 255:red, 248; green, 231; blue, 28 }  ,fill opacity=1 ] (344,233.33) .. controls (344,227.07) and (349.07,222) .. (355.33,222) .. controls (361.59,222) and (366.67,227.07) .. (366.67,233.33) .. controls (366.67,239.59) and (361.59,244.67) .. (355.33,244.67) .. controls (349.07,244.67) and (344,239.59) .. (344,233.33) -- cycle ;
\draw  [draw opacity=0][fill={rgb, 255:red, 126; green, 211; blue, 33 }  ,fill opacity=1 ] (505.67,67) -- (523,67) -- (523,84.33) -- (505.67,84.33) -- cycle ;
\draw  [draw opacity=0][fill={rgb, 255:red, 126; green, 211; blue, 33 }  ,fill opacity=1 ] (505.67,170) -- (523,170) -- (523,187.33) -- (505.67,187.33) -- cycle ;

\end{tikzpicture}
    \caption{Example of an artificial neural network. On the left we have the \emph{input} nodes marked in red that
represent input data to the neural system.
The yellow nodes are the \emph{neurons} that
perform some simple operations on the input.
The edges between the nodes represent
synapses or \emph{connections} that transfer (after
possibly applying an affine linear transformation) the
output of one node into the input of another.
The final green nodes are the \emph{output} nodes.
In this particular example the
number of \emph{layers} $L$ is three, with two \emph{hidden layers}.}
    \label{fig:ann}
\end{figure}
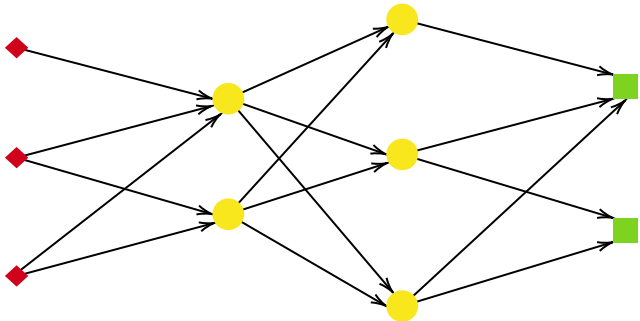


Mathematically, a feed-forward NN can be represented by a tuple
\begin{align*}
    \nn = \left([T_1,\sigma_1],\ldots,[T_L,\sigma_L]\right),
\end{align*}
where $T_l:\R^{N_{l-1}}\rightarrow\R^{N_{l}}$ are
affine maps; 
$N_l$ is the number of neurons in layer $l$, with
$N_0$ being the number of inputs and $N_L$ the
number of outputs;
the functions $\sigma_l:\R^l\rightarrow\R^l$
are typically non-linear and represent
the operations performed on data inside the neurons.
The functions $\sigma_l$ are often implemented
via a component-wise application of a single
non-linear function $\rho:\R\rightarrow\R$
referred to as the \emph{activation} function. 
The realization $\real(\nn)$ of the NN $\nn$ is the function
\begin{align*}
    \real(\nn):\R^{N_0}\rightarrow\R^{N_L},\quad
    \real(\nn):=\sigma_L\circ T_L\circ\cdots\circ\sigma_1\circ T_1.
\end{align*}
Before one can proceed with \emph{training} (i.e., estimating)
a NN, one usually specifies the \emph{architecture} of
the NN: this includes the number of layers $L$, neurons $N_l$, 
connections between the neurons and
the non-linearities $\sigma_l$.
Only after this, one proceeds with training
which entails determining the affine maps
$T_l$. This is typically done by minimizing some \emph{loss}
or distance functional $\mc J(\real(\nn), f)$,
where $f:\R^{N_0}\rightarrow\R^{N_L}$ is the target function
that we want to approximate, in some sense. 
If we set $N_0=N_L=1$ and, e.g.,
$\mc J(\real(\nn), f)=\norm{f-\real(\nn)}[p]$, then we are in
the classical setting of one-dimensional approximation\footnote{Though one
would typically not train a network by directly minimizing the $\Lp$-norm.}
(in the $\Lp$-norm).


\subsection{Tensor Networks}\label{sec:introtns}

Tensor networks have been studied in parallel
in different fields, sometimes under different names: e.g.,
hierarchical tensor formats in numerical analysis,  sum-product networks in machine learning, belief networks in bayesian inference.
Tensor networks are  commonly
applied and studied in condensed matter physics, where understanding phenomena in quantum many-body systems
has proven to be a challenging problem, to a large extent
due to the sheer amount of dependencies that cannot be simulated
even on the most powerful computers (see \cite{Orus2014} for a non-technical introduction).
In all these fields, a common challenging problem is the approximation 
of  functions of a very large number
of  variables. 
This led to the development of tools tailored to
so-called \emph{high-dimensional} problems.

The mathematical principle behind tensor networks is to use tensor
products. 
For approximating a $d$-variate function $f$, 
there are several types of tensor formats.
The simplest is the so-called \emph{$r$-term} or \emph{CP} format, where $f$ is approximated as
\begin{align}\label{eq:cp}
   f(x_1,\hdots,,x_d)\approx  \sum_{k=1}^r v_1^k(x_1)\cdots v^k_d(x_d).
\end{align}
If each factor $v^k_\nu$ is encoded with $N$ parameters, the total number of  parameters  is thus $dNr$, which is linear in the number of variables. 
The approximation format \eqref{eq:cp} is successful in many applications (chemometrics, inverse problems in signal processing...)
 but due to 
 a few unfavorable properties (see 
\cite[Chapter 9]{Hackbusch2012}), different types of tensor formats 
are frequently used in numerical approximation. 
In particular, with the so-called \emph{tensor train} (TT) format or \emph{matrix product state} (MPS), the function $f$ is approximated as 
\begin{align}\label{eq:TTex}
    f(x_1,\hdots,,x_d)\approx 
  \sum_{k_1=1}^{r_1} \hdots 
    \sum_{k_{d-1}=1}^{r_{d-1}} v_1^{k_1}(x_1)v_2^{k_1,k_2}(x_2) \hdots v_{d-1}^{k_{d-2},k_{d-1}}(x_{d-1}) v_d^{k_{d-1}}(x_d).
\end{align}
The numbers $r_\nu$ are referred to as \emph{multi-linear ranks} or \emph{hierarchical ranks}. The rank $r_\nu$ is related to 
the classical notion of rank for bi-variate functions,  by identifying a $d$-variate function as a function of two complementary groups
of variables $(x_1,\hdots,x_\nu)$ and $(x_{\nu+1},\hdots,x_d)$.  It corresponds to the so-called $\beta$-rank $r_\beta$, with $\beta  = \{1,\hdots,\nu\}$.
The format in \eqref{eq:TTex} is a particular case of \emph{tree-based tensor formats}, or \emph{tree tensor networks} \cite{Hackbusch2012,Falco2018SEMA}, 
the TT format being associated with a linear dimension partition tree. 
Numerically, such formats have favorable stability properties
with robust algorithms (see \cite{Grasedyck2010,oseledets2009breaking,Nouy2019,grelier:2018}). Moreover, the corresponding tensor networks and decompositions
have a physical
interpretation in the context of entangled many-body systems,
see \cite{Orus2014, Orus2019}.

For more general tensor networks, we refer to \Cref{fig:tns} for graphical representations. 

\begin{figure}[ht!]
    \begin{subfigure}{0.47\textwidth}
        \center
        \tikzset{every picture/.style={line width=0.75pt}} 

\begin{tikzpicture}[x=0.75pt,y=0.75pt,yscale=-1,xscale=1]

\draw    (330,56.33) -- (330,30.33) ;
\draw    (381.67,30.33) -- (397,30.33) -- (425.33,30.33) ;
\draw    (232.33,30.33) -- (247.67,30.33) -- (276,30.33) ;
\draw    (276,30.33) -- (381.67,30.33) ;
\draw  [color={rgb, 255:red, 255; green, 255; blue, 255 }  ,draw opacity=1 ][fill={rgb, 255:red, 74; green, 144; blue, 226 }  ,fill opacity=1 ] (269,30.33) .. controls (269,26.47) and (272.13,23.33) .. (276,23.33) .. controls (279.87,23.33) and (283,26.47) .. (283,30.33) .. controls (283,34.2) and (279.87,37.33) .. (276,37.33) .. controls (272.13,37.33) and (269,34.2) .. (269,30.33) -- cycle ;
\draw  [color={rgb, 255:red, 255; green, 255; blue, 255 }  ,draw opacity=1 ][fill={rgb, 255:red, 74; green, 144; blue, 226 }  ,fill opacity=1 ] (374.67,30.33) .. controls (374.67,26.47) and (377.8,23.33) .. (381.67,23.33) .. controls (385.53,23.33) and (388.67,26.47) .. (388.67,30.33) .. controls (388.67,34.2) and (385.53,37.33) .. (381.67,37.33) .. controls (377.8,37.33) and (374.67,34.2) .. (374.67,30.33) -- cycle ;
\draw  [color={rgb, 255:red, 255; green, 255; blue, 255 }  ,draw opacity=1 ][fill={rgb, 255:red, 74; green, 144; blue, 226 }  ,fill opacity=1 ] (323,30.33) .. controls (323,26.47) and (326.13,23.33) .. (330,23.33) .. controls (333.87,23.33) and (337,26.47) .. (337,30.33) .. controls (337,34.2) and (333.87,37.33) .. (330,37.33) .. controls (326.13,37.33) and (323,34.2) .. (323,30.33) -- cycle ;

\draw (269,3.4) node [anchor=north west][inner sep=0.75pt]    {$v_{1}^{k_{1}}$};
\draw (374,2.4) node [anchor=north west][inner sep=0.75pt]    {$v_{3}^{k_{2}}$};
\draw (311,1.4) node [anchor=north west][inner sep=0.75pt]    {$v_{2}^{k_{1} ,k_{2}}$};

\end{tikzpicture}
        \caption{Tensor corresponding to \eqref{eq:TTex} with $d=3$.\label{fig:order3}}
    \end{subfigure}
    \begin{subfigure}{0.47\textwidth}
        \center
        \tikzset{every picture/.style={line width=0.75pt}} 

\begin{tikzpicture}[x=0.75pt,y=0.75pt,yscale=-1,xscale=1]

\draw    (183,49.33) -- (183,23.33) ;
\draw    (288.67,49.33) -- (288.67,23.33) ;
\draw    (338,49.33) -- (338,23.33) ;
\draw    (443.67,49.33) -- (443.67,23.33) ;
\draw    (294.33,23.33) -- (309.67,23.33) -- (338,23.33) ;
\draw    (237,49.33) -- (237,23.33) ;
\draw    (288.67,23.33) -- (304,23.33) -- (332.33,23.33) ;
\draw    (183,23.33) -- (288.67,23.33) ;
\draw  [color={rgb, 255:red, 255; green, 255; blue, 255 }  ,draw opacity=1 ][fill={rgb, 255:red, 74; green, 144; blue, 226 }  ,fill opacity=1 ] (176,23.33) .. controls (176,19.47) and (179.13,16.33) .. (183,16.33) .. controls (186.87,16.33) and (190,19.47) .. (190,23.33) .. controls (190,27.2) and (186.87,30.33) .. (183,30.33) .. controls (179.13,30.33) and (176,27.2) .. (176,23.33) -- cycle ;
\draw  [color={rgb, 255:red, 255; green, 255; blue, 255 }  ,draw opacity=1 ][fill={rgb, 255:red, 74; green, 144; blue, 226 }  ,fill opacity=1 ] (281.67,23.33) .. controls (281.67,19.47) and (284.8,16.33) .. (288.67,16.33) .. controls (292.53,16.33) and (295.67,19.47) .. (295.67,23.33) .. controls (295.67,27.2) and (292.53,30.33) .. (288.67,30.33) .. controls (284.8,30.33) and (281.67,27.2) .. (281.67,23.33) -- cycle ;
\draw  [color={rgb, 255:red, 255; green, 255; blue, 255 }  ,draw opacity=1 ][fill={rgb, 255:red, 74; green, 144; blue, 226 }  ,fill opacity=1 ] (230,23.33) .. controls (230,19.47) and (233.13,16.33) .. (237,16.33) .. controls (240.87,16.33) and (244,19.47) .. (244,23.33) .. controls (244,27.2) and (240.87,30.33) .. (237,30.33) .. controls (233.13,30.33) and (230,27.2) .. (230,23.33) -- cycle ;
\draw    (392,49.33) -- (392,23.33) ;
\draw    (338,23.33) -- (443.67,23.33) ;
\draw  [color={rgb, 255:red, 255; green, 255; blue, 255 }  ,draw opacity=1 ][fill={rgb, 255:red, 74; green, 144; blue, 226 }  ,fill opacity=1 ] (331,23.33) .. controls (331,19.47) and (334.13,16.33) .. (338,16.33) .. controls (341.87,16.33) and (345,19.47) .. (345,23.33) .. controls (345,27.2) and (341.87,30.33) .. (338,30.33) .. controls (334.13,30.33) and (331,27.2) .. (331,23.33) -- cycle ;
\draw  [color={rgb, 255:red, 255; green, 255; blue, 255 }  ,draw opacity=1 ][fill={rgb, 255:red, 74; green, 144; blue, 226 }  ,fill opacity=1 ] (436.67,23.33) .. controls (436.67,19.47) and (439.8,16.33) .. (443.67,16.33) .. controls (447.53,16.33) and (450.67,19.47) .. (450.67,23.33) .. controls (450.67,27.2) and (447.53,30.33) .. (443.67,30.33) .. controls (439.8,30.33) and (436.67,27.2) .. (436.67,23.33) -- cycle ;
\draw  [color={rgb, 255:red, 255; green, 255; blue, 255 }  ,draw opacity=1 ][fill={rgb, 255:red, 74; green, 144; blue, 226 }  ,fill opacity=1 ] (385,23.33) .. controls (385,19.47) and (388.13,16.33) .. (392,16.33) .. controls (395.87,16.33) and (399,19.47) .. (399,23.33) .. controls (399,27.2) and (395.87,30.33) .. (392,30.33) .. controls (388.13,30.33) and (385,27.2) .. (385,23.33) -- cycle ;

\end{tikzpicture}
        \caption{General Tensor Train (TT) or Matrix Product State (MPS).\label{fig:mps}}
    \end{subfigure}
    \hfill
    \begin{subfigure}{0.47\textwidth}
        \center
        \tikzset{every picture/.style={line width=0.75pt}} 

\begin{tikzpicture}[x=0.75pt,y=0.75pt,yscale=-1,xscale=1]

\draw    (302.67,168.33) -- (302.67,193.33) ;
\draw    (258.67,165.33) -- (258.67,190.33) ;
\draw    (214.67,119.33) -- (228.67,165.33) ;
\draw    (258.67,165.33) -- (264.67,120.33) ;
\draw    (182.67,163.33) -- (182.67,188.33) ;
\draw    (276,30.33) -- (246.67,72.33) ;
\draw    (304.67,73.33) -- (276,30.33) ;
\draw    (214.67,119.33) -- (246.67,72.33) ;
\draw    (362.67,115.33) -- (304.67,73.33) ;
\draw    (214.67,119.33) -- (182.67,163.33) ;
\draw    (264.67,120.33) -- (302.67,168.33) ;
\draw    (228.67,165.33) -- (228.67,190.33) ;
\draw    (308.67,122.33) -- (308.67,147.33) ;
\draw    (362.67,115.33) -- (362.67,140.33) ;
\draw    (264.67,120.33) -- (246.67,72.33) ;
\draw    (304.67,73.33) -- (308.67,122.33) ;
\draw  [color={rgb, 255:red, 255; green, 255; blue, 255 }  ,draw opacity=1 ][fill={rgb, 255:red, 74; green, 144; blue, 226 }  ,fill opacity=1 ] (269,30.33) .. controls (269,26.47) and (272.13,23.33) .. (276,23.33) .. controls (279.87,23.33) and (283,26.47) .. (283,30.33) .. controls (283,34.2) and (279.87,37.33) .. (276,37.33) .. controls (272.13,37.33) and (269,34.2) .. (269,30.33) -- cycle ;
\draw  [color={rgb, 255:red, 255; green, 255; blue, 255 }  ,draw opacity=1 ][fill={rgb, 255:red, 74; green, 144; blue, 226 }  ,fill opacity=1 ] (239.67,72.33) .. controls (239.67,68.47) and (242.8,65.33) .. (246.67,65.33) .. controls (250.53,65.33) and (253.67,68.47) .. (253.67,72.33) .. controls (253.67,76.2) and (250.53,79.33) .. (246.67,79.33) .. controls (242.8,79.33) and (239.67,76.2) .. (239.67,72.33) -- cycle ;
\draw  [color={rgb, 255:red, 255; green, 255; blue, 255 }  ,draw opacity=1 ][fill={rgb, 255:red, 74; green, 144; blue, 226 }  ,fill opacity=1 ] (297.67,73.33) .. controls (297.67,69.47) and (300.8,66.33) .. (304.67,66.33) .. controls (308.53,66.33) and (311.67,69.47) .. (311.67,73.33) .. controls (311.67,77.2) and (308.53,80.33) .. (304.67,80.33) .. controls (300.8,80.33) and (297.67,77.2) .. (297.67,73.33) -- cycle ;
\draw  [color={rgb, 255:red, 255; green, 255; blue, 255 }  ,draw opacity=1 ][fill={rgb, 255:red, 74; green, 144; blue, 226 }  ,fill opacity=1 ] (207.67,119.33) .. controls (207.67,115.47) and (210.8,112.33) .. (214.67,112.33) .. controls (218.53,112.33) and (221.67,115.47) .. (221.67,119.33) .. controls (221.67,123.2) and (218.53,126.33) .. (214.67,126.33) .. controls (210.8,126.33) and (207.67,123.2) .. (207.67,119.33) -- cycle ;
\draw  [color={rgb, 255:red, 255; green, 255; blue, 255 }  ,draw opacity=1 ][fill={rgb, 255:red, 74; green, 144; blue, 226 }  ,fill opacity=1 ] (257.67,120.33) .. controls (257.67,116.47) and (260.8,113.33) .. (264.67,113.33) .. controls (268.53,113.33) and (271.67,116.47) .. (271.67,120.33) .. controls (271.67,124.2) and (268.53,127.33) .. (264.67,127.33) .. controls (260.8,127.33) and (257.67,124.2) .. (257.67,120.33) -- cycle ;
\draw  [color={rgb, 255:red, 255; green, 255; blue, 255 }  ,draw opacity=1 ][fill={rgb, 255:red, 74; green, 144; blue, 226 }  ,fill opacity=1 ] (301.67,122.33) .. controls (301.67,118.47) and (304.8,115.33) .. (308.67,115.33) .. controls (312.53,115.33) and (315.67,118.47) .. (315.67,122.33) .. controls (315.67,126.2) and (312.53,129.33) .. (308.67,129.33) .. controls (304.8,129.33) and (301.67,126.2) .. (301.67,122.33) -- cycle ;
\draw  [color={rgb, 255:red, 255; green, 255; blue, 255 }  ,draw opacity=1 ][fill={rgb, 255:red, 74; green, 144; blue, 226 }  ,fill opacity=1 ] (355.67,115.33) .. controls (355.67,111.47) and (358.8,108.33) .. (362.67,108.33) .. controls (366.53,108.33) and (369.67,111.47) .. (369.67,115.33) .. controls (369.67,119.2) and (366.53,122.33) .. (362.67,122.33) .. controls (358.8,122.33) and (355.67,119.2) .. (355.67,115.33) -- cycle ;
\draw  [color={rgb, 255:red, 255; green, 255; blue, 255 }  ,draw opacity=1 ][fill={rgb, 255:red, 74; green, 144; blue, 226 }  ,fill opacity=1 ] (175.67,163.33) .. controls (175.67,159.47) and (178.8,156.33) .. (182.67,156.33) .. controls (186.53,156.33) and (189.67,159.47) .. (189.67,163.33) .. controls (189.67,167.2) and (186.53,170.33) .. (182.67,170.33) .. controls (178.8,170.33) and (175.67,167.2) .. (175.67,163.33) -- cycle ;
\draw  [color={rgb, 255:red, 255; green, 255; blue, 255 }  ,draw opacity=1 ][fill={rgb, 255:red, 74; green, 144; blue, 226 }  ,fill opacity=1 ] (221.67,165.33) .. controls (221.67,161.47) and (224.8,158.33) .. (228.67,158.33) .. controls (232.53,158.33) and (235.67,161.47) .. (235.67,165.33) .. controls (235.67,169.2) and (232.53,172.33) .. (228.67,172.33) .. controls (224.8,172.33) and (221.67,169.2) .. (221.67,165.33) -- cycle ;
\draw  [color={rgb, 255:red, 255; green, 255; blue, 255 }  ,draw opacity=1 ][fill={rgb, 255:red, 74; green, 144; blue, 226 }  ,fill opacity=1 ] (251.67,165.33) .. controls (251.67,161.47) and (254.8,158.33) .. (258.67,158.33) .. controls (262.53,158.33) and (265.67,161.47) .. (265.67,165.33) .. controls (265.67,169.2) and (262.53,172.33) .. (258.67,172.33) .. controls (254.8,172.33) and (251.67,169.2) .. (251.67,165.33) -- cycle ;
\draw  [color={rgb, 255:red, 255; green, 255; blue, 255 }  ,draw opacity=1 ][fill={rgb, 255:red, 74; green, 144; blue, 226 }  ,fill opacity=1 ] (295.67,168.33) .. controls (295.67,164.47) and (298.8,161.33) .. (302.67,161.33) .. controls (306.53,161.33) and (309.67,164.47) .. (309.67,168.33) .. controls (309.67,172.2) and (306.53,175.33) .. (302.67,175.33) .. controls (298.8,175.33) and (295.67,172.2) .. (295.67,168.33) -- cycle ;

\end{tikzpicture}
        \caption{Hierarchical Tucker (HT) or a tree-based format.\label{fig:ht}}
    \end{subfigure}
    \begin{subfigure}{0.47\textwidth}
        \center
        \tikzset{every picture/.style={line width=0.75pt}} 

\begin{tikzpicture}[x=0.75pt,y=0.75pt,yscale=-1,xscale=1]

\draw    (203.5,99.42) -- (240,98.75) ;
\draw    (471,79.33) -- (497.5,79.33) ;
\draw    (424,30) -- (424,55.33) ;
\draw    (424,55.33) -- (471,79.33) ;
\draw    (424,105.33) -- (424,130.67) ;
\draw    (471,79.33) -- (424,105.33) ;
\draw    (314.5,98.75) -- (374,105.33) ;
\draw    (374,105.33) -- (374,130.67) ;
\draw   (374,55.33) -- (424,55.33) -- (424,105.33) -- (374,105.33) -- cycle ;
\draw    (307.5,98.75) -- (374,55.33) ;
\draw    (307.5,98.75) -- (322,31.33) ;
\draw    (254.5,45.75) -- (273.75,65) ;
\draw    (273.75,65) -- (322,31.33) ;
\draw   (240,98.75) .. controls (240,80.11) and (255.11,65) .. (273.75,65) .. controls (292.39,65) and (307.5,80.11) .. (307.5,98.75) .. controls (307.5,117.39) and (292.39,132.5) .. (273.75,132.5) .. controls (255.11,132.5) and (240,117.39) .. (240,98.75) -- cycle ;
\draw  [color={rgb, 255:red, 255; green, 255; blue, 255 }  ,draw opacity=1 ][fill={rgb, 255:red, 74; green, 144; blue, 226 }  ,fill opacity=1 ] (315,31.33) .. controls (315,27.47) and (318.13,24.33) .. (322,24.33) .. controls (325.87,24.33) and (329,27.47) .. (329,31.33) .. controls (329,35.2) and (325.87,38.33) .. (322,38.33) .. controls (318.13,38.33) and (315,35.2) .. (315,31.33) -- cycle ;
\draw  [color={rgb, 255:red, 255; green, 255; blue, 255 }  ,draw opacity=1 ][fill={rgb, 255:red, 74; green, 144; blue, 226 }  ,fill opacity=1 ] (233,98.75) .. controls (233,94.88) and (236.13,91.75) .. (240,91.75) .. controls (243.87,91.75) and (247,94.88) .. (247,98.75) .. controls (247,102.62) and (243.87,105.75) .. (240,105.75) .. controls (236.13,105.75) and (233,102.62) .. (233,98.75) -- cycle ;
\draw  [color={rgb, 255:red, 255; green, 255; blue, 255 }  ,draw opacity=1 ][fill={rgb, 255:red, 74; green, 144; blue, 226 }  ,fill opacity=1 ] (367,55.33) .. controls (367,51.47) and (370.13,48.33) .. (374,48.33) .. controls (377.87,48.33) and (381,51.47) .. (381,55.33) .. controls (381,59.2) and (377.87,62.33) .. (374,62.33) .. controls (370.13,62.33) and (367,59.2) .. (367,55.33) -- cycle ;
\draw  [color={rgb, 255:red, 255; green, 255; blue, 255 }  ,draw opacity=1 ][fill={rgb, 255:red, 74; green, 144; blue, 226 }  ,fill opacity=1 ] (266.75,132.5) .. controls (266.75,128.63) and (269.88,125.5) .. (273.75,125.5) .. controls (277.62,125.5) and (280.75,128.63) .. (280.75,132.5) .. controls (280.75,136.37) and (277.62,139.5) .. (273.75,139.5) .. controls (269.88,139.5) and (266.75,136.37) .. (266.75,132.5) -- cycle ;
\draw  [color={rgb, 255:red, 255; green, 255; blue, 255 }  ,draw opacity=1 ][fill={rgb, 255:red, 74; green, 144; blue, 226 }  ,fill opacity=1 ] (300.5,98.75) .. controls (300.5,94.88) and (303.63,91.75) .. (307.5,91.75) .. controls (311.37,91.75) and (314.5,94.88) .. (314.5,98.75) .. controls (314.5,102.62) and (311.37,105.75) .. (307.5,105.75) .. controls (303.63,105.75) and (300.5,102.62) .. (300.5,98.75) -- cycle ;
\draw  [color={rgb, 255:red, 255; green, 255; blue, 255 }  ,draw opacity=1 ][fill={rgb, 255:red, 74; green, 144; blue, 226 }  ,fill opacity=1 ] (266.75,65) .. controls (266.75,61.13) and (269.88,58) .. (273.75,58) .. controls (277.62,58) and (280.75,61.13) .. (280.75,65) .. controls (280.75,68.87) and (277.62,72) .. (273.75,72) .. controls (269.88,72) and (266.75,68.87) .. (266.75,65) -- cycle ;
\draw  [color={rgb, 255:red, 255; green, 255; blue, 255 }  ,draw opacity=1 ][fill={rgb, 255:red, 74; green, 144; blue, 226 }  ,fill opacity=1 ] (367,105.33) .. controls (367,101.47) and (370.13,98.33) .. (374,98.33) .. controls (377.87,98.33) and (381,101.47) .. (381,105.33) .. controls (381,109.2) and (377.87,112.33) .. (374,112.33) .. controls (370.13,112.33) and (367,109.2) .. (367,105.33) -- cycle ;
\draw  [color={rgb, 255:red, 255; green, 255; blue, 255 }  ,draw opacity=1 ][fill={rgb, 255:red, 74; green, 144; blue, 226 }  ,fill opacity=1 ] (417,55.33) .. controls (417,51.47) and (420.13,48.33) .. (424,48.33) .. controls (427.87,48.33) and (431,51.47) .. (431,55.33) .. controls (431,59.2) and (427.87,62.33) .. (424,62.33) .. controls (420.13,62.33) and (417,59.2) .. (417,55.33) -- cycle ;
\draw  [color={rgb, 255:red, 255; green, 255; blue, 255 }  ,draw opacity=1 ][fill={rgb, 255:red, 74; green, 144; blue, 226 }  ,fill opacity=1 ] (417,105.33) .. controls (417,101.47) and (420.13,98.33) .. (424,98.33) .. controls (427.87,98.33) and (431,101.47) .. (431,105.33) .. controls (431,109.2) and (427.87,112.33) .. (424,112.33) .. controls (420.13,112.33) and (417,109.2) .. (417,105.33) -- cycle ;
\draw  [color={rgb, 255:red, 255; green, 255; blue, 255 }  ,draw opacity=1 ][fill={rgb, 255:red, 74; green, 144; blue, 226 }  ,fill opacity=1 ] (464,79.33) .. controls (464,75.47) and (467.13,72.33) .. (471,72.33) .. controls (474.87,72.33) and (478,75.47) .. (478,79.33) .. controls (478,83.2) and (474.87,86.33) .. (471,86.33) .. controls (467.13,86.33) and (464,83.2) .. (464,79.33) -- cycle ;

\end{tikzpicture}
        \caption{General tensor network. Can be seen
        as an instance of Projected Entangled Pair States (PEPS).\label{fig:peps}}
    \end{subfigure}
    \caption{Examples of tensor networks. The vertices in \Cref{fig:tns} represent the low-dimensional functions in
the decomposition, such as $v^1, \hdots,v^d$ in \eqref{eq:TTex}.
The edges between the vertices represent summation over an index
(\emph{contraction)}
between two functions, such as summation over $k_\nu$ in
\eqref{eq:TTex}. The free edges represent input variables $x_1,\hdots,x_d$ in \eqref{eq:TTex}.}
    \label{fig:tns}
\end{figure}
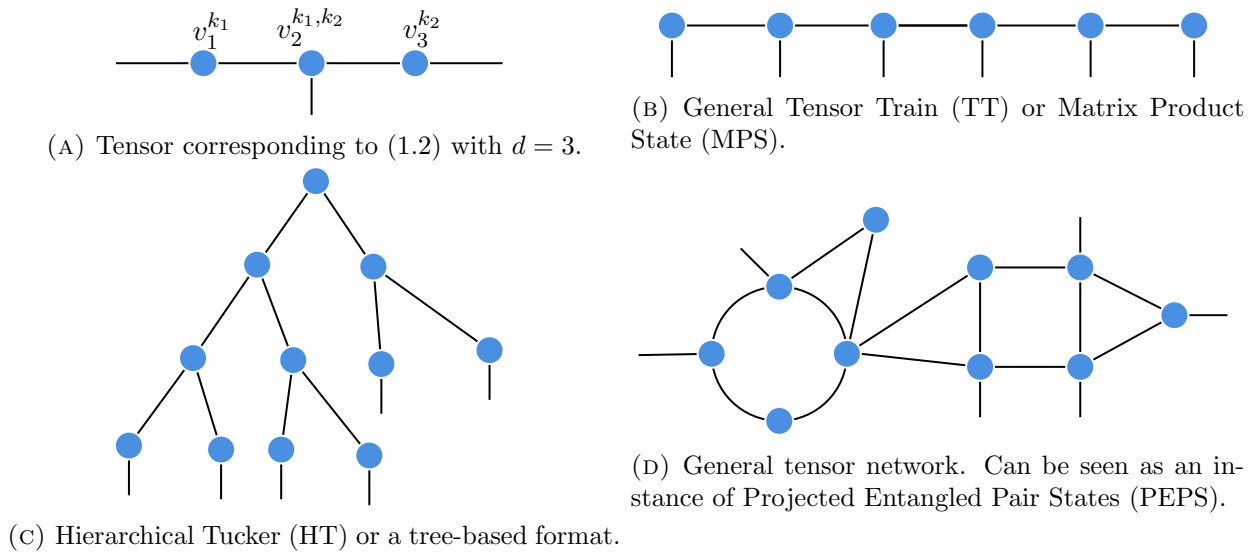

The specific choice of a tensor network is sometimes suggested
by the problem at hand: e.g., in quantum physics
by the entanglement/interaction structure of the
quantum system that $f$ is to model --
see, e.g., \cite{Hastings2007, Ali2019, Schwarz2017, Arad2013}.

At first glance, it seems that tensor networks are a tool
suited only for approximating high-dimensional functions.
However, such formats can be applied in any multi-variate setting
and this multi-variate setting can be enforced even if $d=1$ by a
 ``coarse-graining'' of an interval in $\R$ allowing to identify a one-dimensional function with a multi-variate function (or tensor). This identification is the \emph{tensorization} of functions which is at the core of the approximation tools considered in this work. 
It was originally applied for matrices in
\cite{Oseledets2009} and later coined as
\emph{quantized tensor format} when tensorization is combined with the use of a tensor format. 

In high-dimensional approximation,
keeping the ranks $r_\beta$ small relies
on the correct choice of the tensor network that
``fits'' the interaction structure as hinted above.
For the approximation of tensorized functions
a different type of structure is required.
In \cite{Grasedyck2010techreport, Oseledets2009}, it was shown
that, if $f$ is vector of evaluations of a polynomial on a grid,
then choosing the TT format yields
hierarchical ranks that are bounded by the degree of the polynomial.
Similar statements were shown for trigonometric polynomials and exponential
functions.
In \cite{Kazeev2017}, this fact was utilized to show that a
finite element approximation of two-dimensional functions with singularities,
where the coefficient vector was stored in a quantized tensor format,
automatically recovers an exponential rate of convergence,
analogue to that of $hp$-approximation.

This work can be seen as a consolidation and a deeper analysis of 
approximation of one-dimensional functions using quantized tensor formats.
We first show that Lebesgue spaces of $p$-integrable functions
are isometric to tensor product spaces of any order and analyze
some basic properties of this identification.
We then define and analyze the approximation classes of $\Lp$ functions
that can be approximated by rank-structured functions
in the TT format with a certain rate. In Part II \cite{partII},
we will show direct and (lack of) inverse embeddings.


\subsection{Tensor vs.\ Neural Networks}
 
Recently multiple connections between TNs and NNs
have been discovered.
In \cite{Beny2013}, the author exhibits an analogy between
the Renormalization Group (RG) -- the fundamental physical concept
behind TNs -- and deep learning,
with the scale in RG being akin to depth in NNs.
In \cite{Levine2017}, it was observed that in fact
tree tensor networks can be viewed as a specific type of feed-forward NNs
with multi-linear functions $\sigma_l$, namely sum-product NNs (or arithmetic circuits)  \cite{poon2011sum}.
This connection offers intriguing perspectives
and it can be exploited both ways:
we can use our fundamental
understanding of quantum entanglement\footnote{That is not to say
that we have understood quantum entanglement.
But an argument can be made that our understanding
of entanglement and thus tensor networks offers a different perspective on neural networks.}
to measure and design NNs, see \cite{Levine2017, Cohen2016}.
Or we can use NNs
to \emph{learn} entanglement and augment
tensor networks to better represent
highly entangled systems, see \cite{Carleo2017, Levine2019}.
Tensor networks also
offer a big choice of well studied and robust
numerical algorithms.

In this spirit, our work in part II \cite{partII} can be seen as a result
on the approximation power of a particular type
of NN,
where the TT format is a
feed-forward sum-product NN and a recurrent neural network architecture.
When compared to the results
of \cite{Gribonval2019} on approximation classes
of
RePU networks, we will observe in Part II \cite{partII} that
both tools achieve optimal approximation order for Sobolev spaces.
We also show that TNs (using the TT format) achieve optimal approximation
order for Besov spaces on the embedding line (corresponding to
non-linear approximation). These statements hold for
a tensor network of fixed polynomial degree and \emph{any} smoothness
order of the Sobolev/Besov spaces.

On the other hand,
TNs are much easier to handle -- both analytically and
numerically. Moreover,
it seems the much simpler
architecture of TNs does not sacrifice anything in terms
of representing functions of classical smoothness when compared
to RePU networks.
In particular, both tools are able to recover optimal
or close to optimal approximation rates -- without
being ``adapted'' to the particular space in question.
In other words, all smoothness spaces are included in the
same approximation class.
This is to be contrasted with more standard approximation
tools, such as splines or wavelets, where the approximation class
(and thus the approximation method) has to be adapted to
the smoothness notion in question.
Moreover, both tools will frequently perform
better than predicted on specific instances of functions 
that possess structural features that are not
captured by classical smoothness
theory\footnote{A fundamental theory of these structures
remains an open question for both tools.}.

Of course, this is simply to say that both tools
do a good job when it comes to classical notions of smoothness.
We still expect that NN approximation classes are very different
than those of TNs, in an appropriate sense.
We also show in Part II \cite{partII} that TNs approximation
classes (using the TT format) are not embedded into any Besov space --
as was shown in \cite{Gribonval2019} for RePU networks.


\subsection{Main Results}\label{sec:mainresults}

First, we show that any $\Lp$-function $f$ defined on the interval $[0,1)$ can be identified with a tensor. 
For a given $b \in \N$ (the base) and $d\in \N$ (the level), we first note that any $x \in [0,1)$ can be uniquely decomposed as 
$$
x = \sum_{k=1}^d i_k b^{-k} + b^{-d} y := t_{b,d}(i_1,\hdots,i_d,y),
$$
where $(i_1,...,i_d)$ is the representation of $\lfloor b^d x \rfloor$ in base $b$ and $y = b^d x - \lfloor b^d x \rfloor$. 
This allows to  identify a function  with a tensor (or multivariate function) 
$$\tensor{f}(i_1,\hdots,i_d,y) = f(t_{b,d}(i_1,\hdots,i_d,y)) := T_{b,d} f(i_1,\hdots,i_d,y),$$
and to define different notions of ranks for a univariate function. A function $f$ can be tensorized at different levels $d\in \N$. We analyze the relation between tensorization maps at different levels, and the relation between the ranks of the corresponding tensors of different orders.  
When looking at $T_{b,d}$ as a map on $\Lp{([0,1))}$, a main result is given by \Cref{thm:tensorizationmap} and \Cref{lemma:reasonablecross}.
\begin{main result}
    For any $0<p\leq\infty$,  $b\in\N$ ($b\geq 2$) and $d\in\N$,  
     the map $T_{b,d}$ is a linear isometry from  $\Lp{([0,1))}$ to the algebraic tensor space
    $
        \mathbf{V}_{b,d,\Lp}:=(\R^b)^{\otimes d}\otimes \Lp([0,1)),
    $
    where $\mathbf{V}_{b,d,\Lp}$ is equipped with a reasonable crossnorm.
\end{main result}

For later use in approximation, we introduce the tensor subspace
\begin{align*}
    \mathbf{V}_{b, d, S}:=(\R^b)^{\otimes d}\otimes S,
\end{align*}
where $S\subset\Lp([0,1))$ is some finite-dimensional subspace. 
Then,
we can identify $\mathbf{V}_{b, d, S}$ with a finite-dimensional
subspace of $\Lp$ as
\begin{align*}
    \Vbd{S}:=\Tbd^{-1}\left(\mathbf{V}_{b, d, S}\right)\subset\Lp([0,1)).
\end{align*}
We introduce the crucial assumption that  $S$ is closed under $b$-adic dilation, i.e., for any $f \in S$ and any $k \in \{0,\hdots,b-1\}$, $f(b^{-1}(\cdot + k)) \in S.$  Under this assumption, which is reminiscent of multi-resolution analysis (MRA), we obtain bounds for multilinear ranks  that are related to the dimension of $S$. Also, under this assumption on $S$, we obtain the main results given by \Cref{inclusion_Vbd,lemma:vbvectorspace} and \Cref{thm:dense}.
\begin{main result}
The spaces $\Vbd{S}$ form a hierarchy of $\Lp$-subspaces, i.e. 
 \begin{align*}
        S:=\Vbd[@][0]{S}\subset\Vbd[@][1]{S}\subset\Vbd[@][2]{S}\subset\ldots,
    \end{align*}
    and $\Vb{S}:=\bigcup_{d\in\N}\Vbd{S}$ is a linear space.    
  If we further assume that  $S$ contains the constant function one,
    $\Vb{S}$ is dense in $\Lp$
    for $1\leq p<\infty$. 
\end{main result}
For the approximation of multivariate functions (or tensors), we use the set of tensors in the tensor train (TT) format
 $\TT{\mathbf{V}_{b, d, S}}{\bs r}$, $\bs r=(r_\nu)_{\nu=1}^d$,
cf. \Cref{sec:introtns} and \Cref{fig:mps}.
Given a basis $\{\varphi_k\}_{k=1}^{\dim S}$ of $S$, a tensor $\tensor{f}$ in $\TT{\mathbf{V}_{b, d, S}}{\bs r}$
 can be written as
\begin{align*}
    \tensor{f}(i_1,\ldots,i_d, y)&=\sum_{k_1=1}^{r_1}\cdots\sum_{k_d=1}^{r_d} \sum_{k_{d+1}}^{\dim S} v_1^{k_1}(i_1) v_2^{k_1,k_2}(i_2)
    v_3^{k_2,k_3}(i_3)\cdots v_d^{k_{d-1},k_d}(i_d) v_{d+1}^{k_d,k_{d+1}} \varphi_k(y) ,
\end{align*}
where  the parameters $\mathbf{v}:= (v_1 ,\hdots,v_{d+1}) $ form a tensor network (a collection of low-order tensors) with 
$$
 \mathbf{v} := (v_1 ,\hdots,v_{d+1}) \in \R^{b\times r_1} \times \R^{b\times  r_1 \times r_2} \times \hdots  \times \R^{b\times  r_{d-1} \times r_d} \times \R^{r_d \times \dim S} := \mathcal{P}_{b,d,S,\bs r}. 
$$
With this we define 
	$$\Phi_{b,d,S,\bs r}= T_{b,d}^{-1} ( \TT{\mathbf{V}_{b,d,S}}{\bs r}) = \{ \mathcal{R}_{b,d,S,\bs r}(\mathbf{v}) 
	: \mathbf{v} \in  \mathcal{P}_{b,d,S,\bs r} \},$$
	where $\mathcal{R}_{b,d,S,\bs r}(\mathbf{v})$ is the map which associates to a tensor network $\mathbf{v}$ the function $f = T_{b,d} \tensor f$ with $\tensor f$ defined as above. Then 
 our approximation tool for univariate functions is defined as
 $$
\tool[] := (\Phi_n)_{n\in \N}, \quad  \Phi_n = \{\varphi \in \Phi_{b,d,S,\bs r} : d\in \N, \bs r \in \N^d , \cost(\varphi) \le n\},
 $$
 where $\cost(\varphi)$ is some measure of complexity of a function $\varphi$, defined as 
 $$
 \cost(\varphi) := \min \{\cost(\mathbf{v}) :  \mathcal{R}_{b,d,S,\bs r}(\mathbf{v}) = \varphi, d \in \N , \bs r\in \N^d\},
 $$ 
 where the infimum is taken over all tensor networks $\mathbf{v}$ whose realization  is  the function  $\varphi$. 
We introduce three different measures
of complexity 
\begin{align*}
  \cost_{\mathcal{N}}(\mathbf{v})  &:=  
  \sum_{\nu=1}^{d} r_\nu,\\  
   \cost_\mathcal{C}(\mathbf{v})  &:=
  br_1+b\sum_{k=2}^{d}r_{k-1}  r_k+
    r_d \dim S,\notag \\    
    \cost_\mathcal{S}(\mathbf{v}) &:=  \sum_{\nu=1}^{d+1} \Vert v_\nu \Vert_{\ell_0} \notag,
\end{align*}
where $\Vert v_\nu\Vert_{\ell_0}$ is the number of non-zero entries in the tensor $v_\nu$.
Consequently, this defines three types of subsets
\begin{align*}
    \tool^{\mc N}&:=\set{\varphi\in\tool[]:\;\cost_{\mc N}(\varphi)\leq n},\\
        \tool^{\mc C}&:=\set{\varphi\in\tool[]:\;\cost_{\mc C}(\varphi)\leq n},\notag\\
            \tool^{\mc S}&:=\set{\varphi\in\tool[]:\;\cost_{\mc S}(\varphi)\leq n}.\notag
\end{align*}
Complexity measures $\cost_\mathcal{C}$ and $\cost_\mathcal{N}$ are related to the TT-ranks of the tensor $ T_{b,d(\varphi)} \varphi$, where $d(\varphi)$ is the minimal $d$ such that $\varphi \in V_{b,d,S}$. The function
$\cost_\mathcal{C}$ is a natural measure of complexity which corresponds to the dimension of the parameter space.
The function $\cost_\mathcal{S}$  is also a natural measure of complexity which counts the number of non-zero parameters. 
When interpreting a tensor network $\mathbf{v}$ as a sum-product neural network,  $\cost_\mathcal{N}(\mathbf{v})$ corresponds to the number of neurons, $\cost_\mathcal{C}(\mathbf{v})$ to the number of weights, and $\cost_\mathcal{S}(\mathbf{v})$ the number of non-zero weights (or connections). 

We use $\tool\in\set{\tool^{\mc N}, \tool^{\mc C}, \tool^{\mc S}}$
and the corresponding best approximation error
\begin{align*}
    \E{f}[p]:=\inf_{\varphi\in\tool}\norm{f-\varphi}[p]
\end{align*} 
for functions $f$ in $\Lp([0,1))$ 
to define approximation classes 
\begin{align*}
       \Asq :=  \Asq(\Lp, (\tool)_{n\in \N}) ):=\set{f\in\Lp([0,1)):\; \norm{f}[\Asq]<\infty},
    \end{align*}
for $\alpha>0$ and $0<q\le \infty$, where 
     \begin{align*}
        \norm{f}[\Asq]:=
        \begin{cases}
            \left(\sum_{n=1}^\infty[n^\alpha\E[n-1]{f}]^q\frac{1}{n}\right)^{1/q},&\quad 0<q<\infty,\\
            \sup_{n\geq 1}[n^\alpha\E[n-1]{f}],&\quad q=\infty.
        \end{cases}
    \end{align*}  
For the three approximation classes
\begin{align*}
N_q^\alpha(X) &:= A_q^\alpha(X , (\tool^{\mathcal{N}})_{n\in \N}),\\
C_q^\alpha(X) &:= A_q^\alpha(X ,  (\tool^{\mathcal{C}})_{n\in \N}),\notag\\
S_q^\alpha(X) &:= A_q^\alpha(X ,  (\tool^{\mathcal{S}})_{n\in \N}),
\end{align*}
we obtain the main result of this part I given by \Cref{p1-p6,comparing-spaces}.

\begin{main result}
For any $\alpha>0$, $0<p\leq\infty$ and $0<q \le \infty$,
the classes $N_q^\alpha(\Lp)$, $C_q^\alpha(\Lp)$
and $S_q^\alpha(\Lp)$ are quasi-normed vector spaces
and satisfy the continuous embeddings
\begin{align*}
&C^{\alpha}_q(L^p) \hookrightarrow S^{\alpha}_q(L^p) \hookrightarrow N^{\alpha}_q(L^p)\hookrightarrow C^{\alpha/2}_q(L^p).
\end{align*}
\end{main result}


\subsection{Outline}
In \Cref{sec:tensorization},
we discuss how one-dimensional functions can be identified with
tensors and analyze some basic properties of this identification.
In \Cref{sec:tensorapp},
we introduce our approximation tool, briefly review general results from approximation theory,
and analyze several approximation classes of rank-structured functions.
In particular, we show that these classes are quasi-normed linear spaces.
We conclude in \Cref{sec:discussion} by a brief discussion on how
tensorization can be viewed as a particular featuring step and tensor networks as a particular neural network with features as input variables. 
\section{Tensorization of Functions}\label{sec:tensorization}

We begin by introducing how one-dimensional functions can be identified with tensors of arbitrary dimension. We then introduce finite-dimensional subspaces
of tensorized functions and show that these form
a hierarchy of subspaces that are dense in $\Lp$.
This will be the basis for our approximation tool
in \Cref{sec:tensorapp}.

\subsection{The Tensorization Map}

Consider one-dimensional functions on the unit interval
\begin{equation*}
    f:[0,1)\rightarrow\R.
\end{equation*}
We tensorize such functions by encoding the input variable $x\in[0,1)$ as follows. 
Let $b\in\N$ be the base and $d\in\N$ the level.  We introduce a uniform partition of $[0,1)$ with $ b^d$ intervals $[x_i,x_{i+1})$ with $x_i = b^{-d} i$, $0 \le i \le b^d$. An integer $i\in \{0,\hdots,b^d-1\}$ admits a representation $(i_1,\hdots,i_d)$ in base $b$ such that 
$$
i = \sum_{k=1}^d i_k b^{d-k},
$$
where $i_k \in \set{0,\ldots,b-1} := \Ib.$
We define a \emph{conversion map} $\tbd$ from $\Ib^d\times[0,1)$ to $[0,1)$ defined by 
\begin{align*}
t_{b,d}(i_1,\ldots,i_d,y) = \sum_{k=1}^di_kb^{-k}+b^{-d}y.
\end{align*}
For any $x\in [0,1)$, there exists a unique $(i_1,\hdots,i_d,y) \in \Ib^d \times [0,1)$ such that $t_{b,d}(i_1,\ldots,i_d,y) = x$, where 
$(i_1,\hdots,i_d)$ is the representation of $\lfloor b^d x \rfloor$ in base $b$ and $y = b^d x -  \lfloor b^d x \rfloor$. We therefore deduce the following property. 
\begin{lemma}\label{property_tbd}
The conversion map $\tbd$ defines a linear bijection from the set $\Ib^d\times[0,1)$ to the interval
  $[0,1)$, with inverse defined for $x\in [0,1)$  by
  $$
  \tbd^{-1}(x) = (\lfloor b x \rfloor  , \lfloor b^2 x \rfloor \text{ mod } b, \hdots , \lfloor b^d x \rfloor \text{ mod } b , b^d x- \lfloor b^d x \rfloor). 
  $$
\end{lemma}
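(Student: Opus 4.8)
The plan is to factor $\tbd$ through the classical base-$b$ expansion of integers together with the floor/fractional-part decomposition of a real number. First I would rewrite the conversion map as
\[
\tbd(i_1,\ldots,i_d,y)=b^{-d}\Bigl(\sum_{k=1}^d i_k b^{d-k}+y\Bigr)=b^{-d}(i+y),\qquad i:=\sum_{k=1}^d i_k b^{d-k}.
\]
This displays $\tbd$ as the composition of three maps whose bijectivity is elementary: the digit map $(i_1,\ldots,i_d)\mapsto i=\sum_{k=1}^d i_k b^{d-k}$ from $\Ib^d$ onto $\{0,\ldots,b^d-1\}$ (uniqueness of base-$b$ representation); the map $(i,y)\mapsto i+y$ from $\{0,\ldots,b^d-1\}\times[0,1)$ onto $[0,b^d)$; and the dilation $z\mapsto b^{-d}z$ from $[0,b^d)$ onto $[0,1)$. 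Composing these gives at once the well-definedness (the image lands in $[0,1)$, not merely in $\R$) and the bijectivity.

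Second, to make the middle step rigorous I would check that $(i,y)\mapsto i+y$ is a bijection onto $[0,b^d)$ by observing that every $z\in[0,b^d)$ has the unique decomposition $z=\lfloor z\rfloor+(z-\lfloor z\rfloor)$ with $\lfloor z\rfloor\in\{0,\ldots,b^d-1\}$ and $z-\lfloor z\rfloor\in[0,1)$. For the range containment at the digit level I would use the geometric-sum bound $\sum_{k=1}^d i_k b^{-k}\le\sum_{k=1}^d (b-1)b^{-k}=1-b^{-d}$, so that adding $b^{-d}y<b^{-d}$ keeps the value strictly below $1$; this is exactly the point that guarantees the upper endpoint $1$ is never attained. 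The word \emph{linear} in the statement refers only to the fact that, for each fixed digit vector $(i_1,\ldots,i_d)$, the map $y\mapsto\tbd(i_1,\ldots,i_d,y)$ is affine; no vector-space structure on $\Ib^d\times[0,1)$ is being claimed.

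Third, I would read off the inverse by inverting the three factors in reverse order. Given $x\in[0,1)$, set $z=b^d x\in[0,b^d)$; then $i=\lfloor b^d x\rfloor$ and $y=b^d x-\lfloor b^d x\rfloor$, which already yields the last entry of $\tbd^{-1}(x)$. To recover the digits I would show $\lfloor b^j x\rfloor=\sum_{k=1}^j i_k b^{j-k}$ for $1\le j\le d$: writing $b^j x=\sum_{k=1}^j i_k b^{j-k}+b^{j-d}\bigl(\sum_{k=j+1}^d i_k b^{d-k}+y\bigr)$, the first sum is an integer while the same geometric estimate shows the remainder lies in $[0,1)$. Hence $i_j=\lfloor b^j x\rfloor-b\lfloor b^{j-1}x\rfloor=\lfloor b^j x\rfloor\bmod b$, which is precisely the asserted formula (the first component $\lfloor bx\rfloor$ needs no reduction since $bx<b$).

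The routine parts are the two geometric-sum estimates and the uniqueness of base-$b$ digits; the only place demanding care is the range analysis showing the image is exactly the half-open interval $[0,1)$ — that is, that the closed endpoint is included on the $y=0$ side while the open endpoint $1$ is excluded on the all-digits-equal-$(b-1)$ side — which is what makes $\tbd$ a bijection onto $[0,1)$ rather than a map into $[0,1]$ or onto $(0,1)$. I expect this bookkeeping of the half-open endpoints to be the main (if minor) obstacle.
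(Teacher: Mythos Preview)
Your proposal is correct and follows the same idea as the paper, which treats the lemma as an immediate consequence of the observation preceding it: each $x\in[0,1)$ decomposes uniquely via $\lfloor b^d x\rfloor$ (whose base-$b$ digits give $(i_1,\ldots,i_d)$) and $y=b^d x-\lfloor b^d x\rfloor$. Your three-factor decomposition and the careful digit-recovery identity $i_j=\lfloor b^j x\rfloor\bmod b$ simply spell this out in full detail, so there is nothing substantively different from the paper's (implicit) argument.
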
	
\begin{definition}[Tensorization Map]\label{def-tensorization-map}
We define the 
 \emph{tensorization map} 
	\begin{align*}
	    \Tbd:\R^{[0,1)}\rightarrow\R^{\Ib^d\times[0,1)}, \quad f \mapsto f \circ t_{b,d} := \tensor{f}
	\end{align*}
which associates to a function 
$f \in \R^{[0,1)}$ the multivariate function 
	$	   \tensor{f}\in \R^{\Ib^d\times[0,1)} $ such that 
	\begin{align*}
	    \tensor{f}(i_1,\ldots,i_d,y):=f(\tbd(i_1,\ldots,i_d,y)).
	\end{align*}	
\end{definition}
From \Cref{property_tbd}, we directly deduce the following property of $\Tbd$.
\begin{proposition}\label{Tbd-bijection}
The tensorization map $\Tbd$ is a linear bijection from $\R^{[0,1)}$ to  $\R^{\Ib^d\times[0,1)}$, with inverse  given for $\tensor{f} \in \R^{\Ib^d \times [0,1)}$ by $\Tbd^{-1} \tensor{f} = \tensor{f} \circ t_{b,d}^{-1}$.
  \end{proposition}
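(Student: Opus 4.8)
The plan is to derive the statement directly from \Cref{property_tbd}, viewing $\Tbd$ as the pullback (precomposition) operator induced by the bijection $\tbd$. First I would verify linearity: since the vector space structure on $\R^{[0,1)}$ and on $\R^{\Ib^d\times[0,1)}$ is defined pointwise, for any $f,g\in\R^{[0,1)}$, scalars $\alpha,\beta$, and any $z\in\Ib^d\times[0,1)$ one has $(\alpha f+\beta g)(\tbd(z))=\alpha f(\tbd(z))+\beta g(\tbd(z))$, whence $\Tbd(\alpha f+\beta g)=\alpha\,\Tbd f+\beta\,\Tbd g$. Thus $\Tbd$ is a linear map between the two function spaces, independently of any topological considerations.

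Next I would establish bijectivity together with the explicit inverse in one stroke. By \Cref{property_tbd} the conversion map $\tbd$ is a bijection from $\Ib^d\times[0,1)$ onto $[0,1)$, so it possesses a genuine two-sided inverse $\tbd^{-1}$ satisfying $\tbd\circ\tbd^{-1}=\mathrm{id}_{[0,1)}$ and $\tbd^{-1}\circ\tbd=\mathrm{id}_{\Ib^d\times[0,1)}$. I would introduce the candidate inverse operator $\tensor{f}\mapsto\tensor{f}\circ\tbd^{-1}$ and check that the two composite operators reduce to the identity: for $f\in\R^{[0,1)}$, associativity of composition gives $(f\circ\tbd)\circ\tbd^{-1}=f\circ(\tbd\circ\tbd^{-1})=f$, and for $\tensor{f}\in\R^{\Ib^d\times[0,1)}$ it gives $(\tensor{f}\circ\tbd^{-1})\circ\tbd=\tensor{f}\circ(\tbd^{-1}\circ\tbd)=\tensor{f}$. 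This simultaneously shows that $\Tbd$ is a bijection and that its inverse acts by $\Tbd^{-1}\tensor{f}=\tensor{f}\circ\tbd^{-1}$, exactly as asserted.

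I do not expect any serious obstacle: the entire content is already packaged in \Cref{property_tbd}, and the proposition is the formal statement that precomposition by a linear bijection is itself a linear bijection of the associated full function spaces, with inverse given by precomposition by the inverse map. The only point requiring minor care is the bookkeeping of the direction of composition, so that the inverse is correctly identified as precomposition with $\tbd^{-1}$ and not as a postcomposition; this is settled by invoking associativity as above. I would therefore present the argument as a short direct verification rather than via any auxiliary construction.
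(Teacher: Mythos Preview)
Your argument is correct and matches the paper's approach: the paper simply states that the proposition is directly deduced from \Cref{property_tbd}, and your verification via pullback by the bijection $\tbd$ is precisely the routine check that underlies that deduction. There is nothing to add.
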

The space $\R^{\Ib^d\times[0,1)}$ can be identified with the algebraic tensor space $$ \mathbf{V}_{b,d} := \R^{\Ib^d} \otimes \R^{[0,1)}=  \underbrace{\R^{\Ib} \otimes \hdots \otimes \R^{\Ib}}_{\text{$d$ times }}\otimes \R^{[0,1)} =: (\R^{\Ib})^{\otimes d} \otimes \R^{[0,1)} ,$$ which is the set of functions $\tensor{f}$ defined on $\Ib^d \times [0,1)$ that admit 
a representation
\begin{equation}
\tensor{f}(i_1,\hdots,i_d,y) = \sum_{k=1}^r v_1^k(i_1) \hdots v_d^k(i_d) g^k(y) :=\sum_{k=1}^r (v_1^k \otimes \hdots \otimes v_d^k \otimes g^k)(i_1,\hdots,i_d,y)    \label{algebraic-tensor}
\end{equation}
for some $r \in \N$ and for some functions $v_\nu^k \in \R^{\Ib}$ and $g^k \in \R^{[0,1)}$, $1\le k \le r$, $1\le \nu\le d$. Letting $\{\delta_{j_\nu} : j_\nu \in \Ib\}$ be the canonical basis of  $\R^{\Ib}$, defined by $\delta_{j_\nu}(i_\nu) = \delta_{i_\nu,j_\nu}$, a function $\tensor{f}\in \R^{\Ib^d\times[0,1)}$ admits the particular representation
\begin{equation}
\tensor{f}   = \sum_{j_1 \in \Ib} \hdots \sum_{j_d \in \Ib} \delta_{j_1} \otimes \hdots  \otimes \delta_{j_d} \otimes  \tensor{f}(j_1,\hdots,j_d,\cdot).
\label{canonical-representation}
\end{equation} 
The following result provides an interpretation of the above representation.
\begin{lemma}\label{elementary-tensor-localized-function}
Let $f\in \R^{[0,1)}$ and $\tensor{f} = T_{b,d} f \in \mathbf{V}_{b,d}$. For $(j_1,\hdots,j_d) \in \Ib^d$ and $j= \sum_{k=1}^{d} j_k b^{d-k}$, it holds
\begin{equation}
 T_{b,d}( f \indicator{[b^{-d} j ,b^{-d} (j+1))}) = \delta_{j_1}\otimes \hdots \otimes \delta_{j_d} \otimes \tensor{f}(j_1,\hdots,j_d,\cdot) ,
\label{j-piece}
\end{equation}
and  
\begin{align}
\tensor{f}(j_1,\hdots,j_d,\cdot) = f(b^{-d}(j+\cdot)),\label{partial-eval-dilation}
\end{align}
where $f(b^{-d}(j+\cdot))$ is the restriction of $f$ to the interval $[b^{-d} j ,b^{-d} (j+1))$ rescaled to $[0,1)$. 
\end{lemma}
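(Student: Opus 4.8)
The plan is to verify both identities by direct evaluation, using only the definition of $\tbd$, the tensorization map, and the uniqueness of the base-$b$ representation recorded in \Cref{property_tbd}. I would begin with \eqref{partial-eval-dilation}, which is the more elementary identity and which feeds into the first. Fixing $(j_1,\hdots,j_d)\in\Ib^d$ and substituting into the definition of the conversion map gives $\tbd(j_1,\hdots,j_d,y)=\sum_{k=1}^d j_k b^{-k}+b^{-d}y$. The key algebraic observation is that $\sum_{k=1}^d j_k b^{-k}=b^{-d}\sum_{k=1}^d j_k b^{d-k}=b^{-d}j$, so that $\tbd(j_1,\hdots,j_d,y)=b^{-d}(j+y)$. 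By definition of $\Tbd$ this yields $\tensor{f}(j_1,\hdots,j_d,y)=f(\tbd(j_1,\hdots,j_d,y))=f(b^{-d}(j+y))$, which is exactly \eqref{partial-eval-dilation}. Observing that $y\mapsto b^{-d}(j+y)$ maps $[0,1)$ bijectively onto the cell $[b^{-d}j,b^{-d}(j+1))$ gives the stated interpretation as the rescaled restriction of $f$.

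Next I would establish \eqref{j-piece} by checking that the two sides agree as functions on $\Ib^d\times[0,1)$, that is, by evaluating at an arbitrary $(i_1,\hdots,i_d,y)$. Writing $g:=f\indicator{[b^{-d}j,b^{-d}(j+1))}$ and setting $i:=\sum_{k=1}^d i_k b^{d-k}$, the same substitution as above gives $\Tbd g(i_1,\hdots,i_d,y)=g(b^{-d}(i+y))$. Since the $b$-adic cells $[b^{-d}i,b^{-d}(i+1))$, $i\in\{0,\hdots,b^d-1\}$, are pairwise disjoint and $b^{-d}(i+y)\in[b^{-d}i,b^{-d}(i+1))$ for every $y\in[0,1)$, the point $b^{-d}(i+y)$ lies in the support of the indicator if and only if $i=j$, which by uniqueness of the base-$b$ representation is equivalent to $(i_1,\hdots,i_d)=(j_1,\hdots,j_d)$. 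Hence $\Tbd g(i_1,\hdots,i_d,y)$ equals $f(b^{-d}(j+y))=\tensor{f}(j_1,\hdots,j_d,y)$ when $(i_1,\hdots,i_d)=(j_1,\hdots,j_d)$ and vanishes otherwise.

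It then remains to recognise this piecewise expression as the claimed elementary tensor. Evaluating the right-hand side of \eqref{j-piece} pointwise gives $\delta_{j_1}(i_1)\cdots\delta_{j_d}(i_d)\,\tensor{f}(j_1,\hdots,j_d,y)$, and the product of Kronecker deltas $\prod_{\nu=1}^d\delta_{i_\nu,j_\nu}$ is precisely the indicator of the event $(i_1,\hdots,i_d)=(j_1,\hdots,j_d)$; this matches the case distinction obtained for $\Tbd g$, so the two functions coincide. I do not anticipate a genuine obstacle here, as the statement is a bookkeeping verification built entirely on the definition of $\tbd$ and \Cref{property_tbd}. The only point requiring slight care is the disjointness argument, where one must use both that the image $b^{-d}(i+y)$ stays within a single cell as $y$ ranges over $[0,1)$ and that distinct base-$b$ digit strings index distinct cells — exactly the content already guaranteed by \Cref{property_tbd}.
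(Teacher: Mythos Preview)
Your proposal is correct and follows essentially the same route as the paper: both arguments verify \eqref{j-piece} by evaluating each side at an arbitrary $(i_1,\hdots,i_d,y)$ and using that $t_{b,d}(i_1,\hdots,i_d,y)\in[b^{-d}j,b^{-d}(j+1))$ iff $(i_1,\hdots,i_d)=(j_1,\hdots,j_d)$, and both obtain \eqref{partial-eval-dilation} directly from the definition of $\tbd$. Your write-up is simply more detailed than the paper's, which compresses the indicator-to-Kronecker-delta step into a single line.
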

\begin{proof}
For $x = t_{b,d}(i_1,\hdots,i_d,y)$, 
\begin{align*}f(x) \indicator{[b^{-d} j ,b^{-d} (j+1))}(x) &= \delta_{j_1}(i_1) \hdots \delta_{j_d}(i_d) f(t_{b,d}(i_1,\hdots,i_d,y)) \\
&=  \delta_{j_1}(i_1) \hdots \delta_{j_d}(i_d) f(t_{b,d}(j_1,\hdots,j_d,y)) \\
&= \delta_{j_1}(i_1) \hdots \delta_{j_d}(i_d) \tensor{f}(j_1,\hdots,j_d,y).
\end{align*}
The property \eqref{partial-eval-dilation} simply results from the definition of $\tensor{f}$.   
\end{proof}

From \Cref{elementary-tensor-localized-function}, we deduce that the representation \eqref{canonical-representation}
corresponds to the decomposition of $f = \Tbd^{-1}(\tensor{f})$ as a superposition of functions with disjoint supports, 
\begin{equation}
f(x) = \sum_{j =0}^{b^d-1} f_{j}(x), \quad f_{j}(x) = \indicator{[b^{-d} j ,b^{-d} (j+1))}(x) f(x), \label{piecewise-representation}
\end{equation}
where $f_j$ is the function 
supported  on  the interval  $[b^{-d} j ,b^{-d} (j+1))$ and equal to $f$ on this interval. Also, \Cref{elementary-tensor-localized-function}  yields the following result. 
\begin{corollary}\label{tensorization-dilation} 
A function $f \in \R^{[0,1)}$ defined by 
$$
f(x) = 
  \begin{cases} g(b^d x-j) & \text{for} \quad x\in [ b^{-d}j , b^{-d}(j+1))  \\
    0 & \text{elsewhere}
    \end{cases}$$
    with $g\in \R^{[0,1)}$ and $0\le j <b^d$ admits a tensorization $T_{b,d} f = \delta_{j_1}\otimes \hdots \otimes \delta_{j_d} \otimes g$, which is an elementary tensor. 
    
\end{corollary}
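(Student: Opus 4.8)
The plan is to read this off \Cref{elementary-tensor-localized-function} directly, since the corollary treats precisely a function that is already localized on a single level-$d$ cell. First I would record the trivial but essential observation that, by its very definition, $f$ vanishes outside $[b^{-d}j,b^{-d}(j+1))$, so that $f = f\,\indicator{[b^{-d} j ,b^{-d} (j+1))}$ as an element of $\R^{[0,1)}$. This lets me write $T_{b,d}f = T_{b,d}(f\,\indicator{[b^{-d} j ,b^{-d} (j+1))})$ and invoke \eqref{j-piece}, which, with $j=\sum_{k=1}^d j_k b^{d-k}$ as in the hypothesis, immediately gives $T_{b,d}f = \delta_{j_1}\otimes\cdots\otimes\delta_{j_d}\otimes\tensor{f}(j_1,\hdots,j_d,\cdot)$.

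It then remains only to identify the last tensor factor with $g$. For this I would use \eqref{partial-eval-dilation}, which expresses $\tensor{f}(j_1,\hdots,j_d,\cdot) = f(b^{-d}(j+\cdot))$, the rescaled restriction of $f$ to its supporting interval. Evaluating at an arbitrary $y\in[0,1)$, the argument $b^{-d}(j+y)$ lies in $[b^{-d}j,b^{-d}(j+1))$, so the first branch of the definition of $f$ applies and yields $f(b^{-d}(j+y)) = g\bigl(b^d b^{-d}(j+y)-j\bigr) = g(y)$. Hence $\tensor{f}(j_1,\hdots,j_d,\cdot)=g$, and substituting back produces $T_{b,d}f = \delta_{j_1}\otimes\cdots\otimes\delta_{j_d}\otimes g$, an elementary tensor by construction.

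There is no genuine obstacle here: the statement is a direct specialization of \Cref{elementary-tensor-localized-function} to a function supported on one $b$-adic cell, and the only computation is the cancellation $b^d b^{-d}(j+y)-j=y$, i.e.\ the elementary fact that $b$-adic dilation undoes the conversion map $\tbd$ on that cell. The single point worth stating with care is that the dilation $x\mapsto g(b^d x - j)$ indeed carries $[b^{-d}j,b^{-d}(j+1))$ onto the reference interval $[0,1)$, so that $g$ is evaluated only on its natural domain and the identification of the final factor with $g$ is unambiguous.
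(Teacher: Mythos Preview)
Your proof is correct and is exactly the argument the paper intends: the corollary is stated without a separate proof, merely as an immediate consequence of \Cref{elementary-tensor-localized-function}, and you have spelled out precisely that deduction via \eqref{j-piece} and \eqref{partial-eval-dilation}.
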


We now provide a useful result on compositions of tensorization maps for changing the representation level of a function.  
\begin{lemma}\label{tbdbard}
Let $\bar d, d \in \N$ such that $\bar d >d$. For any $(i_1,\hdots,i_{\bar d},y)\in \Ib^{\bar d} \times [0,1)$, it holds 
$$
t_{b,\bar d}(i_1,\hdots,i_{\bar d},y) = t_{b,d}(i_1,\hdots,i_{d},t_{b,\bar d-d}(i_{d+1},\hdots,i_{\bar d},y)),
$$
and 
 the operator $T_{b,\bar d}\circ T_{b,d}^{-1}$ from $\mathbf{V}_{b,d}$ to $\mathbf{V}_{b,\bar d}$ is such that 
$$
T_{b,\bar d}\circ T_{b,d}^{-1} = id_{\{1,\hdots,d\}} \otimes T_{b,\bar d - d}
$$
where $id_{\{1,\hdots,d\}}$ is the identity operator on $\R^{\Ib^d}$ and $T_{b,\bar d -d}$ is the tensorization map from $\R^{[0,1)}$ to $\mathbf{V}_{b,\bar d - d}$. Also, the operator $T_{b,  d}\circ T_{b,\bar d}^{-1}$ from $\mathbf{V}_{b,\bar d}$ to $\mathbf{V}_{b,d}$ is such that 
$$
T_{b, d}\circ T_{b,\bar d}^{-1} = id_{\{1,\hdots,d\}} \otimes T_{b,\bar d - d}^{-1}.
$$
\end{lemma}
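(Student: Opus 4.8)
The plan is to prove the three assertions in sequence, deriving the two operator identities from the single pointwise identity for the conversion maps. The only genuine content lies in the first, algebraic, step; the rest is bookkeeping of tensor slots.

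First I would establish the pointwise identity by a direct computation from the definition $t_{b,d}(i_1,\ldots,i_d,y)=\sum_{k=1}^d i_k b^{-k}+b^{-d}y$. Setting
\[
z=t_{b,\bar d-d}(i_{d+1},\ldots,i_{\bar d},y)=\sum_{k=1}^{\bar d-d}i_{d+k}b^{-k}+b^{-(\bar d-d)}y\in[0,1),
\]
and substituting into $t_{b,d}(i_1,\ldots,i_d,z)=\sum_{k=1}^d i_k b^{-k}+b^{-d}z$, the prefactor $b^{-d}$ shifts the indices of the inner sum, giving $b^{-d}z=\sum_{m=d+1}^{\bar d}i_m b^{-m}+b^{-\bar d}y$; the two sums then combine into $\sum_{k=1}^{\bar d}i_k b^{-k}+b^{-\bar d}y=t_{b,\bar d}(i_1,\ldots,i_{\bar d},y)$. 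This is nothing but the additivity of the base-$b$ expansion, and needs only a reindexing $m=d+k$.

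Next, for the identity $T_{b,\bar d}\circ T_{b,d}^{-1}=id_{\{1,\ldots,d\}}\otimes T_{b,\bar d-d}$, I would evaluate the left-hand side on an arbitrary $\tensor f\in\mathbf{V}_{b,d}$. Writing $f=T_{b,d}^{-1}\tensor f=\tensor f\circ t_{b,d}^{-1}$ via \Cref{Tbd-bijection}, the relation $f(t_{b,d}(i_1,\ldots,i_d,z))=\tensor f(i_1,\ldots,i_d,z)$ combined with the pointwise identity yields
\[
(T_{b,\bar d}f)(i_1,\ldots,i_{\bar d},y)=\tensor f\big(i_1,\ldots,i_d,\,t_{b,\bar d-d}(i_{d+1},\ldots,i_{\bar d},y)\big).
\]
I would then identify this as the action of $id_{\{1,\ldots,d\}}\otimes T_{b,\bar d-d}$: on an elementary tensor $v_1\otimes\cdots\otimes v_d\otimes g$ the right-hand operator produces $v_1(i_1)\cdots v_d(i_d)\,g(t_{b,\bar d-d}(i_{d+1},\ldots,i_{\bar d},y))$, which matches the displayed formula, and the general case follows since both maps are linear and $\mathbf{V}_{b,d}$ is spanned by elementary tensors. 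I expect the notational overhead to be heaviest here — keeping straight which of the three levels $d$, $\bar d-d$, $\bar d$ each slot belongs to — but there is no analytic obstacle.

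Finally, the third identity follows by inversion. Since $T_{b,d}$ and $T_{b,\bar d}$ are bijections by \Cref{Tbd-bijection}, we have $T_{b,d}\circ T_{b,\bar d}^{-1}=(T_{b,\bar d}\circ T_{b,d}^{-1})^{-1}=(id_{\{1,\ldots,d\}}\otimes T_{b,\bar d-d})^{-1}$, and invoking $(id\otimes A)^{-1}=id\otimes A^{-1}$ for the invertible map $A=T_{b,\bar d-d}$ gives $id_{\{1,\ldots,d\}}\otimes T_{b,\bar d-d}^{-1}$. The crux of the whole argument is thus the elementary base-$b$ reindexing of the first step; everything else is a consequence of bijectivity and the tensor-product action on elementary tensors.
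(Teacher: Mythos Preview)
Your proposal is correct and follows essentially the same route as the paper: a direct reindexing computation for the conversion map, verification of the operator identity on elementary tensors (extended by linearity), and the third identity by inversion using $(id\otimes A)^{-1}=id\otimes A^{-1}$. The only cosmetic difference is that the paper works directly on an elementary tensor from the start, whereas you first derive the pointwise formula for a general $\tensor f$ and then match it against elementary tensors; both amount to the same argument.
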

\begin{proof}
See \Cref{proof:lemma26}.
\end{proof}

For $d=0$, we adopt the conventions that $t_{b,0} $ is the identity on $[0,1)$, $T_{b,0}$ is the identity operator on $\R^{[0,1)}$, and $\mathbf{V}_{b,0} = \R^{[0,1)}$.

\subsection{Ranks and Minimal Subspaces}
The minimal integer $r$ such that $\tensor{f} \in \mathbf{V}_{b,d} $ admits a representation of the form \eqref{algebraic-tensor} is the \emph{canonical tensor rank} of $\tensor{f}$ denoted $r(\tensor{f}).$  We deduce from the representation \eqref{canonical-representation} that  $$r(\tensor{f}) \le b^d.$$ 
 
 Other notions of ranks can be defined from the classical notion of rank  by identifying a tensor with a tensor of order two (through unfolding). Letting $V_\nu := \R^{\Ib}$ for $1\le \nu\le d$, and $V_{d+1}:= \R^{[0,1)}$, we have $$\mathbf{V}_{b,d} = \bigotimes_{\nu=1}^{d+1} V_\nu.$$
 Then for any $\beta \subset \{1,\hdots,d+1\}$ and its complementary set $\beta^c =\{1,\hdots,d+1\}\setminus \beta$, a tensor 
 $\tensor{f} \in \mathbf{V}_{b,d} $ can be identified with an order-two tensor in $\mathbf{V}_\beta \otimes \mathbf{V}_{\beta^c}$, where $\mathbf{V}_{\gamma} = \bigotimes_{\nu\in \gamma} V_\nu$, called the
 $\beta$-unfolding of $\tensor{f}.$
This allows us to define the notion of $\beta$-rank.
\begin{definition}[$\beta$-rank]\label{def:betarank}
For $\beta \subset \{1,\hdots,d+1\}$, the  $\beta$-rank of $\tensor{f} \in \mathbf{V}_{b,d}$, denoted $r_\beta(\tensor{f})$, is the minimal integer  such that $\tensor{f}$ admits a representation of the form 
 \begin{align}\label{eq:tensorfrep}
        \tensor{f}=\sum_{k=1}^{r_\beta(\tensor{f})}
        \bs{v}^k_\beta\otimes\bs{v}^k_{\beta^c},
    \end{align}
    where $\bs{v}_\beta^k \in \mathbf{V}_\beta$ and 
     $\bs{v}_{\beta^c}^k \in \mathbf{V}_{\beta^c}$.  
      \end{definition}
    Since $\mathbf{V}_{b,d}$ is an algebraic tensor space, the $\beta$-rank is finite
and we have $r_\beta(\tensor{f})\leq r(\tensor{f})$ (though the $\beta$-rank can be much smaller).
Moreover, we have the following straightforward property
$$
r_\beta(\tensor{f}) = r_{\beta^c}(\tensor{f}),
$$
and the bound
\begin{align}\label{rank-bound-general-format}
    r_\beta(\tensor{f})\leq\min\left\{\prod_{\nu\in\beta}\dim V_\nu,\prod_{\nu\in\beta^c}\dim V_\nu\right\},
\end{align}
which can be useful for small $b$ and either very small or very large $\#\beta$.      

Representation \eqref{eq:tensorfrep} is not unique but
    the space spanned by the $\bs{v}_k^\beta$ is unique  and corresponds to the $\beta$-\emph{minimal subspace} of $\tensor{f}$.    
\begin{definition}[$\beta$-minimal subspace]\label{def:minsub}
    For $\beta \subset \{1,\hdots,d+1\}$, the $\beta$-minimal subspace of $\tensor{f}$, denoted $\Umin{\beta}(\tensor{f})$, is  the smallest subspace $\mathbf{U}_\beta \subset \mathbf{V}_\beta$ such that $\tensor{f} \in \mathbf{U}_\beta\otimes \mathbf{V}_{\beta^c}$, and its dimension is 
    $$
\dim(\Umin{\beta}(\tensor{f})) = r_\beta(\tensor{f}).
$$   
\end{definition}
We have the following useful characterization of minimal subspaces from partial evaluations of a tensor. 
\begin{lemma}\label{ranks-partial-evaluations}
For $\beta \subset \{1,\hdots,d\}$ and $\tensor{f} \in \mathbf{V}_{b,d}$,  
$$
\Umin{\beta^c}(\tensor{f}) = \mathrm{span}\{\tensor{f}(j_{\beta} , \cdot) : j_\beta \in \Ib^{\#\beta}\} \subset \mathbf{V}_{b,d-\#\beta},
$$
where $\tensor{f}(j_{\beta} , \cdot) \in  \mathbf{V}_{\beta^c}  = \mathbf{V}_{b,d-\#\beta}$ is a partial evaluation of $\tensor{f}$ along dimensions  $\nu \in \beta$.
\end{lemma}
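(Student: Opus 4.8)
The plan is to prove the set equality by a double inclusion, using the single structural observation that a partial evaluation at a multi-index $j_\beta \in \Ib^{\#\beta}$ is exactly the contraction of $\tensor{f}$ against the tensor product of coordinate functionals on the $\mathbf{V}_\beta$-factor. Concretely, let $\{\delta_{j_\nu}^\ast\}$ denote the basis of $(\R^{\Ib})^\ast$ dual to the canonical basis $\{\delta_{j_\nu}\}$, so that $\delta_{j_\nu}^\ast(g) = g(j_\nu)$; then $\tensor{f}(j_\beta,\cdot)$ is the result of applying $\bigotimes_{\nu\in\beta}\delta_{j_\nu}^\ast$ to the first factor of $\tensor{f}\in\mathbf{V}_\beta\otimes\mathbf{V}_{\beta^c}$. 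Throughout I abbreviate $\mathbf{U} := \mathrm{span}\{\tensor{f}(j_\beta,\cdot) : j_\beta \in \Ib^{\#\beta}\}\subset\mathbf{V}_{\beta^c}$, and I recall from \Cref{def:minsub} that $\Umin{\beta^c}(\tensor{f})$ is the \emph{smallest} subspace $\mathbf{U}_{\beta^c}\subset\mathbf{V}_{\beta^c}$ with $\tensor{f}\in\mathbf{V}_\beta\otimes\mathbf{U}_{\beta^c}$.

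For the inclusion $\mathbf{U}\subseteq\Umin{\beta^c}(\tensor{f})$, I would start from any representation $\tensor{f}=\sum_k \bs{v}_\beta^k\otimes\bs{u}_{\beta^c}^k$ with $\bs{u}_{\beta^c}^k\in\Umin{\beta^c}(\tensor{f})$, which exists by the very definition of the minimal subspace. Applying the functional $\bigotimes_{\nu\in\beta}\delta_{j_\nu}^\ast$ to the first factor gives $\tensor{f}(j_\beta,\cdot)=\sum_k c_k\,\bs{u}_{\beta^c}^k$, with scalars $c_k:=(\bigotimes_{\nu\in\beta}\delta_{j_\nu}^\ast)(\bs{v}_\beta^k)$. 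Thus every partial evaluation is a linear combination of the $\bs{u}_{\beta^c}^k$ and hence lies in $\Umin{\beta^c}(\tensor{f})$; since this holds for all $j_\beta$, the span $\mathbf{U}$ is contained in $\Umin{\beta^c}(\tensor{f})$.

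For the reverse inclusion $\Umin{\beta^c}(\tensor{f})\subseteq\mathbf{U}$, by minimality it suffices to show $\tensor{f}\in\mathbf{V}_\beta\otimes\mathbf{U}$. To this end I would reorganize the canonical representation \eqref{canonical-representation}: grouping the summation indices into those in $\beta$ and those in $\beta^c\cap\{1,\hdots,d\}$, and recognizing the inner partial sum as the canonical representation of $\tensor{f}(j_\beta,\cdot)$ as an element of $\mathbf{V}_{\beta^c}=\mathbf{V}_{b,d-\#\beta}$, one arrives at the ``partial'' canonical representation
\[
\tensor{f}=\sum_{j_\beta\in\Ib^{\#\beta}}\Big(\bigotimes_{\nu\in\beta}\delta_{j_\nu}\Big)\otimes\tensor{f}(j_\beta,\cdot).
\]
Each summand lies in $\mathbf{V}_\beta\otimes\mathbf{U}$ by construction, so $\tensor{f}\in\mathbf{V}_\beta\otimes\mathbf{U}$, and minimality of $\Umin{\beta^c}(\tensor{f})$ yields $\Umin{\beta^c}(\tensor{f})\subseteq\mathbf{U}$. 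Combining the two inclusions gives the asserted equality.

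The only genuinely delicate point is the index bookkeeping in the last step: one must verify that fixing the $\beta$-coordinates in \eqref{canonical-representation} and summing over the remaining discrete coordinates does reproduce the canonical representation of $\tensor{f}(j_\beta,\cdot)$ in $\mathbf{V}_{b,d-\#\beta}$. This relies on the hypothesis $\beta\subset\{1,\hdots,d\}$, which guarantees that the continuous factor $V_{d+1}=\R^{[0,1)}$ always belongs to the $\beta^c$-side so that the partial evaluation is well defined and lands in $\mathbf{V}_{\beta^c}$. No analytic subtlety enters, since the argument takes place entirely in algebraic tensor spaces and all sums are finite.
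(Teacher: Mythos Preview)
Your proof is correct, but it takes a somewhat different route from the paper's. The paper invokes a known characterization of minimal subspaces (citing \cite[Corollary 2.19]{Falco:2012uq}), namely that $\Umin{\beta^c}(\tensor{f})=\{(\boldsymbol{\varphi}_\beta\otimes id_{\beta^c})\tensor{f}:\boldsymbol{\varphi}_\beta\in(\mathbf{V}_\beta)'\}$, and then simply specializes this to the basis of coordinate functionals $\delta_{j_\beta}=\bigotimes_{\nu\in\beta}\delta_{j_\nu}$ of $(\mathbf{V}_\beta)'$, noting that $(\delta_{j_\beta}\otimes id_{\beta^c})\tensor{f}=\tensor{f}(j_\beta,\cdot)$. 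You instead prove the equality from first principles by a double inclusion, using only the definition of the minimal subspace. Both arguments hinge on the same structural observation that partial evaluation is contraction against coordinate functionals; the paper's proof is shorter because it outsources the double inclusion to the cited result, while yours is self-contained and does not require the external reference. Your second inclusion, via the partial canonical representation $\tensor{f}=\sum_{j_\beta}\big(\bigotimes_{\nu\in\beta}\delta_{j_\nu}\big)\otimes\tensor{f}(j_\beta,\cdot)$, is in fact essentially a proof of the cited characterization in this finite-dimensional setting.
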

\begin{proof}
    See \Cref{proof:partialeval}.
\end{proof}

Next we define a notion of $(\beta,d)$-rank for univariate functions in $\R^{[0,1)}$. 
\begin{definition}[$(\beta,d)$-rank]\label{def:betadrank}
For a function $f \in \R^{[0,1)}$, $d\in \N$ and $\beta \subset \{1,\hdots,d+1\} $, we define the $(\beta,d)$-rank of $f$, denoted $r_{\beta,d}(f)$, as the $\beta$-rank of its tensorization in $\mathbf{V}_{b,d}$, 
$$
r_{\beta,d}(f) = r_\beta(\Tbd f).
$$
\end{definition}
In the rest of this work, we will essentially consider subsets $\beta$  of the form $\{1,\hdots,\nu\}$ or $\{\nu+1,\hdots,d+1\}$ for some $\nu\in \{1,\hdots,d\}$. For the corresponding $\beta$-ranks, we will use the shorthand notations
$$
r_{\nu}(\tensor{f}) := r_{\{1,\hdots,\nu\}}(\tensor{f}), \quad r_{\nu,d}(f) =  r_{\{1,\hdots,\nu\},d}(f).
$$
Note that $r_\nu(\tensor{f})$ should not be confused with $r_{\{\nu\}}(\tensor{f}).$   
The ranks $(r_\nu(\tensor{f}))_{1\le \nu\le d}$ of a tensor $\tensor{f} \in \mathbf{V}_{b,d}$ have to satisfy some relations, as seen in the next lemma. 
\begin{lemma}[Ranks Admissibility Conditions]\label{lemma:rankgrowth}
    Let $\tensor{f} = \mathbf{V}_{b,d}$. For any set $\beta \subset \{1,\hdots,d+1\}$ and any partition $\beta = \gamma \cup \alpha$, we have 
    $$
    r_{\beta}(\tensor{f})  \le r_{\gamma}(\tensor{f}) r_{\alpha}(\tensor{f}) 
    $$
    and in particular
    \begin{align}\label{eq:rankgrowth}
        r_{\nu+1}(\tensor{f})&\leq b r_{\nu}(\tensor{f})\quad \text{and} \quad r_{\nu}(\tensor{f})\leq b r_{\nu+1}(\tensor{f}),\quad
        1\leq \nu\leq d-1,
    \end{align}
\end{lemma}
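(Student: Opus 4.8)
The plan is to work entirely with minimal subspaces, since by \Cref{def:minsub} we have $r_\beta(\tensor{f}) = \dim \Umin{\beta}(\tensor{f})$ for every $\beta$. With this reformulation, the general inequality $r_\beta(\tensor{f}) \le r_\gamma(\tensor{f}) \, r_\alpha(\tensor{f})$ follows at once from the dimension identity $\dim(\mathbf{U} \otimes \mathbf{W}) = \dim \mathbf{U} \cdot \dim \mathbf{W}$, provided I can establish the key inclusion
\begin{equation*}
\Umin{\beta}(\tensor{f}) \subseteq \Umin{\gamma}(\tensor{f}) \otimes \Umin{\alpha}(\tensor{f}),
\end{equation*}
valid whenever $\beta = \gamma \cup \alpha$ is a \emph{disjoint} union. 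Indeed, granting this inclusion, $r_\beta = \dim \Umin{\beta} \le \dim(\Umin{\gamma} \otimes \Umin{\alpha}) = r_\gamma \, r_\alpha$.

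To prove the inclusion I would exploit the defining minimality property of $\Umin{\beta}$, namely that it is the smallest subspace $\mathbf{U}_\beta \subseteq \mathbf{V}_\beta$ with $\tensor{f} \in \mathbf{U}_\beta \otimes \mathbf{V}_{\beta^c}$. First I regroup the ambient factors: since $\gamma \subseteq \beta$ and the union is disjoint, $\gamma^c = \alpha \cup \beta^c$, hence $\mathbf{V}_{\gamma^c} = \mathbf{V}_\alpha \otimes \mathbf{V}_{\beta^c}$, and the defining property of $\Umin{\gamma}$ gives $\tensor{f} \in \Umin{\gamma}(\tensor{f}) \otimes \mathbf{V}_\alpha \otimes \mathbf{V}_{\beta^c}$. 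Symmetrically, $\alpha^c = \gamma \cup \beta^c$ yields $\tensor{f} \in \mathbf{V}_\gamma \otimes \Umin{\alpha}(\tensor{f}) \otimes \mathbf{V}_{\beta^c}$. Thus $\tensor{f}$ lies in the intersection of these two subspaces, and the main technical step is the elementary but essential identity for subspaces $U_1 \subseteq V_1$ and $U_2 \subseteq V_2$,
\begin{equation*}
(U_1 \otimes V_2) \cap (V_1 \otimes U_2) = U_1 \otimes U_2,
\end{equation*}
applied here after factoring out the common spectator tensor factor $\mathbf{V}_{\beta^c}$. This identity is where I expect the only real subtlety to lie; it is standard in the tensor literature (see \cite{Hackbusch2012}) and can be proved by extending bases of the $U_i$ to bases of the $V_i$ and inspecting coordinates. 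It gives $\tensor{f} \in (\Umin{\gamma}(\tensor{f}) \otimes \Umin{\alpha}(\tensor{f})) \otimes \mathbf{V}_{\beta^c}$, and since $\Umin{\gamma}(\tensor{f}) \otimes \Umin{\alpha}(\tensor{f}) \subseteq \mathbf{V}_\beta$, the minimality of $\Umin{\beta}$ forces the claimed inclusion.

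Finally I would deduce the particular relations \eqref{eq:rankgrowth} by specializing the partition. For the first, take $\beta = \{1,\ldots,\nu+1\}$ with the disjoint decomposition $\gamma = \{1,\ldots,\nu\}$ and $\alpha = \{\nu+1\}$, so that $r_{\nu+1}(\tensor{f}) \le r_\nu(\tensor{f}) \, r_{\{\nu+1\}}(\tensor{f})$; since $\nu+1 \le d$ we have $V_{\nu+1} = \R^{\Ib}$, whence $r_{\{\nu+1\}}(\tensor{f}) \le \dim V_{\nu+1} = b$ by \eqref{rank-bound-general-format}, giving $r_{\nu+1}(\tensor{f}) \le b\, r_\nu(\tensor{f})$. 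For the reverse inequality I would pass to complements using $r_\beta(\tensor{f}) = r_{\beta^c}(\tensor{f})$: writing $r_\nu(\tensor{f}) = r_{\{\nu+1,\ldots,d+1\}}(\tensor{f})$ and applying the general bound to the disjoint partition $\{\nu+1,\ldots,d+1\} = \{\nu+1\} \cup \{\nu+2,\ldots,d+1\}$ yields $r_\nu(\tensor{f}) \le r_{\{\nu+1\}}(\tensor{f}) \, r_{\{\nu+2,\ldots,d+1\}}(\tensor{f}) \le b\, r_{\nu+1}(\tensor{f})$, where I again use $r_{\{\nu+1\}}(\tensor{f}) \le b$ and the complementation identity $r_{\{\nu+2,\ldots,d+1\}}(\tensor{f}) = r_{\nu+1}(\tensor{f})$.
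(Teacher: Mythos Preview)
Your proof is correct and follows essentially the same route as the paper's: both establish the hierarchy inclusion $\Umin{\beta}(\tensor{f}) \subset \Umin{\gamma}(\tensor{f}) \otimes \Umin{\alpha}(\tensor{f})$ and then specialize exactly as you do for \eqref{eq:rankgrowth}. The only difference is that the paper simply cites this inclusion as the ``hierarchy property'' from \cite[Corollary 6.18]{Hackbusch2012}, whereas you spell out its proof via the intersection identity $(U_1\otimes V_2)\cap(V_1\otimes U_2)=U_1\otimes U_2$; your argument is thus a bit more self-contained but otherwise identical in strategy.
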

\begin{proof}
    See \Cref{proof:rankadmiss}.
\end{proof}

A function $f$ admits infinitely many tensorizations of different levels. The following result provides a relation between  minimal subspaces. 
\begin{lemma}\label{link-minimal-subspaces}
Consider a function $f \in \R^{[0,1)}$ and its tensorization $\tensor{f}^{d}= \Tbd f$ at level $d$. 
For any $1\leq \nu \leq d$,
$$
T_{b,d-\nu}^{-1}(\Umin{\{\nu+1,\hdots,d+1\}}(\tensor{f}^{d}))  =  \linspan{\tensor{f}^{\nu}(j_1,\ldots,j_\nu,\cdot):\;(j_1,\ldots,j_\nu)\in\Ib^\nu} = \Umin{\{\nu+1\}}(\tensor{f}^{\nu}) ,
$$
where $\tensor{f}^{\nu} = T_{b,\nu} f$ is the tensorization of $f$ at level $\nu$.\end{lemma}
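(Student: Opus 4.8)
The plan is to chain together two earlier results: the characterization of minimal subspaces via partial evaluations (\Cref{ranks-partial-evaluations}) and the level-change identity (\Cref{tbdbard}). I would prove the two displayed equalities separately, treating the rightmost one first since it is immediate.

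For the rightmost equality I would apply \Cref{ranks-partial-evaluations} to the tensor $\tensor{f}^{\nu}=T_{b,\nu}f\in\mathbf{V}_{b,\nu}$ with the index set $\beta=\{1,\hdots,\nu\}$, whose complement in $\{1,\hdots,\nu+1\}$ is $\{\nu+1\}$. This yields directly
$$
\Umin{\{\nu+1\}}(\tensor{f}^{\nu}) = \linspan{\tensor{f}^{\nu}(j_1,\ldots,j_\nu,\cdot):\;(j_1,\ldots,j_\nu)\in\Ib^\nu},
$$
so no further argument is needed there.

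For the leftmost equality I would first apply \Cref{ranks-partial-evaluations} to $\tensor{f}^{d}\in\mathbf{V}_{b,d}$ with $\beta=\{1,\hdots,\nu\}\subset\{1,\hdots,d\}$, so that $\beta^c=\{\nu+1,\hdots,d+1\}$, obtaining
$$
\Umin{\{\nu+1,\hdots,d+1\}}(\tensor{f}^{d}) = \linspan{\tensor{f}^{d}(j_1,\ldots,j_\nu,\cdot):\;(j_1,\ldots,j_\nu)\in\Ib^\nu}\subset\mathbf{V}_{b,d-\nu}.
$$
Since $T_{b,d-\nu}^{-1}$ is linear, it then suffices to identify the image of each generator, for which the key claim is
$$
\tensor{f}^{d}(j_1,\ldots,j_\nu,\cdot) = T_{b,d-\nu}\bigl(\tensor{f}^{\nu}(j_1,\ldots,j_\nu,\cdot)\bigr);
$$
applying $T_{b,d-\nu}^{-1}$ to each generator and taking spans then gives the first equality. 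To prove the claim I would use the operator form of \Cref{tbdbard}, namely $T_{b,d}\circ T_{b,\nu}^{-1}=id_{\{1,\hdots,\nu\}}\otimes T_{b,d-\nu}$, applied to the canonical representation \eqref{canonical-representation} of $\tensor{f}^{\nu}$, i.e. $\tensor{f}^{\nu}=\sum_{j_1,\ldots,j_\nu}\delta_{j_1}\otimes\cdots\otimes\delta_{j_\nu}\otimes\tensor{f}^{\nu}(j_1,\ldots,j_\nu,\cdot)$. Pushing $id_{\{1,\hdots,\nu\}}\otimes T_{b,d-\nu}$ through the sum leaves the $\delta$-factors untouched and replaces the last factor by its level-$(d-\nu)$ tensorization, producing the canonical representation of $\tensor{f}^{d}$ along its first $\nu$ coordinates with last-factor components $T_{b,d-\nu}(\tensor{f}^{\nu}(j_1,\ldots,j_\nu,\cdot))$. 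Reading off the partial evaluation of this expression at $(j_1,\ldots,j_\nu)$, using $\delta_{j'_k}(j_k)=\delta_{j'_k,j_k}$, yields exactly the claimed identity.

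The main obstacle is bookkeeping rather than conceptual: one must keep the two tensorization levels and their index ranges aligned and confirm that partial evaluation along the first $\nu$ coordinates commutes with the level-change operator $id_{\{1,\hdots,\nu\}}\otimes T_{b,d-\nu}$ in the expected way. Once the operator identity of \Cref{tbdbard} is invoked on the canonical representation, this commutation is immediate, and both equalities follow from \Cref{ranks-partial-evaluations}.
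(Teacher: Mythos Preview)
Your proposal is correct and follows essentially the same route as the paper: both proofs invoke \Cref{ranks-partial-evaluations} twice (once for each side) and use the level-change identity of \Cref{tbdbard} to identify $T_{b,d-\nu}^{-1}\bigl(\tensor{f}^{d}(j_1,\ldots,j_\nu,\cdot)\bigr)$ with $\tensor{f}^{\nu}(j_1,\ldots,j_\nu,\cdot)$. The only cosmetic difference is that the paper applies the inverse operator $id_{\{1,\hdots,\nu\}}\otimes T_{b,d-\nu}^{-1}$ directly to $\tensor{f}^{d}$ and reads off the partial evaluation, whereas you push the forward operator through the canonical representation of $\tensor{f}^{\nu}$; both are the same commutation argument.
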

\begin{proof}
    See \Cref{proof:liniminsub}.
\end{proof}

For $j=\sum_{k=1}^{\nu} j_k b^{\nu-k},$ since $\tensor{f}^{\nu}(j_1,\ldots,j_\nu,\cdot) =  f(b^{-\nu}(j + \cdot))$ is the restriction of $f$ to the interval $[b^{-\nu}j,b^{-\nu}(j+1))$  rescaled to 
$[0, 1)$,  \Cref{link-minimal-subspaces}
 provides a simple interpretation of minimal subspace $\Umin{\{\nu+1\}}(\tensor{f}^{\nu})$
 as the linear span of contiguous pieces of $f$ rescaled to $[0,1)$, see the illustration in 
 \Cref{fig:unfold}.

\begin{figure}[h]
    \centering
    \begin{subfigure}[h]{.6\textwidth}
        \centering
        \includegraphics[scale=.4]{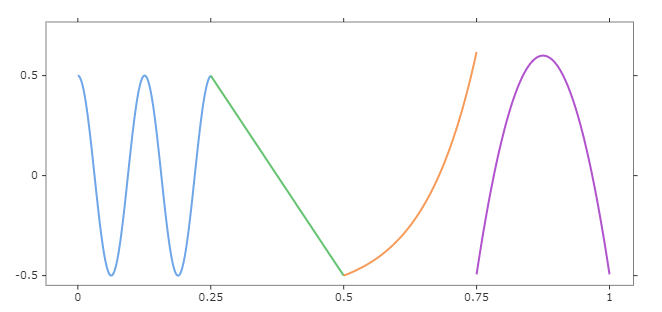}
        \subcaption{Function $f:[0,1)\rightarrow\R$}\label{fig11}
    \end{subfigure}
    \\
    \begin{subfigure}[h]{.6 \textwidth}
        \centering
        \includegraphics[scale=.4]{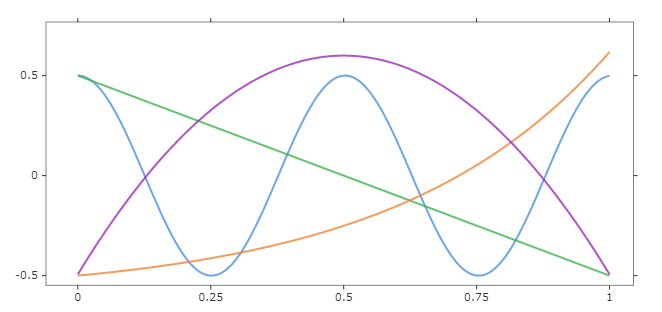}        
        \subcaption{Partial evaluations $\tensor{f}^\nu(j_1,j_2,\cdot)$ for $(j_1,j_2) \in \{0,1\}^2$.}\label{fig12}
    \end{subfigure}
    \caption{A function $f:[0,1)\rightarrow\R$  and partial evaluations of $\tensor{f}^\nu \in \mathbf{V}_{b,d}$ for $b=d=2$.}
    \label{fig:unfold}
\end{figure} 
 
\begin{corollary}\label{link-ranks-d-nu}
Let $f \in \R^{[0,1)}$ and $d\in \N$. For any $1\le \nu\le d,$
$$
r_{\nu,d}(f) = r_{\nu,\nu}(f)$$
and 
$$
r_{\nu,\nu}(f) = \dim\mathrm{span}\{f( b^{-\nu}(j+\cdot)) : 0\le j \le b^\nu-1\} .
$$
\end{corollary}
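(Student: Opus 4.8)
The plan is to read off both identities from \Cref{link-minimal-subspaces}, combined with the identification of $\beta$-ranks with dimensions of minimal subspaces (\Cref{def:minsub}) and the complementation property $r_\beta(\tensor{f}) = r_{\beta^c}(\tensor{f})$. I would first establish $r_{\nu,d}(f)=r_{\nu,\nu}(f)$ by transporting a minimal subspace from level $d$ down to level $\nu$, and then specialize to $d=\nu$ to obtain the explicit spanning-set formula.

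For the first identity, write $\tensor{f}^d = T_{b,d}f$. By definition $r_{\nu,d}(f) = r_{\{1,\hdots,\nu\}}(\tensor{f}^d)$, and by complementation (with the complement taken in $\{1,\hdots,d+1\}$) this equals $r_{\{\nu+1,\hdots,d+1\}}(\tensor{f}^d)$, which by \Cref{def:minsub} is $\dim \Umin{\{\nu+1,\hdots,d+1\}}(\tensor{f}^d)$. Now \Cref{link-minimal-subspaces} gives $T_{b,d-\nu}^{-1}(\Umin{\{\nu+1,\hdots,d+1\}}(\tensor{f}^d)) = \Umin{\{\nu+1\}}(\tensor{f}^{\nu})$ with $\tensor{f}^{\nu}=T_{b,\nu}f$, and since $T_{b,d-\nu}^{-1}$ is a linear bijection (\Cref{Tbd-bijection}) it preserves dimension, so these two minimal subspaces have equal dimension. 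Finally, inside the level-$\nu$ tensor space $\mathbf{V}_{b,\nu}$, whose factors are indexed by $\{1,\hdots,\nu+1\}$, the complement of $\{\nu+1\}$ is $\{1,\hdots,\nu\}$, so $\dim\Umin{\{\nu+1\}}(\tensor{f}^{\nu}) = r_{\{\nu+1\}}(\tensor{f}^{\nu}) = r_{\{1,\hdots,\nu\}}(\tensor{f}^{\nu}) = r_{\nu,\nu}(f)$. Chaining these equalities yields $r_{\nu,d}(f)=r_{\nu,\nu}(f)$.

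For the second identity, I would take $d=\nu$ in \Cref{link-minimal-subspaces}, so that $T_{b,0}$ is the identity and the statement reduces to $\Umin{\{\nu+1\}}(\tensor{f}^{\nu}) = \linspan{\tensor{f}^{\nu}(j_1,\hdots,j_\nu,\cdot):(j_1,\hdots,j_\nu)\in\Ib^\nu}$. By \eqref{partial-eval-dilation} each partial evaluation satisfies $\tensor{f}^{\nu}(j_1,\hdots,j_\nu,\cdot)=f(b^{-\nu}(j+\cdot))$ with $j=\sum_{k=1}^\nu j_k b^{\nu-k}$, and as $(j_1,\hdots,j_\nu)$ runs over $\Ib^\nu$ the index $j$ runs bijectively over $\{0,\hdots,b^\nu-1\}$. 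Hence the spanning set coincides with $\{f(b^{-\nu}(j+\cdot)):0\le j\le b^\nu-1\}$; combining with $r_{\nu,\nu}(f)=\dim\Umin{\{\nu+1\}}(\tensor{f}^{\nu})$ from the previous paragraph gives the claimed formula.

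Most steps are bookkeeping, so I do not anticipate a genuine obstacle. The one point demanding care is that the ambient tensor space — and therefore the set in which complements are formed — changes between level $d$ and level $\nu$, so the complementation of $\{1,\hdots,\nu\}$ must be computed inside $\{1,\hdots,d+1\}$ in the first step but inside $\{1,\hdots,\nu+1\}$ in the last. Keeping these two index sets distinct is the only place where a slip would break the argument.
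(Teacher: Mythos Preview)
Your proposal is correct and follows essentially the same route as the paper's own proof: both unwind the definitions via complementation to reduce to dimensions of minimal subspaces, invoke \Cref{link-minimal-subspaces} to pass from level $d$ to level $\nu$, and then read off the spanning-set description using \eqref{partial-eval-dilation}. Your remark about tracking the two different ambient index sets when forming complements is exactly the care point, and you handle it correctly.
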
	
\begin{proof}
It holds 
$$r_{\nu,d}(f) = r_{\{1,\hdots,\nu\}}(\tensor{f}^d) = r_{\{\nu+1,\hdots,d+1\}}(\tensor{f}^d) = \dim  \Umin{\{\nu+1,\hdots,d+1\}}(\tensor{f}^d)$$ and  $$r_{\nu,\nu}(f) =  r_{\{1,\hdots,\nu\}}(\tensor{f}^\nu) =  r_{\nu+1}(\tensor{f}^\nu) = \dim \Umin{\{\nu+1\}}(\tensor{f}^\nu).$$ \Cref{link-minimal-subspaces} then implies that $r_{\nu,d}(f) = r_{\nu,\nu}(f)$ and provides the characterization from the linear span of  
$\tensor{f}^{\nu}(j_1,\ldots,j_\nu,\cdot) =  f( b^{-\nu}(j+\cdot)),$ with $j=\sum_{k=1}^{\nu} j_k b^{\nu-k},$ which is linearly identified with the restriction 
$f_{\mid [b^{-\nu}j,b^{-\nu}(j+1))}$ shifted and rescaled to $[0,1)$.
\end{proof}

%

\subsection{Measures, Lebesgue and Smoothness Spaces}
We now look at $T_{b,d}$ as a linear map between spaces of measurable functions, by equipping the interval $[0,1)$ with the  Lebesgue measure.
\begin{proposition}\label{bijection-measurable}
The Lebesgue measure $\lambda$ on 
$[0,1)$ is the push-forward measure  through the map $t_{b,d}$ of the product measure $\mu_{b,d} := \mu_{b}^{\otimes d} \otimes \lambda$, where $\mu_b$ is the uniform probability measure on $\Ib$. Then 
the tensorization map $\Tbd$ defines a linear isomorphism from the space of measurable functions $\R^{[0,1)}$ to the space of measurable functions $\R^{\Ib^d\times[0,1)}$, where $[0,1)$ is equipped with the Lebesgue measure and 
  $\Ib^d\times[0,1)$ is equipped with the product measure   $\mu_{b,d}$.
  \end{proposition}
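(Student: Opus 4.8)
The plan is to split the statement into its two assertions: first, the pushforward identity $(t_{b,d})_*\mu_{b,d} = \lambda$; and second, that $\Tbd$ restricts to a linear isomorphism between the corresponding spaces of measurable functions. The second assertion will follow almost formally from the first together with \Cref{Tbd-bijection}, so the measure-theoretic identity is the heart of the matter and the rest is bookkeeping on top of the already-established bijection.

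For the pushforward, I would use the decomposition of $\Ib^d\times[0,1)$ into the cells $\{(j_1,\ldots,j_d)\}\times[0,1)$. On each such cell, \Cref{elementary-tensor-localized-function} (equivalently, the defining formula for $\tbd$) shows that $\tbd$ acts as the affine bijection $y\mapsto b^{-d}(j+y)$ onto the interval $I_j := [b^{-d}j,\,b^{-d}(j+1))$, where $j=\sum_{k=1}^d j_k b^{d-k}$. For a measurable $A\subset[0,1)$ I would then compute
\[
\mu_{b,d}\bigl(\tbd^{-1}(A)\bigr)
= \sum_{(j_1,\ldots,j_d)\in\Ib^d} b^{-d}\,\lambda\bigl(\{y : b^{-d}(j+y)\in A\}\bigr)
= \sum_{j=0}^{b^d-1} b^{-d}\cdot b^{d}\,\lambda(A\cap I_j)
= \lambda(A),
\]
using that each factor $\mu_b(\{j_\nu\})=b^{-1}$ contributes $b^{-d}$, that the affine rescaling multiplies Lebesgue measure by $b^{d}$, and that the intervals $I_j$ partition $[0,1)$. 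Alternatively, since $\mu_{b,d}$ and $\lambda$ are both probability measures, it suffices to verify the identity on the $\pi$-system of $b$-adic intervals, which generates the Borel $\sigma$-algebra, and then invoke the uniqueness theorem for measures.

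It remains to promote this to an isomorphism of function spaces. First I would record that $\tbd$ is a measure-space isomorphism: it is a bijection by \Cref{property_tbd}, it is Borel measurable (being affine on each of the finitely many cells), and its inverse is Borel measurable as well, since the explicit formula in \Cref{property_tbd} is built from floor and modulo operations; combined with the pushforward identity, $\tbd$ is a measure-preserving, bimeasurable bijection. To transfer this to functions, I would exploit that $\Ib^d$ is finite and discrete, so a function $\tensor{f}$ on $\Ib^d\times[0,1)$ is $\mu_{b,d}$-measurable exactly when each partial evaluation $\tensor{f}(j_1,\ldots,j_d,\cdot)$ is Lebesgue measurable on $[0,1)$. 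Since $\Tbd f = f\circ\tbd$ has sections $\tensor{f}(j_1,\ldots,j_d,\cdot)=f(b^{-d}(j+\cdot))$ by \Cref{elementary-tensor-localized-function}, measurability of $f$ yields measurability of $\Tbd f$, and conversely $\Tbd^{-1}\tensor{f}=\tensor{f}\circ\tbd^{-1}$ (cf.\ \Cref{Tbd-bijection}) sends measurable functions to measurable functions; linearity and bijectivity on all of $\R^{[0,1)}$ are already furnished by \Cref{Tbd-bijection}, so these restrictions assemble into the claimed linear isomorphism. I expect the only genuinely delicate point to be the careful treatment of the pushforward and of measurability with respect to the (possibly completed) product measure; the reduction to sections via the discreteness of $\Ib^d$ is what keeps this step clean, while everything else is formal.
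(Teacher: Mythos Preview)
Your proposal is correct and follows essentially the same approach as the paper: both verify the pushforward identity by exploiting the cell decomposition $\{(j_1,\ldots,j_d)\}\times[0,1)$ on which $\tbd$ acts as the affine map $y\mapsto b^{-d}(j+y)$, and then conclude the function-space statement from the already-established bijectivity of $\Tbd$. The only cosmetic difference is that the paper checks $\lambda(\tbd(J\times A))=\mu_{b,d}(J\times A)$ on the generating $\pi$-system of product sets (which is precisely the alternative you mention), whereas your main computation evaluates $\mu_{b,d}(\tbd^{-1}(A))$ directly for arbitrary Borel $A$; your treatment of the measurability of $\Tbd$ and its inverse via sections is also more detailed than the paper's one-line remark.
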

\begin{proof}
    See \Cref{proof:bijection}.
\end{proof}

\subsubsection{Lebesgue Spaces}
For $0< p \le \infty$, we consider the Lebesgue space $\Lp([0,1))$ of functions defined on $[0,1)$ equipped with its standard (quasi-)norm. 
Then we  consider the algebraic tensor space $$\mathbf{V}_{b,d,L^p} := {\R^{\Ib}}^{\otimes d} \otimes L^p([0,1)) \subset \mathbf{V}_{b,d},$$ which is the space of multivariate functions $\tensor{f}$ on $\Ib^d \times [0,1)$ with partial evaluations $\tensor{f}(j_1,\hdots,j_d,\cdot) \in L^p([0,1))$.  From hereon we frequently abbreviate $\Lp:=\Lp({[0,1)})$.

\begin{theorem}[Tensorization is an $\Lp$-Isometry]\label{thm:tensorizationmap}
For any $0 < p \le \infty$,  
 $\Tbd$ is a linear isometry from $\Lp([0,1))$ to  $\mathbf{V}_{b,d,L^p}$ equipped with the (quasi-)norm 
  $\Vert \cdot\Vert_p$ defined by 
 $$\norm{\tensor{f}}[p]^p = 
 \sum_{j_1\in \Ib} \hdots \sum_{j_d\in \Ib} b^{-d} \Vert  \tensor{f}(j_1,\hdots,j_d,\cdot) \Vert_{p}^p
 $$
 for $p<\infty$,  
or  
$$
\norm{\tensor{f}}[\infty] = \max_{j_1\in \Ib} \hdots \max_{j_d \in \Ib} \Vert \tensor{f}(j_1,\hdots,j_d,\cdot) \Vert_\infty.
$$
\end{theorem}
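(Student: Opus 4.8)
The plan is to verify the isometry property level-by-level using the pushforward measure structure established in \Cref{bijection-measurable}, and then to check that the resulting (quasi-)norm on $\mathbf{V}_{b,d,L^p}$ is the natural one given in the statement. The key observation is that by \Cref{bijection-measurable}, the Lebesgue measure $\lambda$ on $[0,1)$ is the pushforward through $t_{b,d}$ of the product measure $\mu_{b,d} = \mu_b^{\otimes d}\otimes\lambda$, where $\mu_b$ is uniform on $\Ib$ assigning mass $b^{-1}$ to each point. The change-of-variables formula for pushforward measures then gives, for any measurable $F$ on $[0,1)$,
\begin{align*}
\int_{[0,1)} |F(x)|^p \d\lambda(x) = \int_{\Ib^d\times[0,1)} |F(t_{b,d}(i_1,\hdots,i_d,y))|^p \d\mu_{b,d}(i_1,\hdots,i_d,y).
\end{align*}

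First I would treat the case $0<p<\infty$. Applying the displayed identity to $F=f$ and recalling that $\tensor{f} = T_{b,d}f$ satisfies $\tensor{f}(i_1,\hdots,i_d,y) = f(t_{b,d}(i_1,\hdots,i_d,y))$ by \Cref{def-tensorization-map}, I would rewrite the right-hand integral against the product measure as an iterated sum-integral. Since $\mu_{b,d} = \mu_b^{\otimes d}\otimes\lambda$ places mass $b^{-d}$ on each tuple $(j_1,\hdots,j_d)\in\Ib^d$ in the discrete factors and the Lebesgue measure on the continuous factor, Tonelli's theorem (all integrands nonnegative) yields
\begin{align*}
\norm{f}[p]^p = \sum_{j_1\in\Ib}\hdots\sum_{j_d\in\Ib} b^{-d} \int_{[0,1)} |\tensor{f}(j_1,\hdots,j_d,y)|^p \d\lambda(y) = \sum_{j_1\in\Ib}\hdots\sum_{j_d\in\Ib} b^{-d} \norm{\tensor{f}(j_1,\hdots,j_d,\cdot)}[p]^p,
\end{align*}
which is exactly $\norm{\tensor{f}}[p]^p$ as defined in the statement. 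This establishes $\norm{T_{b,d}f}[p] = \norm{f}[p]$. For the $p=\infty$ case I would argue analogously: since the essential supremum over $[0,1)$ corresponds under the pushforward to the essential supremum over the product space, and $\mu_b$ has full support on the finite set $\Ib$, the essential supremum over the discrete factors becomes a genuine maximum over the $b^d$ tuples, giving $\norm{\tensor{f}}[\infty] = \max_{j_1,\hdots,j_d}\norm{\tensor{f}(j_1,\hdots,j_d,\cdot)}[\infty]$.

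It remains to confirm the structural claims: that $T_{b,d}$ maps \emph{into} $\mathbf{V}_{b,d,L^p}$ and is surjective onto it, and that the stated expression defines a reasonable crossnorm. Linearity and bijectivity of $T_{b,d}$ as a map of function spaces come directly from \Cref{Tbd-bijection}; the isometry identity just proved shows that $f\in L^p$ if and only if every partial evaluation $\tensor{f}(j_1,\hdots,j_d,\cdot)$ lies in $L^p([0,1))$ with the finite combined (quasi-)norm, which is precisely the defining condition for membership in $\mathbf{V}_{b,d,L^p}$. I expect the main obstacle to be purely bookkeeping rather than conceptual: one must be careful that for $0<p<1$ the expression is only a quasi-norm, so the verification of the crossnorm property (that $\norm{v_1\otimes\hdots\otimes v_d\otimes g}[p]$ factors as the product of the factor norms, up to the measure weights $b^{-1}$ per discrete factor) should be checked on elementary tensors and the quasi-triangle inequality invoked rather than the triangle inequality. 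The fact that this is a \emph{reasonable} crossnorm — controlling both the injective and projective crossnorms appropriately — is deferred to \Cref{lemma:reasonablecross} as indicated in the Main Result, so for this theorem I would only need the isometry and the explicit formula, both of which follow from the pushforward computation above.
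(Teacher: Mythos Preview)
Your proposal is correct and follows essentially the same route as the paper: both invoke \Cref{bijection-measurable} to identify $\lambda$ as the pushforward of $\mu_{b,d}$ through $t_{b,d}$ and then apply the change-of-variables formula to compute $\norm{f}[p]$ as the iterated sum-integral (respectively, the iterated max-esssup for $p=\infty$). Your version is somewhat more explicit (Tonelli, the surjectivity discussion), and the crossnorm remarks are extraneous here since the theorem only asserts the isometry---the paper treats the reasonable-crossnorm property separately in \Cref{lemma:reasonablecross}, as you note.
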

 \begin{proof}
  The results follows from \Cref{bijection-measurable} and by noting that 
for $\tensor{f} = \Tbd f = f \circ t_{b,d}$, 
\begin{align*}
\Vert f \Vert_p^p   = \int_{[0,1)} \vert f(x) \vert^p d\lambda(x) =  \int_{\Ib^d\times[0,1)} \vert \tensor{f} (j_1,\hdots,j_d,y) \vert^p d\mu_{b,d}(j_1,\hdots,j_d,y)  =\norm{\tensor{f}}[p]^p 
\end{align*}
for $p<\infty$, and 
$
\Vert f \Vert_\infty   = \esssup_{x  } \vert f(x) \vert = \esssup_{(j_1,\hdots,j_d,y) } \vert \tensor{f}(j_1,\hdots ,j_d,y) \vert =  \norm{\tensor{f}}[\infty].
$
 \end{proof}
 
We  denote by $\ell^p(\Ib)$ the space $\R^{\Ib}$ equipped with the (quasi-)norm $\Vert \cdot \Vert_{\ell^p}$ defined for $v = (v_k)_{k\in \Ib}$ by  
$$\norm{v}[\ell^p]^p:=b^{-1}\sum_{k=0}^{b-1}|v_k|^p \quad  (p<\infty), \quad \norm{v}[\ell^\infty]:= \max_{0\le k \le b-1} |v_k|.$$
The space $\mathbf{V}_{b,d,L^p}$ can  then be identified with the algebraic tensor space
$$
(\ell^p(\Ib))^{\otimes d} \otimes L^p([0,1)).
$$
and $\Vert \cdot\Vert_p$ is a crossnorm, i.e., satisfying for an elementary tensor $v^1\otimes \hdots \otimes v^{d+1} \in \mathbf{V}_{b,d,L^p}$, 
$$
\Vert v^1\otimes \hdots \otimes v^{d+1}  \Vert_p = \Vert v^1 \Vert_{\ell^p} \hdots \Vert v^d \Vert_{\ell^p}  \Vert v^{d+1} \Vert_p.
$$ 
and even a reasonable crossnorm for $1\le p \le \infty$ (see \Cref{lemma:reasonablecross} in the appendix).
We let $\{e^p_k\}_{k\in \Ib}$ denote the normalized canonical basis of $\ell^p(\Ib)$, defined by 
\begin{align}
e_k^p = b^{1/p} \delta_k \text{ for }  0<p<\infty, \quad \text{and } \quad 
e^\infty_{k} = \delta_k \; \text{ for }  p=\infty. \label{ekp}
\end{align}
The tensorization 
$\boldsymbol{f} = T_{b,d}f$  of  a  function $f \in L^p([0,1))$ 
admits a representation
\begin{align}
\tensor{f} =   \sum_{j_1 \in \Ib} \hdots \sum_{j_d \in \Ib} e^p_{j_1} \otimes \hdots \otimes e^{p}_{j_d} \otimes f^p_{j_1,\hdots,j_d},
\label{representation-normalized}
\end{align}
with $f^p_{j_1,\hdots,j_d} = b^{-d/p} \tensor{f}(j_1,\hdots,j_d,\cdot)$ for $p<\infty$ and $f^\infty_{j_1,\hdots,j_d} =   \tensor{f}(j_1,\hdots,j_d,\cdot)$. The crossnorm property implies that 
$$
\Vert e^p_{j_1} \otimes \hdots \otimes e^{p}_{j_d} \otimes f^p_{j_1,\hdots,j_d} \Vert_p = \Vert  f^p_{j_1,\hdots,j_d} \Vert_p,
$$
so that \Cref{thm:tensorizationmap} implies 
$$
\Vert f \Vert_p = \Big(\sum_{(j_1,\hdots,j_d) \in \Ib^d} \Vert  f^p_{j_1,\hdots,j_d} \Vert_p^p\Big)^{1/p}\quad (p<\infty), \quad \Vert f \Vert_\infty = \max_{(j_1,\hdots,j_d) \in \Ib^d} \Vert  f^\infty_{j_1,\hdots,j_d} \Vert_p.
$$
 
\subsubsection{Sobolev Spaces}\label{sec:sobolev}
Now consider functions $f $ in the Sobolev space $W^{k,p} := W^{k,p}([0,1))$, equipped with the (quasi-)norm 
$$
\Vert f \Vert_{W^{k,p}} = (\Vert f \Vert_p^p + \vert f \vert_{W^{k,p}}^p)^{1/p} \quad (p<\infty), \quad \Vert f \Vert_{W^{k,\infty}} = \max\set{\Vert f \Vert_p , \vert f \vert_{W^{k,\infty}}},
$$
where $\vert f \vert_{W^{k,p}}$ is a (quasi-)semi-norm defined by
$$
\vert  f \vert_{W^{k,p}} = \Vert D^k f \Vert_p,
$$
with $D^k f := f^{(k)}$ the $k$-th weak derivative of $f$. Since $f$ and  its tensorization $\tensor{f} = T_{b,d} f$ are such that 
$
f(x) = \tensor{f}(j_1,\hdots,j_d,b^d x- j)
$ 
for $x\in [b^d j,b^d(j+1))$ and 
 $j= \sum_{k=1}^d b^{d-k} j_k$, we deduce that 
$$
D^k f(x) = b^{kd} \frac{\partial^k}{\partial y^k} \tensor{f}(j_1,\hdots,j_d, b^d x - j)
$$
for $x\in [b^d j,b^d(j+1))$, 
that means that $D^k$ can be identified with a rank-one operator over $\mathbf{V}_{b,d,\Lp}$,
$$
T_{b,d} \circ D^k \circ T_{b,d}^{-1}  =  id_{\{1,\hdots,d\}} \otimes (b^{kd} D^k) .
$$
We deduce from \Cref{thm:tensorizationmap} that for $f\in W^{k,p}$,
$$
\vert f \vert_{W^{k,p}} = \Vert D^k f \Vert_p = \Vert T_{b,d} (D^k f)\Vert_p = 
b^{kd} \Vert (id_{\{1,\hdots,d\}} \otimes  D^k ) \tensor{f} \Vert_p  ,
$$
with 
$$
 (id_{\{1,\hdots,d\}} \otimes  D^k ) \tensor{f}  =\sum_{j\in \Ib^d} \delta_{j_1}\otimes \hdots \otimes \delta_{j_d} \otimes D^k \tensor{f}(j_1,\hdots,j_d,\cdot) . 
$$
Then we deduce that if  
 $f \in W^{k,p}$, $\tensor{f} = T_{b,d} f$ is in the algebraic tensor space $$\mathbf{V}_{b,d,W^{k,p}} := (\R^{\Ib})^{\otimes d} \otimes W^{k,p},$$ and   
$$
\vert  f \vert_{W^{k,p}} =  b^{kd} \Vert \sum_{j_1\in \Ib} \hdots \sum_{j_d\in \Ib}  \delta_{j_1} \otimes \hdots \otimes \delta_{j_d} \otimes 
 D^k\tensor{f}(j_1,\hdots,j_d,\cdot)  \Vert_p.
$$
This implies that $T_{b,d} W^{k,p} \subset \mathbf{V}_{b,d,W^{k,p}}$ but  $T_{b,d}^{-1}( \mathbf{V}_{b,d,W^{k,p}}) \not\subset W^{k,p}.$ In fact, $T_{b,d}^{-1}( \mathbf{V}_{b,d,W^{k,p}}) = W^{k,p}(\mathcal{P}_{b,d})$,   the broken Sobolev space associated with the partition
 $\mathcal{P}_{b,d} = \{[b^d j , b^d(j+1)) : 0\le j\le b^d-1\}$. 
 From the above considerations, we deduce 
 \begin{theorem}\label{isometryWkp}
For any $0<p\le \infty$ and $k\in \N_0$,   $T_{b,d}$ is a linear isometry from the broken Sobolev space $W^{k,p}(\mathcal{P}_{b,d})$ to 
 $\mathbf{V}_{b,d,W^{k,p}} $ equipped with the (quasi-)norm 
$$
 \Vert \tensor{f} \Vert_{W^{k,p}} = (\Vert \tensor{f}\Vert_p^p + \vert \tensor{f}\vert_{W^{k,p}}^p)^{1/p} \quad (p<\infty), \quad \Vert \tensor{f} \Vert_{W^{k,\infty}} = \max\set{\Vert \tensor{f} \Vert_\infty , \vert \tensor{f} \vert_{W^{k,\infty}}},
$$
 where $\vert \cdot \vert_{W^{k,p}}$ is a (quasi-)semi-norm defined by
$$\vert \tensor{f} \vert_{W^{k,p}}^p := b^{d(kp-1)} \sum_{(j_1,\hdots,j_d)\in \Ib^d} \vert  \tensor{f}(j_1,\hdots,j_d,\cdot) \vert_{W^{k,p}}^p$$
for $p<\infty$, and 
$$\vert \tensor{f} \vert_{W^{k,\infty}}^\infty := b^{dk} \max_{(j_1,\hdots,j_d)\in \Ib^d} \vert  \tensor{f}(j_1,\hdots,j_d,\cdot) \vert_{W^{k,\infty}}.
$$
 \end{theorem}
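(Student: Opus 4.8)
The plan is to assemble the statement from two ingredients that are already in place: the $L^p$-isometry of \Cref{thm:tensorizationmap} and the rank-one operator identity $T_{b,d}\circ D^k\circ T_{b,d}^{-1} = id_{\{1,\hdots,d\}}\otimes(b^{kd}D^k)$ derived just above the statement. Accordingly I would split the argument into three parts: identifying the domain of the map, preserving the $\Vert\cdot\Vert_p$ part of the norm, and preserving the seminorm $\vert\cdot\vert_{W^{k,p}}$.

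First I would pin down the domain. By \Cref{Tbd-bijection}, $T_{b,d}$ is already a linear bijection between $\R^{[0,1)}$ and $\R^{\Ib^d\times[0,1)}$, so it suffices to check that it restricts to a bijection between $W^{k,p}(\mathcal{P}_{b,d})$ and $\mathbf{V}_{b,d,W^{k,p}}$. By \eqref{partial-eval-dilation} in \Cref{elementary-tensor-localized-function}, the partial evaluation $\tensor{f}(j_1,\hdots,j_d,\cdot)$ equals the restriction of $f$ to $[b^{-d}j,b^{-d}(j+1))$ shifted and rescaled to $[0,1)$; since an affine rescaling of the interval is a $W^{k,p}$-isomorphism, $f$ is piecewise in $W^{k,p}$ on the partition $\mathcal{P}_{b,d}$ if and only if every partial evaluation lies in $W^{k,p}([0,1))$, i.e. if and only if $\tensor{f}\in\mathbf{V}_{b,d,W^{k,p}}$. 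This gives the asserted domain and range.

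The $\Vert\cdot\Vert_p$ component is then already handled by \Cref{thm:tensorizationmap}, which gives $\Vert f\Vert_p=\Vert\tensor{f}\Vert_p$ verbatim. For the seminorm I would invoke the operator identity above, which says that $T_{b,d}(D^kf)$ has partial evaluations $b^{kd}D^k\tensor{f}(j_1,\hdots,j_d,\cdot)$. Applying the $L^p$-isometry of \Cref{thm:tensorizationmap} to the function $D^kf\in W^{k,p}(\mathcal{P}_{b,d})$ and pulling out the scalar $b^{kd}$ yields, for $p<\infty$,
\begin{align*}
\vert f\vert_{W^{k,p}}^p = \Vert D^kf\Vert_p^p = \sum_{(j_1,\hdots,j_d)\in\Ib^d} b^{-d}\,b^{kdp}\,\Vert D^k\tensor{f}(j_1,\hdots,j_d,\cdot)\Vert_p^p = b^{d(kp-1)}\sum_{(j_1,\hdots,j_d)\in\Ib^d}\vert\tensor{f}(j_1,\hdots,j_d,\cdot)\vert_{W^{k,p}}^p,
\end{align*}
which is exactly $\vert\tensor{f}\vert_{W^{k,p}}^p$; the case $p=\infty$ is identical with $\max$ in place of the sum and the single factor $b^{dk}$ instead of $b^{d(kp-1)}$. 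Combining the two components through the definition of $\Vert\cdot\Vert_{W^{k,p}}$ finishes the isometry.

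The computation is routine once the derivative formula is available, so the only point demanding care is the bookkeeping of scaling constants: the measure factor $b^{-d}$ coming from $\mu_{b,d}$ in \Cref{thm:tensorizationmap} must be reconciled with the factor $b^{kdp}$ coming from the $k$-fold chain rule, and it is their product $b^{d(kp-1)}$ that surfaces in the stated seminorm. I would therefore treat the tracking of these exponents — together with the separate $p=\infty$ normalization, where the measure factor is absent and only $b^{dk}$ survives — as the main place an error could creep in, rather than any genuine analytic difficulty.
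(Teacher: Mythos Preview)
Your proposal is correct and follows essentially the same route as the paper: the paper does not give a separate proof block but derives the theorem from the discussion immediately preceding it, combining the $L^p$-isometry of \Cref{thm:tensorizationmap} with the rank-one identity $T_{b,d}\circ D^k\circ T_{b,d}^{-1}=id_{\{1,\hdots,d\}}\otimes(b^{kd}D^k)$ and the characterization $T_{b,d}^{-1}(\mathbf{V}_{b,d,W^{k,p}})=W^{k,p}(\mathcal{P}_{b,d})$ --- exactly your three ingredients. One cosmetic slip: when you write ``applying the $L^p$-isometry to the function $D^kf\in W^{k,p}(\mathcal{P}_{b,d})$'' you mean $D^kf\in L^p$, but the computation that follows is right.
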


\subsubsection{Besov Spaces}\label{sec:besov}
Let $f\in\Lp$, $0<p\leq\infty$ and consider the difference operator
\begin{align*}
    \diff_h&:\Lp{([0,1))}\rightarrow\Lp{([0,1-h))},\\
    \diff_h[f](\cdot)&:=f(\cdot+h)-f(\cdot).
\end{align*}
For $r=2,3,\ldots$, the $r$-th difference is defined as
\begin{align*}
    \diff_h^r:=\diff_h \circ \diff_h^{r-1},
\end{align*}
with $\diff_h^1:=\diff_h$.
\emph{The $r$-th modulus of smoothness} is defined as
\begin{align}\label{eq:modsmooth}
    \mod_r(f,t)_p:=\sup_{0<h\leq t}\norm{\diff_h^r[f]}[p],\quad t>0.
\end{align}

\begin{definition}[Besov Spaces]\label{def:besov}
    For parameters $\alpha>0$ and $0<p,q\leq\infty$, define
    $r:=\lfloor\alpha\rfloor+1$ and the Besov  (quasi-)semi-norm as
    \begin{align*}
        \snorm{f}[\Bsqp]:=
        \begin{cases}
            \left({\int_0^{1}}[t^{-\alpha}\mod_r(f,t)_p]^q\frac{\d t}{t}\right)^{1/q},
            &\quad 0<q<\infty,\\
            {\sup_{0<t\le 1}}t^{-\alpha}\mod_r(f,t)_p,
            &\quad q=\infty.            
        \end{cases}
    \end{align*}
    The Besov (quasi-)norm is defined as
    \begin{align*}
        \norm{f}[\Bsqp]:=\norm{f}[p]+\snorm{f}[\Bsqp].
    \end{align*}
    The \emph{Besov space} is defined as
    \begin{align*}
        \Bsqp:=\set{f\in\Lp:\;\norm{f}[\Bsqp]<\infty}.
    \end{align*}
\end{definition}

As in \Cref{sec:sobolev}, we would like to compare the Besov
space $\Bsqp$ with the algebraic tensor space
\begin{align*}
    \mathbf{V}_{b,d,\Bsqp} := (\R^{\Ib})^{\otimes d} \otimes\Bsqp.
\end{align*}
First, we briefly elaborate how the Besov (quasi-)semi-norm scales under
affine transformations of the interval. I.e.,
suppose we are given a function $f:[a,b)\rightarrow\R$ with
$-\infty<a<b<\infty$ and a transformed $\fbar$ such that
\begin{align*}
    \fbar:[\abar,\bbar)\rightarrow\R,\quad
    \xbar\mapsto x:=\frac{b-a}{\bbar-\abar}(\xbar-\abar)+a
    \mapsto f(x)=\fbar(\xbar),
\end{align*}
for $-\infty<\abar<\bbar<\infty$.
Then,
\begin{alignat*}{1}
    \diff_{\hbar}^r[\fbar]&:[\abar,\bbar-r\hbar)\rightarrow\R,\\
    \diff_{\hbar}^r[\fbar](\xbar)&=\sum_{k=0}^r{r\choose k}(-1)^{r-k}\fbar(\xbar+k\hbar)=
    \sum_{k=0}^r{r\choose k}(-1)^{r-k}f(x+kh)
    =\diff_{h}^r[f](x)
\end{alignat*}
with
\begin{align*}
    h:=\frac{b-a}{\bbar-\abar}\hbar.
\end{align*}
For $0<p<\infty$, we obtain for the modulus of smoothness
\begin{align*}
    \mod_r(\fbar, \tbar)^p_p=\frac{\bbar-\abar}{b-a}\mod_r(f,t)_p^p,
    \quad t:=\frac{b-a}{\bbar-\abar}\tbar,
\end{align*}
and for $p=\infty$, $\mod_r(\fbar, \tbar)_\infty=\mod_r(f, t)_\infty$.
Finally, for the Besov (quasi-)semi-norm this implies
\begin{alignat*}{2}
    \snorm{\fbar}[\Bsqp]&=
    \left(\frac{\bbar-\abar}{b-a}\right)^{1/p-\alpha}\snorm{f}[\Bsqp],
    &&\quad 0<q\leq\infty,\;0<p<\infty,\\
    \snorm{\fbar}[\Bsqp]&=
    \left(\frac{\bbar-\abar}{b-a}\right)^{-\alpha}\snorm{f}[\Bsqp],&&\quad
    0<q\leq\infty,\;p=\infty\notag.
\end{alignat*}

With this scaling at hand, {for $p<\infty$}, what remains is ``adding up''
Besov (quasi-)norms of partial evaluations
$\tensor{f}(j_1,\hdots,j_d,\cdot)$.
The modulus of smoothness from \eqref{eq:modsmooth}
is not suitable for this task.
Instead, we can use an equivalent measure of
smoothness via the \emph{averaged modulus of smoothness}
(see \cite[§5 of Chapter 6 and §5 of Chapter 12]{DeVore93})
\begin{align*}
    \w_r(f,t)^p_p:=\frac{1}{t}\int_0^t\norm{\diff_h^r[f]}[p]^p\d h,\quad 0<p<\infty.
\end{align*}
With this {definition of Besov norm}, we can {define a  (quasi-)semi-norm
\begin{align*}
    \snorm{f}[\Bsqp]  {=}
   \left( \int_0^1 [t^{-\alpha } \w_r(f,t)_p]^q \frac{\d t}{t} \right)^{1/q},\quad 0<q<\infty,
\end{align*}
which is equivalent to the former one and therefore results in the same Besov space $\Bsqp$.}
Expanding the right-hand-side and interchanging the order
of integration
allows us to add up the contributions
to $\snorm{f}[\Bsqp]$ over the intervals $[b^d j , b^d(j+1))$,
provided that $q=p$. 
However, note that this is not the same
as summing over $\snorm{\tensor{f}(j_1,\hdots,j_d,\cdot)}[\Bsqp]$,
since the latter necessarily omits the contributions of
$|\diff_h^r[f]|$ across the right boundaries of the intervals
$[b^d j , b^d(j+1))$.

\begin{example}
    Consider the function
    \begin{align*}
        f(x):=
        \begin{cases}
            1,\quad& 0\leq x\leq 1/2,\\
            0,\quad&\text{otherwise}.
        \end{cases}
    \end{align*}
    Take $0<\alpha<1$ and $r=1$ in \Cref{def:besov}.
    The first difference is then
    \begin{align*}
        \diff_h[f](x)=
        \begin{cases}
            1,\quad &1/2-h<x\leq 1/2,\\
            0,\quad &\text{otherwise}.
        \end{cases}
    \end{align*}
    For $0<p<\infty$,
    \begin{align*}
        \norm{\diff_h[f]}[p]^p=h,
    \end{align*}
    and for $p=\infty$,
    \begin{align*}
        \norm{\diff_h[f]}[\infty]=1.
    \end{align*}
    Thus, for the ordinary modulus of smoothness we obtain
    \begin{alignat*}{2}
        \mod_r(f,t)_p&=t^{1/p},&&\quad 0<p<\infty,\\
        \mod_r(f,t)_\infty&=1.
    \end{alignat*}
    Inserting this into \Cref{def:besov}, we see that
    $f\in\Bsqp$ if and only if $p\neq\infty$ and $0<\alpha<1/p$.
    In this case $0<\snorm{f}[{\Bsqp}]<\infty$.
    
    On the other hand, for $b=2$ and $d=1$, the partial evaluations of
    the tensorization $\Tbd[2][1] f$ are the constant functions $0$ and $1$.
    Thus, any Besov semi-norm of these partial evaluations is $0$ and
    consequently the sum as well. We see that, unlike in
    \Cref{isometryWkp}, even if a function $f$ has Besov regularity,
    the Besov norm of $f$ is in general not equivalent to the sum
    of the Besov norms of partial evaluations.
\end{example}

\begin{proposition}\label{isometryBesov}
Let $0<p=q\le \infty$ and $\alpha>0$. Let $\Bsqp[@][p][p]$ be equipped with the (quasi-)norm associated with the modulus of smoothness when $p=\infty$ or the averaged modulus of smoothness when $p<\infty$. Then, 
we equip the tensor space 
 $\mathbf{V}_{b,d,\Bsqp[@][p][p]} $ with the (quasi-)norm 
$$
 \Vert \tensor{f} \Vert_{{\Bsqp[@][p][p]}} := (\Vert \tensor{f}\Vert_p^p + \vert \tensor{f}\vert_{{\Bsqp[@][p][p]}}^p)^{1/p}\quad
 (p<\infty),\quad
 \Vert \tensor{f} \Vert_{{\Bsqp[@][\infty][\infty]}}
 :=\max\{\Vert \tensor{f}\Vert_\infty, \vert \tensor{f}\vert_{{\Bsqp[@][\infty][\infty]}}\},
$$
 where $\vert \cdot \vert_{{\Bsqp[@][p][p]}}$ is a (quasi-)semi-norm defined by
 \begin{align*}
    \vert \tensor{f} \vert_{{\Bsqp[@][p][p]}}^p :=b^{d(\alpha p-1)}
    \sum_{(j_1,\hdots,j_d) \in \Ib^d}
    \snorm{\tensor{f}(j_1,\hdots,j_d,\cdot)}[{\Bsqp[@][p][p]}]^p,
 \end{align*}
 for $p<\infty$, and
 \begin{align*}
    \vert \tensor{f} \vert_{{\Bsqp[@][\infty][\infty]}}:=
       b^{d\alpha} \max_{(j_1,\hdots,j_d) \in \Ib^d}
    \snorm{\tensor{f}(j_1,\hdots,j_d,\cdot)}[{\Bsqp[@][\infty][\infty]}]
 \end{align*}
 Then, $\Tbd(\Bsqp[@][p][p])\hookrightarrow\mathbf{V}_{b,d,\Bsqp[@][p][p]}$
 with
 \begin{align*}
     \snorm{f}[{\Bsqp[@][p][p]}] \geq
     \snorm{\Tbd(f)}[{\Bsqp[@][p][p]}].
 \end{align*}
 \end{proposition}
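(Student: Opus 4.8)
The plan is to reduce the claimed seminorm bound to a scale-by-scale comparison of $r$-th differences over the $b^d$ subintervals of the partition, using crucially that the hypothesis $q=p$ makes the Besov seminorm (with the averaged modulus) an honest double integral of $\Lp$-norms of differences, which Fubini then lets us split cell by cell.

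First I would assemble the two ingredients that are already available. By \eqref{partial-eval-dilation} each partial evaluation $f_j:=\tensor{f}(j_1,\hdots,j_d,\cdot)$ equals $f(b^{-d}(j+\cdot))$, the restriction of $f$ to $[b^{-d}j,b^{-d}(j+1))$ rescaled to $[0,1)$, with $j=\sum_k j_k b^{d-k}$. Substituting $\xi=b^{-d}(j+x)$ and $\eta=b^{-d}h$ converts $\diff_h^r[f_j](x)$ into $\diff_\eta^r[f](\xi)$, which is exactly the affine rescaling of the (averaged) modulus computed above, now localized to the $j$-th cell; concretely $\norm{\diff_h^r[f_j]}[p]^p=b^{d}\int_{b^{-d}j}^{b^{-d}(j+1)-r\eta}|\diff_\eta^r[f](\xi)|^p\,\d\xi$.

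For $0<p<\infty$ I would expand $\snorm{f_j}[{\Bsqp[@][p][p]}]^p$ through the averaged modulus, interchange the order of integration, and apply the two substitutions above; after the resulting power of $b$ cancels against the prefactor $b^{d(\alpha p-1)}$ one is left with
\begin{align*}
\vert\tensor{f}\vert_{{\Bsqp[@][p][p]}}^p
= \int_0^{b^{-d}} s^{-\alpha p-2}\int_0^s \Big(\sum_{j=0}^{b^d-1}\int_{b^{-d}j}^{b^{-d}(j+1)-r\eta}|\diff_\eta^r[f](\xi)|^p\,\d\xi\Big)\,\d\eta\,\d s.
\end{align*}
The key observation is that, for each fixed $\eta$, the cells $[b^{-d}j,b^{-d}(j+1)-r\eta)$ are pairwise disjoint and contained in $[0,1-r\eta)$, so the inner sum is at most $\int_0^{1-r\eta}|\diff_\eta^r[f]|^p\,\d\xi=\norm{\diff_\eta^r[f]}[p]^p$; the slack is precisely the mass of $|\diff_\eta^r[f]|^p$ in the boundary layers straddling the interfaces $b^{-d}(j+1)$, which is discarded. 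Enlarging the outer integration range from $[0,b^{-d}]$ to $[0,1]$ and recognizing the same Fubini expansion of $\snorm{f}[{\Bsqp[@][p][p]}]^p$ then gives $\vert\tensor{f}\vert_{{\Bsqp[@][p][p]}}^p\le\snorm{f}[{\Bsqp[@][p][p]}]^p$.

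The case $p=\infty$ is easier and does not need Fubini: the modulus of a restriction is dominated by the modulus of $f$, so the affine scaling $\mod_r(f_j,\tbar)_\infty=\mod_r(f|_{[b^{-d}j,b^{-d}(j+1))},b^{-d}\tbar)_\infty$ yields $\snorm{f_j}[{\Bsqp[@][\infty][\infty]}]\le b^{-d\alpha}\snorm{f}[{\Bsqp[@][\infty][\infty]}]$ for every $j$, and multiplying by $b^{d\alpha}$ before taking the maximum closes it. Finally I would combine the seminorm estimate with the $\Lp$-isometry $\norm{\tensor{f}}[p]=\norm{f}[p]$ from \Cref{thm:tensorizationmap}: this shows $\Tbd f\in\mathbf{V}_{b,d,\Bsqp[@][p][p]}$ and that $\Tbd$ is bounded into it, i.e.\ the asserted continuous embedding. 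I expect the only genuinely delicate point to be the boundary-layer step — making rigorous that the cell integrals omit exactly the difference mass crossing the interfaces. This is what turns the isometry of \Cref{isometryWkp} into a one-sided inequality here, and it is the same mechanism underlying the preceding Example.
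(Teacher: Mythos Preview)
Your proposal is correct and follows exactly the approach the paper sketches in the discussion preceding the proposition: use the averaged modulus (for $p<\infty$) so that Fubini permits summing the contributions over the $b^d$ subintervals, observe that this sum omits precisely the mass of $|\diff_\eta^r[f]|^p$ in the boundary layers straddling the cell interfaces (yielding the inequality rather than an identity), and handle $p=\infty$ directly via domination of the modulus on restrictions. The paper does not give a formal proof beyond that sketch, and your write-up fills in the substitutions and the scaling bookkeeping accurately.
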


\subsection{Tensor Subspaces and Corresponding Function Spaces}

For a linear space of functions   $S\subset \R^{[0,1)}$, we define the tensor subspace
\begin{align*}
 \mathbf{V}_{b,d,S} :=  (\R^{\Ib})^{\otimes d} \otimes S \subset  \mathbf{V}_{b,d}, 
\end{align*}
and the corresponding linear subspace of functions in $\R^{[0,1)}$,
$$
\Vbd{S} = \Tbd^{-1}(\mathbf{V}_{b,d,S}).
$$ 
In the majority of this work we will be using finite-dimensional subspaces $S$ for approximation.
In particular, we will frequently use $S=\P_m$ where $\P_m$ is the space of polynomials of degree up to $m\in\N_0$. In this case we use the shorthand notation
\begin{align*}
    \Vbd{m}:=\Vbd{\P_m}.
\end{align*}
The tensorization   
 $\tensor{f}=\Tbd(f)$ of a function $f\in\Vbd{S}$ admits a representation \eqref{canonical-representation} with functions $  \tensor{f}(j_1,\hdots,j_d,\cdot) := f_{j_1,\hdots,j_d}$ in $S$. For $x = t_{b,d}(j_1,\hdots,j_d,y)$ in the interval  $[x_j,x_{j+1}),$ with $j = \sum_{k=1}^{b} j_k b^{d-k} $, we have $f(x) = f_{j_1,\hdots,j_d}(y) = f_{j_1,\hdots,j_d}(b^d x - j)$. Therefore, the   functions $f\in\Vbd{S}$ have restrictions on intervals $[x_j,x_{j+1})$
that are obtained by shifting and scaling functions in $S$. In particular, the space $ \Vbd{m}$ corresponds to the space of piecewise polynomials of degree $m$ over the uniform partition of $[0,1)$ with $b^d$ intervals. 

For considering functions with variable levels $d\in\N$, we introduce the set 
\begin{align*}
    \Vb{S}:=\bigcup_{d\in\N}\Vbd{S}.
\end{align*}
It is straight-forward to see that, in general,
\begin{align*}
    V_{b, d,S} \not\subset V_{b,\bar d,S}
\end{align*}
for $\bar{d}<d$.
E.g., take $S=\P_m$ and let $f\in\Vbd{m}$ be a piece-wise polynomial but discontinuous function. Then, clearly
$f$ does not have to be a polynomial over the intervals
\begin{align*}
    [kb^{-\bar{d}}, (k+1)b^{-\bar{d}}),\quad 0\leq k\leq b^{-\bar{d}}-1,
\end{align*}
for $\bar{d}<d$.
The same holds for the other inclusion, as the following example demonstrates.
\begin{example}\label{ex:cosines}
    Consider the one-dimensional subspace
    $
            S:=\linspan{\cos(2\pi\cdot)}.
   $
    A function $0\neq f\in\Vbd{S}$ is thus a piece-wise cosine.
    Take for simplicity $b=2$, $d=0$ (i.e., $\Vbd[2][0]{S}=S$) and $\bar{d}=1$.
    Then, $f\not\in\Vbd[2][1]{S}$ due to
    $
        \linspan{\cos(2\pi\cdot)}\not\supset\linspan{\cos(\pi\cdot),\,\cos(\pi+\pi\cdot)},
    $ 
    since cosines of different frequencies are linearly independent.
    The same reasoning can be applied to any $b\geq 2$ and $d,\,\bar{d}\in\N$ with $d<\bar{d}$.        
\end{example}
This motivates the following definition that is reminiscent
of multi-resolution analysis (MRA).
\begin{definition}[Closed under $b$-adic dilation]\label{def:badicdil}
We say that a linear space $S$ is closed under $b$-adic dilation if for any $f \in S$ and any $k \in \{0,\hdots,b-1\}$, 
$$
f(b^{-1}(\cdot + k)) \in S.
$$
\end{definition}
\begin{lemma}\label{b-adic-stability}
If $S$ is closed under $b$-adic dilation, then for all $f\in S$, 
$$
f(b^{-d}(\cdot + k)) \in S
$$
for all $d\in \N$ and $k \in \{0,\hdots,b^d-1\}$.
\end{lemma}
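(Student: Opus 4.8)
The plan is to realize the general $b^{-d}$-dilation as a $d$-fold composition of the single-step $b$-adic dilations supplied by the hypothesis, and to track how the shift index accumulates. For each $k\in\{0,\hdots,b-1\}$ I would introduce the operator $D_k$ defined by $(D_k f)(x) := f(b^{-1}(x+k))$; by \Cref{def:badicdil}, each $D_k$ maps $S$ into $S$. The base case $d=1$ of the lemma is then precisely the statement that $D_k f \in S$.

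The key step is a composition identity, which I would prove by induction on the number $n$ of factors: for digits $k_1,\hdots,k_n \in \{0,\hdots,b-1\}$,
\[
(D_{k_1}\circ\hdots\circ D_{k_n} f)(x) = f\!\left( b^{-n}\Big( x + \sum_{j=1}^{n} k_j\, b^{j-1}\Big)\right).
\]
The case $n=1$ is exactly the defining formula for $D_{k_1}$. For the inductive step, I would substitute the level-$n$ formula into an outer operator $D_{k_0}$, expand $b^{-1}(x+k_0)$ inside, factor out $b^{-(n+1)}$, and verify that the accumulated shift is again of the claimed form after relabeling the digit sequence. This is a routine computation using only $b^{-n}\cdot b^{-1} = b^{-(n+1)}$ and the geometric rescaling of the inner sum.

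With the identity in hand, I would invoke the base-$b$ expansion: the map $(k_1,\hdots,k_d)\mapsto \sum_{j=1}^d k_j b^{j-1}$ is a bijection from $\{0,\hdots,b-1\}^d$ onto $\{0,\hdots,b^d-1\}$. Hence, for any $k\in\{0,\hdots,b^d-1\}$, choosing the base-$b$ digits of $k$ yields $f(b^{-d}(\cdot+k)) = D_{k_1}\circ\hdots\circ D_{k_d} f$. Since each $D_{k_j}$ preserves $S$, the composition does as well, and therefore $f(b^{-d}(\cdot+k))\in S$, which is the desired conclusion.

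There is no genuine analytic obstacle here; the only care required is the bookkeeping of the shift index and confirming that it ranges over the full set $\{0,\hdots,b^d-1\}$ via the base-$b$ representation. The one place to be attentive is the orientation of the digit expansion (whether the first-applied dilation contributes the lowest- or the highest-order digit), which the inductive computation pins down unambiguously.
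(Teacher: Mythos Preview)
Your proposal is correct and follows essentially the same approach as the paper: both arguments decompose the $b^{-d}$-dilation with shift $k$ into a sequence of $d$ single-step $b$-adic dilations via the base-$b$ digits of $k$, and use induction to conclude that $S$ is preserved. The only difference is cosmetic---you phrase it through operators $D_k$ and an explicit composition formula, while the paper runs the induction directly on $d$ by writing $k=bk''+k'$ and peeling off one digit at a time.
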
	
\begin{proof}
    See \Cref{proof:badicstab}.
\end{proof}
Important examples of spaces $S$ that satisfy the above property include spaces of polynomials and MRAs.
The closedness of $S$ under $b$-adic dilation implies a hierarchy between spaces $\Vbd{S}$ with different levels, and provides $V_{b,S}$ with a linear space structure.  
\begin{proposition}\label{inclusion_Vbd}
If  $S$ is closed under $b$-adic dilation, then 
$$
S := V_{b,0,S} \subset   V_{b,1,S} \subset \hdots \subset  V_{b,d,S} \subset \hdots .
$$
\end{proposition}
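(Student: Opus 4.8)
The plan is to reduce the whole chain to a single inclusion $V_{b,d,S}\subset V_{b,d+1,S}$ for an arbitrary $d\ge 0$ and then close by induction; the first inclusion $S=V_{b,0,S}\subset V_{b,1,S}$ will serve simultaneously as the base case and as the only place where the $b$-adic dilation hypothesis is actually consumed. First I would record that, by the conventions fixed just after \Cref{tbdbard}, $V_{b,0,S}=T_{b,0}^{-1}(\mathbf{V}_{b,0,S})=S$, so the leftmost identification is purely definitional.

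For the inductive step I would exploit the factorization of tensorization maps across levels from \Cref{tbdbard}. Taking $\bar d=d+1$ there yields $T_{b,d+1}=(id_{\{1,\hdots,d\}}\otimes T_{b,1})\circ T_{b,d}$. Hence, if $f\in V_{b,d,S}$ so that $T_{b,d}f$ lies in $(\R^{\Ib})^{\otimes d}\otimes S$, I would write $T_{b,d}f=\sum_k \boldsymbol{w}^k\otimes g^k$ with $\boldsymbol{w}^k\in(\R^{\Ib})^{\otimes d}$ and $g^k\in S$, and apply the operator to obtain
$$
T_{b,d+1}f=\sum_k \boldsymbol{w}^k\otimes T_{b,1}g^k.
$$
Since the operator $id_{\{1,\hdots,d\}}\otimes T_{b,1}$ leaves the first $d$ factors untouched, membership $f\in V_{b,d+1,S}$ follows as soon as each $T_{b,1}g^k$ lies in $(\R^{\Ib})\otimes S=\mathbf{V}_{b,1,S}$; that is, the inductive step collapses to the base inclusion $S\subset V_{b,1,S}$.

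It then remains to establish $S\subset V_{b,1,S}$, which is exactly where \Cref{def:badicdil} enters. For $g\in S$ I would compute the level-one partial evaluations of $\tensor{g}=T_{b,1}g$ via \eqref{partial-eval-dilation} (equivalently \Cref{elementary-tensor-localized-function} with $d=1$): they are precisely $\tensor{g}(j,\cdot)=g(b^{-1}(\cdot+j))$ for $j\in\Ib$. By the closedness of $S$ under $b$-adic dilation, each such dilate lies in $S$, so the canonical representation \eqref{canonical-representation} exhibits $\tensor{g}$ as an element of $(\R^{\Ib})\otimes S$, i.e. $g\in V_{b,1,S}$. Combined with the inductive step, this delivers the full hierarchy.

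I do not anticipate a genuine obstacle here: all the content lies in matching the dilation condition of \Cref{def:badicdil} with the scaling form of the partial evaluations, and the only point requiring care is the bookkeeping that keeps $id_{\{1,\hdots,d\}}\otimes T_{b,1}$ acting solely on the last tensor factor, so that the unique condition to verify concerns the $S$-component. As an alternative I would note that one can bypass the induction entirely and prove $V_{b,d,S}\subset V_{b,\bar d,S}$ in a single step for any $\bar d>d$, by writing each level-$\bar d$ partial evaluation of $f$ as a $b$-adic dilate at level $\bar d-d$ of a level-$d$ partial evaluation lying in $S$ and invoking \Cref{b-adic-stability}; this route is marginally more computational but equally routine.
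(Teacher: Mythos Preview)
Your proof is correct and follows essentially the same route as the paper's. Both arguments proceed by induction on the level, invoking \Cref{tbdbard} to relate $T_{b,d+1}$ to $T_{b,d}$ and using the dilation hypothesis to keep the $S$-factor inside $S$. The only organizational difference is that you reduce the inductive step to the base case via the operator factorization $T_{b,d+1}=(id_{\{1,\hdots,d\}}\otimes T_{b,1})\circ T_{b,d}$, so the dilation condition is consumed once at level one, whereas the paper works directly with partial evaluations and applies the dilation hypothesis afresh at each step by recognizing $(T_{b,d+1}f)(i_1,\hdots,i_{d+1},\cdot)$ as a level-one dilate of $(T_{b,d}f)(i_1,\hdots,i_d,\cdot)$. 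Your alternative one-shot argument via \Cref{b-adic-stability} is also valid and is the route implicitly behind the paper's use of that lemma elsewhere.
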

\begin{proof}
    See \Cref{proof:inclusion}.
\end{proof}

\begin{proposition}[$\Vb{S}$ is a linear space]\label{lemma:vbvectorspace}
    If $S$ is closed under $b$-adic dilation, then $\Vb{S}$ is a linear space.
\end{proposition}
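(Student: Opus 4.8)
The plan is to exploit the nested structure provided by \Cref{inclusion_Vbd}. Since $S$ is closed under $b$-adic dilation, that proposition gives the increasing chain
\begin{align*}
S = \Vbd[@][0]{S} \subset \Vbd[@][1]{S} \subset \cdots \subset \Vbd{S} \subset \cdots,
\end{align*}
so $\Vb{S} = \bigcup_{d\in\N}\Vbd{S}$ is a union of a family of subspaces that is totally ordered by inclusion. The entire argument then reduces to the elementary fact that the union of an increasing chain of linear spaces is again a linear space, the nesting being precisely what makes this work (a union of arbitrary subspaces is in general not a subspace).

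First I would record that each individual $\Vbd{S}$ is a linear subspace of $\R^{[0,1)}$. This is immediate from the definition $\Vbd{S} = \Tbd^{-1}(\mathbf{V}_{b,d,S})$: the algebraic tensor space $\mathbf{V}_{b,d,S} = (\R^{\Ib})^{\otimes d}\otimes S$ is a linear space, and by \Cref{Tbd-bijection} the map $\Tbd$ is a linear bijection, so its inverse carries linear spaces to linear spaces. Hence every term in the chain is closed under addition and scalar multiplication.

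Next I would verify closure of the union directly. Take $f,g \in \Vb{S}$ and scalars $\lambda,\mu$. By definition of the union there are levels $d_1,d_2\in\N$ with $f\in\Vbd[@][d_1]{S}$ and $g\in\Vbd[@][d_2]{S}$. Setting $d := \max\{d_1,d_2\}$ and invoking the inclusions from \Cref{inclusion_Vbd}, both $f$ and $g$ lie in the single space $\Vbd{S}$. Since that space is linear (previous step), $\lambda f + \mu g \in \Vbd{S} \subset \Vb{S}$. Thus $\Vb{S}$ is closed under linear combinations and is therefore a linear space.

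I do not expect any genuine obstacle here; the only point requiring care is that the conclusion is false without the dilation hypothesis, since then the spaces $\Vbd{S}$ need not be nested (cf.\ \Cref{ex:cosines}), and a union of non-comparable subspaces generally fails to be a subspace. The whole content of the statement is therefore carried by \Cref{inclusion_Vbd}, and the proof is a one-line application of the ``max of two levels'' trick once that chain is in hand.
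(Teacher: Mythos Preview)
Your proof is correct and follows essentially the same approach as the paper: use \Cref{inclusion_Vbd} to place any two elements in a common $\Vbd{S}$ via $d=\max\{d_1,d_2\}$, then use linearity of that space. The paper's proof is nearly identical, differing only in that it explicitly notes $0\in\Vb{S}$ and leaves implicit the (obvious) linearity of each $\Vbd{S}$ that you spell out.
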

\begin{proof}
    See \Cref{proof:vectorspace}.
\end{proof}

If $S \subset \Lp({[0,1)})$, then $\Vb{S} $ is clearly a subspace of $\Lp({[0,1)}).$ However, 
it is not difficult to see that, in general, $\Vb{S}$ is not a closed subspace of $\Lp({[0,1)})$. On the other hand, we have the following density result.
\begin{theorem}[$\Vb{S}$ dense in $\Lp$]\label{thm:dense}
    Let $1\leq p< \infty$. If $S \subset \Lp([0,1)) $ and $S$ contains the constant function one, then $\Vb{S}=\bigcup_{d\in\N}\Vbd{S}$ is dense in $\Lp([0,1)).$
\end{theorem}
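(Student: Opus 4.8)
The plan is to show that $\Vb{S}$ contains every $b$-adic step function and then invoke the classical density of step functions in $\Lp$ for $1\le p<\infty$; the closedness of $S$ under $b$-adic dilation will not even be needed, only that $S$ is a linear space with $1\in S$.

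First I would record that for each fixed $d$ the set $\Vbd{S}=\Tbd^{-1}(\mathbf{V}_{b,d,S})$ is a linear subspace of $\Lp$, being the image of the linear tensor space $\mathbf{V}_{b,d,S}=(\R^{\Ib})^{\otimes d}\otimes S$ under the linear bijection $\Tbd^{-1}$. Next I would apply \Cref{tensorization-dilation} with $g$ equal to the constant function one, which lies in $S$ by hypothesis: it gives, for every $d\in\N$ and every $0\le j<b^d$,
\begin{align*}
    \Tbd\big(\indicator{[b^{-d}j,\,b^{-d}(j+1))}\big) = \delta_{j_1}\otimes\cdots\otimes\delta_{j_d}\otimes 1 \in (\R^{\Ib})^{\otimes d}\otimes S,
\end{align*}
so that each such indicator belongs to $\Vbd{S}$. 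By linearity of $\Vbd{S}$, the whole space $\Sigma_d$ of functions constant on each interval of the level-$d$ uniform partition $\{[b^{-d}j,b^{-d}(j+1)):0\le j<b^d\}$ is contained in $\Vbd{S}\subset\Vb{S}$. Hence $\bigcup_{d\in\N}\Sigma_d\subset\Vb{S}$.

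It then remains to prove that $\bigcup_{d\in\N}\Sigma_d$ is dense in $\Lp([0,1))$ for $1\le p<\infty$. I would argue this through the averaging (conditional-expectation) projections $P_d\colon\Lp\to\Sigma_d$,
\begin{align*}
    P_d f := \sum_{j=0}^{b^d-1}\Big(b^d\!\int_{b^{-d}j}^{b^{-d}(j+1)} f\,\d\lambda\Big)\,\indicator{[b^{-d}j,\,b^{-d}(j+1))}.
\end{align*}
An application of Jensen's inequality on each interval (here $p\ge 1$ is used) gives $\norm{P_d f}[p]\le\norm{f}[p]$, so the $P_d$ are uniformly bounded with norm at most one. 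For $f$ continuous on $[0,1]$, uniform continuity yields $\norm{P_d f-f}[\infty]\to 0$ and hence $\norm{P_d f-f}[p]\to 0$ on the bounded interval. A standard three-$\varepsilon$ argument combining this with the density of $C([0,1])$ in $\Lp$ (valid for $1\le p<\infty$) and the uniform bound $\norm{P_d}\le 1$ then extends the convergence $\norm{P_d f-f}[p]\to 0$ to every $f\in\Lp$. Since $P_d f\in\Sigma_d\subset\Vb{S}$, density follows.

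The only genuinely structural step is the first one, namely the identification of the indicators as elements of $\Vbd{S}$ via \Cref{tensorization-dilation} and the assumption $1\in S$; the remainder is the routine $\Lp$-convergence of the averaging projections. I would stress that the hypothesis $p<\infty$ is essential here: in $\Lp[\infty]$ the closure of the step functions is the proper subspace of uniform limits of step functions, so the argument — and the conclusion — fails at $p=\infty$.
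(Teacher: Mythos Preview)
Your proof is correct and shares the paper's key structural step: both arguments use the hypothesis $1\in S$ together with \Cref{tensorization-dilation} to place every $b$-adic indicator $\indicator{[b^{-d}j,\,b^{-d}(j+1))}$ inside $\Vbd{S}$, so that the $b$-adic step functions $\Sigma_d$ lie in $\Vb{S}$.

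Where the two arguments diverge is in the routine density step. The paper works through simple functions: given an arbitrary step function $f=\sum_i a_i\indicator{[x_i,x_{i+1})}$, it snaps the endpoints to the $b$-adic grid, $x_i^d:=b^{-d}\lfloor b^d x_i\rfloor$, and bounds $\norm{f-f_d}[p]^p$ explicitly by $2^p b^{-d}\sum_i|a_i|^p$. You instead pass through continuous functions and the averaging projections $P_d$, using Jensen for the uniform bound $\norm{P_d}\le 1$ and uniform continuity for convergence on $C([0,1])$. Your route is perhaps cleaner conceptually (it identifies $P_d$ as the right operator and invokes a standard uniform-boundedness-plus-dense-subclass argument), while the paper's route is more self-contained and yields an explicit rate on simple functions. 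Neither approach needs closedness of $S$ under $b$-adic dilation, as you correctly note.
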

\begin{proof}
    See \Cref{proof:dense}.
\end{proof}

Now we provide bounds for ranks of functions in ${V}_{b,S}$, directly deduced from \eqref{rank-bound-general-format}.
\begin{lemma}\label{rank-bound-VbdS}
For $f \in \Vbd{S}$ and any $\beta \subset \{1,\hdots,d\},$
$$
r_{\beta,d}(f) \le \min\{b^{\#\beta} , b^{d-\#\beta} \dim S\}.
$$
In particular, for all $1\le \nu\le d$,
$$
r_{\nu,d}(f) \le \min\{b^\nu , b^{d-\nu} \dim S\}.
$$
\end{lemma}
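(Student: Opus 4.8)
The plan is to deduce the estimate directly from the general rank bound \eqref{rank-bound-general-format}, the only point requiring care being the choice of ambient tensor space in which we apply it.

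First I would observe that, since $f \in \Vbd{S}$, its tensorization $\tensor{f} = \Tbd f$ lies in the algebraic tensor space $\mathbf{V}_{b,d,S} = (\R^{\Ib})^{\otimes d} \otimes S$, that is, every partial evaluation $\tensor{f}(j_1,\hdots,j_d,\cdot)$ belongs to $S$. Consequently we may regard $\tensor{f}$ as an element of $\bigotimes_{\nu=1}^{d+1} V_\nu$ with the factor spaces $V_\nu = \R^{\Ib}$ for $1\le \nu\le d$ and $V_{d+1} = S$, the latter now taken to be the (finite-dimensional) space $S$ rather than all of $\R^{[0,1)}$. The one conceptual step is that the $\beta$-rank $r_\beta(\tensor{f}) = \dim \Umin{\beta}(\tensor{f})$ is intrinsic to $\tensor{f}$ and independent of the ambient tensor space: the minimal subspace is the same whether we view $\tensor{f}$ inside $\mathbf{V}_{b,d}$ (with $V_{d+1} = \R^{[0,1)}$, which would make the bound vacuous) or inside the smaller $\mathbf{V}_{b,d,S}$. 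We are therefore entitled to evaluate the right-hand side of \eqref{rank-bound-general-format} using the dimensions of these smaller factor spaces.

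Next I would compute the two products in \eqref{rank-bound-general-format} for a set $\beta \subset \{1,\hdots,d\}$, noting that then $d+1 \in \beta^c$. Since $\dim V_\nu = b$ for $\nu \le d$, we have $\prod_{\nu\in\beta}\dim V_\nu = b^{\#\beta}$, whereas $\beta^c$ contains the $d-\#\beta$ indices of $\{1,\hdots,d\}\setminus\beta$ together with the index $d+1$, giving $\prod_{\nu\in\beta^c}\dim V_\nu = b^{d-\#\beta}\dim S$. Inserting these into \eqref{rank-bound-general-format} and recalling that $r_{\beta,d}(f) = r_\beta(\Tbd f)$ by \Cref{def:betadrank} yields
$$
r_{\beta,d}(f) \le \min\{b^{\#\beta},\, b^{d-\#\beta}\dim S\}.
$$
The particular case follows at once by taking $\beta = \{1,\hdots,\nu\}$, so that $\#\beta = \nu$, and using the shorthand $r_{\nu,d}(f) = r_{\{1,\hdots,\nu\},d}(f)$.

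As an alternative route (which also makes the intrinsic nature of the bound transparent) I would invoke \Cref{ranks-partial-evaluations}: using $r_\beta(\tensor{f}) = r_{\beta^c}(\tensor{f}) = \dim \Umin{\beta^c}(\tensor{f})$, that lemma identifies $\Umin{\beta^c}(\tensor{f})$ with the span of the $b^{\#\beta}$ partial evaluations $\{\tensor{f}(j_\beta,\cdot) : j_\beta \in \Ib^{\#\beta}\}$, which for $f\in\Vbd{S}$ lie in $(\R^{\Ib})^{\otimes(d-\#\beta)}\otimes S$ of dimension $b^{d-\#\beta}\dim S$; bounding the dimension of this span by the number of spanning vectors and by the dimension of the containing space gives both terms of the minimum simultaneously. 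I do not anticipate any genuine obstacle here: the content is entirely the bookkeeping of factor-space dimensions, and the single idea to flag is that \eqref{rank-bound-general-format} may legitimately be applied in the restricted space $\mathbf{V}_{b,d,S}$.
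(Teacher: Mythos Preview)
Your proposal is correct and follows exactly the paper's approach: the lemma is stated in the paper as being ``directly deduced from \eqref{rank-bound-general-format}'', and your argument spells out precisely this deduction (with the correct observation that one must apply the bound in the ambient space $\mathbf{V}_{b,d,S}$ so that $\dim V_{d+1}=\dim S$). Your alternative route via \Cref{ranks-partial-evaluations} is also valid and a nice way to see the bound intrinsically.
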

In the case where $S$ is closed under $b$-adic dilation, we can obtain sharper bounds for ranks. 
\begin{lemma}\label{rank-bound-VbdS-closed-dilation} 
Let $S$ be closed under $b$-adic dilation.
\begin{enumerate}
\item[(i)] If $f \in S$, then for any $d\in \N$, $f\in V_{b,d,S}$ and we have 
$$
r_{\nu,d}(f) \le \min\{b^\nu , \dim S\}, \quad 1\le \nu\le d.
$$
\item[(ii)] If $f\in V_{b,d,S}$, then for any $\bar d \ge d$, $f \in V_{b,\bar d,S}$ and we have 
      \begin{align}
       & r_{\nu,\bar d}(f) = r_{\nu,d}(f) \leq \min\set{b^\nu, b^{d-\nu} \dim S},\quad  1\le \nu \le d,\notag\\
      &  r_{\nu,\bar d}(f)\leq \min\set{b^{\nu}  ,\, \dim S}, \quad d < \nu \le \bar d\label{eq:rjbig}.
    \end{align}
    \end{enumerate}
\end{lemma}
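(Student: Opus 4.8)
The plan is to reduce both parts to the span characterization of ranks furnished by \Cref{link-ranks-d-nu}, namely that for $1\le \nu\le D$ one has $r_{\nu,D}(f) = r_{\nu,\nu}(f) = \dim \linspan{f(b^{-\nu}(j+\cdot)) : 0\le j\le b^\nu-1}$ at any level $D\ge \nu$, and then to show that the $b$-adic dilation hypothesis forces these rescaled restrictions to lie in $S$. The two membership assertions ($f\in V_{b,d,S}$ when $f\in S$, and $f\in V_{b,\bar d,S}$ when $f\in V_{b,d,S}$) are immediate from the hierarchy established in \Cref{inclusion_Vbd}, so only the rank bounds require work.

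For part (i), I would take $f\in S$ and apply \Cref{link-ranks-d-nu} to write $r_{\nu,d}(f) = r_{\nu,\nu}(f) = \dim \linspan{f(b^{-\nu}(j+\cdot)) : 0\le j\le b^\nu-1}$ for each $1\le \nu\le d$. Since $S$ is closed under $b$-adic dilation, \Cref{b-adic-stability} gives $f(b^{-\nu}(j+\cdot))\in S$ for every $j\in\set{0,\hdots,b^\nu-1}$, so this span is a subspace of $S$ and hence has dimension at most $\dim S$; being generated by $b^\nu$ functions it also has dimension at most $b^\nu$. Combining the two bounds yields $r_{\nu,d}(f)\le \min\set{b^\nu,\dim S}$.

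For part (ii), fix $f\in V_{b,d,S}$ and $\bar d\ge d$. For $1\le \nu\le d$, the equality $r_{\nu,\bar d}(f) = r_{\nu,d}(f)$ is immediate from \Cref{link-ranks-d-nu}, since applying it at levels $\bar d$ and $d$ shows both quantities equal $r_{\nu,\nu}(f)$; the bound $\min\set{b^\nu, b^{d-\nu}\dim S}$ is then exactly \Cref{rank-bound-VbdS}. The genuinely new case is $d<\nu\le\bar d$. Here I would again use \Cref{link-ranks-d-nu} to express $r_{\nu,\bar d}(f) = r_{\nu,\nu}(f)$ as the dimension of $\linspan{f(b^{-\nu}(j+\cdot)) : 0\le j\le b^\nu-1}$, and then argue that each restriction lies in $S$. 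The key observation is that, because $\nu>d$, each fine interval $[b^{-\nu}j, b^{-\nu}(j+1))$ sits inside a single coarse interval $[b^{-d}j', b^{-d}(j'+1))$ on which $f$, after the level-$d$ rescaling, agrees with some $g_{j'}\in S$. Writing $j = j'b^{\nu-d}+\ell$ with $\ell\in\set{0,\hdots,b^{\nu-d}-1}$ and pushing the substitution $x=b^{-\nu}(j+y)$ through the definition of $V_{b,d,S}$, I expect to obtain $f(b^{-\nu}(j+\cdot)) = g_{j'}(b^{-(\nu-d)}(\ell+\cdot))$, which is a $b$-adic dilation of $g_{j'}\in S$ at level $\nu-d$ and therefore lies in $S$ by \Cref{b-adic-stability}. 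The span is then contained in $S$, giving $r_{\nu,\bar d}(f)\le\min\set{b^\nu,\dim S}$.

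The main obstacle is precisely this last change of variables: correctly tracking how the index $j$ splits into a coarse part $j'$ and a fine part $\ell$, and verifying that the fine restriction is a dilation at the exact level $\nu-d$ of the correct piece $g_{j'}$. Once that identity is in place, everything else is a direct appeal to \Cref{inclusion_Vbd}, \Cref{link-ranks-d-nu}, \Cref{rank-bound-VbdS}, and \Cref{b-adic-stability}.
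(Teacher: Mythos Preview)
Your proposal is correct and follows essentially the same approach as the paper: reduce to $r_{\nu,\nu}(f)$ via \Cref{link-ranks-d-nu}, then show the span of rescaled restrictions lies in $S$. The one difference is that for the case $d<\nu\le\bar d$ in part (ii), the paper avoids your explicit index-splitting computation by simply invoking \Cref{inclusion_Vbd} to get $f\in V_{b,\nu,S}$, which immediately yields $\Umin{\{\nu+1\}}(T_{b,\nu}f)\subset S$ and hence $r_{\nu,\nu}(f)\le\dim S$; your change of variables is correct but is precisely the content already packaged in \Cref{inclusion_Vbd}, which you cite anyway for the membership statement.
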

\begin{proof}
    See \Cref{proof:rankbound}.
\end{proof}

\begin{remark}
\Cref{rank-bound-VbdS-closed-dilation} shows that the ranks are independent of the representation level of a function $\varphi$, so that we will frequently suppress this dependence and simply note $r_{\nu,d}(\varphi) = r_{\nu}(\varphi)$ for any $d$ such that $\varphi \in V_{b,d,S}$.
\end{remark}

We end this section by introducing projection operators based on local projection. Let $\mathcal{I}_S$ be a linear projection operator from $\Lp([0,1))$ to a finite-dimensional space $S$. Then, define the linear operator $\mathcal{I}_{b,d,S} $ on $\Lp([0,1))$ defined for $f \in \Lp([0,1))$ by 
\begin{equation}
	(\mathcal{I}_{b,d,S} f) (b^{-d} (j +\cdot))=  \mathcal{I}_{S} (f(b^{-d} (j+\cdot ))), \quad 0\le j < b^d.\label{local-projection}
\end{equation}
\begin{lemma}[Local projection]\label{local-projection-structure}
The operator $\mathcal{I}_{b,d,S}$ is a linear operator from $\Lp([0,1))$ to $V_{b,d,S}$ and satisfies 
\begin{align}
T_{b,d} \circ \mathcal{I}_{b,d,S} \circ T_{b,d}^{-1} = id_{\{1,\hdots,d\}} \otimes \mathcal{I}_S. \label{tensor-structure-local-projection}
\end{align}
\end{lemma}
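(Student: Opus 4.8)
The plan is to establish the operator identity \eqref{tensor-structure-local-projection} by a partial-evaluation argument, after first checking that $\mathcal{I}_{b,d,S}$ is a well-defined linear operator. The key input will be the identity \eqref{partial-eval-dilation} from \Cref{elementary-tensor-localized-function}, together with the observation that, by the canonical representation \eqref{canonical-representation}, a tensor in $\mathbf{V}_{b,d}$ is uniquely determined by the family of its partial evaluations $\{\tensor{f}(j_1,\hdots,j_d,\cdot) : (j_1,\hdots,j_d) \in \Ib^d\}$.

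For well-definedness, I would note that every $x \in [0,1)$ lies in exactly one interval $[b^{-d}j,b^{-d}(j+1))$, so writing $x = b^{-d}(j+y)$ with $y \in [0,1)$ the prescription \eqref{local-projection} determines $(\mathcal{I}_{b,d,S}f)(x) = \mathcal{I}_S(f(b^{-d}(j+\cdot)))(y)$ without ambiguity; since $f(b^{-d}(j+\cdot))$ is the rescaled restriction of $f$, hence again in $\Lp([0,1))$, and $\mathcal{I}_S$ maps $\Lp([0,1))$ into $S$, the output is well-defined. Linearity of $\mathcal{I}_{b,d,S}$ is then inherited from the linearity of $\mathcal{I}_S$ and of the shift-and-rescale maps.

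For the identity, I would fix an arbitrary $f \in \Lp([0,1))$, set $\tensor{f} = T_{b,d}f$ and $h = \mathcal{I}_{b,d,S}f$, and compare partial evaluations. Writing $j = \sum_{k=1}^d j_k b^{d-k}$, \eqref{partial-eval-dilation} gives $\tensor{f}(j_1,\hdots,j_d,\cdot) = f(b^{-d}(j+\cdot))$, and applying \eqref{partial-eval-dilation} to $h$ together with the defining relation \eqref{local-projection} yields
\[
(T_{b,d}h)(j_1,\hdots,j_d,\cdot) = h(b^{-d}(j+\cdot)) = \mathcal{I}_S\big(f(b^{-d}(j+\cdot))\big) = \mathcal{I}_S\big(\tensor{f}(j_1,\hdots,j_d,\cdot)\big).
\]
On the other hand, applying $id_{\{1,\hdots,d\}} \otimes \mathcal{I}_S$ termwise to the canonical representation \eqref{canonical-representation} of $\tensor{f}$ shows that the partial evaluation of $(id_{\{1,\hdots,d\}} \otimes \mathcal{I}_S)\tensor{f}$ at $(j_1,\hdots,j_d)$ is likewise $\mathcal{I}_S(\tensor{f}(j_1,\hdots,j_d,\cdot))$. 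Since all partial evaluations coincide, the two tensors are equal, so $T_{b,d}\mathcal{I}_{b,d,S}f = (id_{\{1,\hdots,d\}} \otimes \mathcal{I}_S)T_{b,d}f$; as $f$ ranges over $\Lp([0,1))$ and $T_{b,d}$ is a bijection (\Cref{Tbd-bijection}), conjugating by $T_{b,d}$ gives exactly \eqref{tensor-structure-local-projection}. The range statement is then immediate: since $\mathcal{I}_S$ has image in $S$, the right-hand side of \eqref{tensor-structure-local-projection} takes values in $(\R^{\Ib})^{\otimes d} \otimes S = \mathbf{V}_{b,d,S}$, whence $\mathcal{I}_{b,d,S}f \in T_{b,d}^{-1}(\mathbf{V}_{b,d,S}) = V_{b,d,S}$.

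The argument is essentially bookkeeping and requires no machinery beyond \Cref{elementary-tensor-localized-function} and \Cref{Tbd-bijection}. The only point I would take care over -- the mild obstacle -- is to match the shift-and-rescale normalization appearing in \eqref{partial-eval-dilation} with the one used to define $\mathcal{I}_{b,d,S}$ in \eqref{local-projection}, and to invoke the uniqueness of a tensor given all its partial evaluations, which is precisely what \eqref{canonical-representation} supplies.
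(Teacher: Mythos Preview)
Your proof is correct and follows essentially the same route as the paper: both arguments compute the partial evaluations $(T_{b,d}\mathcal{I}_{b,d,S}f)(j_1,\hdots,j_d,\cdot)$ via \eqref{partial-eval-dilation} and the definition \eqref{local-projection}, and identify them with $\mathcal{I}_S(\tensor{f}(j_1,\hdots,j_d,\cdot))$. The only cosmetic difference is that the paper expands $\mathcal{I}_S$ in a basis of $S$ and then verifies the tensor identity on elementary tensors, whereas you work basis-free and appeal directly to the canonical representation \eqref{canonical-representation}; both are equivalent bookkeeping.
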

\begin{proof}
    See \Cref{proof:localproj}.
\end{proof}

We now provide a result on the ranks of projections. 
\begin{lemma}[Local projection ranks]\label{local-projection-ranks}
For any $f\in \Lp$, $\mathcal{I}_{b,d,S} f \in V_{b,d,S}$ satisfies 
$$
r_{\nu,d}(\mathcal{I}_{b,d,S} f) \le r_{\nu,d}(f) ,\quad 1\le \nu\le d.
$$
\end{lemma}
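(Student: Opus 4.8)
The plan is to transport the statement to the tensor picture and then exploit the tensor structure of the local projection established in \Cref{local-projection-structure}. Writing $\tensor{f} = T_{b,d} f$ and $\tensor{g} = T_{b,d}(\mathcal{I}_{b,d,S} f)$, that lemma gives $\tensor{g} = (id_{\{1,\hdots,d\}} \otimes \mathcal{I}_S)\tensor{f}$. Since by definition $r_{\nu,d}(\cdot)$ is the $\{1,\hdots,\nu\}$-rank of the tensorization, it suffices to prove $r_{\{1,\hdots,\nu\}}(\tensor{g}) \le r_{\{1,\hdots,\nu\}}(\tensor{f})$ for each fixed $1\le \nu\le d$.

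The key observation is that, for the splitting $\beta = \{1,\hdots,\nu\}$ and $\beta^c = \{\nu+1,\hdots,d+1\}$, the operator regroups as $id_{\{1,\hdots,d\}} \otimes \mathcal{I}_S = id_\beta \otimes B$, where $B := id_{\{\nu+1,\hdots,d\}} \otimes \mathcal{I}_S$ acts on $\mathbf{V}_{\beta^c}$ and the first factor is the identity on $\mathbf{V}_\beta$; this is legitimate precisely because $\nu\le d$, so no index in $\beta$ is touched by $\mathcal{I}_S$. I would then take a minimal $\beta$-representation $\tensor{f} = \sum_{k=1}^{r} \tensor{v}_\beta^k \otimes \tensor{v}_{\beta^c}^k$ with $r = r_{\{1,\hdots,\nu\}}(\tensor{f})$, as in \Cref{def:betarank}, and apply the operator term by term:
\begin{align*}
\tensor{g} = (id_\beta \otimes B)\tensor{f} = \sum_{k=1}^{r} \tensor{v}_\beta^k \otimes (B\tensor{v}_{\beta^c}^k).
\end{align*}
This is a valid $\beta$-representation of $\tensor{g}$ using at most $r$ terms, whence $r_{\{1,\hdots,\nu\}}(\tensor{g}) \le r$, which is the desired bound.

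An equivalent route phrases this through minimal subspaces: by \Cref{ranks-partial-evaluations} the relevant rank equals $\dim \Umin{\beta^c}(\tensor{f}) = \dim \linspan{\tensor{f}(j_\beta,\cdot) : j_\beta \in \Ib^{\#\beta}}$, and a one-line computation on partial evaluations shows $\tensor{g}(j_\beta,\cdot) = B(\tensor{f}(j_\beta,\cdot))$; hence $\Umin{\beta^c}(\tensor{g}) = B(\Umin{\beta^c}(\tensor{f}))$ is the image of $\Umin{\beta^c}(\tensor{f})$ under a linear map, and its dimension can only decrease. I expect no genuine obstacle here: the sole point requiring care is confirming that the operator acts as the identity on every factor indexed by $\beta$, so that it truly factors through the $\beta$-splitting; once this is checked, the conclusion is the standard fact that applying a linear map to a single tensor leg cannot increase the rank across any matricization keeping that leg on one side.
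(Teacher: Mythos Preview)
Your proof is correct and follows essentially the same route as the paper: both transport the problem via \Cref{local-projection-structure} to the tensor side and then use that $id_{\{1,\hdots,d\}}\otimes\mathcal{I}_S$ is a rank-one (elementary tensor) operator, which cannot increase $\beta$-ranks. The paper simply invokes this last fact directly, whereas you spell it out by regrouping the operator as $id_\beta\otimes B$ and pushing it through a minimal $\beta$-representation (or, equivalently, through the minimal subspace description); this is exactly the standard justification of that fact, so the arguments coincide.
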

\begin{proof}
\Cref{local-projection-structure} implies that $T_{b,d} \circ \mathcal{I}_{b,d,S} \circ T_{b,d}^{-1}$ is a rank one operator. 
Since a rank-one operator can not increase $\beta$-ranks, we have for all $1\le \nu \le d$
     $$r_{\nu,d}(\mathcal{I}_{b,d,S}(f))  = r_{\nu}(T_{b,d}\circ \mathcal{I}_{b,d,S} \circ T_{b,d}^{-1} \boldsymbol{f}) = r_{\nu}((id_{\{1,\hdots,d\}} \otimes \mathcal{I}_S) \boldsymbol{f}) \le r_\nu(\boldsymbol{f}) = r_{\nu,d}(f).
     $$
     \end{proof}


%
%
%
%

%

\section{Tensor Networks and Their Approximation spaces}\label{sec:tensorapp}

In this section, we begin by describing particular tensor formats, namely tree tensor networks that will constitute our  approximation tool.
We then briefly review classical approximation spaces (see \cite{DeVore93}).
We conclude by introducing different measures of complexity of tree tensor networks and analyze the resulting approximation classes.

\subsection{Tree Tensor Networks and The Tensor Train Format}

Let $S$ be a finite-dimensional subspace of $\R^{[0,1)}.$
A tensor format in the tensor space $\mathbf{V}_{b,d,S} = (\R^{\Ib})^{\otimes d} \otimes S$ is defined as a set of tensors with $\beta$-ranks bounded by some integers $r_\beta$, for a certain collection $A$ of subsets $\beta \subset \{1,\hdots,d+1\},$
$$
\mathcal{T}^{A}_{\boldsymbol{r}}(\mathbf{V}_{b,d,S}) = \{\tensor{f} \in \mathbf{V}_{b,d,S} : r_\beta(\tensor{f}) \le r_\beta, \beta\in A\}.
$$ 
When $A$ is a dimension partition tree (or a subset of such a tree), the resulting format is called a hierarchical or tree-based tensor format \cite{hackbusch2009newscheme,Falco2018SEMA}. 
A tensor $\tensor{f}\in \mathcal{T}^{A}_{\boldsymbol{r}}(\mathbf{V}_{b,d,S}) $ in a tree-based tensor format admits a parametrization in terms of a collection of low-order tensors $v_\beta$, $\beta \in A$. Hence, the interpretation as a tree tensor network (see \cite[Section 4]{Nouy2019}). 
\begin{remark}\label{rem:equiv-NN}
Tree tensor networks are convolutional feedforward neural networks with non-linear feature maps, product pooling, a number of layers equal to the depth of the dimension partition tree and a number of neurons equal to the sum of ranks $r_\beta$.
 \end{remark}
 For the most part we will work  with the tensor train format with the exception of a few remarks. This format considers the collection of subsets $A = \{\{1\},\{1,2\}, \hdots,\{1,\hdots,d\}\}$, which is a subset of a linear dimension partition tree.

\begin{definition}[Tensor Train Format]\label{def:ttfuncs}
	The set\footnote{It is in fact a manifold, see \cite{Holtz2011,Falco2015,Falco2019,falco2020geometry}.} of tensors in $\mathbf{V}_{b,d}$ in tensor train (TT) format 
	with ranks at most $\bs r:=(r_\nu)_{\nu=1}^d$  is defined as
	\begin{align*}
	    \TT{\mathbf{V}_{b,d,S}}{\bs r}:=\set{\tensor{f} \in\mathbf{V}_{b,d,S}:\;r_\nu(\tensor{f})\leq r_\nu , \; 1 \le \nu\le d},
	\end{align*}
	where we have used the shorthand notation
	$
	r_\nu(\tensor{f}) := r_{\{1,\hdots,\nu\}}(\tensor{f}).
	$
	This defines a set of univariate functions 
	$$\Phi_{b,d,S,\bs r}= T_{b,d}^{-1} ( \TT{\mathbf{V}_{b,d,S}}{\bs r}) = \{f \in V_{b,d,S} : r_{\nu}(f) \le r_\nu , 1\le \nu \le d\},$$
	where $r_\nu(f) := r_{\nu,d}(f)$, that we further call the tensor train format for univariate functions.
\end{definition}    

Letting $\{\varphi_k\}_{k=1}^{\dim S}$ be a basis of $S$, a tensor $\tensor{f} \in\TT{\mathbf{V}_{b,d,S}}{\bs r}$ admits a representation
\begin{align}
    \tensor{f}(i_1,\ldots,i_d, y)&=\sum_{k_1=1}^{r_1}\cdots\sum_{k_d=1}^{r_d}\sum_{k=1}^{\dim S}v_1^{k_1}(i_1) v_2^{k_1,k_2}(i_2)
    \cdots v_d^{k_{d-1},k_d}(i_d) v^{k_d,k}_{d+1} \varphi_{k}(y), \label{repres-tt}
    \end{align}
with parameters $v_1 \in  \R^{b\times r_1}$, $v_\nu\in \R^{b\times r_{\nu-1} \times r_\nu}$, $2\leq \nu\leq d$,  and $v_{d+1} \in \R^{r_d \times \dim S}$ forming a tree tensor network $$\mathbf{v} = (v_1 ,\hdots,v_{d+1}) \in \mathcal{P}_{b,d,S,\bs r} :=   \R^{b\times r_1} \times \R^{b\times  r_1 \times r_2} \times \hdots  \times \R^{b\times  r_{d-1} \times r_d} \times \R^{r_d \times \dim S} .$$  
The  format $\TT{\mathbf{V}_{b,d,S}}{\bs r}$ then corresponds to the image of the space of tree tensor networks   
$\mathcal{P}_{b,d,S,\bs r}$  
through the map  $${R}_{b,d,S,\bs r} : \mathcal{P}_{b,d,S,\bs r} \to \TT{\mathbf{V}_{b,d,S}}{\bs r} \subset \mathbf{V}_{b,d,S}$$ such that for $\mathbf{v}  = (v_1,\hdots,v_{d+1}) \in \mathcal{P}_{b,d,S,\bs r}$, the tensor $\tensor{f}={R}_{b,d,S,\bs r}(\mathbf{v})$ is defined by \eqref{repres-tt}. The set of functions  $\Phi_{b,d,S,\bs r}$ in the tensor train format can be parametrized as follows: 
$$
\Phi_{b,d,S,\bs r}=  \{\varphi = \mathcal{R}_{b,d,S,\bs r}(\mathbf{v}) : \mathbf{v} \in \mathcal{P}_{b,d,S,\bs r}\}, \quad \mathcal{R}_{b,d,S,\bs r}(\mathbf{v}) = T_{b,d}^{-1} \circ  {R}_{b,d,S,\bs r}. 
$$
With an abuse of terminology, we call tensor networks both the set of tensors $\mathbf{v}$ and the corresponding function 
$\varphi = \mathcal{R}_{b,d,S,\bs r}(\mathbf{v})$.
The  representation complexity
of $\tensor{f} ={R}_{b,d,S,\bs r}(\mathbf{v}) \in\TT{\Vbd{S}}{\bs r}$ is
\begin{align}\label{eq:dof}
\mathcal{C}(b,d,S,\bs r) :=  \dim(\mathcal{P}_{b,d,S,\bs r}) = br_1+b\sum_{\nu=2}^dr_{\nu-1}r_\nu+r_d \dim S.
\end{align}
%



\begin{remark}[Re-Ordering Variables in the TT Format]\label{remark:varordering}
    We chose in \Cref{def-tensorization-map} to order the input variables of the tensorized function 
    $\tensor{f}$ such that $y\in[0,1)$ is in the last position.
     This specific choice allows the interpretation of partial evaluations of $\{1,\hdots,\nu\}$-unfoldings as contiguous pieces of $f=\Tbd^{-1}(\tensor{f})$ (see \Cref{link-minimal-subspaces} and the discussion thereafter).
    Alternatively, we could have chosen the ordering 
   $
        (y, i_1,\ldots, i_d)\mapsto\tensor{f}(y, i_1,\ldots,i_d),
    $
    and defined the TT-format and TT-ranks correspondingly.
    Essentially this is the same as considering a different tensor format, see
    discussion above.
    Many of the results of part II \cite{partII} remain the same.
    In particular, the order of magnitude of the rank bounds and therefore
    the resulting direct and inverse estimates would not change.
    However, this re-ordering may lead to slightly smaller rank bounds
    as in \Cref{remark:extranks} or slightly larger rank bounds as in
    \cite[Remark 4.6]{partII}.
\end{remark}

\subsection{General Approximation Spaces}

Approximation spaces have been extensively studied
in the second half of the last century.
They provide a systematic way of classifying functions that can be approximated with a certain rate.
Moreover, they have intriguing connections to smoothness and interpolation spaces (see \cite{DeVore98}) and
thus provide a complete characterization of approximation properties.
We briefly review some fundamentals that we require for the rest of this work.
For details we refer to \cite{DeVore93, DeVore98}.

Let $X$ be a quasi-normed linear space, $\tool\subset X$ subsets of $X$
for $n\in\N_0$ and $\tool[]:= (\tool)_{n\in \N_0}$ an approximation tool.
Define the best approximation error
\begin{align*}
E_n(f):= E(f,\Phi_n):=\inf_{\varphi\in\tool}\norm{f-\varphi}[X].
\end{align*}
With this we define approximation classes as

\begin{definition}[Approximation Classes]\label{def:asq}
    For any $f\in X$ and $\alpha>0$, define the quantity
    \begin{align*}
        \norm{f}[\Asq]:=
        \begin{cases}
            \left(\sum_{n=1}^\infty[n^\alpha E_{n-1}({f})]^q\frac{1}{n}\right)^{1/q},&\quad 0<q<\infty,\\
            \sup_{n\geq 1}[n^\alpha E_{n-1}({f})],&\quad q=\infty.
        \end{cases}
    \end{align*}
    The approximation classes $\Asq$ of $\tool[] = (\tool)_{n\in \N_0}$ are defined by  
    \begin{align*}
        \Asq:=\Asq(X):=\Asq(X,\tool[]):=\set{f\in X:\; \norm{f}[\Asq]<\infty}.
    \end{align*}
\end{definition}

These approximation classes have many useful properties if we further assume that $\tool$ satisfy the following criteria for any $n\in \N_0$.
\begin{enumerate}[label=(P\arabic*)]
    \item\label{P1}    $0\in\tool$, $\tool[0]=\set{0}$.
    \item\label{P2}     $\tool[n]\subset\tool[n+1]$.
    \item\label{P3}     $a\tool=\tool$ for any $a\in\R\setminus\set{0}$.
    \item\label{P4}     $\tool+\tool\subset\tool[cn]$ for some $c:=c(\tool[])$.
    \item\label{P5}     $\bigcup_{n\in \N_0} \tool$ is dense in $X$.
    \item\label{P6}     $\tool$ is proximinal in $X$, i.e. each $f\in X$ has a best approximation in $\tool$.
\end{enumerate}
Additionally, properties \ref{P1} -- \ref{P6} will be frequently
combined with the so-called \emph{direct} or \emph{Jackson}
inequality
\begin{align}\label{eq:jackson}
    E_{n}({f})\leq Cn^{-\rJ}\snorm{f}[Y],\quad\forall f\in Y,
\end{align}
for a semi-normed vector space $Y$ and some parameter $\rJ>0$,
and the \emph{inverse} or \emph{Bernstein} inequality
\begin{align}\label{eq:bernstein}
    \snorm{\varphi}[Y]\leq Cn^{\rB}\norm{\varphi}[X],\quad\forall\varphi\in\tool,
\end{align}
for some parameter $\rB>0$.

The implications of \ref{P1} -- \ref{P6} about the properties of $\Asq$ are as follows
\begin{itemize}
    \item    \ref{P1}+\ref{P3}+\ref{P4} $\Rightarrow$ $\Asq$ is a linear space with a quasi-norm.
    \item    \ref{P1}+\ref{P3}+\ref{P4} $\Rightarrow$ $\Asq$ satisfies the \emph{direct} or \emph{Jackson} inequality
                \begin{align*}
                    E_{n}({f})\leq Cn^{-\rJ}\norm{f}[\Asq],\quad\forall f\in\Asq,
                \end{align*}
                for $\rJ=\alpha$.
    \item    \ref{P1}+\ref{P2}+\ref{P3}+\ref{P4} $\Rightarrow$ $\Asq$ satisfies the \emph{inverse} or \emph{Bernstein} inequality
                \begin{align*}
                    \norm{\varphi}[\Asq]\leq Cn^{\rB}\norm{\varphi}[X],\quad\forall\varphi\in\tool,
                \end{align*}
                for $\rB=\alpha$.
\end{itemize}
The other properties \ref{P5} and \ref{P6} are required for characterizing approximation spaces by
interpolation spaces, see \cite{DeVore93}. Specifically,
\ref{P1} -- \ref{P4} together with a Jackson estimate as in \eqref{eq:jackson}
are required to prove so-called \emph{direct embeddings}:
a range of smoothness spaces is continuously embedded into $\Asq$.
While \ref{P1} -- \ref{P6} together with a Bernstein estimate
\eqref{eq:bernstein}
are required for \emph{inverse embeddings}:
$\Asq$ is continuously embedded into smoothness spaces.
We will see in Part II \cite{partII} that, in general, for approximation spaces of
tensor networks
no inverse estimates\footnote{The same was shown for RePU networks
in \cite{Gribonval2019}.}
are
possible,
since these spaces are ``too large''. Therefore,
properties
\ref{P5} -- \ref{P6} are not essential, while \ref{P5}
is typically true for any type of reasonable
approximation tool\footnote{Think of
\emph{universality theorems} for neural networks which hold
for tensor networks as well as we will see shortly.}.

We have the continuous embeddings
\begin{align*}
    \Asq\hookrightarrow\Asq[\beta][\bar{q}],\quad\text{if }\alpha>\beta\quad\text{or if }\alpha=\beta\text{ and }
    q \leq \bar{q}.
\end{align*}

We will see that, while most properties are easy to satisfy, property \ref{P4} will be the most critical one.
In essence \ref{P4} is a restriction on the non-linearity of the sets $\tool$,
with $c(\tool[])=1$ being satisfied by linear subspaces.


\subsection{Measures of Complexity}\label{sec:appclasses}
We consider as an approximation tool $\Phi$ the collection of tensor networks $\Phi_{b,d,S,\bs r}$ associated with different levels and ranks,
$$
\tool[] := (\Phi_{b,d,S,\bs r})_{d\in \N , \bs r \in \N^d},
$$
and define the sets of functions $\tool$ as
\begin{align}\label{eq:deftool}
    \tool:=\set{\varphi   \in \Phi_{b,d,S,\bs r} :   d\in \N , \bs r \in \N^d, \cost(\varphi)\leq n},
\end{align}
where $\cost(\varphi)$ is some measure of complexity of a function $\varphi.$ The approximation classes of tensor networks depend on the chosen measure of complexity.  We propose different measures of complexity and discuss the critical property \ref{P4}. 
A function $\varphi \in \tool[] $ may admit representations at different levels. We set $$d(\varphi)=\min\{d : \varphi \in \Vbd{S}\}$$ to be the minimal representation level of $\varphi$, and $\bs r(\varphi) = (r_\nu(\varphi))_{\nu=1}^{d(\varphi)}$ be the corresponding ranks. 
Measures of complexity may be based on a measure of complexity $\cost(\mathbf{v})$ of tensor networks $\mathbf{v}$ such that $\varphi = \mathcal{R}_{b,d,S,\bs r}(\mathbf{v})$. Then, we would define  
\begin{align}\label{eq:deftool-network}
    \tool:=\set{\varphi = \mathcal{R}_{b,d,S,\bs r}(\mathbf{v}) \in \Phi_{b,d,S,\bs r} :  \mathbf{v} \in \mathcal{P}_{b,d,S,\bs r} , d\in \N , \bs r \in \N^d, \cost(\mathbf{v})\leq n},
\end{align}
which is equivalent to the definition \eqref{eq:deftool} if we let 
\begin{align}\label{mincost}
\cost(\varphi) := \min \{\cost(\mathbf{v}) :  \mathcal{R}_{b,d,S,\bs r}(\mathbf{v}) = \varphi, d \in \N , \bs r\in \N^d\},
\end{align}
where the minimum is taken over all possible representations of $\varphi$.

\subsubsection{Complexity Measure: Maximum Rank}

In many high-dimensional approximation problems it is common to consider the maximum rank as
an indicator of complexity (see, e.g., \cite{Bachmayr2015}). By this analogy we consider for $\varphi\in \tool[]$,
\begin{align}\label{eq:rmaxdef}
    \cost(\varphi)&:=bd \rmax^2(\varphi) +\rmax(\varphi) \dim S, \quad \rmax(\varphi) = \max\set{r_\nu(\varphi) :\;1\leq \nu\leq d(\varphi)}.
\end{align}
This complexity measure does not satisfy \ref{P4}.

\begin{proposition}[\ref{P4} not satisfied by the complexity measure based on $\rmax$]\label{prop:maxrank}
    Let $S$ be closed under $b$-adic dilation and assume $\dim S<\infty$.
    Then, with $\tool$ as defined in \eqref{eq:deftool} with the measure of complexity \eqref{eq:rmaxdef}, 
    \begin{enumerate}[label=(\roman*)]
        \item\label{prop:rmaxi}    There exists no constant $c\in\R$ such that
                    \begin{align*}
                        \tool+\tool\subset\tool[cn].
                    \end{align*}
        \item    There exists a constant $c>1$ such that
                    \begin{align*}
                        \tool+\tool\subset\tool[cn^2].
                    \end{align*}
    \end{enumerate}
\end{proposition}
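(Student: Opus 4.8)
The plan is to read off both halves of the proposition from how the two quantities $d(\varphi)$ and $\rmax(\varphi)$ behave under addition. First I would record the facts I need for a sum $\varphi=\varphi_1+\varphi_2$ of two nonzero elements of $\tool$, writing $d_i=d(\varphi_i)$, $R_i=\rmax(\varphi_i)$ and $\bar d=\max\set{d_1,d_2}$. By \Cref{inclusion_Vbd} both summands lie in $\Vbd[b][\bar d]{S}$, so $d(\varphi)\le\bar d$. By subadditivity of $\beta$-ranks under addition (the minimal subspace of a sum sits inside the sum of the minimal subspaces, cf.\ \Cref{def:minsub}) together with \Cref{rank-bound-VbdS-closed-dilation}(ii), which bounds the ranks created when $\varphi_i$ is lifted from level $d_i$ to level $\bar d$ by $\min\set{b^\nu,\dim S}\le\dim S$, one gets $\rmax(\varphi)\le R_1+R_2+\dim S$. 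Finally $\cost(\varphi_i)\le n$, i.e. $bd_iR_i^2+R_i\dim S\le n$, yields the separate bounds $bd_iR_i^2\le n$ and $R_i\dim S\le n$, whence (using $R_i\ge1$, $d_i\ge1$) $bd_i\le n$ and $R_i^2\le n/b$.

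For (ii) these bounds already suffice. Substituting into $\cost(\varphi)=b\,d(\varphi)\,\rmax(\varphi)^2+\rmax(\varphi)\dim S$ and expanding $(R_1+R_2+\dim S)^2\le 3(R_1^2+R_2^2+(\dim S)^2)$, every contribution is controlled: $bd_1R_1^2\le n$, the one genuinely quadratic term $bd_1R_2^2=(bd_1)R_2^2\le n\cdot(n/b)$, the term $bd_1(\dim S)^2\le(\dim S)^2 n$, and $\rmax(\varphi)\dim S\le(R_1+R_2)\dim S+(\dim S)^2\le 2n+(\dim S)^2$. Since $n\ge1$, all of these are $\le cn^2$ for a constant $c=c(b,\dim S)>1$, so $\tool+\tool\subset\tool[cn^2]$.

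For (i) I would exhibit, for every prescribed ratio, a \emph{deep and thin} summand and a \emph{shallow and wide} summand whose sum is forced to be deep \emph{and} wide, so that its cost is of order the \emph{product} $bd\rmax^2$. For the thin summand I take a single spike at high level $D$: choose $g\in S$ and $k\in\Ib$ such that the localization $L_kg$ (equal to $g(b\cdot-k)$ on $[b^{-1}k,b^{-1}(k+1))$ and $0$ elsewhere, i.e. $L_kg=\Tbd[b][1]^{-1}(\delta_k\otimes g)$ by \Cref{tensorization-dilation}) does \emph{not} lie in $S$. Such $g,k$ exist because a nonzero finite-dimensional $S$ cannot be closed under localization: iterating localizations of a fixed $g\neq0$ produces arbitrarily many functions with pairwise disjoint supports, contradicting $\dim S<\infty$. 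Setting $\varphi_1=\Tbd[b][D]^{-1}(\delta_{j_1}\otimes\cdots\otimes\delta_{j_{D-1}}\otimes\delta_k\otimes g)$, \Cref{tbdbard} gives $\Tbd[b][D-1]\varphi_1=\delta_{j_1}\otimes\cdots\otimes\delta_{j_{D-1}}\otimes L_kg$, whose last factor is not in $S$; hence $\varphi_1\notin\Vbd[b][D-1]{S}$ and, by \Cref{inclusion_Vbd}, $d(\varphi_1)=D$, while $\rmax(\varphi_1)=1$ and $\cost(\varphi_1)=bD+\dim S$.

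For the wide summand I take $\varphi_2$ to be a sum of $R$ spikes of shape $g$ on distinct level-$D'$ intervals, $D'=2\lceil\log_bR\rceil$ and $\nu=\lceil\log_bR\rceil$, placed so that each of the first $R$ level-$\nu$ intervals contains exactly one spike, at a distinct sub-position. Then the $\nu$-pieces of $\varphi_2$ are spikes with pairwise disjoint supports, so $r_\nu(\varphi_2)=R$ by \Cref{link-ranks-d-nu}, while $\varphi_2$ being a sum of $R$ elementary tensors forces $\rmax(\varphi_2)=R$; thus $\cost(\varphi_2)\le bD'R^2+R\dim S=O(R^2\log R)$. Now fix $R$, choose $D$ so large that $D'<D$ and $n:=bD+\dim S\ge\cost(\varphi_2)$, so $\varphi_1,\varphi_2\in\tool$. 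For $\varphi=\varphi_1+\varphi_2$: since $\varphi_2\in\Vbd[b][D']{S}$ with $D'<D$, if $\varphi$ had level $<D$ then $\varphi_1=\varphi-\varphi_2$ would too, contradicting $d(\varphi_1)=D$; hence $d(\varphi)=D$. Moreover, using the subadditivity lower bound and rank-stability in the level (\Cref{rank-bound-VbdS-closed-dilation}(ii)), $r_\nu(\varphi)\ge r_\nu(\varphi_2)-r_\nu(\varphi_1)\ge R-1$, so $\rmax(\varphi)\ge R-1$. Therefore $\cost(\varphi)\ge bD(R-1)^2$, giving $\cost(\varphi)/n\ge\tfrac{bD}{bD+\dim S}(R-1)^2\to(R-1)^2$ as $D\to\infty$; letting $R\to\infty$ makes the ratio unbounded, ruling out any fixed $c$ with $\tool+\tool\subset\tool[cn]$. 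The main obstacle is precisely this last part: producing a single construction valid for \emph{every} $S$ in which the sum is provably forced to have both maximal level $D$ and rank $\approx R$ — the matching lower bounds on $d(\varphi)$ and $\rmax(\varphi)$ are what make the quadratic blow-up rigorous, and the localization/finite-dimensionality argument is what guarantees the deep spike genuinely needs level $D$.
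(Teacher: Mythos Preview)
Your proof is correct, and for (ii) it is essentially the paper's argument with the same rank bound $\rmax(\varphi)\le R_1+\max(R_2,\dim S)\le R_1+R_2+\dim S$ and the same term-by-term estimate.

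For (i) the strategic idea is identical to the paper's---add a deep rank-one function to a shallow high-rank function so that the sum is forced to be both deep and wide---but your execution differs in two useful ways. First, the paper ties both parameters to $n$ simultaneously via the Lambert $W$ function and then isolates the leading $n^2/\log n$ term; you instead run a cleaner two-parameter limit (fix $R$, send $D\to\infty$, then send $R\to\infty$), which avoids the special-function machinery entirely. Second, and more importantly, you actually justify the lower bound on $\cost(\varphi_1+\varphi_2)$: you construct $\varphi_1$ so that $d(\varphi_1)=D$ \emph{exactly} (via the localization argument exploiting $\dim S<\infty$), and you verify $d(\varphi)=D$ and $\rmax(\varphi)\ge r_\nu(\varphi)\ge R-1$ by subadditivity. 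The paper simply writes down the lower bound on the cost of the sum without arguing why the sum cannot drop to a smaller level or rank, so your version is more rigorous on precisely the point you flagged as the main obstacle.
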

\begin{proof}
    See \Cref{proof:maxrank}.
\end{proof}

\subsubsection{Complexity Measure: Sum of Ranks}

For a neural network, a natural measure of complexity is the number of neurons.  By analogy (see \Cref{rem:equiv-NN}), we can define a complexity measure equal to the sum of ranks 
\begin{align}\label{eq:neuroncost}
  \cost(\varphi): = \cost_{\mathcal{N}}(\varphi) :=  
  \sum_{\nu=1}^{d(\varphi)} r_\nu(\varphi),
\end{align}
and the corresponding set 
\begin{align}\label{eq:deftool-N}
    \tool^{\mathcal{N}}:=\set{\varphi   \in \Phi_{b,d,S,\bs r} :   d\in \N , \bs r \in \N^d, \cost_{\mathcal{N}}(\varphi)\leq n}.
\end{align}
This complexity measure can be equivalently defined by \eqref{mincost} with $\cost_{\mathcal{N}}(\mathbf{v}) = \sum_{\nu=1}^{d} r_\nu $ for $\mathbf{v} \in \mathcal{P}_{b,d,S,\bs r}$.
\begin{lemma}[$\tool^{\mathcal{N}}$ satisfies \ref{P4}]\label{lemma:p4-N}
    Let $S$ be closed under $b$-adic dilation and $\dim S<\infty$.
    Then, the set $\tool^{\mathcal{N}}$ as defined in \eqref{eq:deftool-N} satisfies
    \ref{P4} with $c=2+\dim S$.
\end{lemma}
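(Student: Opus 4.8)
The plan is to lift both summands to a common representation level, invoke subadditivity of the TT-ranks, and then control the additional ranks that appear when the function of smaller minimal level is re-expressed at the larger level. Let $\varphi,\psi\in\tool^{\mathcal{N}}$, so that $\cost_{\mathcal{N}}(\varphi)\le n$ and $\cost_{\mathcal{N}}(\psi)\le n$; the case where one of them vanishes is trivial, so assume both are nonzero. Write $d_\varphi = d(\varphi)$, $d_\psi=d(\psi)$ and, without loss of generality, $D:=d_\psi\ge d_\varphi$. By \Cref{rank-bound-VbdS-closed-dilation}(ii) both functions lie in $V_{b,D,S}$, which is a linear space (being $T_{b,D}^{-1}$ of the tensor subspace $\mathbf{V}_{b,D,S}$), so $\varphi+\psi\in V_{b,D,S}$ as well. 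Since $\cost_{\mathcal{N}}$ is the minimum of the rank-sum over all admissible representation levels (cf. \eqref{mincost}), evaluating at level $D$ already yields the upper bound
\begin{align*}
\cost_{\mathcal{N}}(\varphi+\psi)\le \sum_{\nu=1}^{D} r_{\nu,D}(\varphi+\psi).
\end{align*}
First I would use linearity of $T_{b,D}$ together with subadditivity of the $\beta$-ranks --- the minimal subspace of a sum is contained in the sum of the minimal subspaces, hence $r_\beta(\mathbf{f}+\mathbf{g})\le r_\beta(\mathbf{f})+r_\beta(\mathbf{g})$ --- to split the right-hand side as $\sum_{\nu=1}^D r_{\nu,D}(\varphi) + \sum_{\nu=1}^D r_{\nu,D}(\psi)$.

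Next I would estimate the two sums separately. Because $D=d_\psi$, the rank-independence of the representation level (the remark following \Cref{rank-bound-VbdS-closed-dilation}) gives $\sum_{\nu=1}^D r_{\nu,D}(\psi)=\sum_{\nu=1}^{d_\psi} r_\nu(\psi)=\cost_{\mathcal{N}}(\psi)\le n$. For $\varphi$ I would split the sum at its minimal level,
\begin{align*}
\sum_{\nu=1}^D r_{\nu,D}(\varphi)= \sum_{\nu=1}^{d_\varphi} r_\nu(\varphi) + \sum_{\nu=d_\varphi+1}^{D} r_{\nu,D}(\varphi) \le \cost_{\mathcal{N}}(\varphi) + (D-d_\varphi)\dim S ,
\end{align*}
where the extra terms are bounded using \eqref{eq:rjbig}, which yields $r_{\nu,D}(\varphi)\le\min\{b^\nu,\dim S\}\le\dim S$ for $d_\varphi<\nu\le D$.

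Finally I would control $D-d_\varphi$. Since $\psi\neq 0$ each rank $r_\nu(\psi)\ge 1$, so $d_\psi\le\sum_{\nu=1}^{d_\psi} r_\nu(\psi)=\cost_{\mathcal{N}}(\psi)\le n$, whence $D-d_\varphi\le D=d_\psi\le n$. Combining the estimates gives
\begin{align*}
\cost_{\mathcal{N}}(\varphi+\psi)\le n + \big(n + (D-d_\varphi)\dim S\big)\le 2n + n\dim S = (2+\dim S)\,n ,
\end{align*}
which is exactly \ref{P4} with $c=2+\dim S$. The one delicate point --- and the main obstacle --- is the bookkeeping of the spurious ranks $r_{\nu,D}(\varphi)$ for $\nu>d_\varphi$ that are created when $\varphi$ is lifted from its minimal level to level $D$: it is precisely the uniform bound $\dim S$ from \eqref{eq:rjbig} on each such rank, multiplied by the number $D-d_\varphi\le n$ of additional levels, that produces the factor $\dim S$ in the constant.
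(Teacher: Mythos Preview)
Your proof is correct and follows essentially the same route as the paper's: lift both summands to the larger minimal level $D$, use subadditivity of the $\beta$-ranks under addition, bound the extra ranks $r_{\nu,D}(\varphi)$ for $\nu>d_\varphi$ by $\dim S$ via \Cref{rank-bound-VbdS-closed-dilation}(ii), and control $D-d_\varphi\le D\le\cost_{\mathcal{N}}(\psi)\le n$. Your write-up is in fact slightly more explicit than the paper's (you justify subadditivity via minimal subspaces and explain why $d_\psi\le n$), but the argument is the same.
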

\begin{proof}
    See \Cref{proof:p4N}.
\end{proof}

\subsubsection{Complexity Measure: Representation Complexity}

A straight-forward choice for the complexity measure is
the number of parameters required for representing $\varphi$ as in 
\eqref{eq:dof}, i.e., 
\begin{align}\label{eq:dofcost}
  \cost(\varphi) := \cost_\mathcal{C}(\varphi) := \mathcal{C}(b,d(\varphi),S,\bs r(\varphi))  = 
  br_1(\varphi)+b\sum_{k=2}^{d(\varphi)}r_{k-1}(\varphi)  r_k(\varphi)+
    r_d(\varphi) \dim S,
\end{align}
and  the corresponding set is defined as
\begin{align}\label{eq:deftool-C}
    \tool^{\mathcal{C}}:=\set{\varphi   \in \Phi_{b,d,S,\bs r} :   d\in \N , \bs r \in \N^d, \cost_{\mathcal{C}}(\varphi)\leq n}.
\end{align}
This complexity measure can be equivalently defined by \eqref{mincost} with $\cost_{\mathcal{C}}(\mathbf{v}) = \mathcal{C}(b,d,S,\bs r)$ for $\mathbf{v} \in \mathcal{P}_{b,d,S,\bs r}$.
\begin{remark}
When interpreting tensor networks as neural networks (see \Cref{rem:equiv-NN}), the complexity measure $\cost_\mathcal{C}$ is equivalent to the number of weights for a fully connected neural network with $r_\nu$ neurons  in layer $\nu$.
\end{remark}
We can show the set $\tool^{\mathcal{C}}$ satisfies \ref{P4} with the help of \Cref{rank-bound-VbdS-closed-dilation,lemma:rankgrowth}.
\begin{lemma}[$\tool^{\mathcal{C}}$ satisfies \ref{P4}]\label{lemma:p4-C}
    Let $S$ be closed under $b$-adic dilation and $\dim S<\infty$.
    Then, the set $\tool^{\mathcal{C}}$ as defined in \eqref{eq:deftool-C}  satisfies
    \ref{P4} with $c=c(b, \dim S)>1$.
\end{lemma}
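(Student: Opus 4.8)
The plan is to bound $\cost_\mathcal{C}(\varphi_1 + \varphi_2)$ for two functions with $\cost_\mathcal{C}(\varphi_1), \cost_\mathcal{C}(\varphi_2) \le n$ by exhibiting a single tensor-train representation of the sum and invoking the minimal-cost characterization \eqref{mincost}; showing $\cost_\mathcal{C}(\varphi_1 + \varphi_2) \le cn$ is exactly \ref{P4}. First I would dispose of the trivial case: if either summand is $0$ the bound holds with $c = 1$, so assume both are nonzero. Writing $d_i := d(\varphi_i)$ and assuming without loss of generality $d_1 \le d_2 =: d$, I lift both functions to the common level $d$. By \Cref{rank-bound-VbdS-closed-dilation}(ii) this is legitimate, and the lifted ranks $a_\nu := r_{\nu,d}(\varphi_1)$ and $\bar a_\nu := r_{\nu,d}(\varphi_2)$ are controlled: the ranks of $\varphi_2$ are unchanged since its minimal level is $d$, while for $\varphi_1$ one has $a_\nu = r_\nu(\varphi_1)$ for $\nu \le d_1$ and $a_\nu \le \dim S$ for $\nu \ge d_1$ by \eqref{eq:rjbig}. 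Since the $\beta$-rank is subadditive (concatenating two representations \eqref{eq:tensorfrep} of the summands yields a representation of the sum, and $T_{b,d}$ is linear), we get $r_{\nu,d}(\varphi_1 + \varphi_2) \le s_\nu := a_\nu + \bar a_\nu$; as $\mathcal{C}(b,d,S,\cdot)$ is monotone in the ranks, \eqref{mincost} reduces the claim to bounding $\mathcal{C}(b,d,S,\bs s)$, where $\bs s = (s_\nu)_{\nu=1}^d$.

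The decisive preliminary step is a bound on the common level $d$ in terms of $n$. Since $\varphi_2$ is nonzero all its ranks are at least $1$, so each of the $d-1$ products in $\cost_\mathcal{C}(\varphi_2) = b\bar a_1 + b\sum_{k=2}^d \bar a_{k-1}\bar a_k + \bar a_d \dim S$ is at least $1$, whence $\cost_\mathcal{C}(\varphi_2) \ge bd$ and therefore $d \le n/b$. This is exactly what keeps the extra cost created by lifting $\varphi_1$ from level $d_1$ to level $d$ under control: using $a_\nu \le \dim S$ for $\nu \ge d_1$, the tail satisfies $b\sum_{k=d_1+1}^d a_{k-1}a_k \le b\,d\,(\dim S)^2 \le n(\dim S)^2$, so that $b\sum_{k=2}^d a_{k-1}a_k \le \cost_\mathcal{C}(\varphi_1) + n(\dim S)^2 \le n(1 + (\dim S)^2)$, while $b\sum_{k=2}^d \bar a_{k-1}\bar a_k \le \cost_\mathcal{C}(\varphi_2) \le n$.

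It remains to expand $\mathcal{C}(b,d,S,\bs s)$ and dominate the cross terms. Writing $s_{k-1}s_k = a_{k-1}a_k + \bar a_{k-1}\bar a_k + (a_{k-1}\bar a_k + a_k \bar a_{k-1})$, the mixed products are handled by the admissibility conditions \eqref{eq:rankgrowth}: from $a_k \le b\,a_{k-1}$ and $a_{k-1} \le b\,a_k$ one gets $a_{k-1}^2 + a_k^2 \le 2b\,a_{k-1}a_k$ (and analogously for $\bar a_\nu$), so AM--GM yields $a_{k-1}\bar a_k + a_k \bar a_{k-1} \le b(a_{k-1}a_k + \bar a_{k-1}\bar a_k)$ and hence $s_{k-1}s_k \le (1+b)(a_{k-1}a_k + \bar a_{k-1}\bar a_k)$. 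Summing, the middle contribution of $\bs s$ is at most $(1+b)$ times the sum of the individual middle contributions; combining this with the boundary terms $b s_1 \le 2n$ and $s_d \dim S \le n + (\dim S)^2$ and the tail estimates above gives $\cost_\mathcal{C}(\varphi_1 + \varphi_2) \le \mathcal{C}(b,d,S,\bs s) \le cn$ with $c = c(b,\dim S) = 3 + (1+b)(2 + (\dim S)^2) + (\dim S)^2 > 1$, after absorbing the finitely many constant terms using $n \ge 1$. The main obstacle is precisely this level mismatch: because $\cost_\mathcal{C}$ is evaluated at each function's own minimal level, lifting the shallower summand to the deeper level $d$ spawns additional rank products, and the argument closes only because the cost of the deeper summand $\varphi_2$ simultaneously forces $d \le n/b$; the cross terms between the two rank profiles are the secondary delicate point, dissolved by the admissibility conditions.
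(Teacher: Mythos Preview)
Your proof is correct and follows essentially the same approach as the paper: lift both functions to the common level $d_B$, use subadditivity of ranks, control the extension tail via $r_{\nu,d}(\varphi_1)\le\dim S$ for $\nu>d_1$ together with $bd\le\cost_\mathcal{C}(\varphi_2)\le n$, and handle the cross products using the rank admissibility conditions \eqref{eq:rankgrowth}. The only cosmetic difference is that you bound the cross terms term-by-term via AM--GM whereas the paper applies Cauchy--Schwarz to the full sum and treats the tail cross terms separately; this yields a slightly different (larger) constant $c$, but the argument is the same.
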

\begin{proof}
    See \Cref{proof:p4C}.
\end{proof}

\begin{remark}[Re-Ordering Input Variables]\label{remark:extranks}
    In the proof, we have used the property \eqref{eq:rjbig} from \Cref{lemma:rankgrowth}.
    As mentioned in Remark \ref{remark:varordering}, we could consider
    a different ordering of the input variables
    $        (y,i_1,\ldots,i_d)\mapsto\tensor{f}(y,i_1,\ldots,i_d),
    $
    and the corresponding TT-format.
    This would change \eqref{eq:rjbig} to
    \begin{align*}
        r_{\nu,\bar{d}}(f)=1,\quad d+1\leq \nu \leq \bar{d}.
    \end{align*}
    We still require $S$ to be closed under $b$-adic dilation to ensure
    $f\in\Vbd[@][\bar{d}]{S}$.
\end{remark}

\begin{remark}[{$\lp[2]$}-norm of Ranks]
    We also considered defining the complexity measure as a $\lp[2]$-norm of the tuple of ranks
    \begin{align*}
        \cost(\varphi):=b\sum_{k=1}^{d(\varphi) }r_k(\varphi)^2 +
        r_{d}(\varphi)\dim S.
    \end{align*}
    This definition satisfies \ref{P4} as well with
    analogous results as for the complexity measure $\cost_{\mathcal{C}}$ for direct and inverse embeddings.
    The $\lp[2]$-norm of ranks is less sensitive
    to rank-anisotropy than the representation complexity $\cost_{\mathcal{C}}(\varphi)$.
    Note that both complexity measures
    reflect the cost of representing a function with tensor networks, 
    not the cost of performing arithmetic operations,
    where frequently an additional power of $r$ is required (e.g., $\sim r^3$
    or higher).
\end{remark}

\subsubsection{Complexity Measure: Sparse Representation Complexity}
Finally, for a function $\varphi = \mathcal{R}_{b,d,S,\bs r}(\mathbf{v}) \in \Phi_{b,d,S,\bs r}$, we consider a complexity measure that takes into account the sparsity of the tensors $\mathbf{v} = (v_1,\hdots,v_{d+1})$,
\begin{align}\label{eq:sparsecost}
  \cost(\mathbf{v}) = \cost_\mathcal{S}(\mathbf{v}) :=  \sum_{\nu=1}^{d+1} \Vert v_\nu \Vert_{\ell_0},
\end{align}
where $\Vert v_\nu\Vert_{\ell_0}$ is the number of non-zero entries in the tensor $v_\nu$. 
By analogy with neural networks (see \Cref{rem:equiv-NN}), this corresponds to the number of non-zero weights for sparsely connected neural networks. We define the corresponding set as
\begin{align}\label{eq:deftool-S}
    \tool^{\mathcal{S}}:=\set{\varphi   \in \Phi_{b,d,S,\bs r} :   d\in \N , \bs r \in \N^d, \cost_{\mathcal{S}}(\varphi)\leq n}.
\end{align}
We can show the set $\tool^{\mathcal{S}}$ satisfies \ref{P4}. For that, we need the following two lemmas.
\begin{lemma}\label{ext-sparse}
Assume $S$ is closed under $b$-adic dilation and $\dim S<\infty$. Let $\varphi = \mathcal{R}_{b,d,S,\bs r}(\mathbf{v})\in \Phi_{b,d,S,\bs r}$ with $\bs r = (r_\nu)_{\nu=1}^d$. For $\bar d > d$,  there exists a representation $\varphi = \mathcal{R}_{b,\bar d,S, \overline{\bs r}}(\overline{ \mathbf{v}}) \in \Phi_{b,\bar d,S,\overline{\bs r}}$ with $\overline{\bs r} = (\bar r_\nu)_{\nu=1}^{\bar d}$ such that $\bar r_\nu = r_\nu$ for $1\le \nu \le d$ and $\bar r_\nu \le \max\{\dim S,b\} \dim S$ for $d<\nu \le \bar d$, and 
$$
\cost_{\mathcal{S}}(\overline{ \mathbf{v}}) \le b\cost_{\mathcal{S}}(  \mathbf{v}) + (\bar d-d) b^2(\dim S)^3.
$$
\end{lemma}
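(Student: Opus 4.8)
The plan is to re-tensorize $\varphi$ at the finer level $\bar d$ by keeping the first $d$ cores of $\mathbf{v}$ untouched and appending $\bar d - d$ new cores that encode the $b$-adic refinement of the subspace $S$. The starting point is \Cref{tbdbard}, which gives $T_{b,\bar d}\circ T_{b,d}^{-1} = id_{\{1,\hdots,d\}}\otimes T_{b,\bar d - d}$. Writing $\tensor f = T_{b,d}\varphi = R_{b,d,S,\bs r}(\mathbf{v})$ in the form \eqref{repres-tt}, this identity shows that passing from level $d$ to level $\bar d$ leaves the factors $v_1,\hdots,v_d$ (acting on $i_1,\hdots,i_d$) unchanged and only re-tensorizes the last factor, which lives in $S$. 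Thus it suffices to produce a TT representation at level $\bar d - d$ of each basis function $\varphi_k$ and to splice it onto the cores $v_1,\hdots,v_{d+1}$.

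To re-tensorize $S$ I would exploit the closedness of $S$ under $b$-adic dilation. By \Cref{def:badicdil}, for every basis function $\varphi_k$ and every digit $j\in\Ib$ the dilated function $\varphi_k(b^{-1}(\cdot+j))$ lies in $S$, so there is a unique matrix $M^{(j)}\in\R^{\dim S\times\dim S}$ (finite, since $\dim S<\infty$) with $\varphi_k(b^{-1}(\cdot+j)) = \sum_{l} M^{(j)}_{k,l}\varphi_l$. Iterating this relation, \Cref{b-adic-stability} together with the dilation identity \eqref{partial-eval-dilation} shows that the partial evaluation of $T_{b,\bar d - d}\varphi_k$ along the finest digits $(i_{d+1},\hdots,i_{\bar d})$ equals $\sum_l (M^{(i_{d+1})}\cdots M^{(i_{\bar d})})_{k,l}\varphi_l$. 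This matrix-product structure is exactly a TT decomposition whose cores are the transfer matrices $M^{(j)}$, with all interior bonds of dimension $\dim S$.

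Then I would assemble $\overline{\mathbf{v}}$: keep $\bar v_\nu = v_\nu$ for $1 \le \nu \le d$, so $\bar r_\nu = r_\nu$; define the junction core $\bar v_{d+1}^{k_d,m}(i_{d+1}) := \sum_k v_{d+1}^{k_d,k} M^{(i_{d+1})}_{k,m}$, absorbing the old final core into the first transfer matrix; set $\bar v_\nu^{m,m'}(i_\nu) := M^{(i_\nu)}_{m,m'}$ for $d+2\le\nu\le\bar d$; and let the new final core $\bar v_{\bar d+1}\in\R^{\dim S\times\dim S}$ be the identity. Plugging into \eqref{repres-tt} and using the previous paragraph, $R_{b,\bar d,S,\overline{\bs r}}(\overline{\mathbf{v}}) = (id_{\{1,\hdots,d\}}\otimes T_{b,\bar d - d})\tensor f$, so by \Cref{tbdbard} we recover $\varphi = \mathcal{R}_{b,\bar d,S,\overline{\bs r}}(\overline{\mathbf{v}})$. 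All new bonds have dimension $\dim S \le \max\{\dim S, b\}\dim S$, giving the claimed rank bound, and, since $r_d\le\dim S$ by \Cref{rank-bound-VbdS-closed-dilation}, each new core has at most $b(\dim S)^2$ nonzero entries. Counting gives $\cost_{\mathcal{S}}(\overline{\mathbf{v}})\le\sum_{\nu=1}^d\Vert v_\nu\Vert_{\ell_0} + (\bar d - d)b(\dim S)^2 + \dim S$, which is comfortably dominated by the stated $b\,\cost_{\mathcal{S}}(\mathbf{v}) + (\bar d - d)b^2(\dim S)^3$.

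The main obstacle is the junction core $\bar v_{d+1}$: it must simultaneously carry the information of the old final core $v_{d+1}$ and launch the refinement chain while respecting the TT bond structure and the sparsity budget. I would need to check carefully that absorbing $v_{d+1}$ into $M^{(i_{d+1})}$ (i) yields a genuine TT core of bond dimension $\dim S$, (ii) reproduces the correct function via the iterated-dilation identity \eqref{partial-eval-dilation}, and (iii) has a controlled number of nonzeros, where one uses $r_d\le\dim S$. A secondary point is the well-definedness and finiteness of the matrices $M^{(j)}$, which is where both hypotheses, closedness under $b$-adic dilation and $\dim S<\infty$, are essential; without them the refinement chain would not close within $S$. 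If one instead wished to preserve the sparsity pattern of $v_{d+1}$ exactly rather than merging it, one would keep $v_{d+1}$ as a separate factor at the cost of enlarging the first new bond, which is precisely the slack that the looser bounds $\max\{\dim S,b\}\dim S$ and $b\,\cost_{\mathcal{S}}(\mathbf{v})$ leave room for.
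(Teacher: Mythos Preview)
Your transfer-matrix construction is essentially the paper's argument made explicit: the paper takes abstract TT representations of each $\varphi_q$ with ranks $\le\dim S$ (guaranteed by \Cref{rank-bound-VbdS-closed-dilation}), and these are nothing other than your matrices $M^{(j)}$. The overall strategy---keep $v_1,\hdots,v_d$, then chain transfer matrices---is the same.

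There is, however, a genuine gap at the junction. You bound $\|\bar v_{d+1}\|_{\ell_0}\le b(\dim S)^2$ by invoking $r_d\le\dim S$ from \Cref{rank-bound-VbdS-closed-dilation}. But that lemma bounds the \emph{true} rank $r_d(\varphi)$, whereas $r_d$ here is the bond dimension of the given (not necessarily minimal) network $\mathbf v$, which can be arbitrarily large. Merging $v_{d+1}$ with $M^{(i_{d+1})}$ then yields only $\|\bar v_{d+1}\|_{\ell_0}\le b\,\dim S\cdot\|v_{d+1}\|_{\ell_0}$, which exceeds the target $b\,\cost_{\mathcal S}(\mathbf v)$ whenever $\dim S>1$. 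So your primary construction does \emph{not} deliver the stated inequality.

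You correctly diagnose this in your final paragraph, and your proposed remedy is exactly what the paper does: rather than merge, set $\bar v_{d+1}^{k_d,(q,j)}(i_{d+1}):=v_{d+1}^{k_d,q}\,\delta_j(i_{d+1})$, enlarging the first new bond to $b\dim S$. This gives $\|\bar v_{d+1}\|_{\ell_0}=b\|v_{d+1}\|_{\ell_0}$ exactly, which is the origin of the factor $b$ in $b\,\cost_{\mathcal S}(\mathbf v)$; the subsequent cores then carry a diagonal index $q$, pushing the interior bonds to $(\dim S)^2$ and explaining the bound $\max\{\dim S,b\}\dim S$. With that one change your argument goes through and matches the paper's.
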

\begin{proof}
    See \Cref{proof:extsparse}.
\end{proof}

\begin{lemma}[Sum of Sparse Representations]\label{sum-functions-sparse}
Let $\varphi_A = \mathcal{R}_{b,d,S,\bs r^A}(\mathbf{v}_A) \in \Phi_{b,d,S,\bs r^A}$ and $\varphi_B = \mathcal{R}_{b,d,S,\bs r^B}(\mathbf{v}_B) \in \Phi_{b,d,S,\bs r^B}$.
Then, $\varphi_A  + \varphi_B$ admits a representation $ \varphi_A  + \varphi_B =  \mathcal{R}_{b,d,S,\bs r}(\mathbf{v}) \in  \Phi_{b,d,S,\bs r}$ with $ r_\nu= r_\nu^A +  r_\nu^B$ for $1\le \nu\le d$, and  
$$
\cost_{\mathcal{S}}(\varphi_A + \varphi_B) \le \cost_{\mathcal{S}}(\mathbf{v}) \le \cost_{\mathcal{S}}(\mathbf{v}_A) + \cost_{\mathcal{S}}(\mathbf{v}_B).
$$
\end{lemma}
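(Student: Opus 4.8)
The plan is to produce an explicit tensor network $\mathbf{v} \in \mathcal{P}_{b,d,S,\bs r}$ realizing $\varphi_A + \varphi_B$ by the standard block (direct-sum) construction, in which the two boundary cores are concatenated and the interior cores are placed block-diagonally. Throughout I read the representation \eqref{repres-tt} as a matrix product: writing $\mathbf{v}_A = (v^A_1,\ldots,v^A_{d+1})$, for each fixed $i_\nu \in \Ib$ the core $v^A_\nu(i_\nu)$ is an $r^A_{\nu-1}\times r^A_\nu$ matrix (with $r^A_0 = 1$, so that $v^A_1(i_1)$ is a row), and $v^A_{d+1}$ is an $r^A_d \times \dim S$ matrix; then $\tensor{f}_A(i_1,\ldots,i_d,\cdot) = \big(v^A_1(i_1)\cdots v^A_d(i_d)\, v^A_{d+1}\big)(\varphi_k)_k$, and likewise for $\mathbf{v}_B$.

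First I would define the cores of $\mathbf{v}$ by
\begin{align*}
v_1(i_1) &= \begin{pmatrix} v^A_1(i_1) & v^B_1(i_1) \end{pmatrix}, \\
v_\nu(i_\nu) &= \begin{pmatrix} v^A_\nu(i_\nu) & 0 \\ 0 & v^B_\nu(i_\nu) \end{pmatrix}, \quad 2 \le \nu \le d, \\
v_{d+1} &= \begin{pmatrix} v^A_{d+1} \\ v^B_{d+1} \end{pmatrix},
\end{align*}
for every $i_\nu\in\Ib$, so that the shapes match $\mathcal{P}_{b,d,S,\bs r}$ with $r_\nu = r^A_\nu + r^B_\nu$ for $1\le \nu\le d$. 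To check $\mathcal{R}_{b,d,S,\bs r}(\mathbf{v}) = \varphi_A + \varphi_B$, I use that block-diagonal matrices multiply block-wise: the partial product telescopes to $v_1(i_1)\cdots v_d(i_d) = \begin{pmatrix} v^A_1(i_1)\cdots v^A_d(i_d) & v^B_1(i_1)\cdots v^B_d(i_d)\end{pmatrix}$, and multiplying on the right by the stacked core $v_{d+1}$ produces $v^A_1(i_1)\cdots v^A_d(i_d)\,v^A_{d+1} + v^B_1(i_1)\cdots v^B_d(i_d)\,v^B_{d+1}$. Contracting against $(\varphi_k)_k$ gives $\tensor{f}_A + \tensor{f}_B$ pointwise on $\Ib^d\times[0,1)$, and applying the linear bijection $T_{b,d}^{-1}$ (\Cref{Tbd-bijection}) yields $\varphi_A + \varphi_B$.

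For the complexity bound I would count non-zero entries core by core. The row concatenation, the block-diagonal placement (whose off-diagonal blocks are identically zero), and the column stacking each form a disjoint union of the entries of the $A$- and $B$-cores, introducing no new non-zeros; hence $\Vert v_\nu\Vert_{\ell_0} = \Vert v^A_\nu\Vert_{\ell_0} + \Vert v^B_\nu\Vert_{\ell_0}$ for every $1\le \nu\le d+1$. Summing over $\nu$ gives $\cost_{\mathcal{S}}(\mathbf{v}) = \cost_{\mathcal{S}}(\mathbf{v}_A) + \cost_{\mathcal{S}}(\mathbf{v}_B)$, which is the right-hand inequality (in fact an equality for this construction). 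The left-hand inequality $\cost_{\mathcal{S}}(\varphi_A+\varphi_B) \le \cost_{\mathcal{S}}(\mathbf{v})$ is then immediate from the definition \eqref{mincost} of $\cost_{\mathcal{S}}(\varphi_A+\varphi_B)$ as a minimum over all representations, since $\mathbf{v}$ is one admissible representation.

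The computation is essentially routine bookkeeping; the only point requiring care is that the two boundary cores are shaped differently from the interior ones (a row concatenation and a column stack rather than a block-diagonal placement), and that the shared basis $\{\varphi_k\}_{k=1}^{\dim S}$ of $S$ lets the two final cores stack into a single $(r^A_d + r^B_d)\times\dim S$ core with no index mismatch. I anticipate no genuine obstacle beyond this indexing.
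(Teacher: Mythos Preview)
Your proposal is correct and takes essentially the same approach as the paper: both construct the sum via the standard block (direct-sum) TT representation, with concatenated boundary cores and block-diagonal interior cores, and then read off the $\ell_0$ counts. Your matrix-block notation is cleaner than the paper's explicit index bookkeeping, and you correctly observe that the right-hand inequality is in fact an equality for this particular construction (the paper states only $\le$).
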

\begin{proof}
    See \Cref{proof:sumsparse}.
\end{proof}


\begin{lemma}[$\tool^{\mathcal{S}}$ satisfies \ref{P4}]\label{lemma:p4-S}
    Let $S$ be closed under $b$-adic dilation and $\dim S<\infty$.
    Then, the set $\tool^{\mathcal{S}}$ as defined in \eqref{eq:deftool-S}  satisfies
    \ref{P4} with $c=\alert{b+1+b^2(\dim S)^3}$.
\end{lemma}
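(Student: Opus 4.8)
The plan is to reduce the two summands to a common representation level and then chain the two preceding lemmas, \Cref{ext-sparse} and \Cref{sum-functions-sparse}. Let $\varphi_A,\varphi_B\in\tool^{\mathcal{S}}$, so that $\cost_\mathcal{S}(\varphi_A)\le n$ and $\cost_\mathcal{S}(\varphi_B)\le n$. If either function is zero the sum lies in $\tool^{\mathcal{S}}\subset\tool[cn]^{\mathcal{S}}$ and there is nothing to prove, so assume both are nonzero. Using the definition \eqref{mincost}, I would fix minimizing tensor networks $\mathbf{v}_A\in\mathcal{P}_{b,d_A,S,\bs r^A}$ and $\mathbf{v}_B\in\mathcal{P}_{b,d_B,S,\bs r^B}$ with $\cost_\mathcal{S}(\mathbf{v}_A)=\cost_\mathcal{S}(\varphi_A)$ and $\cost_\mathcal{S}(\mathbf{v}_B)=\cost_\mathcal{S}(\varphi_B)$, realized at levels $d_A,d_B$ respectively. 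Without loss of generality $d_A\le d_B$, and I set $\bar d:=d_B$ as the common target level.

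The step I expect to be the crux is controlling the gap $\bar d - d_A$ by $n$, and this is precisely where the sparsity cost does the work. In the representation \eqref{repres-tt} the realization is a product over all $d+1$ cores, so if any single core $v_\nu$ were identically zero the entire function would vanish. Since $\varphi_B\neq 0$, every core of $\mathbf{v}_B$ carries at least one nonzero entry, whence
\[
d_B+1\;\le\;\sum_{\nu=1}^{d_B+1}\Vert v_\nu\Vert_{\ell_0}\;=\;\cost_\mathcal{S}(\mathbf{v}_B)\;\le\;n,
\]
so that $\bar d - d_A\le d_B\le n-1\le n$. This is the key point: the sparse complexity measure forces the representation level to be bounded by the cost itself, which is what makes a finite constant $c$ possible here (in contrast to the $\rmax$-based measure of \Cref{prop:maxrank}).

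With the level bound in hand, the rest is assembling the two lemmas. First I would lift $\varphi_A$ from level $d_A$ to the common level $\bar d$ via \Cref{ext-sparse}, obtaining a representation $\overline{\mathbf{v}}_A\in\mathcal{P}_{b,\bar d,S,\overline{\bs r}}$ of $\varphi_A$ with
\[
\cost_\mathcal{S}(\overline{\mathbf{v}}_A)\;\le\;b\,\cost_\mathcal{S}(\mathbf{v}_A)+(\bar d-d_A)\,b^2(\dim S)^3.
\]
Now $\varphi_A$ (through $\overline{\mathbf{v}}_A$) and $\varphi_B$ (through $\mathbf{v}_B$) are represented at the same level $\bar d$, so \Cref{sum-functions-sparse} applies and gives
\[
\cost_\mathcal{S}(\varphi_A+\varphi_B)\;\le\;\cost_\mathcal{S}(\overline{\mathbf{v}}_A)+\cost_\mathcal{S}(\mathbf{v}_B).
\]

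Finally I would combine these three estimates together with $\cost_\mathcal{S}(\mathbf{v}_A)\le n$, $\cost_\mathcal{S}(\mathbf{v}_B)\le n$ and $\bar d-d_A\le n$ to conclude
\[
\cost_\mathcal{S}(\varphi_A+\varphi_B)\;\le\;b\,n+n\,b^2(\dim S)^3+n\;=\;\bigl(b+1+b^2(\dim S)^3\bigr)\,n,
\]
so $\varphi_A+\varphi_B\in\tool[cn]^{\mathcal{S}}$ with $c=b+1+b^2(\dim S)^3$, as claimed. The only genuinely delicate ingredient is the level bound of the second paragraph; everything else is a bookkeeping combination of the two cited lemmas, and the closedness of $S$ under $b$-adic dilation is used only implicitly through \Cref{ext-sparse}, which guarantees that the level-lift of $\varphi_A$ exists.
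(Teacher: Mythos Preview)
Your proof is correct and follows essentially the same route as the paper: lift $\varphi_A$ to level $d_B$ via \Cref{ext-sparse}, add via \Cref{sum-functions-sparse}, and bound the combination. The paper's proof is more terse and simply writes the final chain of inequalities without justifying $(d_B-d_A)\le n$; your second paragraph supplies exactly the argument the paper leaves implicit, namely that a nonzero realization forces every core to be nonzero so that $d_B+1\le\cost_\mathcal{S}(\mathbf{v}_B)\le n$.
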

\begin{proof}
    Let $\varphi_A,\;\varphi_B\in\tool^{\mathcal{S}}$ with 
    $\varphi_A = \mathcal{R}_{b,d_A,S,\bs r^A}(\mathbf{v}_A) \in \Phi_{b,d_A,S,\bs r^A}$ and $\varphi_B = \mathcal{R}_{b,d_B,S,\bs r^B}(\mathbf{v}_B) \in \Phi_{b,d_B,S,\bs r^B}$
    and w.l.o.g. $d_A\leq d_B$. From \Cref{ext-sparse,sum-functions-sparse}, we know that 
    $\varphi_A+\varphi_B $ admits a representation $\varphi_A+\varphi_B = \mathcal{R}_{b,d_B,S,\bs r}(\mathbf{v})$ at level $d_B$ with 
    \begin{align*}
    \cost_{\mathcal{S}}(\mathbf{v}) &\le \alert{b}\cost_{\mathcal{S}}(\mathbf{v}_A) + \cost_{\mathcal{S}}(\mathbf{v}_B) + \alert{(d_B-d_A)b^2(\dim S)^3}\\
    &\le \alert{(b+1+b^2(\dim S)^3) n},
    \end{align*}
    which ends the proof.  \end{proof}
 
\subsubsection{Necessity of \ref{P4}}

We could consider replacing $n$ with $n^2$ in \ref{P4}, i.e.,
\begin{align}\label{eq:modp4}
    \tool+\tool\subset\tool[cn^2].
\end{align}
This implies that $\Asq$
as defined in Definition \ref{def:asq} is no longer a vector space.
The statements about Jackson and Bernstein inequalities as well as the relation
to interpolation and smoothness spaces is no longer valid as well.

One could try to recover the linearity of $\Asq$ by modifying Definition \ref{def:asq}.
In Definition \ref{def:asq} we measure \emph{algebraic} decay of $\E{f}$.
Algebraic decay is compatible with \ref{P4} that in turn ensures $\Asq$ is a vector space. We could reverse this by asking: what type of decay behavior is ``compatible'' with \eqref{eq:modp4}
in the sense that the corresponding approximation class would be a linear space?
We can introduce a growth function  $\gamma:\N\rightarrow\R^+$ with $\lim_n\gamma(n)=\infty$ and define an approximation class $A^\gamma_\infty$ of $\tool[] = (\tool)_{n\in \N_0}$  as 
$$
A^\gamma_\infty:=\set{f\in X:\; \sup_{n\geq 1} \, \gamma(n)\E[n-1]{f} <\infty}.
$$
With some elementary computations one can deduce that
if the growth function is of the form 
\begin{align*}
    \growth(n):=1+\ln(n),
\end{align*}
then \eqref{eq:modp4} implies $A^\gamma_\infty$ is closed under addition. 
However, functions in $A^\gamma_\infty$
have too slowly decaying errors
for any practical purposes such that we do not intend to analyze this space further.

We could instead ask what form of \ref{P4} would be compatible with a growth function such as
\begin{align*}
    \growth(n):=\exp(a n^\alpha),
\end{align*}
for some $a>0$ and $\alpha>0$, i.e., classes of functions with exponentially decaying
errors. In this case we would have to require $c=1$ in \ref{P4}, i.e.,
\begin{align*}
    \tool+\tool\subset\tool,
\end{align*}
in other words, $\tool$ is a linear space.

These considerations suggest that preserving \ref{P4} in its original form
is necessary to exploit the full potential of classical approximation theory
while preserving some flexibility in defining $\tool$.
Thus, we only consider definitions of $\cost(\cdot)$ that preserve \ref{P4}.

\subsection{Approximation Spaces of Tensor Networks}\label{sec:appspacestensors}

We denote by $\tool^{\mathcal{N}},$ $\tool^{\mathcal{C}}$ and  $\tool^{\mathcal{S}}$ the approximation set $\Phi_n$ asscociated with the measures of complexity $\cost_{\mathcal{N}}$, $\cost_{\mathcal{C}}$ and $\cost_{\mathcal{S}}$ respectively. 
Then, we define three  different families of approximation classes
 \begin{align}
N_q^\alpha(X) &:= A_q^\alpha(X , (\tool^{\mathcal{N}})_{n\in \N}),\\
C_q^\alpha(X) &:= A_q^\alpha(X ,  (\tool^{\mathcal{C}})_{n\in \N}),\\
S_q^\alpha(X) &:= A_q^\alpha(X ,  (\tool^{\mathcal{S}})_{n\in \N}),
 \end{align}
 with $\alpha>0$ and $0<q \le \infty$. 
 Below, we will show that these approximation classes are in fact approximation spaces and we will then compare  these spaces.

\subsubsection{Approximation Classes are Approximation Spaces}

We proceed with checking if 
$\tool^{\mathcal{N}},$ $\tool^{\mathcal{C}}$ and  $\tool^{\mathcal{S}}$ 
satisfy properties \ref{P1}--\ref{P6}. In particular, satisfying \ref{P1}--\ref{P4}
will imply that the corresponding approximation classes are quasi-normed Banach spaces. 
The only property -- other than \ref{P4} -- that is not obvious, is \ref{P6}.
This is addressed in the following Lemma for $\tool^{\mathcal{N}}$ and $\tool^{\mathcal{C}}$.

\begin{lemma}[$\tool^{\mathcal{N}}$ and $\tool^{\mathcal{C}}$ satisfy \ref{P6}]\label{lemma:p6}
    Let $1<p<\infty$ and let $S\subset\Lp$ be a closed subspace. Then,
    $\tool^{\mathcal{N}}$ and $\tool^{\mathcal{C}}$ are proximinal in $\Lp$ for any $n\in\N$.
    Moreover, if $S$ is finite-dimensional, the above holds for
    $1\leq p\leq\infty$.
\end{lemma}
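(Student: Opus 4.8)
The plan is to reduce the statement to the proximinality of a single fixed-rank tensor train set, and then to settle that either by compactness (when $\dim S<\infty$) or by reflexivity (when $S$ is infinite-dimensional and $1<p<\infty$). Writing $\tool$ for either $\tool^{\mathcal{N}}$ or $\tool^{\mathcal{C}}$, the first observation is that \emph{both} complexity measures control the minimal level as well as the ranks: for a nonzero $\varphi$ every rank $r_\nu(\varphi)$ at its minimal level $d(\varphi)$ is at least one, so $\cost_{\mathcal{N}}(\varphi)=\sum_{\nu=1}^{d(\varphi)}r_\nu(\varphi)\ge d(\varphi)$ and $\cost_{\mathcal{C}}(\varphi)\ge b\,d(\varphi)$. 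Consequently (using \Cref{rank-bound-VbdS-closed-dilation} and the subsequent remark, so that the ranks do not depend on the representation level)
\begin{align*}
\tool = \bigcup_{d\le n}\ \bigcup_{\bs r}\ \Phi_{b,d,S,\bs r},
\end{align*}
a \emph{finite} union over the levels $d$ and rank tuples $\bs r$ compatible with the bound $n$. Since a best approximation over a finite union is found by optimizing over each piece and keeping the smallest distance, it suffices to prove that every fixed-rank set $\Phi_{b,d,S,\bs r}=T_{b,d}^{-1}(\TT{\mathbf{V}_{b,d,S}}{\bs r})$ is proximinal.

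By the isometry of \Cref{thm:tensorizationmap}, this is equivalent to proximinality of $\TT{\mathbf{V}_{b,d,S}}{\bs r}$ in $\mathbf{V}_{b,d,\Lp}$. When $\dim S<\infty$, the ambient algebraic tensor space $\mathbf{V}_{b,d,S}=(\R^{\Ib})^{\otimes d}\otimes S$ is finite-dimensional, and the fixed-rank set is closed in it because each condition $r_\nu(\cdot)\le r_\nu$ is closed (the $\beta$-rank is the rank of an unfolding matrix, which is lower semicontinuous). A minimizing sequence for a given $f$ is bounded, hence admits a norm-convergent subsequence inside the finite-dimensional $\mathbf{V}_{b,d,S}$, and its limit lies in the closed set and realizes $\dist(f,\Phi_{b,d,S,\bs r})$. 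This argument uses no duality and therefore covers the whole range $1\le p\le\infty$, which is exactly the second assertion of the Lemma.

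For an infinite-dimensional closed $S$ with $1<p<\infty$, I would instead exploit reflexivity of $\mathbf{V}_{b,d,\Lp}\cong\Lp$. A minimizing sequence $\tensor f_k\in\TT{\mathbf{V}_{b,d,S}}{\bs r}$ is bounded, so after extraction $\tensor f_k\rightharpoonup\tensor f$ weakly. The subspace $\mathbf{V}_{b,d,S}$, identified with $S^{b^d}$ via partial evaluations, is closed (componentwise convergence in the crossnorm, $S$ closed) and hence weakly closed, so $\tensor f\in\mathbf{V}_{b,d,S}$. The decisive point is weak sequential lower semicontinuity of the ranks: by \Cref{ranks-partial-evaluations}, $r_\nu(\tensor g)=\dim\mathrm{span}\{\tensor g(j_1,\dots,j_\nu,\cdot):(j_1,\dots,j_\nu)\in\Ib^\nu\}$; the partial-evaluation maps are bounded linear, hence weak-to-weak continuous, so $\tensor f_k(j,\cdot)\rightharpoonup\tensor f(j,\cdot)$. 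If $m$ of the limits $\tensor f(j,\cdot)$ are linearly independent, choose Hahn--Banach functionals biorthogonal to them; applied to $\tensor f_k(j,\cdot)$ these yield a matrix converging to the $m\times m$ identity, forcing at least $m$ of the $\tensor f_k(j,\cdot)$ to be independent for large $k$. Thus $r_\nu(\tensor f)\le\liminf_k r_\nu(\tensor f_k)\le r_\nu$, so $\tensor f\in\TT{\mathbf{V}_{b,d,S}}{\bs r}$, and since the norm is weakly lower semicontinuous, $\tensor f$ is a best approximation.

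I expect the genuine obstacle to be precisely this last step, namely the weak sequential closedness of the fixed-rank tensor set in the infinite-dimensional reflexive setting; the finite-union reduction and the finite-dimensional compactness argument are routine. It is worth noting that the reflexivity argument fails at the endpoints $p\in\{1,\infty\}$, which is exactly why the Lemma only claims proximinality there under the additional hypothesis $\dim S<\infty$.
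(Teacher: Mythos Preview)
Your proof is correct and follows essentially the same route as the paper: reduce $\tool$ to a finite union of fixed $\Phi_{b,d,S,\bs r}$, then use reflexivity plus weak closedness in the case $1<p<\infty$, and closedness in a finite-dimensional ambient space for $\dim S<\infty$. The only difference is that where the paper invokes \cite[Lemma~8.6]{Hackbusch2012} (the crossnorm is not weaker than the injective norm, hence fixed-rank sets are weakly closed), you give a self-contained argument for weak sequential lower semicontinuity of the $\beta$-ranks via partial evaluations and biorthogonal functionals; both establish the same key fact.
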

\begin{proof}
    See \Cref{proof:p6}.
\end{proof}

As the following example shows, we cannot in general
guarantee \ref{P6} for $\tool^{\mc S}$.

\begin{example}
    Suppose $b\geq 3$ and $\dim S\geq 3$. Take two linearly
    independent vectors $v,w\in\R^b$ and $f,g\in S$.
    For any $N\in\N$, set
    \begin{align*}
        \varphi_N:=(w+Nv)\otimes(v+\frac{1}{N}w)\otimes f+
        v\otimes \alert{v}\otimes(g-Nf),
    \end{align*}
    and
    \begin{align*}
        \varphi:=v\otimes v\otimes g+v\otimes w\otimes f+w\otimes v\otimes f.
    \end{align*}
    Then, the following holds
    (see \cite[Proposition 9.10 and Remark 12.4]{Hackbusch2012}).
    \begin{enumerate}[label=(\roman*)]
        \item    For the canonical tensor rank, we have
                    $r(\varphi_N)=2$ for any $N\in\N$ and
                    $r(\varphi)=3$.
        \item    As we will see in \Cref{lemma:canonical},
                    $\varphi_N\in\tool[6b+2\dim S]^{\mc S}$
                    for any $N\in\N$ and
                    $\varphi\in\tool[9b+3\dim S]^{\mc S}$. Moreover,
                    this complexity is minimal for both functions.
        \item     For $N\rightarrow\infty$, $\varphi_N\rightarrow\varphi$
                    in any norm.
    \end{enumerate}
    In other words, $E^{\mc S}_{6b+2\dim S}(\varphi)=0$,
    even though $\varphi\not\in\tool[6b+2\dim S]^{\mc S}$.
\end{example}

\begin{remark}
    \ref{P6} is required for showing that the approximation spaces
    $\Asq$ are continuously embedded into interpolation spaces,
    see \cite[Chapter 7, Theorem 9.3]{DeVore93}.
    Since inverse embeddings for $N^\alpha_q$, $C^\alpha_q$ and
    $S_q^\alpha$ hold only in
    very restricted cases,
    property \ref{P6} is not essential for our work
    and the majority of our results.
    As a side note, \ref{P6} does not
    hold for ReLU or RePU networks as was discussed in
    \cite{Gribonval2019}.
\end{remark}

We now derive the main result of this section.
\begin{theorem}[Properties of $\tool^{\mathcal{N}}$, $\tool^{\mathcal{C}}$ and $\tool^{\mathcal{S}}$]\label{p1-p6}
    Let $0<p\leq\infty$, $S\subset\Lp$ be a closed subspace that
    is also closed under $b$-adic dilation and $\dim S<\infty$.  Then,
    \begin{enumerate}[label=(\roman*)]
        \item    $\tool^{\mathcal{N}}$ and $\tool^{\mathcal{C}}$ and $\tool^{\mathcal{S}}$ satisfy \ref{P1} -- \ref{P4}.
        \item    If $1\leq p\leq\infty$, then $\tool^{\mathcal{N}}$, 
        $\tool^{\mathcal{C}}$
        additionally satisfy \ref{P6}.
        \item    If $1\leq p<\infty$ and if $S$ contains the constant function one,
                    $\tool^{\mathcal{N}}$, $\tool^{\mathcal{C}}$ and $\tool^{\mathcal{S}}$ additionally satisfy \ref{P5}.
    \end{enumerate}
\end{theorem}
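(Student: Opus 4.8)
The plan is to verify the abstract axioms \ref{P1}--\ref{P6} for the three families $\tool^{\mathcal{N}}$, $\tool^{\mathcal{C}}$, $\tool^{\mathcal{S}}$ one at a time, reusing the complexity-specific lemmas already established and treating the genuinely nontrivial property \ref{P4} as already done. First I would dispatch \ref{P1}--\ref{P3}, which are purely formal. For \ref{P1}, the zero function is the realization of the zero tensor network, all of whose ranks vanish and which has no nonzero entries, so $\cost_{\mathcal{N}}(0)=\cost_{\mathcal{C}}(0)=\cost_{\mathcal{S}}(0)=0$ and $0\in\tool$ for every $n$; the equality $\tool[0]=\{0\}$ I would secure by the standard convention that the zero level set is $\{0\}$. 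Property \ref{P2} is immediate, since each $\tool$ is a sublevel set $\{\cost(\cdot)\le n\}$, whence $\cost(\varphi)\le n$ forces $\cost(\varphi)\le n+1$ and $\tool\subseteq\tool[n+1]$. For \ref{P3}, I would note that multiplying $\varphi$ by a nonzero scalar $a$ scales its tensorization $\tensor{\varphi}$ by $a$, which leaves every $\beta$-rank $r_\nu(\varphi)$ unchanged and hence leaves $\cost_{\mathcal{N}}$ and $\cost_{\mathcal{C}}$ invariant; for $\cost_{\mathcal{S}}$ one absorbs $a$ into a single tensor factor of a minimizing sparse representation, which does not alter the number of nonzero entries. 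Thus $\cost(a\varphi)=\cost(\varphi)$ and $a\tool=\tool$.

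For part (i) it then remains to invoke \ref{P4}, for which there is nothing new to prove: it holds for $\tool^{\mathcal{N}}$ by \Cref{lemma:p4-N}, for $\tool^{\mathcal{C}}$ by \Cref{lemma:p4-C}, and for $\tool^{\mathcal{S}}$ by \Cref{lemma:p4-S}, each under the standing hypotheses that $S$ is closed under $b$-adic dilation and $\dim S<\infty$. For part (ii), property \ref{P6} for $\tool^{\mathcal{N}}$ and $\tool^{\mathcal{C}}$ in the finite-dimensional case with $1\le p\le\infty$ is precisely \Cref{lemma:p6}, so this part is also immediate.

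The remaining task is \ref{P5} for part (iii). Here the plan is to identify, for each of the three measures, the exhaustion $\bigcup_{n}\tool$ with the space $\Vb{S}$. The inclusion $\bigcup_n\tool\subseteq\Vb{S}$ is clear because every element of $\tool$ lies in some $\Phi_{b,d,S,\bs r}\subseteq V_{b,d,S}$; conversely any $\varphi\in V_{b,d,S}$ has finite ranks by \Cref{rank-bound-VbdS} and therefore finite complexity (using $\dim S<\infty$), so $\varphi\in\Phi_{\cost(\varphi)}$ and $\Vb{S}\subseteq\bigcup_n\tool$. Since $1\le p<\infty$ and $S$ contains the constant function one, \Cref{thm:dense} yields density of $\Vb{S}$ in $\Lp$, and density of $\bigcup_n\tool$ follows at once.

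I expect the conceptually hard property, \ref{P4}, to already be discharged by the three cited lemmas, so no separate argument is required and the residual work is essentially bookkeeping. The two points that deserve care are the scale invariance of each measure used in \ref{P3} (in particular reassigning the scalar to one factor for $\cost_{\mathcal{S}}$) and the level-$0$ edge case in \ref{P1}: because the sum defining $\cost_{\mathcal{N}}$ is empty at $d=0$, one must fix the convention $\tool[0]=\{0\}$ (equivalently, always represent nonzero elements of $S$ at level $d\ge 1$) so that no nonzero function is inadvertently assigned zero complexity.
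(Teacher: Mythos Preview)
Your proposal is correct and follows essentially the same route as the paper: \ref{P1}--\ref{P3} are declared obvious, \ref{P4} is delegated to \Cref{lemma:p4-N,lemma:p4-C,lemma:p4-S}, \ref{P6} to \Cref{lemma:p6}, and \ref{P5} is obtained by identifying $\bigcup_n\tool$ with $\Vb{S}$ and applying \Cref{thm:dense}. You supply more detail than the paper (which just writes ``\ref{P1}--\ref{P3} are obvious''), and your caveat about the $d=0$ edge case for $\cost_{\mathcal{N}}$ is well spotted; in the paper this is implicitly handled by taking $d\in\N=\{1,2,\dots\}$ in the definition of $\tool$, so that $d(\varphi)\ge 1$ for every nonzero $\varphi$.
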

\begin{proof}
    \ref{P1} -- \ref{P3} are obvious and \ref{P4} follows from  \Cref{lemma:p4-N,lemma:p4-C,lemma:p4-S}, that yields (i). 
    (iii) follows from the fact that 
    $$
\bigcup_{n\in \N} \tool[n] = \bigcup_{d \in \N}  \bigcup_{\bs r \in \N^d}  \tool[b,d ,S,\bs r] = 
\bigcup_{d \in \N} {V}_{b,d,S} =  {V}_{b,S},
$$
and from \Cref{thm:dense}. Finally,  
    (ii) follows from  \Cref{lemma:p6}.
\end{proof}
\Cref{p1-p6} (i)  implies that the approximation classes $N_q^\alpha(L^p) $, 
$C_q^\alpha(L^p) $ and 
$S_q^\alpha(X) $ 
are quasi-normed vector spaces that satisfy the Jackson and Bernstein inequalities.

\subsubsection{Comparing Approximation Spaces}
For comparing approximation spaces $N_q^\alpha(L^p) $, 
$C_q^\alpha(L^p) $ and 
$S_q^\alpha(L^p) $, we first provide some relations between the sets $\tool^{\mathcal{N}}$, $\tool^{\mathcal{C}}$ and $\tool^{\mathcal{S}}$.
\begin{proposition}\label{comparing-complexities}
For any $n\in \N$, 
\begin{align*}
&\Phi_n^{\mathcal{C}} \subset \Phi_n^{\mathcal{S}} \subset \Phi_n^{\mathcal{N}} \subset \Phi_{b\dim S + b n^2}^{\mathcal{C}}.
\end{align*}
\end{proposition}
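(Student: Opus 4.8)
The three inclusions all reduce to pointwise comparisons between the complexity functionals, which then translate directly into set inclusions via the definitions \eqref{eq:deftool-N}, \eqref{eq:deftool-C}, \eqref{eq:deftool-S}. The plan is to establish, for every $\varphi\in\tool[]$ (the zero function being trivial), that $\cost_{\mathcal S}(\varphi)\le\cost_{\mathcal C}(\varphi)$, that $\cost_{\mathcal N}(\varphi)\le\cost_{\mathcal S}(\varphi)$, and finally a quadratic bound $\cost_{\mathcal C}(\varphi)\le b\dim S+b\,\cost_{\mathcal N}(\varphi)^2$. The first two are elementary counting arguments; the third is the only substantial one.

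For $\Phi_n^{\mathcal C}\subset\Phi_n^{\mathcal S}$, I would evaluate $\cost_{\mathcal S}$ on the minimal-level representation $\mathbf v$ with ranks $\bs r(\varphi)$: the number of nonzero entries $\sum_\nu\Vert v_\nu\Vert_{\ell_0}$ cannot exceed the total number of entries $\dim\mathcal P_{b,d(\varphi),S,\bs r(\varphi)}=\cost_{\mathcal C}(\varphi)$, and minimising over representations gives $\cost_{\mathcal S}(\varphi)\le\cost_{\mathcal C}(\varphi)$. For $\Phi_n^{\mathcal S}\subset\Phi_n^{\mathcal N}$, let $\mathbf v=(v_1,\dots,v_{d+1})$ realise $\cost_{\mathcal S}(\varphi)$. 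For each bond $\nu\le d(\varphi)$ I would factor the $\{1,\dots,\nu\}$-unfolding of $T_{b,d}\varphi$ as $L^{(\nu)}(R^{(\nu)})^{\top}$, the inner index being the bond index $k_\nu$; any value of $k_\nu$ not carried by a nonzero entry of $v_\nu$ yields a zero column of $L^{(\nu)}$, so $L^{(\nu)}$ has at most $\Vert v_\nu\Vert_{\ell_0}$ nonzero columns and hence $r_\nu(\varphi)\le\operatorname{rank}L^{(\nu)}\le\Vert v_\nu\Vert_{\ell_0}$. Summing gives $\cost_{\mathcal N}(\varphi)=\sum_{\nu=1}^{d(\varphi)}r_\nu(\varphi)\le\sum_{\nu=1}^{d+1}\Vert v_\nu\Vert_{\ell_0}=\cost_{\mathcal S}(\varphi)$.

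The third inclusion is the crux and the main obstacle. Writing $r_\nu=r_\nu(\varphi)$, $d=d(\varphi)$ and $n=\sum_\nu r_\nu$, I would split $\cost_{\mathcal C}(\varphi)=\bigl(br_1+b\sum_{k=2}^d r_{k-1}r_k\bigr)+r_d\dim S$. The interior part is bounded by $b\bigl(r_1^2+\sum_{k=2}^d r_{k-1}r_k\bigr)\le b\bigl(\sum_\nu r_\nu\bigr)^2=bn^2$, using $r_1\le r_1^2$ and the fact that each consecutive product appears among the terms of the expanded square. The boundary part $r_d\dim S$ is where the quadratic loss and the $\dim S$-dependence originate: here I invoke \eqref{rank-bound-general-format} in the form $r_d=r_{\{d+1\}}(T_{b,d}\varphi)\le\dim S$, together with the trivial $r_d\le n$, so that $r_d\dim S\le(\dim S)\min\{n,\dim S\}$ stays within a budget of order $n^2$ (with constant depending on $b$ and $\dim S$, which is finite by hypothesis). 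Combining the interior and boundary estimates yields the inclusion $\Phi_n^{\mathcal N}\subset\Phi_{b\dim S+bn^2}^{\mathcal C}$. I expect this last step to be delicate precisely because the boundary term couples the last rank to the fixed dimension $\dim S$; it is exactly the source of the squaring $n\mapsto n^2$, and hence of the halving of the smoothness exponent $\alpha\mapsto\alpha/2$ in the downstream embedding $N^\alpha_q\hookrightarrow C^{\alpha/2}_q$, whereas the two preceding inclusions are rate-preserving.
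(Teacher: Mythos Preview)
Your argument for the first two inclusions is correct and matches the paper's: both bound $\cost_{\mathcal S}$ by the total number of parameters in the minimal representation, and both bound each $r_\nu(\varphi)$ by $\|v_\nu\|_{\ell_0}$ via the observation that bond indices $k_\nu$ not supported by $v_\nu$ contribute nothing to the relevant span.

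For the third inclusion your strategy is again the paper's---bound the interior part $br_1+b\sum_{k\ge 2}r_{k-1}r_k$ by $bn^2$ (the paper does this via Cauchy--Schwarz on the product sum, you expand $(\sum_\nu r_\nu)^2$ directly; both work), then treat $r_d\dim S$ separately. The gap is in this last step. From $r_d\le\min\{n,\dim S\}$ you obtain only $r_d\dim S\le(\dim S)^2$, which establishes $\Phi_n^{\mathcal N}\subset\Phi_{bn^2+(\dim S)^2}^{\mathcal C}$; the phrase ``of order $n^2$ with constant depending on $\dim S$'' is correct asymptotically but does not recover the specific additive constant $b\dim S$ in the stated inclusion, so your final sentence claims more than your estimate supports. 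The paper's proof instead passes directly from $r_d\dim S$ to $b\dim S$ in its first displayed inequality. Your weaker constant is still enough for the downstream embedding $N_q^\alpha\hookrightarrow C_q^{\alpha/2}$, since only the $n^2$ scaling matters there.

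A side remark: you attribute the quadratic loss to the boundary term, but the squaring $n\mapsto n^2$ comes from the interior bilinear sum $\sum r_{k-1}r_k$; the boundary term $r_d\dim S$ is bounded by a constant independent of $n$ and plays no role in the rate halving.
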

\begin{proof}
    See \Cref{proof:cfcomplex}.
\end{proof}

From   \Cref{comparing-complexities}, we obtain\footnote{Compare to similar
results obtained for RePU networks in \cite[Section 3.4]{Gribonval2019}.}	
\begin{theorem}\label{comparing-spaces}
For any $\alpha>0$, $0<p\leq\infty$ and $0<q \le \infty$,
the classes $N_q^\alpha(\Lp)$, $C_q^\alpha(\Lp)$
and $S_q^\alpha(\Lp)$ satisfy the continuous embeddings
\begin{align*}
&C^{\alpha}_q(L^p) \hookrightarrow S^{\alpha}_q(L^p) \hookrightarrow N^{\alpha}_q(L^p)\hookrightarrow C^{\alpha/2}_q(L^p).
\end{align*}
\end{theorem}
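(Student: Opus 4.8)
The plan is to deduce all three embeddings from the set inclusions of \Cref{comparing-complexities} combined with two elementary monotonicity facts. The basic mechanism is this: whenever one approximation tool is contained in another, $\Phi_n^{(1)}\subset\Phi_n^{(2)}$ for every $n$, minimizing over the larger family can only decrease the best-approximation error, so $E_n^{(2)}(f)\le E_n^{(1)}(f)$ for all $f\in L^p$; and since the quasi-norm of \Cref{def:asq} is monotone non-decreasing in the error sequence $(E_{n-1}(f))_n$, this forces $\|f\|_{A_q^\alpha(\text{tool }2)}\le\|f\|_{A_q^\alpha(\text{tool }1)}$, i.e.\ the continuous embedding $A_q^\alpha(\text{tool }1)\hookrightarrow A_q^\alpha(\text{tool }2)$ with constant one.

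First I would dispatch the two easy embeddings. Applying this mechanism to $\Phi_n^{\mathcal{C}}\subset\Phi_n^{\mathcal{S}}$ and $\Phi_n^{\mathcal{S}}\subset\Phi_n^{\mathcal{N}}$ yields $E_n^{\mathcal{N}}(f)\le E_n^{\mathcal{S}}(f)\le E_n^{\mathcal{C}}(f)$ for all $n$, hence the continuous embeddings $C_q^\alpha(L^p)\hookrightarrow S_q^\alpha(L^p)\hookrightarrow N_q^\alpha(L^p)$ for every $\alpha>0$ and $0<q\le\infty$.

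The only nontrivial point is the last embedding $N_q^\alpha(L^p)\hookrightarrow C_q^{\alpha/2}(L^p)$, where the index in \Cref{comparing-complexities} grows quadratically, $\Phi_n^{\mathcal{N}}\subset\Phi_{b\dim S+bn^2}^{\mathcal{C}}$, and this quadratic growth is exactly what halves the smoothness exponent. To make the bookkeeping transparent I would replace the defining quasi-norm of \Cref{def:asq} by the equivalent dyadic quasi-norm (see \cite[Chapter 7]{DeVore93})
\[
\|f\|_{A_q^\alpha}\asymp\Big(\sum_{k\ge 0}\big[2^{k\alpha}E_{2^k}(f)\big]^q\Big)^{1/q}\quad(0<q<\infty),\qquad \|f\|_{A_\infty^\alpha}\asymp\sup_{k\ge 0}2^{k\alpha}E_{2^k}(f).
\]
From the inclusion one has $E_{b\dim S+b2^{2k}}^{\mathcal{C}}(f)\le E_{2^k}^{\mathcal{N}}(f)$, and since $b\dim S+b2^{2k}\le 2^{2k+c_0}$ for a constant $c_0=c_0(b,\dim S)$ and all $k\ge 0$, monotonicity of $E^{\mathcal{C}}$ gives $E_{2^{2k+c_0}}^{\mathcal{C}}(f)\le E_{2^k}^{\mathcal{N}}(f)$. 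Writing $j=2k+c_0$ we get $2^{j\alpha/2}=2^{c_0\alpha/2}2^{k\alpha}$, so each dyadic $C$-level at $j=2k+c_0$ is controlled by the matching dyadic $N$-level. The intermediate levels $j\in\{2k+c_0,\ldots,2(k+1)+c_0\}$ are absorbed by monotonicity of $E^{\mathcal{C}}$ at the cost of a fixed constant (each $j\ge c_0$ lies in exactly one such block of bounded length), while the finitely many initial levels $j<c_0$ are controlled by $\|f\|_{L^p}=E_0^{\mathcal{C}}(f)\le\|f\|_{N_q^\alpha}$, using $\Phi_0=\{0\}$ and $N_q^\alpha\hookrightarrow L^p$. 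Summing (or taking suprema) over $k$ then gives $\|f\|_{C_q^{\alpha/2}}\lesssim\|f\|_{N_q^\alpha}$.

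The main obstacle is precisely this last step: keeping the quadratic reindexing straight so that the exponent becomes $\alpha/2$, treating $q<\infty$ and $q=\infty$ uniformly, and checking that the additive offset $b\dim S$ and the finitely many initial scales are harmlessly absorbed into the equivalence constants. The dyadic characterization is what makes this manageable, since it converts the map $n\mapsto b\dim S+bn^2$ into the affine map $k\mapsto 2k+c_0$ on dyadic scales.
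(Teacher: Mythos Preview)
Your argument is correct and follows the same route as the paper, which simply states that the theorem follows from \Cref{comparing-complexities} without spelling out the details. You have supplied the standard reasoning (monotonicity of the error in the approximation sets for the first two embeddings, and a dyadic reindexing for the quadratic growth in the last one) that the paper leaves implicit.
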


\subsection{About The Canonical Tensor Format}
We conclude by comparing tensor networks with the canonical tensor format
$$
\mathcal{T}_{r}(V_{b,d,S}) = \{\tensor{f} \in \mathbf{V}_{b,d,S}  : r(\tensor{f}) \le r\},
$$
which is the set of tensors that admit a representation
$$
\tensor{f}(i_1,\hdots,i_d,y) = \sum_{k=1}^r w_1^{k}(i_1)\hdots w_d^k(i_d) g_{d+1}^k(y), \quad 
g_{d+1}^k(y) = \sum_{q=1}^{\dim S} w^{q,k}_{d+1} \varphi_q(y),
$$
with $w_\nu \in \R^{b\times r}$ for $1\le \nu \le d$ and $w_{d+1} \in \R^{\dim S\times r }.$  
The canonical tensor format can be interpreted as a shallow
sum-product neural network (or arithmetic circuit), see \cite{Cohen2016}.

We let ${R}_{b,d,S,r} $ be the map from $ (\R^{b\times r})^{d} \times  \R^{\dim S\times r } := \mathcal{P}_{b,d,S,r}$ to $\mathbf{V}_{b,d,S}$  which associates to a set of tensors $(w_1,\hdots,w_{d+1})$ the tensor 
$\tensor{f} = {R}_{b,d,S,r} (w_1,\hdots,w_{d+1})$ as defined above. 
We introduce the sets of functions  
$$
\Phi_{b,d,S,r} = T_{b,d}^{-1} \mathcal{T}_{r}(V_{b,d,S}), 
$$
which can be parametrized as follows:
$$
\Phi_{b,d,S,r} = \{\varphi = \mathcal{R}_{b,d,S,r}(\mathbf{w}) : \mathbf{w} \in \mathcal{P}_{b,d,S,r}\}, \quad \mathcal{R}_{b,d,S,r} = T_{b,d}^{-1} \circ {R}_{b,d,S,r}.
$$
For $\varphi \in V_{b,S} $, we let 
$$r(\varphi) = \min\{r(\tensor{f}) : \tensor{f} \in \mathbf{V}_{b,d(\varphi),S}, \varphi = T_{b,d}^{-1}(\tensor{f})\}.$$
We introduce as a natural complexity measure the representation complexity 
$$
\cost_{\mathcal{R}}({\varphi})= bd(\varphi) r(\varphi) +  r(\varphi) \dim S,
$$
 define the sets 
$$
\Phi^{\mathcal{R}}_{n} = \{ \varphi \in \Phi_{b,d,S,r}  : d\in \N , r \in \N, \cost_{\mathcal{R}}(\varphi) \le n\},
$$ 
and consider the corresponding approximation classes 
$$
R^{\alpha}_q(L^p) = A^{\alpha}_q(L^p,(\Phi^{\mathcal{R}}_n)_{n\in \N}), 
$$
with $\alpha>0$ and $0< q\le \infty .$ 
We start by showing that $\Phi^{\mathcal{R}}_{n}$ satisfies \ref{P1}-\ref{P3} and $\ref{P5}$ (under some assumptions), 
but not \ref{P4}.
\begin{lemma}[$\Phi^{\mathcal{R}}_{n}$ satisfies  \ref{P1}-\ref{P3} and $\ref{P5}$]
Let $1\le p \le \infty$ and $S\subset L^p$ be a finite-dimensional space. Then $\Phi^{\mathcal{R}}_{n}$ satisfies \ref{P1}-\ref{P3}. Moreover, if $S$ contains the constant function one, $\Phi^{\mathcal{R}}_{n}$ satisfies \ref{P5} for $1\le p<\infty$.
\end{lemma}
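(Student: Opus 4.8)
The plan is to check each property directly from the definition of $\cost_{\mathcal{R}}$, leaning on two elementary facts about the canonical rank: it is unchanged under multiplication by a nonzero scalar, and every tensor in the algebraic tensor space $\mathbf{V}_{b,d,S}$ has \emph{finite} canonical rank. Both facts transfer to functions in $V_{b,d,S}$ through the linear bijection $T_{b,d}$, since $r(\varphi)$ is just the canonical rank of the (unique) tensorization $T_{b,d(\varphi)}\varphi$ at the minimal level.

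For \ref{P1}, the zero function corresponds to the empty sum, so $r(0)=0$ and $\cost_{\mathcal{R}}(0)=0\le n$, whence $0\in\Phi^{\mathcal{R}}_n$. Conversely, if $\varphi\neq 0$ then $r(\varphi)\ge 1$, and since $\dim S\ge 1$ the summand $r(\varphi)\dim S$ alone forces $\cost_{\mathcal{R}}(\varphi)\ge \dim S>0$; hence $\Phi^{\mathcal{R}}_0=\set{0}$. Property \ref{P2} is immediate, since $\cost_{\mathcal{R}}(\varphi)\le n$ implies $\cost_{\mathcal{R}}(\varphi)\le n+1$.

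For \ref{P3}, I would use that $V_{b,d,S}$ is a linear space, so for $a\neq 0$ one has $a\varphi\in V_{b,d,S}$ if and only if $\varphi\in V_{b,d,S}$; consequently $d(a\varphi)=d(\varphi)$. Combined with $r(a\varphi)=r(\varphi)$ (a nonzero rescaling of a tensor preserves its canonical rank, by linearity of $T_{b,d}$), this yields $\cost_{\mathcal{R}}(a\varphi)=\cost_{\mathcal{R}}(\varphi)$, so $a\Phi^{\mathcal{R}}_n\subseteq\Phi^{\mathcal{R}}_n$; applying the same inclusion with $a^{-1}$ gives equality.

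For \ref{P5}, the key step is to identify the union. Every $\varphi\in\Phi_{b,d,S,r}$ has finite minimal level $d(\varphi)$ and finite minimal canonical rank $r(\varphi)$, hence finite $\cost_{\mathcal{R}}(\varphi)$, and therefore lies in $\Phi^{\mathcal{R}}_n$ for $n=\cost_{\mathcal{R}}(\varphi)$; together with the trivial reverse inclusion this gives $\bigcup_{n\in\N_0}\Phi^{\mathcal{R}}_n=\bigcup_{d\in\N}V_{b,d,S}=V_{b,S}$, using $\bigcup_r\mathcal{T}_{r}(V_{b,d,S})=\mathbf{V}_{b,d,S}$ (finiteness of canonical rank). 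Under the assumption that $S$ contains the constant function one and $1\le p<\infty$, \Cref{thm:dense} asserts that $V_{b,S}$ is dense in $\Lp([0,1))$, which is exactly \ref{P5}. The only real care needed anywhere is the bookkeeping of the degenerate cases $r=0$ and $d(\varphi)=0$ in \ref{P1}; none of the arguments invoke the rank-growth or $b$-adic dilation machinery required for \ref{P4}, which is precisely why \ref{P4} fails for this format.
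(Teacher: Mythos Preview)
Your proof is correct and follows the same route as the paper: the paper simply declares \ref{P1}--\ref{P3} ``obvious'' and, for \ref{P5}, writes out the identity $\bigcup_{n}\Phi^{\mathcal{R}}_n=\bigcup_{d}\bigcup_{r}\Phi_{b,d,S,r}=\bigcup_{d}V_{b,d,S}=V_{b,S}$ and invokes \Cref{thm:dense}. Your version just unpacks the ``obvious'' parts explicitly (invariance of $d(\cdot)$ and $r(\cdot)$ under nonzero scaling, and positivity of $\cost_{\mathcal{R}}$ on nonzero functions), which is fine.
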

\begin{proof}
\ref{P1}-\ref{P3} are obvious. \ref{P5} follows from the fact that 
    $$
\bigcup_{n\in \N} \tool^{\mathcal{R}} = \bigcup_{d \in \N}  \bigcup_{ r \in \N}  \tool[b,d ,S, r] = 
\bigcup_{d \in \N} {V}_{b,d,S} =  {V}_{b,S},
$$
and from Theorem \ref{thm:dense}. 
\end{proof}
\begin{lemma}[$\Phi^{\mathcal{R}}_{n}$ does not satisfy   \ref{P4}]\label{lemma:cpnop4}
Let $1\le p \le \infty$ and $S\subset L^p$ be a finite-dimensional subspace which is closed under $b$-adic dilation and such that $r(T_{b,d} (\varphi))=1$ for any $\varphi\in S$ and $d\in \N$. Then, $\Phi^{\mathcal{R}}_{n}$ satisfies 
\begin{itemize}
\item[(i)] $\Phi^{\mathcal{R}}_{n} + \Phi^{\mathcal{R}}_{n} \subset \Phi^{\mathcal{R}}_{3 n^2}$,
\alert{\item[(ii)] there exists no constant $c>1$ such that $\Phi^{\mathcal{R}}_{n} + \Phi^{\mathcal{R}}_{n} \subset \Phi^{\mathcal{R}}_{cn}.$}
\end{itemize}
\end{lemma}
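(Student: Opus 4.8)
The plan is to handle the two claims with one common tool: the behaviour of the canonical (CP) rank $r(\cdot)$ under a change of representation level. I would first record a \emph{level-independence} property: for $\varphi\in V_{b,S}$ and any $d\geq d(\varphi)$ one has $r(T_{b,d}\varphi)=r(\varphi)$. The bound $r(T_{b,d}\varphi)\le r(\varphi)$ follows by expanding a minimal level-$d(\varphi)$ decomposition $T_{b,d(\varphi)}\varphi=\sum_{k}(v_1^k\otimes\cdots\otimes v_{d(\varphi)}^k)\otimes g^k$ with $g^k\in S$ through \Cref{tbdbard}, since $r(T_{b,d-d(\varphi)}g^k)=1$ by hypothesis keeps every term elementary. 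The reverse bound comes from applying $id\otimes T_{b,d-d(\varphi)}^{-1}$ to a minimal level-$d$ decomposition, which yields elementary tensors of $\mathbf V_{b,d(\varphi)}$ whose last factors lie in $\R^{[0,1)}$; because the last-mode minimal subspace of $T_{b,d(\varphi)}\varphi$ is contained in $S$, the CP rank computed in $\mathbf V_{b,d(\varphi),S}$ equals the one computed in $\mathbf V_{b,d(\varphi)}$ (project any decomposition onto the minimal subspace). I expect this intrinsic-rank observation to be the main subtlety of part (i).

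For (i), take $\varphi_A,\varphi_B\in\Phi^{\mathcal R}_n$ with, w.l.o.g., $d_A:=d(\varphi_A)\le d_B:=d(\varphi_B)$. By \Cref{inclusion_Vbd} both lie in $V_{b,d_B,S}$, so $\varphi_A+\varphi_B\in V_{b,d_B,S}$ and $d(\varphi_A+\varphi_B)\le d_B$. Using level-independence and subadditivity of the CP rank at the common level $d_B$, $r(\varphi_A+\varphi_B)=r(T_{b,d_B}(\varphi_A+\varphi_B))\le r(T_{b,d_B}\varphi_A)+r(T_{b,d_B}\varphi_B)=r(\varphi_A)+r(\varphi_B)=:r_A+r_B$. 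Writing $m=\dim S$ and using $d(\varphi_A+\varphi_B)\le d_B$,
\begin{align*}
\cost_{\mathcal R}(\varphi_A+\varphi_B)\le (r_A+r_B)(bd_B+m)=r_A(bd_B+m)+\cost_{\mathcal R}(\varphi_B).
\end{align*}
Since $r_B\ge 1$ we have $bd_B+m\le\cost_{\mathcal R}(\varphi_B)\le n$, and $bd_A+m\ge 1$ gives $r_A\le\cost_{\mathcal R}(\varphi_A)\le n$; hence $r_A(bd_B+m)\le n^2$ and $\cost_{\mathcal R}(\varphi_A+\varphi_B)\le n^2+n\le 3n^2$, which is (i).

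For (ii) I would exhibit, for each even $D$, a pair whose costs are comparable while the sum's cost grows by a factor $\sim b^{D/2}$. Fix $\psi\in S\setminus\{0\}$ and $\nu=D/2$. Let $\varphi_A=T_{b,D}^{-1}(\tensor a)$ with the ``diagonal'' tensor $\tensor a=\sum_{k\in\Ib^{\nu}}\delta_{k_1}\otimes\cdots\otimes\delta_{k_\nu}\otimes\delta_{k_1}\otimes\cdots\otimes\delta_{k_\nu}\otimes\psi$; its $\{1,\dots,\nu\}$-unfolding is the identity on $\R^{\Ib^{\nu}}$, so $r_{\nu}(\varphi_A)=b^{\nu}$, while the explicit $b^{\nu}$-term sum shows $r(\varphi_A)=b^{\nu}$, and one checks $d(\varphi_A)=D$. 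Let $\varphi_B$ be the single-piece function equal to a rescaled $\psi$ on one level-$D'$ interval and $0$ elsewhere (\Cref{tensorization-dilation}), so $r(\varphi_B)=1$, all $\beta$-ranks of $\varphi_B$ equal $1$, and $d(\varphi_B)=D'$. Set $n:=\cost_{\mathcal R}(\varphi_A)=b^{\nu}(bD+m)$ and choose $D'$ maximal with $\cost_{\mathcal R}(\varphi_B)=bD'+m\le n$, i.e. $D'\sim b^{\nu}D$; then $\varphi_A,\varphi_B\in\Phi^{\mathcal R}_n$.

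Finally I would lower bound the cost of the sum. As $d_A=D<D'=d_B$, \Cref{inclusion_Vbd} gives $\varphi_A\in V_{b,D'-1,S}$ whereas $\varphi_B\notin V_{b,D'-1,S}$, so $d(\varphi_A+\varphi_B)=D'$. Using $r(\cdot)\ge r_\nu(\cdot)$, subadditivity of the level-independent $\beta$-rank (\Cref{link-ranks-d-nu}), and $r_\nu(\varphi_B)=1$,
\begin{align*}
r(\varphi_A+\varphi_B)\ge r_\nu(\varphi_A+\varphi_B)\ge r_\nu(\varphi_A)-r_\nu(\varphi_B)=b^{\nu}-1,
\end{align*}
so $\cost_{\mathcal R}(\varphi_A+\varphi_B)\ge (b^{\nu}-1)bD'\sim b^{2\nu}bD\sim b^{\nu}n$. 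Since $b^{\nu}=b^{D/2}\to\infty$, for any fixed $c>1$ a large enough $D$ forces $\cost_{\mathcal R}(\varphi_A+\varphi_B)>cn$, hence $\varphi_A+\varphi_B\notin\Phi^{\mathcal R}_{cn}$, proving (ii). Here the main obstacle is pinning down the \emph{exact} canonical rank of $\varphi_A$, which the identity-unfolding construction resolves, together with verifying the minimal levels $d(\varphi_A)=D$ and $d(\varphi_A+\varphi_B)=D'$.
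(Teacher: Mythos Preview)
Your argument is correct and follows essentially the same strategy as the paper: for (i) you use the hypothesis to carry a CP decomposition from one level to another and then invoke subadditivity, and for (ii) you pair a full-rank function at a small level with a rank-one function at a large level. Two points are worth noting. First, your explicit \emph{level-independence} of the canonical rank (both inequalities, including the projection onto the last-mode minimal subspace) is something the paper uses tacitly when writing $r(\varphi_A+\varphi_B)\le r_A+r_B$; you are right to isolate it, since $r(\cdot)$ is defined at the minimal level $d(\varphi_A+\varphi_B)$, which may lie strictly below $d_B$. Second, for (ii) the paper only says ``analogous to \Cref{prop:maxrank}'', whereas you give a concrete diagonal tensor for $\varphi_A$ and bound the rank of the sum from below via $r(\cdot)\ge r_\nu(\cdot)$ together with the level-independent $\beta$-rank (\Cref{link-ranks-d-nu}); this is a clean way to certify the lower bound that the paper leaves implicit. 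The only places where a referee might press you are the verifications $d(\varphi_A)=D$ and $d(\varphi_B)=D'$ for a general $S$ satisfying the hypotheses; these are immediate for $S=\mathbb P_0$ or spans of exponentials, and for arbitrary finite-dimensional $S$ the argument still goes through up to an $S$-dependent constant shift in the level, which does not affect the asymptotics.
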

\begin{proof}
    See \Cref{proof:cpnop4}.
\end{proof}

\begin{lemma}\label{lemma:canonical}
For any $n\in \N$,  it holds
$\Phi^{\mathcal{R}}_n \subset \Phi^{\mathcal{S}}_n. $\end{lemma}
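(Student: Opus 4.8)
The plan is to establish the pointwise inequality $\cost_{\mathcal{S}}(\varphi) \le \cost_{\mathcal{R}}(\varphi)$ for every $\varphi \in V_{b,S}$, which immediately yields the claim: if $\cost_{\mathcal{R}}(\varphi)\le n$ then $\cost_{\mathcal{S}}(\varphi)\le n$, hence $\Phi^{\mathcal{R}}_n \subset \Phi^{\mathcal{S}}_n$. The key structural fact I would exploit is that the canonical (CP) format is the special case of the tensor train format in which all interior cores are \emph{diagonal} in their two rank indices. Thus any rank-$r$ CP representation can be rewritten, with no increase in the number of stored parameters, as a TT network of ranks $(r,\hdots,r)$ whose sparsity is governed by the sizes of the CP factors.

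Concretely, I would fix $\varphi$, set $d := d(\varphi)$ and $r := r(\varphi)$, and take a minimal canonical representation of $\tensor{f} = T_{b,d}\varphi$,
\begin{align*}
\tensor{f}(i_1,\hdots,i_d,y) = \sum_{k=1}^r w_1^k(i_1)\hdots w_d^k(i_d)\, g_{d+1}^k(y), \quad g_{d+1}^k(y) = \sum_{q=1}^{\dim S} w_{d+1}^{q,k}\varphi_q(y),
\end{align*}
with $w_\nu \in \R^{b\times r}$ and $w_{d+1}\in\R^{\dim S\times r}$. I would then build a TT network $\mathbf{v}=(v_1,\hdots,v_{d+1})$ with ranks $\bs r = (r,\hdots,r)$ by a diagonal embedding: $v_1^{k_1}(i_1) := w_1^{k_1}(i_1)$, $v_\nu^{k_{\nu-1},k_\nu}(i_\nu) := \delta_{k_{\nu-1},k_\nu}\, w_\nu^{k_\nu}(i_\nu)$ for $2\le\nu\le d$, and $v_{d+1}^{k_d,q} := w_{d+1}^{q,k_d}$. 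Substituting these into \eqref{repres-tt}, the diagonal interior cores force $k_1 = \hdots = k_d$ in the contraction, collapsing the nested sums back to the single CP sum, so that $\mathcal{R}_{b,d,S,\bs r}(\mathbf{v}) = \varphi$.

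The sparsity bound is then a direct count. By construction $v_1 = w_1$ and $v_{d+1}$ is $w_{d+1}$ transposed, giving $\Vert v_1\Vert_{\ell_0}\le br$ and $\Vert v_{d+1}\Vert_{\ell_0}\le r\dim S$; for each interior core the diagonal embedding places the nonzeros of $w_\nu$ on the diagonal $k_{\nu-1}=k_\nu$ and creates no new ones, so $\Vert v_\nu\Vert_{\ell_0} = \Vert w_\nu\Vert_{\ell_0}\le br$. Summing over $\nu$,
\begin{align*}
\cost_{\mathcal{S}}(\varphi) \le \cost_{\mathcal{S}}(\mathbf{v}) = \sum_{\nu=1}^{d+1}\Vert v_\nu\Vert_{\ell_0} \le bdr + r\dim S = \cost_{\mathcal{R}}(\varphi),
\end{align*}
where the first inequality uses that $\cost_{\mathcal{S}}(\varphi)$ is the minimum over \emph{all} TT representations of $\varphi$, so exhibiting the single network $\mathbf{v}$ suffices.

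I do not expect a genuine obstacle here, since the argument is essentially bookkeeping; the only points demanding care are verifying that the diagonal contraction reproduces the CP tensor exactly and confirming that the diagonal embedding does not inflate the $\ell_0$ count of the interior cores beyond that of the factors $w_\nu$. The one conceptual point to state clearly is that the level $d=d(\varphi)$ and rank $r=r(\varphi)$ entering $\cost_{\mathcal{R}}(\varphi)$ are exactly the data of the CP representation used to construct $\mathbf{v}$, so the two complexity measures are compared on the same representation and the constant matches with no loss.
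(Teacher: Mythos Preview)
Your proposal is correct and matches the paper's proof essentially line for line: the paper also rewrites the rank-$r$ CP representation as a TT network with ranks $(r,\hdots,r)$ via the diagonal embedding $v_\nu^{k_{\nu-1},k_\nu}(i_\nu) = \delta_{k_{\nu-1},k_\nu} w_\nu^{k_\nu}(i_\nu)$ and then counts $\sum_\nu \Vert v_\nu\Vert_{\ell_0} = \sum_\nu \Vert w_\nu\Vert_{\ell_0} \le bdr + r\dim S = \cost_{\mathcal{R}}(\varphi)$. Your write-up is in fact slightly more careful than the paper's in distinguishing $\cost_{\mathcal{S}}(\varphi) \le \cost_{\mathcal{S}}(\mathbf{v})$ as an inequality coming from the minimum in the definition.
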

\begin{proof}
    See \Cref{proof:canonical}.
\end{proof}
	
\begin{corollary}
For any $\alpha>0$ and $0<q \le \infty$,
$$R_q^\alpha(L^p) \subset S_q^\alpha(L^p).$$
\end{corollary}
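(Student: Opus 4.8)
The plan is to reduce the claimed inclusion to the elementary monotonicity of best-approximation errors under nesting of the approximation sets, which is already prepared by \Cref{lemma:canonical}. First I would fix $f \in R_q^\alpha(L^p)$ and write $E_n^{\mathcal{R}}(f) := \inf_{\varphi \in \Phi_n^{\mathcal{R}}} \norm{f-\varphi}[p]$ and $E_n^{\mathcal{S}}(f) := \inf_{\varphi \in \Phi_n^{\mathcal{S}}} \norm{f-\varphi}[p]$ for the best-approximation errors of the two tools. Since \Cref{lemma:canonical} gives $\Phi_n^{\mathcal{R}} \subset \Phi_n^{\mathcal{S}}$ for every $n\in\N$, the infimum defining $E_n^{\mathcal{S}}(f)$ runs over a larger set, so that $E_n^{\mathcal{S}}(f) \le E_n^{\mathcal{R}}(f)$ for all $n$ and all $f \in L^p$.

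Then I would insert this inequality directly into the quasi-norm of \Cref{def:asq}. For $0<q<\infty$ the monotonicity of $t \mapsto t^q$ on $[0,\infty)$ gives $[n^\alpha E_{n-1}^{\mathcal{S}}(f)]^q \le [n^\alpha E_{n-1}^{\mathcal{R}}(f)]^q$ term by term, and summing against the fixed positive weights $1/n$ preserves the inequality; for $q=\infty$ the same follows from monotonicity of the supremum. In either case this yields $\norm{f}[S_q^\alpha(L^p)] \le \norm{f}[R_q^\alpha(L^p)]$, so that $\norm{f}[R_q^\alpha(L^p)]<\infty$ forces $\norm{f}[S_q^\alpha(L^p)]<\infty$, i.e.\ $f \in S_q^\alpha(L^p)$. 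This proves $R_q^\alpha(L^p)\subset S_q^\alpha(L^p)$, and in fact the continuous embedding with embedding constant $1$.

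There is no genuine obstacle at this stage: the entire content has been shifted into \Cref{lemma:canonical}, namely the set-inclusion of the canonical format into the sparse TT tool at an equal complexity budget. The only point requiring a little care is that the comparison of quasi-norms is monotone term by term, which holds because all quantities involved are nonnegative and the weights $1/n$ are fixed and positive; in particular no quasi-triangle constant of the quasi-norm enters, so the embedding is norm-nonincreasing.
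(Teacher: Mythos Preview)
Your proof is correct and is precisely the intended argument: the paper states the result as an immediate corollary of \Cref{lemma:canonical} without further proof, and your deduction via $E_n^{\mathcal{S}}(f)\le E_n^{\mathcal{R}}(f)$ and monotonicity of the $A^\alpha_q$-quasi-norm is exactly what is implicit there.
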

	
\section{Tensor Networks as Neural Networks -- The Role of Tensorization}\label{sec:discussion}

The tensorization of functions is a milestone allowing the use of tensor networks for the approximation of multivariate functions.
In this section, we interpret tensorization as a non-standard and powerful featuring step which can be 
 encoded in a neural network with a non-classical architecture. Then, we discuss the role of this particular featuring. 

\subsection{Tensorization as Featuring.}

When applying $t_{b,d}^{-1}$ to the input variable $x$, we create $d+1$ new variables $(i_1,...,i_d,y)$ defined by
$$i_\nu = \sigma(b^\nu x), \quad  \sigma(t) = \lfloor t\rfloor \, mod \, b ,$$
$1\le \nu\le d,$ and 
$$y = \tilde \sigma(b^d x), \quad \tilde \sigma(t) = t - \lfloor t\rfloor,$$
see  \Cref{fig:sigma-sigmatilde} for a graphical representation of functions $\sigma$ and $\tilde \sigma$.
\begin{figure}[h]
    \begin{subfigure}{0.47\textwidth}
        \center
        \includegraphics[scale=.2]{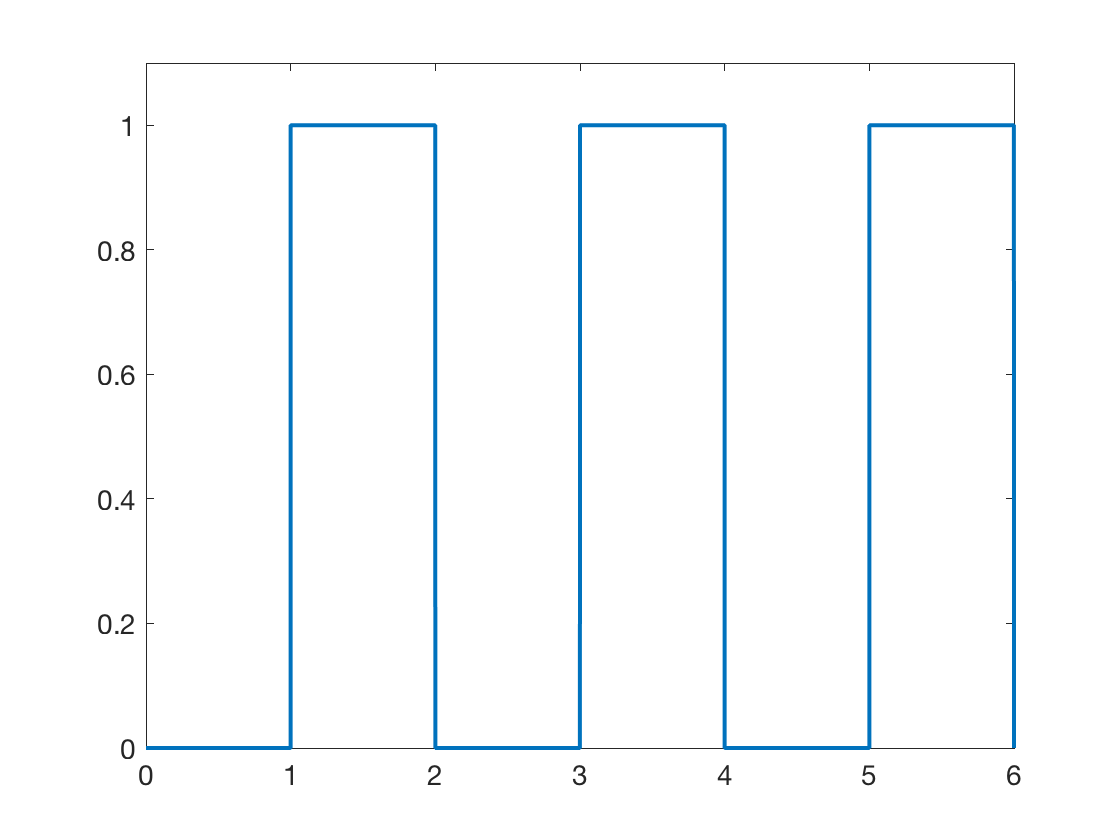}
        \caption{$\sigma$}\label{fig:sigma}
    \end{subfigure}
    \begin{subfigure}{0.47\textwidth}
        \center
        \includegraphics[scale=.2]{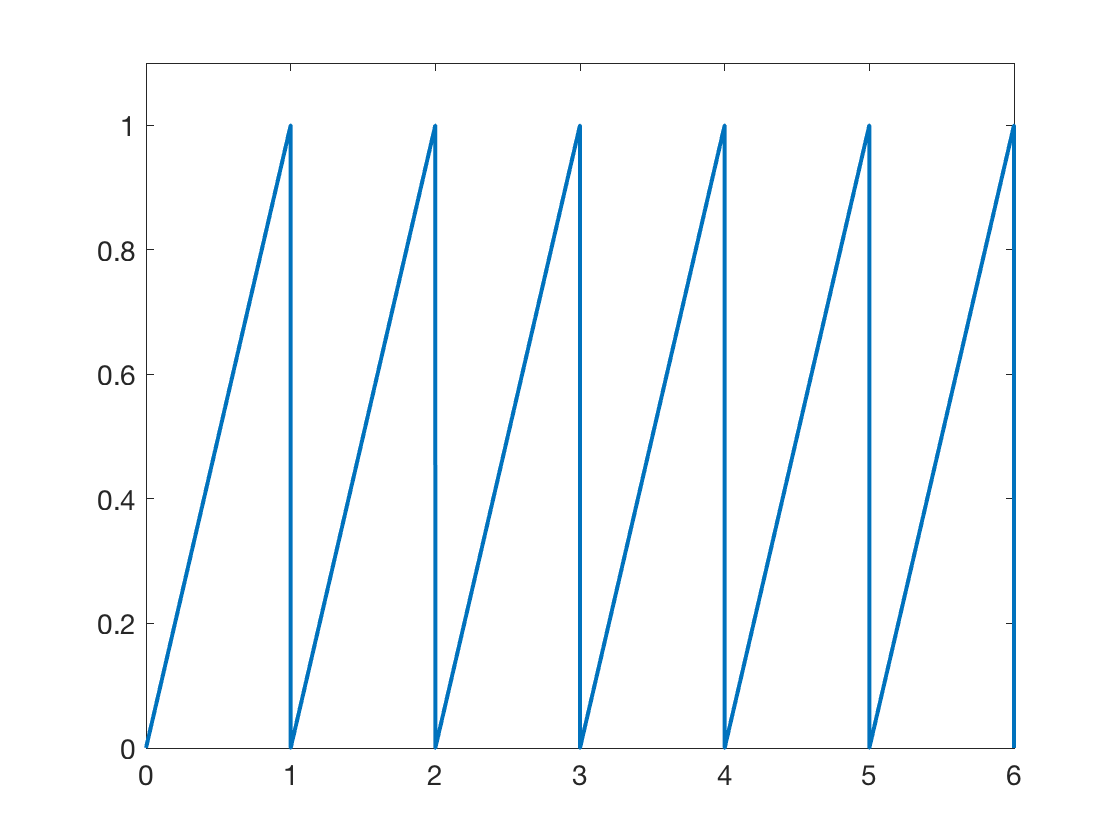}
        \caption{$\tilde \sigma$}\label{fig:sigmatilde}
    \end{subfigure}
    \caption{Functions $\sigma$ and $\tilde \sigma$}
    \label{fig:sigma-sigmatilde}
\end{figure}
Then for each $1\le \nu \le d$, we create $b$ features $\delta_{j_\nu}(i_\nu)$, $0\le j_\nu \le b-1$, and we also create  $m+1$ features  $\varphi_{k}(y) = y^{k}$ from the variable $y$ (or other features for $S$ different from $\P_m$).\footnote{For $m=0$, the extra variable $y$ is not exploited. For $m=1$, we only consider the variable $y$ and for $ m>1$, we exploit more from this variable.}
\Cref{fig:features} provides an illustration of these features and of products of these features. 
\begin{figure}[h]
    \begin{subfigure}{0.47\textwidth}
        \center
        \includegraphics[scale=.2]{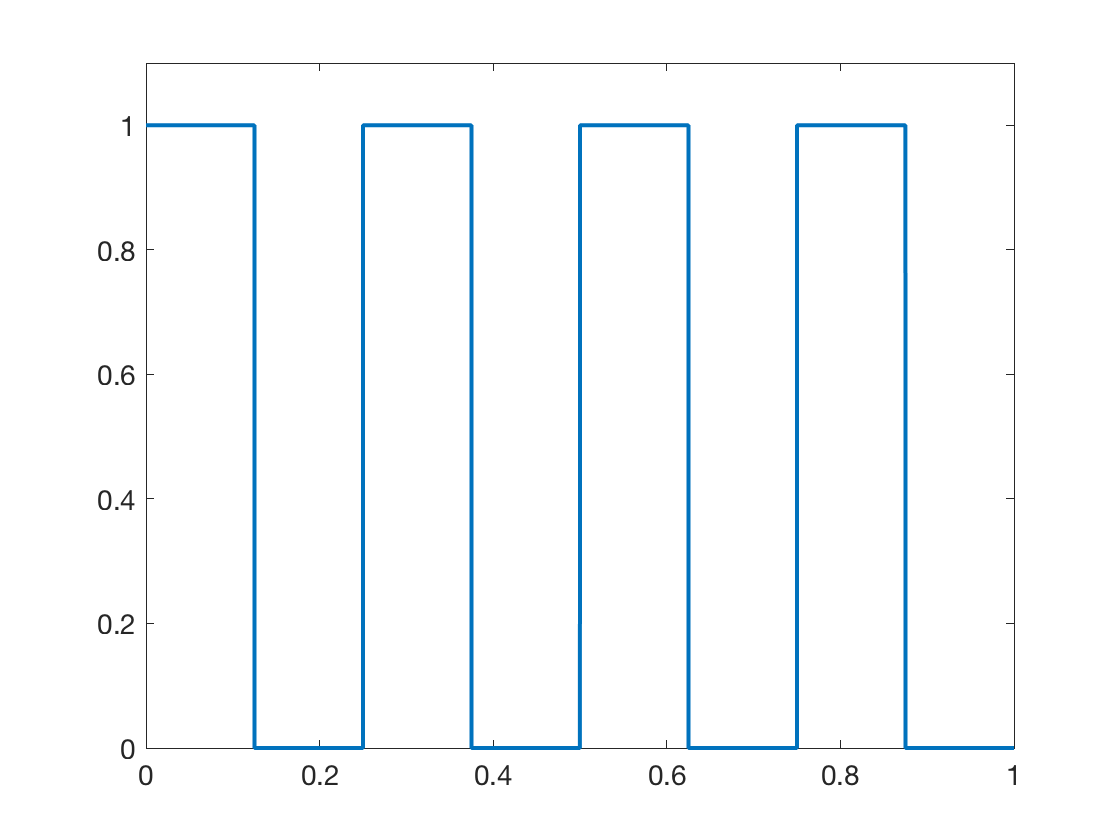}
        \caption{$\delta_0(\sigma(b^3 x))$}\label{fig:feat1}
    \end{subfigure}
    \begin{subfigure}{0.47\textwidth}
        \center
        \includegraphics[scale=.2]{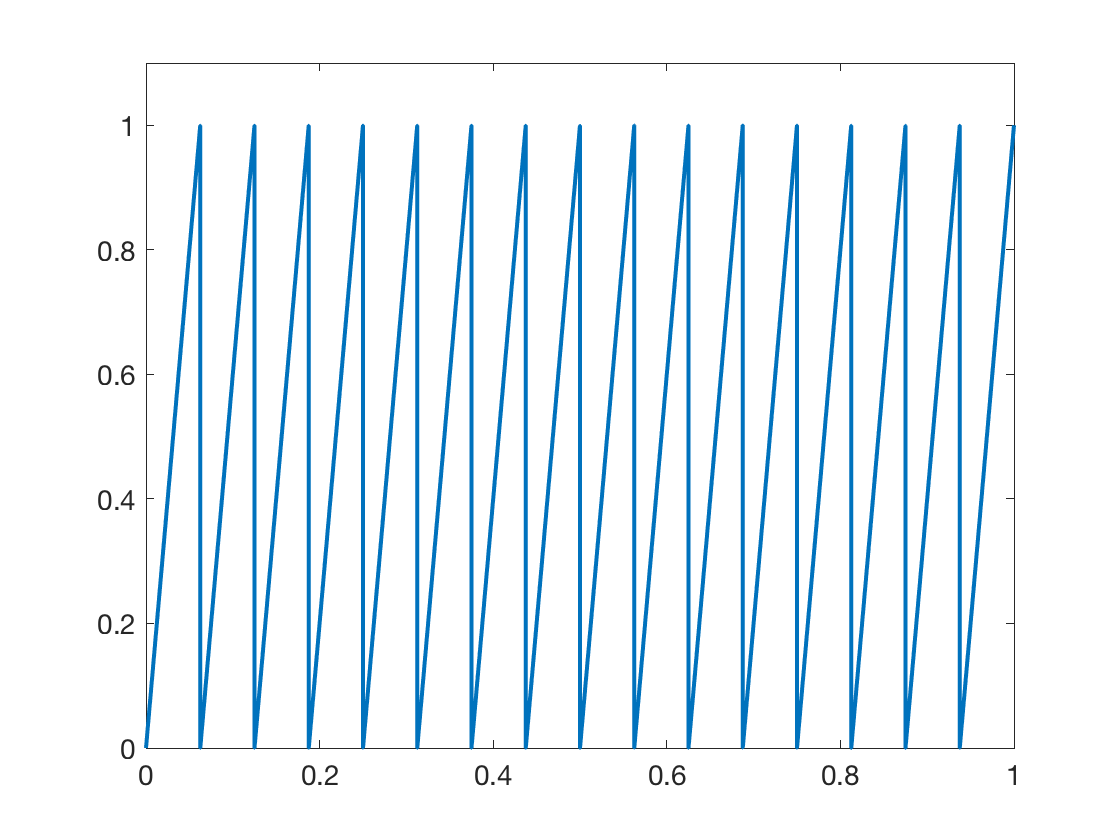}
        \caption{$\tilde \sigma(b^4 x)$}\label{fig:feat2}
            \end{subfigure}
                \begin{subfigure}{0.47\textwidth}
        \center
        \includegraphics[scale=.2]{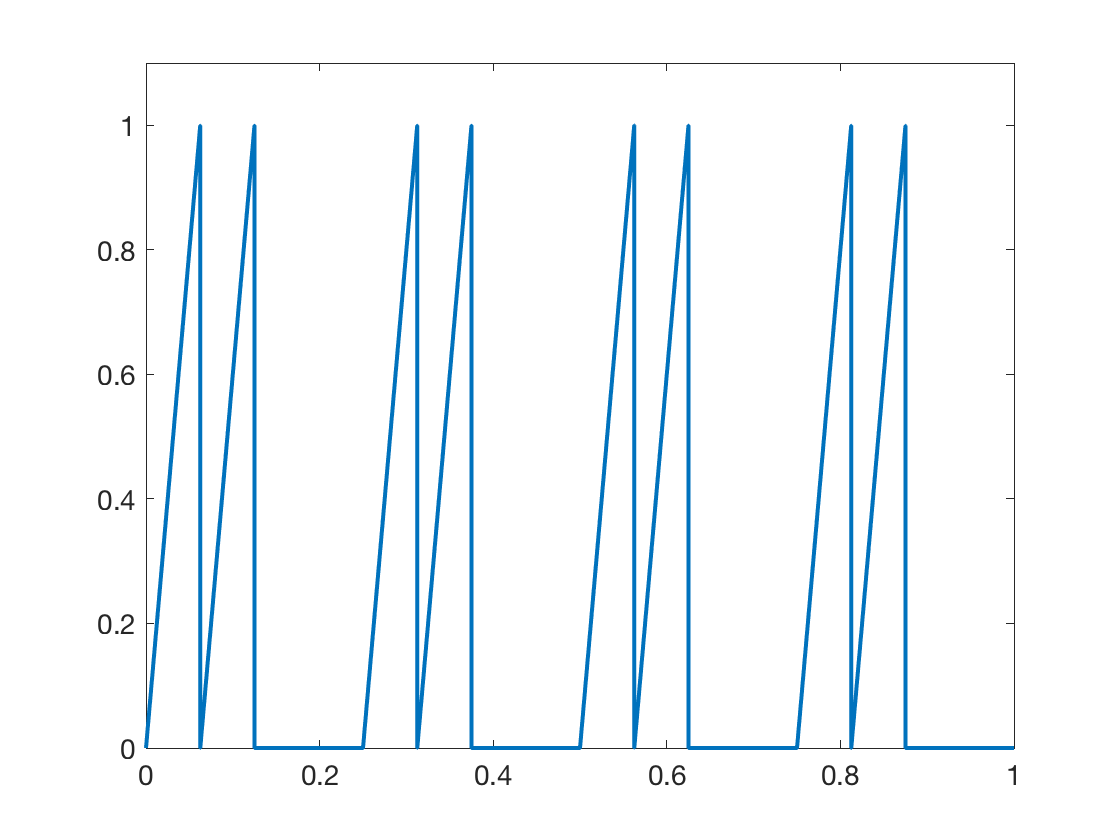}
        \caption{$\delta_0(b^3x) \tilde \sigma(b^4 x)$}\label{fig:feat3}             
    \end{subfigure}
    \begin{subfigure}{0.47\textwidth}
        \center
        \includegraphics[scale=.2]{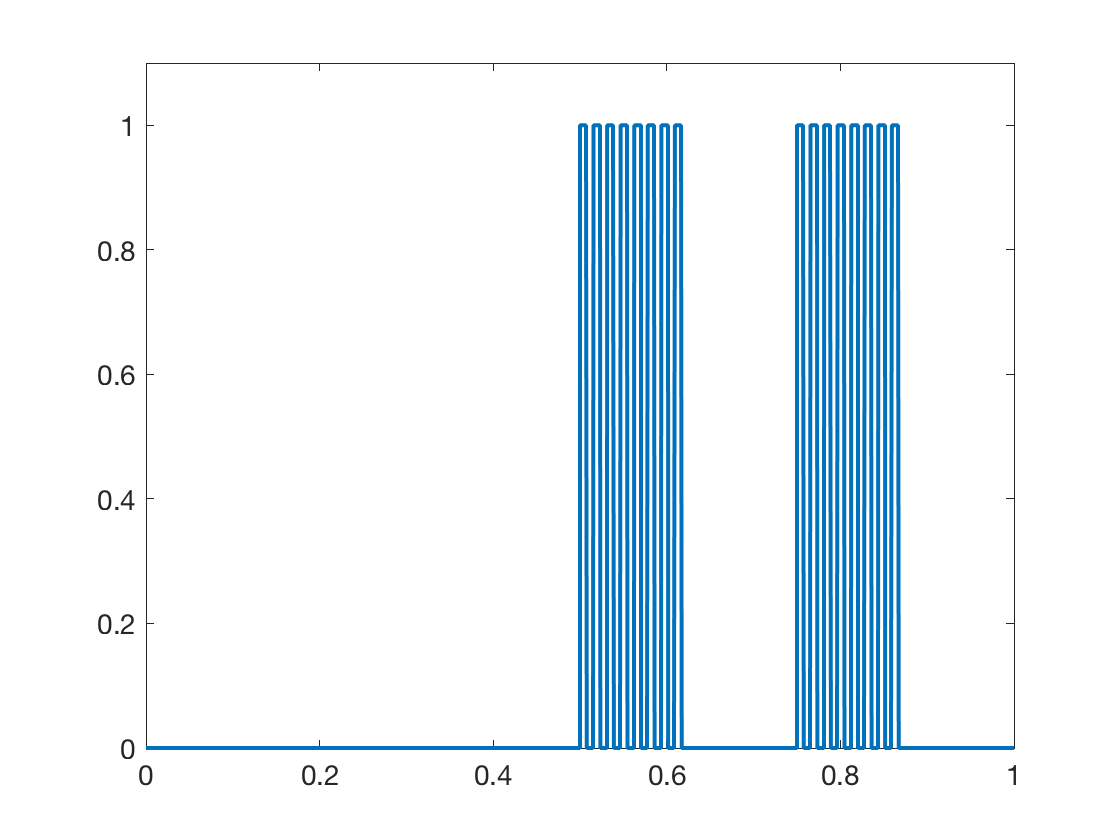}
        \caption{$\delta_1(\sigma(b x)) \delta_0(\sigma(b^3 x)) \delta_0(b^7x)$}\label{fig:feat4}
    \end{subfigure}
    \caption{Representation of some features and their products for $b=2$.}
    \label{fig:features}
\end{figure}
Finally, tensorization can be seen as a featuring step with a featuring map $$\Phi : [0,1) \to \R^{b^d (m+1)}$$
which maps $x\in [0,1)$ to a $(d+1)$-order tensor 
$$
\Phi(x)_{j_1,\hdots,j_{d+1}} = \delta_{j_1}(\sigma(b x)) \hdots \delta_{j_d}(\sigma(b^d x))  \tilde \sigma(b^d x)^{j_{d+1}}.
$$
A function $\varphi \in V_{b,d,m}$ is then represented by $\varphi(x) = \sum_j \Phi(x)_j a_j $, where $a$ is a $(d+1)$-order tensor with entries associated with the $b^d(m+1)$ features.  
When considering for $a$ a full tensor (not rank-structured), it results in a linear approximation tool which is equivalent to spline approximation. Note that functions represented on \Cref{fig:feat3,fig:feat4} are obtained by summing many features $\Phi(x)_{j_1,\hdots,j_{d+1}}$. However, these functions, which have rank-one tensorizations, can be represented with a rank-one tensor $a$ in the feature tensor space, and thus can be encoded with very low complexity within our nonlinear approximation tool.
 
  Increasing $d$ means considering more and more features, and is equivalent to refining the discretisation.
At this point, tensorization is an interpretation of a univariate function as a multivariate function, but it is also an alternative way to look at discretization.

\subsection{Encoding Tensorization with a Neural Network.}
The tensorization step can be encoded as a three-layer feedforward neural network with a single intput $x$ and $b^d(m+1)$ outputs
corresponding  to the entries of $ \Phi(x)$. The variables 
 $(i_1,...,i_d,y)$ can be seen as the output of a first layer with $d+1$ neurons, which corresponds to the application of a linear map $x\mapsto (b,b^2,\hdots,b^d , b^d) x $ to the input variable $x$, followed by a component-wise application of the activation function $\sigma$ (for the first $d$ neurons) or $\tilde \sigma$ (for the last neuron).  
Noting that $\delta_{j_\nu}(i_\nu) = \indicator{[0,1)}(i_\nu - j_\nu)$, the variable $\delta_{j_\nu}(i_\nu) $ can be seen as the output of a neuron which applies the activation function $t\mapsto \indicator{[0,1)}(t)$ to a shift of the input $i_\nu$. The variables $y^k$ correspond to the ouputs of a classical power unit with input $y$. Therefore, 
the resulting  variables $\delta_{j_\nu}(i_\nu)$ ($1\le \nu \le  d$, $0\le j_\nu \le b-1$) and $y^k$   ($0\le k\le m$)
 are the outputs of a second layer with $b d + (m+1)$ neurons, using the activation function $t\mapsto \indicator{[0,1)}(t)$ (for the first $bd$ neurons) or classical power activation functions $y\mapsto y^k$ for the last $m+1$ neurons. 
The third  layer corresponds to a product pooling layer which creates new variables  that are products of $d+1$ variables 
 $\{ \delta_{j_1}(i_1) ,\hdots, \delta_{j_d}(i_d) , \varphi_{j_{d+1}}(y)\}$.
 
The approximation tool considered in this work then corresponds to neural networks with three first layers implementing the particular featuring map $\Phi$ followed by a tensor network or sum-product network (with recurrent network architecture).
Note that, if instead of using a sum-product network we take a simple linear combination of the outputs of the third layer, 
we end up with a neural network implementing classical splines with degree $m$ and $b^d$ knots.

\subsection{The Role of Tensorization}

Another featuring (which is rather straight-forward) would have consisted in taking new variables (or features) $x_{j,k} = (b^d x-j)^k \indicator{I_j}(x)=x^k \indicator{[0,1)}(b^d x-j)$, $0\le k\le m$, $0\le j< b^d$, where $I_j$ is the interval $[b^d j , b^d(j+1))$. This also leads to $(m+1)b^d$ features. This featuring step can be encoded with a two-layer neural network, the first layer with $b^d$ neurons implementing affine transformations $x \mapsto b^d x-j$ followed by the application of the activation function $t\mapsto t \indicator{[0,1)}(t)$, and a second layer with $b^d(m+1)$ neurons that apply classical power activation functions to the outputs of the first layer.  When considering a simple linear combination of the outputs of this two-layer neural network, we also end up with   
classical fixed knot spline approximation. 

Both featuring (or tensorization) methods can be encoded with feed-forward neural networks, and both methods lead to a linear feature space corresponding to classical  spline approximation. One may ask what is the interest of using the very specific feature map $\Phi$? In fact, the use of the particular feature map $\Phi$, which is related to multi-resolution analysis, allows to further exploit sparsity or low-rankness of the tensor when  approximating functions from smoothness spaces, and probably other classes of functions. 
It is well known that the approximation class of splines of degree $m \ge r-1$ is the Sobolev space $W^{r,p}.$ Therefore, whatever the featuring used, the approximation class of the resulting linear approximation tool (taking linear combinations of the features) is the Sobolev space $W^{m+1,p}.$ 
We will see in Part II \cite{partII} that (near-)optimal performance is achieved by the proposed approximation tool  for a large range of smoothness spaces for any fixed $m$ (including $m=0$), at the price of letting $d$ grow (or equivalently the depth of the tensor networks) to capture higher regularity of functions. When working with a fixed $m$, exploiting low-rank structures will then be crucial. 

This reveals that the power of the approximation tool considered in this work comes from the combination of  a particular featuring step (the tensorization step) and the use of tensor networks. 

\bibliographystyle{abbrv}
\bibliography{literature}

\begin{thebibliography}{10}

\bibitem{Ali2019}
M.~Ali.
\newblock Ranks of tensor networks for eigenspace projections and the curse of
  dimensionality.
\newblock {\em arXiv preprint arXiv:2012.12953}, 2020.

\bibitem{partII}
M.~{Ali} and A.~{Nouy}.
\newblock {Approximation theory of tree tensor networks: tensorized univariate
  functions -- part II}.
\newblock {\em arXiv e-prints}, page arXiv:2007.00128, June 2020.

\bibitem{Arad2013}
I.~Arad, A.~Kitaev, Z.~Landau, and U.~Vazirani.
\newblock An area law and sub-exponential algorithm for 1d systems.
\newblock {\em arXiv preprint arXiv:1301.1162}, 2013.

\bibitem{Bachmayr2015}
M.~Bachmayr and W.~Dahmen.
\newblock Adaptive near-optimal rank tensor approximation for high-dimensional
  operator equations.
\newblock {\em Foundations of Computational Mathematics}, 15(4):839--898, 2015.

\bibitem{Bachmayr2016}
M.~Bachmayr, R.~Schneider, and A.~Uschmajew.
\newblock Tensor networks and hierarchical tensors for the solution of
  high-dimensional partial differential equations.
\newblock {\em Foundations of Computational Mathematics}, 16(6):1423--1472,
  Apr. 2016.

\bibitem{Bartlett2009}
P.~L. Bartlett and M.~Anthony.
\newblock {\em Neural Network Learning: Theoretical Foundations}.
\newblock Cambridge University Press, 2009.

\bibitem{Beny2013}
C.~B{\'e}ny.
\newblock Deep learning and the renormalization group.
\newblock {\em arXiv preprint arXiv:1301.3124}, 2013.

\bibitem{Boche2015}
H.~Boche, R.~Calderbank, G.~Kutyniok, and J.~Vyb{\'{\i}}ral, editors.
\newblock {\em Compressed Sensing and Its Applications}.
\newblock Springer International Publishing, 2015.

\bibitem{bogachev2007measure}
V.~I. Bogachev.
\newblock {\em Measure Theory}, volume~1.
\newblock Springer Science \& Business Media, 2007.

\bibitem{Braess2007}
D.~Braess.
\newblock {\em Finite Elements}.
\newblock Cambridge University Press, 2007.

\bibitem{Carleo2017}
G.~Carleo and M.~Troyer.
\newblock Solving the quantum many-body problem with artificial neural
  networks.
\newblock {\em Science}, 355(6325):602--606, feb 2017.

\bibitem{Cohen2016}
N.~Cohen, O.~Sharir, and A.~Shashua.
\newblock On the expressive power of deep learning: {A} tensor analysis.
\newblock In V.~Feldman, A.~Rakhlin, and O.~Shamir, editors, {\em Proceedings
  of the 29th Conference on Learning Theory, {COLT} 2016, New York, USA, June
  23-26, 2016}, volume~49 of {\em {JMLR} Workshop and Conference Proceedings},
  pages 698--728, 2016.

\bibitem{DeVore98}
R.~A. DeVore.
\newblock Nonlinear approximation.
\newblock {\em Acta Numerica}, 7:51--150, jan 1998.

\bibitem{DeVore93}
R.~A. DeVore and G.~G. Lorentz.
\newblock {\em {C}onstructive {A}pproximation}.
\newblock Springer-Verlag Berlin Heidelberg, 1993.

\bibitem{Falco:2012uq}
A.~Falc{\'o} and W.~Hackbusch.
\newblock On minimal subspaces in tensor representations.
\newblock {\em Foundations of Computational Mathematics}, 12:765--803, 2012.

\bibitem{Falco2015}
A.~Falc{\'o}, W.~Hackbusch, and A.~Nouy.
\newblock Geometric structures in tensor representations (final release).
\newblock {\em arXiv preprint arXiv:1505.03027}, 2015.

\bibitem{Falco2018SEMA}
A.~Falc{\'o}, W.~Hackbusch, and A.~Nouy.
\newblock Tree-based tensor formats.
\newblock {\em SeMA Journal}, Oct 2018.

\bibitem{Falco2019}
A.~Falc{\'o}, W.~Hackbusch, and A.~Nouy.
\newblock On the {Dirac}-{Frenkel} variational principle on tensor banach
  spaces.
\newblock {\em Foundations of Computational Mathematics}, 19(1):159--204, Feb
  2019.

\bibitem{falco2020geometry}
A.~Falc{\'o}, W.~Hackbusch, and A.~Nouy.
\newblock Geometry of tree-based tensor formats in tensor banach spaces.
\newblock {\em arXiv preprint arXiv:2011.08466}, 2020.

\bibitem{Grasedyck2010}
L.~Grasedyck.
\newblock Hierarchical singular value decomposition of tensors.
\newblock {\em SIAM Journal on Matrix Analysis and Applications},
  31(4):2029--2054, 2010.

\bibitem{Grasedyck2010techreport}
L.~Grasedyck.
\newblock {P}olynomial {A}pproximation in {H}ierarchical {T}ucker {F}ormat by
  {V}ector-{T}ensorization.
\newblock Technical report, Institut f{\"u}r Geometrie und Prakitsche
  Mathematik, RWTH Aachen, 2010.

\bibitem{grelier:2018}
E.~Grelier, A.~Nouy, and M.~Chevreuil.
\newblock Learning with tree-based tensor formats.
\newblock {\em arXiv e-prints, arXiv:1811.04455}, 2018.

\bibitem{Grelier:2019}
E.~Grelier, A.~Nouy, and R.~Lebrun.
\newblock Learning high-dimensional probability distributions using tree tensor
  networks.
\newblock {\em International Journal for Uncertainty Quantification},
  12(5):47--69, 2022.

\bibitem{Gribonval2019}
R.~Gribonval, G.~Kutyniok, M.~Nielsen, and F.~Voigtlaender.
\newblock Approximation spaces of deep neural networks.
\newblock {\em Constructive approximation}, 55(1):259--367, 2022.

\bibitem{Hackbusch2012}
W.~Hackbusch.
\newblock {\em Tensor Spaces and Numerical Tensor Calculus}.
\newblock Springer Berlin Heidelberg, 2012.

\bibitem{hackbusch2009newscheme}
W.~Hackbusch and S.~Kuhn.
\newblock {A New Scheme for the Tensor Representation}.
\newblock {\em Journal of Fourier analysis and applications},
  {15}({5}):{706--722}, {2009}.

\bibitem{Hastings2007}
M.~B. Hastings.
\newblock An area law for one-dimensional quantum systems.
\newblock {\em Journal of statistical mechanics: theory and experiment},
  2007(08):P08024, 2007.

\bibitem{Holtz2011}
S.~Holtz, T.~Rohwedder, and R.~Schneider.
\newblock On manifolds of tensors of fixed {TT}-rank.
\newblock {\em Numerische Mathematik}, 120(4):701--731, sep 2011.

\bibitem{Hoorfar2008}
A.~Hoorfar and M.~Hassani.
\newblock Inequalities on the {Lambert} {W} function and hyperpower function.
\newblock {\em J. Inequalities in Pure and Applied Math.}, 9(2), 2008.

\bibitem{Kazeev2017}
V.~Kazeev and C.~Schwab.
\newblock Quantized tensor-structured finite elements for second-order elliptic
  {PDEs} in two dimensions.
\newblock {\em Numerische Mathematik}, 138(1):133--190, jul 2017.

\bibitem{Levine2019}
Y.~Levine, O.~Sharir, N.~Cohen, and A.~Shashua.
\newblock Quantum entanglement in deep learning architectures.
\newblock {\em Phys. Rev. Lett.}, 122:065301, Feb 2019.

\bibitem{Levine2017}
Y.~Levine, D.~Yakira, N.~Cohen, and A.~Shashua.
\newblock Deep learning and quantum entanglement: Fundamental connections with
  implications to network design.
\newblock In {\em International Conference on Learning Representations}, 2018.

\bibitem{Mallat2009}
S.~Mallat.
\newblock {\em A Wavelet Tour of Signal Processing}.
\newblock Elsevier LTD, Oxford, 2009.

\bibitem{MichaelMarcellin2013}
M.~Marcellin and D.~Taubman.
\newblock {\em Jpeg2000 Image Compression Fundamentals, Standards and
  Practice}.
\newblock Springer US, 2013.

\bibitem{McCulloch1943}
W.~S. McCulloch and W.~Pitts.
\newblock A logical calculus of the ideas immanent in nervous activity.
\newblock {\em The Bulletin of Mathematical Biophysics}, 5(4):115--133, dec
  1943.

\bibitem{Michel2020}
B.~Michel and A.~Nouy.
\newblock Learning with tree tensor networks: complexity estimates and model
  selection.
\newblock {\em Bernoulli}, 28(2):910--936, 2022.

\bibitem{nouy:2017_morbook}
A.~Nouy.
\newblock {\em Low-rank Methods for High-dimensional Approximation and Model
  Order Reduction}, chapter~4.
\newblock SIAM, Philadelphia, PA, 2017.

\bibitem{Nouy2019}
A.~Nouy.
\newblock Higher-order principal component analysis for the approximation of
  tensors in tree-based low-rank formats.
\newblock {\em Numerische Mathematik}, 141(3):743--789, Mar 2019.

\bibitem{Orus2014}
R.~Or{\'{u}}s.
\newblock A practical introduction to tensor networks: Matrix product states
  and projected entangled pair states.
\newblock {\em Annals of Physics}, 349:117--158, oct 2014.

\bibitem{Orus2019}
R.~Or{\'{u}}s.
\newblock Tensor networks for complex quantum systems.
\newblock {\em Nature Reviews Physics}, 1(9):538--550, aug 2019.

\bibitem{Oseledets2009}
I.~Oseledets.
\newblock Approximation of matrices with logarithmic number of parameters.
\newblock {\em Doklady Mathematics}, 80(2):653--654, oct 2009.

\bibitem{oseledets2009breaking}
I.~Oseledets and E.~E. Tyrtyshnikov.
\newblock Breaking the curse of dimensionality, or how to use {SVD} in many
  dimensions.
\newblock {\em SIAM Journal on Scientific Computing}, 31(5):3744--3759, 2009.

\bibitem{poon2011sum}
H.~Poon and P.~Domingos.
\newblock Sum-product networks: A new deep architecture.
\newblock In {\em 2011 IEEE International Conference on Computer Vision
  Workshops (ICCV Workshops)}, pages 689--690. IEEE, 2011.

\bibitem{Schmidhuber2015}
J.~Schmidhuber.
\newblock Deep learning in neural networks: An overview.
\newblock {\em Neural Networks}, 61:85--117, jan 2015.

\bibitem{Schwarz2017}
M.~Schwarz, O.~Buerschaper, and J.~Eisert.
\newblock Approximating local observables on projected entangled pair states.
\newblock {\em Physical Review A}, 95(6), jun 2017.

\end{thebibliography}

\appendix

\newpage
\section{Proofs for \Cref{sec:tensorization}}

\begin{proof}[Proof of \Cref{tbdbard}]
\label{proof:lemma26}
We have 
\begin{align*}
t_{b,\bar d}(i_1,\hdots,i_{\bar d},y) &=  \sum_{k=1}^{\bar d} i_k b^{-k} +b^{-\bar d} y = 
\sum_{k=1}^{d} i_k b^{-k} +  \sum_{k=1}^{\bar d -d} i_{k+d} b^{-k-d} + b^{-\bar d} y = \sum_{k=1}^{d} i_k b^{-k} + b^{-d} z,
\end{align*}
with $z = \sum_{k=1}^{\bar d -d} i_{k+d} b^{-k} + b^{-(\bar d-d)} y = t_{b,\bar d- d}(i_{d+1},\hdots,i_{\bar d},y)$, which proves the first statement. Then consider an elementary tensor  $\tensor{v} = v_1 \otimes  \hdots \otimes v_d \otimes g \in \mathbf{V}_{b,d}$, with $v_k\in  \R^{\Ib}$ and $g\in \R^{[0,1)}$. 
We have \begin{align*}
T_{b,\bar d}\circ T_{b,d}^{-1} \tensor{v}(i_1,\hdots,i_{\bar d},y) &= 
\tensor{v} ( t_{b,d}^{-1} \circ t_{b,\bar d}(i_1,\hdots,i_{\bar d},y) ) \\
&= \tensor{v}(i_1,\hdots,i_d,t_{b,\bar d -d}(i_{d+1},\hdots,i_{\bar d},y))\\
&= v_1(i_1) \hdots v_d(i_d) g(t_{b,\bar d -d}(i_{d+1},\hdots,i_{\bar d},y)) 
\\
&=  v_1(i_1) \hdots v_d(i_d) T_{b,\bar d-d} g(i_{d+1},\hdots,i_{\bar d},y)\\
&=(v_1\otimes \hdots \otimes v_d \otimes (T_{b,\bar d-d} g))(i_{d+1},\hdots,i_{\bar d},y),
\end{align*}
which proves the second property. The last property simply follows from $T_{b, d}\circ T_{b,\bar d}^{-1} = (T_{b,\bar d}\circ T_{b,d}^{-1} )^{-1} =  (id_{\{1,\hdots,d\}} \otimes T_{b,\bar d - d})^{-1} = id_{\{1,\hdots,d\}} \otimes T_{b,\bar d - d}^{-1}.$
\end{proof}

\begin{proof}[Proof of \Cref{ranks-partial-evaluations}]
\label{proof:partialeval}
$\tensor{f}$ is identified with a tensor in $\mathbf{V}_\beta \otimes\mathbf{V}_{\beta^c} $ with $\mathbf{V}_\beta \in (\R^{\Ib})^{\otimes \#\beta}$ and $\mathbf{V}_{\beta^c} = \mathbf{V}_{b,d-\#\beta}.$
We have 
$
\Umin{\beta^c}(\tensor{f}) =\{ ( \boldsymbol{\varphi}_{\beta} \otimes id_{\beta^c} )\tensor{f} : \boldsymbol{\varphi}_{\beta} \in (\mathbf{V}_{\beta})'\}
$
with $(\mathbf{V}_{\beta})'$ the algebraic dual of $\mathbf{V}_{\beta}$ (see \cite[Corollary 2.19]{Falco:2012uq}). Then for any basis $\{\boldsymbol{\varphi}_{\beta}^{j_\beta} : j_\beta \in \Ib^{\#\beta}\}$ of $(\mathbf{V}_{\beta})'$, we have
$
\Umin{\beta^c}(\tensor{f}) = \mathrm{span}\{ ( \boldsymbol{\varphi}_{\beta}^{j_\beta}\otimes id_{\beta^c})\tensor{f}  :j_\beta \in \Ib^{\#\beta} \}.
$
We conclude by introducing the particular basis $ \boldsymbol{\varphi}_{\beta}^{j_\beta} = \delta_{j_\beta}$, with $\delta_{j_\beta} = \otimes_{\nu \in \beta} \delta_{j_\nu} \in \R^{\Ib^{\#\beta}},$ and by noting that $(\delta_{j_\beta}\otimes id_{\beta^c}) \tensor{f} = \tensor{f}(j_\beta,\cdot) \in \mathbf{V}_{\beta^c}$. 
   \end{proof}

\begin{proof}[Proof of \Cref{lemma:rankgrowth}]
\label{proof:rankadmiss}
For any set $\beta \subset \{1,\hdots,d+1\}$ and any partition $\beta = \gamma \cup \alpha$, 
   the minimal subspaces from Definition \ref{def:minsub} satisfy the
    hierarchy property (see \cite[Corollary 6.18]{Hackbusch2012}) $\Umin{\beta}(\tensor{f}) \subset \Umin{\gamma}(\tensor{f}) \otimes \Umin{\alpha}(\tensor{f})$, from which we deduce that $r_{\beta}(\tensor{f})  \le r_{\gamma}(\tensor{f}) r_{\alpha}(\tensor{f}) $.  Then for $1\le \nu\le d -1$, by considering $\gamma = \{1,\hdots,\nu\}$ and $\alpha = \{\nu+1\}$, we obtain $r_{\nu+1}(\tensor{f}) \le r_{\nu}(\tensor{f}) r_{\{\nu+1\}}(\tensor{f})$, where $r_{\{\nu+1\}}(\tensor{f}) = \dim \Umin{\{\nu+1\}}(\tensor{f}) \le b$, which yields the first inequality. By considering $\gamma = \{\nu+1\}$ and $\alpha = \{\nu+2,\hdots,d+1\}$, we obtain $r_{\nu}(\tensor{f}) = r_{\{\nu+1,\hdots,d+1\}}(\tensor{f}) \le r_{\{\nu+1\}}(\tensor{f})r_{\{\nu+2,\hdots,d+1\}}(\tensor{f}) = 
  r_{\{\nu+1\}}(\tensor{f}) r_{\nu+1}(\tensor{f}) \le b r_{\nu+1}(\tensor{f}) , $ that is the second inequality.
    \end{proof}

\begin{proof}[Proof of \Cref{link-minimal-subspaces}]
\label{proof:liniminsub}
 We have  from \Cref{ranks-partial-evaluations} that 
$$
T_{b,d-\nu}^{-1}(\Umin{\{\nu+1,\hdots,d+1\}}(\tensor{f}^{d})) = \mathrm{span}\{T_{b,d-\nu}^{-1}(\tensor{f}^{d}(j_1,\hdots,j_\nu,\cdot)) : (j_1,\hdots,j_\nu)\in\Ib^\nu\},
$$
where $\tensor{f}^{d}(j_1,\hdots,j_\nu,\cdot) \in \mathbf{V}_{b,d-\nu}$ is a partial evaluation of $\tensor{f}^{d}$ along the first $\nu$ dimensions.  We note that
\begin{align*} 
    T_{b,d-\nu}^{-1}(\tensor{f}^{d}(j_1,\ldots,j_\nu,\cdot)) &= ((id_{\{1,\hdots,\nu\}} \otimes T_{b,d-\nu}^{-1})\tensor{f}^{d})(j_1,\ldots,j_\nu,\cdot) \\&= (T_{b,\nu}\circ T_{b,d}^{-1} \tensor{f}^{d})(j_1,\ldots,j_\nu,\cdot) = \tensor{f}^{\nu}(j_1,\ldots,j_\nu,\cdot),
\end{align*}
where the second equality results from \Cref{tbdbard}. The result then follows from  \Cref{ranks-partial-evaluations} again. 
\end{proof}

  \begin{proof}[Proof of \Cref{bijection-measurable}]
  \label{proof:bijection}
Subsets of the form $J \times A$, with $A$ a Borel set of $ [0,1)$ and  $J = \times_{k=1}^d J_k$ with 
$J_k \subset I_b$, $1\le k \le d$, form a generating system of the Borel $\sigma$-algebra of $I_b^d \times [0,1)$. The image of such a  set $J \times A$ through $t_{b,d}$ is $\bigcup_{j\in J} A_{j}$, where $ A_{j_1,\hdots,j_d} = b^{-d} ( j + A)$ with $j=\sum_{k=1}^{d} j_k b^{d-k}$. Then 
$$
\lambda(t_{b,d}(J\times A)) = \lambda (\bigcup_{j\in J} A_{j}) = \#J b^{-d}  \lambda(A) =   \#J_1 \hdots \#J_d b^{-d}  \lambda(A)=\mu_b(J_1)\hdots \mu_b(J_d) \lambda(A) = \mu_{b,d}(J\times A).
$$
Then, we conclude on $T_{b,d}$ by noting that it is a linear bijection (\Cref{Tbd-bijection}) which preserves measurability. 
\end{proof}

\begin{lemma}\label{lemma:reasonablecross}
Let $S$ be a closed subspace of $\Lp$, $1\le  p\le \infty$. 
The norm $\Vert \cdot \Vert_p$ is a reasonable crossnorm on $(\ell^p(\Ib)^{\otimes d}
 \otimes S$.
\end{lemma}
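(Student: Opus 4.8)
The plan is to verify directly the two defining conditions of a reasonable crossnorm in the sense of \cite[Def.~4.63]{Hackbusch2012}: (a) $\norm{v^1 \otimes \cdots \otimes v^{d+1}}[p] \le \prod_\nu \norm{v^\nu}$ for all elementary tensors, and (b) every elementary dual functional $\varphi_1 \otimes \cdots \otimes \varphi_{d+1}$ with $\varphi_\nu \in (\ell^p(\Ib))'$ for $1\le \nu\le d$ and $\varphi_{d+1}\in S'$ is bounded on $(\ell^p(\Ib))^{\otimes d} \otimes S$ with dual norm at most $\prod_\nu \norm{\varphi_\nu}^{*}$. Condition (a) is already available: restricting the identities of \Cref{thm:tensorizationmap} to the subspace shows that $\norm{\cdot}[p]$ is in fact a crossnorm, and together with (b) this forces both inequalities to become equalities. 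Hence the entire task reduces to establishing (b).

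For (b), I would first identify the dual of $\ell^p(\Ib)=L^p(\Ib,\mu_b)$ with $\ell^{p'}(\Ib)$, $1/p+1/p'=1$, through the pairing $\langle g,v\rangle = b^{-1}\sum_k g_k v_k$, so that each $\varphi_\nu$ is represented by some $g_\nu\in\ell^{p'}(\Ib)$ with $\norm{\varphi_\nu}^{*}=\norm{g_\nu}[\ell^{p'}]$ and $\varphi_\nu(\delta_{j_\nu})=b^{-1}g_\nu(j_\nu)$. Using the canonical representation \eqref{canonical-representation}, for any $\tensor{f}\in(\ell^p(\Ib))^{\otimes d}\otimes S$ the action is
\[(\varphi_1\otimes\cdots\otimes\varphi_{d+1})(\tensor{f}) = \sum_{(j_1,\ldots,j_d)\in\Ib^d}\Big(\prod_{\nu=1}^{d} b^{-1}g_\nu(j_\nu)\Big)\,\varphi_{d+1}\big(\tensor{f}(j_1,\ldots,j_d,\cdot)\big).\]
The decisive point is that on the last factor I would use only the dual-norm bound $|\varphi_{d+1}(\tensor{f}(\bs j,\cdot))|\le \norm{\varphi_{d+1}}^{*}\norm{\tensor{f}(\bs j,\cdot)}[p]$, never an integral representation of $\varphi_{d+1}$; this is what makes the argument work uniformly, in particular at $p=\infty$ where $(L^\infty)'$ is not $L^1$.

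The core estimate is then a weighted Hölder inequality over the finite index set $\Ib^d$ equipped with the product measure $\mu_b^{\otimes d}$, in which each tuple carries weight $b^{-d}$. Writing $a_{\bs j}:=\norm{\tensor{f}(\bs j,\cdot)}[p]$ and $c_{\bs j}:=\prod_\nu |g_\nu(j_\nu)|$, the sum above is controlled by $\norm{\varphi_{d+1}}^{*}\big(\sum_{\bs j} b^{-d}c_{\bs j}^{\,p'}\big)^{1/p'}\big(\sum_{\bs j} b^{-d}a_{\bs j}^{\,p}\big)^{1/p}$. By the product structure of the index set the first factor factorizes as $\prod_\nu\norm{g_\nu}[\ell^{p'}]=\prod_\nu\norm{\varphi_\nu}^{*}$, while the second equals $\norm{\tensor{f}}[p]$ by the very definition of $\norm{\cdot}[p]$ in \Cref{thm:tensorizationmap}. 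This yields $|(\varphi_1\otimes\cdots\otimes\varphi_{d+1})(\tensor{f})|\le \norm{\tensor{f}}[p]\prod_\nu\norm{\varphi_\nu}^{*}$, which is precisely (b).

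The main obstacle, and the place requiring care, is the uniform handling of the endpoints $p\in\{1,\infty\}$. There the Hölder step degenerates into a supremum-times-$\ell^1$ (respectively $\ell^1$-times-supremum) estimate, and one must check that the maxima over $\Ib^d$ still factorize into the $\ell^\infty$ norms of the $g_\nu$ and that the remaining factor reproduces the $L^\infty$ (respectively $L^1$) form of $\norm{\tensor{f}}[p]$. Because the factors $\ell^p(\Ib)$ are finite-dimensional their duals cause no trouble, and because the $S$-factor is treated purely through its dual norm, no delicate analysis of $(L^\infty)'$ is needed; the endpoint cases then simply reflect that $\norm{\cdot}[1]$ is the projective and $\norm{\cdot}[\infty]$ the injective crossnorm, both of which are reasonable.
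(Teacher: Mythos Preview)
Your proof is correct and follows essentially the same route as the paper's own proof: identify $(\ell^p(\Ib))^*$ with $\ell^{p'}(\Ib)$ via the weighted pairing, expand $\tensor f$ in the canonical basis $\delta_{\bs j}$, bound the $S'$-factor by its dual norm, apply H\"older over the finite index set $\Ib^d$ with the product weight $b^{-d}$, and factorize the resulting $\ell^{p'}$ term. The only cosmetic difference is that the paper packages the H\"older step by first collapsing $\varphi_1\otimes\cdots\otimes\varphi_d$ into a single vector $\varphi\in\ell^{p'}(\Ib^d)$ and then invoking that $\ell^{p'}$ is itself a crossnorm on $\ell^{p'}(\Ib)^{\otimes d}$, whereas you write the factorization out explicitly; the content is identical.
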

\begin{proof}
Let $v_k\in \ell^p(\Ib)$, $1\le k \le d$, and $g\in S$. For $ p <\infty$, we have 
$$
\Vert v_1\otimes \hdots \otimes v_d \otimes g \Vert_p^p =  \sum_{i_1\in \Ib} \hdots \sum_{i_d\in \Ib} \vert v_1(i_1)\vert^p \hdots  \vert v_d(i_d)\vert^p b^{-d} \int_{0}^1 \vert g(y) \vert^p dy = \Vert v_1 \Vert_{\ell^p}^p \hdots \Vert v_d \Vert_{\ell^p}^p \Vert g \Vert_p^p,
$$
and for $p=\infty$,
$$\Vert v_1\otimes \hdots \otimes v_d \otimes g \Vert_\infty = \max_{i_1 \in \Ib}  \vert v_1(i_1) \vert \hdots \max_{i_d\in \Ib}  \vert v_d(i_d) \vert \esssup_{y} \vert g(y) \vert =\Vert v_1 \Vert_{\ell^\infty} \hdots \Vert v_d \Vert_{\ell^\infty} \Vert g \Vert_\infty,$$
which proves that $\Vert \cdot\Vert_p$ is a crossnorm. 
Then, consider the dual norm $\Vert \varphi \Vert_p^* = \sup_{\Vert \tensor{f} \Vert \le 1} \vert \varphi(\tensor{f}) \vert $ over the algebraic tensor space $(\ell^p(\Ib)^*)^{\otimes d}  \otimes S^*$, where $V^*$ stands for the continuous dual of a space $V$. 
For $(v,\psi) \in \ell^p(\Ib)\times \ell^p(\Ib)^*$, we consider the duality pairing $\psi(v) = b^{-1}\sum_{k=0}^{b-1} \psi_{k} v_{k}$, such that $ \ell^p(\Ib)^* = \ell^q(\Ib)$ with $1/p + 1/q=1$.
 Consider $\phi \in S^*$ and $\varphi_\nu \in \ell^p(\Ib)^*$, $1\le \nu\le d$. To prove that $\Vert \cdot \Vert_p$ is a reasonable crossnorm, we have to prove that 
$$\Vert \varphi_{1} \otimes \hdots \otimes \varphi_{d}  \otimes \phi \Vert_{p}^* \le \Vert \varphi_1 \Vert_{\ell^q} \hdots \Vert \varphi_d \Vert_{\ell^q}  \Vert \phi \Vert_{p}^*  ,$$
with $ \Vert \phi \Vert_{p}^* = \sup_{f \in S, \Vert f\Vert_p\le 1}  \phi(f)$. 
 Let $\varphi = \varphi_{1} \otimes \hdots \otimes \varphi_{d} \in (\ell^p(\Ib)^*)^{\otimes d} = \ell^q(\Ib^d)$.
For $j\in \Ib^d$, we let $\delta_j = \delta_{j_1}\otimes \hdots \otimes \delta_{j_d} \in \ell^p(\Ib^d)$. 
 Any $\tensor{f} \in V_{b,d,S}$ admits a representation $\tensor{f} = \sum_{j\in \Ib^d} \delta_j \otimes g_j $ where 
 $g_j = \tensor{f}(j_1,\hdots,j_d,\cdot) \in \Lp,$ and  
$$
\vert ( \varphi_{1} \otimes \hdots \otimes \varphi_{d}  \otimes \phi)(\tensor{f}) \vert = \vert \varphi( \sum_{j\in \Ib^d} \delta_{j}   \phi(  g_{j}) \vert = \vert \varphi(\mathbf{v}) \vert
$$
where $\mathbf{v} \in \ell^p(\Ib^d)$ is a tensor with entries $\mathbf{v}(j) = \phi(  g_{j})$. Also,
$$
 \vert \varphi(\mathbf{v}) \vert \le \Vert \varphi \Vert_{\ell^p}^*  \Vert \mathbf{v} \Vert_{\ell^p}  \le \Vert \varphi \Vert_{\ell^q}  \Vert \phi \Vert_{p}^* \Vert \mathbf{w} \Vert_{\ell^p},
$$
where $\mathbf{w} \in \ell^p(\Ib^d)$ is a tensor with entries $\mathbf{w}(j) = \Vert   g_{j}\Vert_{p} = \Vert   \tensor{f}(j_1,\hdots,j_d,\cdot)\Vert_{p} .$ From \Cref{thm:tensorizationmap}, we have $\Vert \mathbf{w} \Vert_{\ell^\infty} = \max_{j \in \Ib^d} \Vert \tensor{f}(j_1,\hdots,j_d,\cdot) \Vert_\infty = \Vert \tensor{f} \Vert_\infty,$ and 
 for $p<\infty$
$$
\Vert \mathbf{w} \Vert_{\ell^p}^p = b^{-d} \sum_{j\in \Ib^d} \vert \mathbf{w}(j) \vert^p = b^{-d} \sum_{j\in \Ib^d}  \Vert  \tensor{f}(j_1,\hdots,j_d,\cdot)\Vert_{p}^p  =  \Vert \tensor{f} \Vert_p^p.
$$
Therefore, 
$
\vert ( \varphi_{1} \otimes \hdots \otimes \varphi_{d}  \otimes \phi)(\tensor{f}) \vert \le \Vert \varphi \Vert_{\ell^q}  \Vert \phi \Vert_{p}^*\Vert \tensor{f} \Vert_p.
$
We conclude by noting that $\Vert \cdot \Vert_{\ell^q}$ is a crossnorm on $\ell^q(\Ib^d) = \ell^q(\Ib)^{\otimes d}$, so that 
 $\Vert \varphi \Vert_{\ell^q(\Ib^d)} = \Vert \varphi_1 \Vert_{\ell^q} \hdots \Vert \varphi_d \Vert_{\ell^q} .$
\end{proof}

\begin{proof}[Proof of \Cref{b-adic-stability}]
\label{proof:badicstab}
By definition, the result is true for $d=1.$ The result is then proved by induction. Assume that for all $f\in S$, $f(b^{-d}(\cdot + k)) \in S$ for all $k\in \{0,\hdots,b^d-1\}$. Then for $f\in S$, consider 
 the function $f(b^{-d-1}(\cdot + k))$ with $k \in \{0,\hdots,b^{d+1}-1\}$. We can write $k = bk''+k'$ for some $k' \in \{0,\hdots,b-1\}$ and $k'' \in \{0,\hdots,b^d-1\}$. Then for any $x\in [0,1)$,
$$f(b^{-d-1}(x + k)) = f(b^{-d}(b^{-1}(x + k' ) + k''))  = g(b^{-1}(x + k'))$$
for some $g\in S$, and $g(b^{-1}(x + k')) = h(x) $ for some $h\in S$. Therefore $f(b^{-d-1}(\cdot + k)) = h(\cdot) \in S$, which ends the proof.
\end{proof}

\begin{proof}[Proof of \Cref{inclusion_Vbd}]
\label{proof:inclusion}
For $f \in S$, we have $(T_{b,1} f) (i_1,\cdot)= f(b^{-1}( \cdot+i_1)).$ Then from  \Cref{b-adic-stability}, we have $(T_{b,1} f) (i_1,\cdot) \in S$, which implies $f \in V_{b,1,S}.$ Now assume $f \in V_{b,d,S}$ for $d\in \N$, i.e. $T_{b,d}f = \tensor{f}^{d} \in \mathbf{V}_{b,d,S}$. Then $\tensor{f}^d(i_1,\hdots,i_{d},\cdot) \in S$ and from \Cref{b-adic-stability}, $\tensor{f}^d(i_1,\hdots,i_{d}, b^{-1}(i_{d+1} +\cdot)) \in S$. Then using \Cref{tbdbard}, we have that 
$\tensor{f}^d(i_1,\hdots,i_{d}, b^{-1}(i_{d+1} +\cdot)) = f\circ t_{b,d}(i_1,\hdots,i_{d}, t_{b,1}(i_{d+1} ,\cdot)) = f\circ t_{b,d+1}(i_1,\hdots,i_{d+1},\cdot)$, which implies that $(T_{b,d+1}f)(i_1,\hdots,i_{d+1},\cdot) \in S$, and therefore $f\in V_{b,d+1,S}.$
\end{proof}

\begin{proof}[Proof of \Cref{lemma:vbvectorspace}]
\label{proof:vectorspace}
Since $0 \in V_{b,d,S}$ for any $d$, we have $0\in V_{b,S}$.  For
$f_1,f_2 \in V_{b,S}$, there exists $d_1,d_2 \in \N$ such that $f_1\in\Vbd[b][d_1]{S}$ and
$f_2\in\Vbd[b][d_2]{S}$. Letting $d=\max\{d_1,d_2\}$, we have from \Cref{inclusion_Vbd} that 
$f_1,f_2\in V_{b,d,S}$, and therefore $cf_1+f_2\in\Vbd{S} \subset V_{b,S}$ for all $c \in \R$, which ends the proof.
\end{proof}

\begin{proof}[Proof of \Cref{thm:dense}]
\label{proof:dense}
The set of simple functions over $[0,1)$ is dense in $\Lp([0,1))$ for $1\le p<\infty$ (see, e.g., \cite[Lemma 4.2.1]{bogachev2007measure}). Then, it remains to prove that $\Vb{S}$ is dense in the set of simple functions over $[0,1)$. Consider a simple function $f = \sum_{i=0}^{n-1} a_i \indicator{[x_i,x_{i+1})} \neq 0$, with $0=x_0<x_1<\hdots<x_n=1,$ and $\Vert f \Vert_p^p = \sum_{i=0}^{n-1} \vert a_i \vert^p (x_{i+1}-x_i)$. Let $x_i^{d} = b^{-d} \lfloor b^d x_i \rfloor$, $0\le i \le n$, and consider the function $f_{d} = \sum_{i=0}^{n-1} a_i \indicator{[x_i^{d},x_{i+1}^{d})}$ which is such that  $f_{d} \in \Vbd{S}.$ Then, noting that $x_0^{d} = x_0 =0$ and $x_{n}^{d}=x_n=1$, it holds 
\begin{align*}
f - f_{d} &= \sum_{i=0}^{n-1} a_i(\indicator{[x_i,x_{i+1})} - \indicator{[x_i^{d},x_{i+1}^{d})}) =  \sum_{i=0}^{n-1} a_i(\indicator{[x_{i+1}^{d},x_{i+1})} -\indicator{[x_i^{d},x_{i})}  ) = \sum_{i=0}^{n-2} (a_i - a_{i+1})\indicator{[x_{i+1}^{d},x_{i+1})}. 
\end{align*}
Then, noting that $0\le  x_i - x_i^{d} \le b^{-d}$ for all $0 < i < n$, we have 
$$\Vert f-f_{d} \Vert_p^p = \sum_{i=0}^{n-2} \vert a_i - a_{i+1} \vert^p (x_{i+1} - x_{i+1}^{d}) \le 2^p b^{-d} \sum_{i=0}^{n-1} \vert a_i\vert^p =2^p b^{-d} \Vert f \Vert^p_p \big( \min_{0\le i\le n-1} (x_{i+1}-x_i)\big)^{-1}, $$
so that $\Vert f-f_{d} \Vert_p \to 0$ as $d \to \infty$, which ends the proof.
\end{proof}

\begin{proof}[Proof of \Cref{rank-bound-VbdS-closed-dilation}]
\label{proof:rankbound} 
(i) If $f\in S,$ from \Cref{inclusion_Vbd}, we have $f \in V_{b,\nu,S}$ for any $\nu$, so that $r_{\nu,\nu}(f) \le \dim S$. Then, using \Cref{link-ranks-d-nu}, we have $r_{\nu,d}(f) = r_{\nu,\nu}(f) \le \dim S$. The other bound $r_{\nu,d}(f)\le b^\nu$ results from \Cref{rank-bound-VbdS}. \\
(ii) The fact that  $f \in V_{b,\bar d,S}$ follows from \Cref{inclusion_Vbd}. 
Then, from \Cref{link-ranks-d-nu}, we have that $r_{\nu,\bar d}(f) = r_{\nu,\nu}(f)$ for all $1 \le \nu \le \bar d$.  
For $1 \le \nu \le d$, \Cref{link-ranks-d-nu} also implies $r_{\nu,\nu}(f) = r_{\nu,d}(f)$ and we obtain the desired inequality from  \Cref{rank-bound-VbdS}. For $\nu> d$, we note that 
$r_{\nu,\nu}(f) = \dim \Umin{\{\nu+1\}}(T_{b,\nu} f)$. From \Cref{inclusion_Vbd}, we know that $T_{b,\nu} f \in V_{b,\nu,S}$ for $\nu\ge d$, so that $\Umin{\{\nu+1\}}(T_{b,\nu} f) \subset S$ and $r_{\nu,\nu}(f) \le \dim(S)$. The other bound $r_{\nu,\bar d}(f)\le b^\nu$ results from \Cref{rank-bound-VbdS}.
 \end{proof}

\begin{proof}[Proof of \Cref{local-projection-structure}]
\label{proof:localproj}
Let $\{\phi_l\}_{1\le l \le \dim S}$ be a basis of $S$, such that for $g\in\Lp$, $\mathcal{I}_S (g) = \sum_{l=1}^{\dim S} \phi_l \sigma_l(g),$ with $\sigma_l$ a linear map from $\Lp$ to $\R$.  For $f \in \Lp$, and $x\in [b^{-d} j, b^{-d}(j+1))$,  
$$
\mathcal{I}_{b,d,S} f (x)= \sum_{l=1}^{\dim S} \phi_l(b^{d} x - j) \sigma_l(f(b^{-d} (j+\cdot )). 
$$
We have $f(b^{-d} (j+\cdot ) = \boldsymbol{f}(j_1,\hdots,j_d,\cdot)$, with $j= \sum_{k=1}^d b^{d-k} j_k$ and $\boldsymbol{f} = T_{b,d}f$, so that 
$$
T_{b,d} (\mathcal{I}_{b,d,S} f)(j_1,\hdots,j_d,y) =  \sum_{l=1}^{\dim S} \phi_l(y) \sigma_l(\boldsymbol{f}(j_1,\hdots,j_d,\cdot)).
$$ 
For $\boldsymbol{f} = \varphi_1 \otimes \hdots \varphi_d \otimes g$, using the linearity of $\sigma_l$, we then have 
$$
T_{b,d} ( \mathcal{I}_{b,d,S} (T_{b,d}^{-1} \boldsymbol{f} ))(j_1,\hdots,j_d,y) = \varphi_1(j_1) \hdots  \varphi_d(j_d)
  (\sum_{l=1}^{\dim S} \phi_l(y) \sigma_l(g)) = \varphi_1(j_1) \hdots  \varphi_d(j_d)  \mathcal{I}_S(g)(y),
$$
 which proves \eqref{tensor-structure-local-projection}.
\end{proof}

\section{Proofs for \Cref{sec:tensorapp}}

\begin{proof}[Proof of \Cref{prop:maxrank}]
\label{proof:maxrank}
    (ii). 
Let $\varphi_A,\;\varphi_B\in\tool$ with
    $\varphi_A\in\Vbd[@][d_A]{S}$, $\varphi_B\in\Vbd[@][d_B]{S}$
    and w.l.o.g.\ $d_B\geq d_A$. 
    Set $r_A:=\rmax(\varphi_A)$ and $r_B:=\rmax(\varphi_B)$.
         Then,
    \begin{align*}
        \cost(\varphi_A+\varphi_B)&\leq bd_A(\max(r_A, \dim S)+r_B)^2+(\max(r_A, \dim S)+r_B)\dim S\\
        &\leq
        2bd_Ar_B^2+4bd_Ar_A^2+4bd_A(\dim S)^2+r_A\dim S+r_B\dim S+(\dim S)^2\\
        &\leq [4+4(\dim S)^2+\dim S]n+4n^2
        \leq [8+4(\dim S)^2+\dim S]n^2.
    \end{align*}
    
    (i). Let $W_0$ denote the principal Branch of the Lambert $W$ function.
    Take $n\in\N$ large enough such that
    \begin{align*}
        d_A:=\left\lfloor\frac{1}{\ln(b)}W_0\left[
        n\frac{\ln(b)}{{2}\max\set{b,\dim S}}\right]\right\rfloor\geq 2,\quad
        d_A:=\left\lfloor\frac{n-\dim S}{b}\right\rfloor\geq 2
    \end{align*}
    Pick a full-rank function $\varphi_A\in\Vbd[@][d_A]{S}$ such that
    $r^2_A:=\rmax^2(\varphi_A)= b^{2\left\lfloor\frac{d_A}{2}\right\rfloor}$.
    Then,
    \begin{align*}
        \cost(\varphi_A)\leq
        bd_Ab^{d_A}+b^{\frac{d_A}{2}}\dim S\leq
        {2} \max\set{b,\dim S}d_Ab^{d_A} \le n,
    \end{align*}
    by the choice of $d_A$ and the properties of the Lambert $W$ function.
    
    Pick any $\varphi_B\in\Vbd[@][d_B]{S}$ with $r_B:=\rmax(\varphi_B)=1$ and $d_B=d_A$, so that $\cost(\varphi_B)=bd_B+\dim S \le n$.
    Then, $\varphi_A,\;\varphi_B\in\tool$.
    On the other hand, $r_A\geq r_B$ and
    from \cite{Hoorfar2008} we can estimate the Lambert $W$ function from below
    as
    \begin{align*}
        W_0\left[
        n\frac{\ln(b)}{{2}\max\set{b,\dim S}}\right]\geq
        \ln\left[
        n\frac{\ln(b)}{{2}\max\set{b,\dim S}}\right]-
        \ln\ln\left[
        n\frac{\ln(b)}{{2}\max\set{b,\dim S}}\right]
    \end{align*}
  Then 
      \begin{align*}
        \cost(\varphi_A+\varphi_B)&\geq bd_Ar_A^2+r_A\dim S\\
        &\geq b\left(\frac{n-\dim S}{b}-1\right)
        \left(b^{\frac{1}{\ln(b)}W_0\left[
        n\frac{\ln(b)}{{2}\max\set{b,\dim S}}\right]-1}\right)\\
        &=\left(\frac{n-\dim S}{b}-1\right)
        \left(n\frac{\ln(b)}{{2}\max\set{b,\dim S}}\right)
        \left[\ln\left(n\frac{\ln(b)}{{2}\max\set{b,\dim S}}\right)\right]^{-1},
    \end{align*}
    The leading term in the latter expression is
    \begin{align*}
        \frac{\ln(b)}{{2b \max\set{b,\dim S}}} n^2\left[\ln\left(n\frac{\ln(b)}{{2}\max\set{b,\dim S}}\right)\right]^{-1}.
    \end{align*}
    This cannot be bounded by $cn$ for any $c>0$ and thus
     \ref{prop:rmaxi} follows.
\end{proof}

\begin{proof}[Proof of \Cref{lemma:p4-N}]
\label{proof:p4N}
   Let $\varphi_A,\,\varphi_B\in\tool^{\mathcal{N}}$ with
    $d_A:=d(\varphi_A)$, $d_B:=d(\varphi_B)$, $\bs r^A := \bs r^A(\varphi_A)$, $\bs r^B := \bs r^B(\varphi_B)$ and
    w.l.o.g.\ $d_A\leq d_B$.
     Then  using \Cref{rank-bound-VbdS-closed-dilation},
      \begin{align*}
    \cost_{\mathcal{N}}(\varphi_A+\varphi_B)&\leq \sum_{\nu=1}^{d_B} (r^A_\nu + r^B_\nu) 
    \le \sum_{\nu=1}^{d_A} r_\nu^A + (d_B-d_A) \dim S + \sum_{\nu=1}^{d_B} r^B_\nu \\
    &\le   \cost_{\mathcal{N}}(\varphi_A) + \cost_{\mathcal{N}}(\varphi_B) (1+ \dim S) \le (2+\dim S) n.
    \end{align*}	
\end{proof}

\begin{proof}[Proof of \Cref{lemma:p4-C}]
\label{proof:p4C}
   Let $\varphi_A,\,\varphi_B\in\tool^{\mathcal{C}}$ with
    $d_A:=d(\varphi_A)$, $d_B:=d(\varphi_B)$, $\bs r^A := \bs r^A(\varphi_A)$, $\bs r^B := \bs r^B(\varphi_B)$ and
    w.l.o.g.\ $d_A\leq d_B$.
  Then
      \begin{align*}
 &   \cost_{\mathcal{C}}(\varphi_A+\varphi_B)\leq b(r_1^A+r_1^B)+
       \sum_{k=2}^{d_B}b(r^A_{k-1}+r^B_{k-1})(r_k^A+r_k^B) + (r_{d_B}^A+r_{d_B}^B) \dim S\\
       = & \underbrace{br_1^A+\sum_{k=2}^{d_A} b r^A_{k-1}r_k^A + r_{d_A}^A \dim S}_{N_1}  + \underbrace{ br_1^B+\sum_{k=2}^{d_B} b r^B_{k-1}r_k^B+b r^B_{d_B}\dim S}_{N_2}  +  \underbrace{ \sum_{k=1}^{d_A} br^A_{k-1} r_k^B + br^B_{k-1} r_k^A}_{N_3} \\
       & + \underbrace{\sum_{k=d_A+1}^{d_B} b r^A_{k-1}r_k^A}_{N_4} + \underbrace{(r_{d_B}^A - r_{d_A}^A) \dim S}_{N_5}   + \underbrace{ \sum_{k=d_A+1}^{d_B} b r^A_{k-1} r_k^B + b r^B_{k-1} r_k^A}_{N_6} .
    \end{align*}	
  Since $\varphi_A,\,\varphi_B\in\tool^{\mathcal{C}}$, we have $N_1 = \cost_{\mathcal{C}}(\varphi_A) \le n$ and $N_2 = \cost_{\mathcal{C}}(\varphi_B)\le n$. Then, using 	\Cref{lemma:rankgrowth}, we have 
 \begin{align*}
  N_3 \le&  b \Big(\sum_{k=2}^{d_A}(r_{k-1}^A)^2\Big)^{1/2}
        \Big(\sum_{k=2}^{d_A}(r_k^B)^2\Big)^{1/2}
        +
        b\Big(\sum_{k=2}^{d_A}(r_{k-1}^B)^2\Big)^{1/2}
        \Big(\sum_{k=2}^{d_A}(r_k^A)^2\Big)^{1/2}\\
        \le& b\Big(\sum_{k=2}^{d_A} b r_{k-1}^A r_k^A\Big)^{1/2}
        \Big(\sum_{k=2}^{d_A} b r_{k-1}^B r_k^B)^2\Big)^{1/2}
        +
        b\Big(\sum_{k=2}^{d_A} b r_{k-1}^Br_{k}^B\Big)^{1/2}
        \Big(\sum_{k=2}^{d_A} b  r_{k-1}^A r_k^A\Big)^{1/2}\\
        \le& 2b \cost_{\mathcal{C}}(\varphi_A)^{1/2} \cost_{\mathcal{C}}(\varphi_B)^{1/2} \le 2bn.
\end{align*}
If $d_A=d_B$, we have $N_4 = N_5 = N_6=0$. If $d_A<d_B$,  
   using \Cref{rank-bound-VbdS-closed-dilation}, we have 
   \begin{align*}
   &N_4 \le (\dim S)^2 b (d_B - d_A) \le (\dim S)^2 \cost_{\mathcal{C}}(\varphi_B) \le n (\dim S)^2 ,\\
   &N_5 \le (\dim S)^2 \le (\dim S) \cost_{\mathcal{C}}(\varphi_A) \le n \dim S,\\
   &N_6 \le  (\dim S) \big( \sum_{k=d_A+1}^{d_B} b r_k^B +  b r^B_{k-1} \big) \le 2 (\dim S) \cost_{\mathcal{C}}(\varphi_B) \le 2n \dim S.
   \end{align*}
    Thus, putting all together
    \begin{align*}
        \cost_{\mathcal{C}}(\varphi_A+\varphi_B)\leq [(\dim S)^2+3\dim S+2b+2]n,
    \end{align*}
    and \ref{P4} is satisfied with
    $c:=(\dim S)^2+3\dim S+2b+2$.
\end{proof}

\begin{proof}[Proof of \Cref{ext-sparse}]
\label{proof:extsparse}
We have the representation
\begin{align*}
&T_{b,d}(\varphi)(i_1,\hdots,i_d,y) = \sum_{k_1=1}^{r_1}\cdots\sum_{k_d=1}^{r_d}\sum_{q=1}^{\dim S}v_1^{k_1}(i_1)   \cdots v_d^{k_{d-1},k_d}(i_d) v^{k_d,q}_{d+1} \varphi_{q}(y).\end{align*}
    Then, from \Cref{tbdbard}, 
 \begin{align*}
&T_{b,\bar d}(\varphi)(i_1,\hdots,i_{\bar d},y)  \\
&= \sum_{k_1=1}^{r_1}\cdots\sum_{k_d=1}^{r_d}\sum_{q=1}^{\dim S}v_1^{k_1}(i_1) 
    \cdots v_d^{k_{d-1},k_d}(i_d) v^{k_d,q}_{d+1} T_{b,\bar d-d}(\varphi_{q})(i_{d+1},\hdots,i_{\bar d},y)\\
    &= \sum_{k_1=1}^{r_1}\cdots\sum_{k_d=1}^{r_d}\sum_{q=1}^{\dim S}\sum_{j_{d+1}=1}^{b} v_1^{k_1}(i_1)  
    \cdots v_d^{k_{d-1},k_d}(i_d) \underbrace{v^{k_d,q}_{d+1} \delta_{j_{d+1}}(i_{d+1})}_{\bar v^{k_d,(q,j_{d+1})}_{d+1}(i_{d+1})} T_{b,\bar d-d}(\varphi_{q})(j_{d+1},i_{d+2},\hdots,i_{\bar d},y),
    \end{align*}
    where $\bar v_{d+1} \in \R^{\alert{b}\times r_d \times (b\dim S)}$. 
Since $S$ is closed under $b$-adic dilation, we know from \Cref{rank-bound-VbdS-closed-dilation}  that  $r_{\nu}( \varphi_{q}) \le \dim S$ for all $\nu \in \N$. Let $l=\bar d-d$ and first assume $l\ge 2$.  Then, 
$T_{b,\bar d-d}(\varphi_{q})$ admits a representation
 \begin{align*}&
T_{b,\bar d-d}(\varphi_{q})(j_{d+1},i_{d+2},\hdots,i_{\bar d},y) \\
&= \sum_{\alpha_{1}=1}^{\dim S} \hdots  
\sum_{\alpha_{l}=1}^{\dim S}  \sum_{p=1}^{\dim S}  
\alert{w}^{q,\alpha_{1}}_1(j_{d+1}) \alert{w}^{q,\alpha_{1},\alpha_{2}}_{2}(i_{d+2})  \hdots \alert{w}^{q,\alpha_{l-1},\alpha_{l}}_{l}(i_{\bar d}) 
\alert{w}^{q,\alpha_{l},p}_{l+1} \varphi_{p}(y) 
\\
&= \sum_{\alpha_{2}=1}^{\dim S} \hdots  
\sum_{\alpha_{l}=1}^{\dim S}  \sum_{p=1}^{\dim S}
 \alert{w}^{q,\alpha_{2}}_{1,2}(j_{d+1}, i_{d+2})  \hdots \alert{w}^{q,\alpha_{l-1},\alpha_{l}}_{l}(i_{\bar d}) 
\alert{w}^{q,\alpha_{l},p}_{l+1} \varphi_{p}(y) 
   \end{align*}
   with $\alert{w}^{q,\alpha_{2}}_{1,2}(j_{d+1}, i_{d+2}) = \sum_{\alpha_1=1}^{\dim S} \alert{w}^{q,\alpha_{1}}_1(j_{d+1}) \alert{w}^{q,\alpha_{1},\alpha_{2}}_{2}(i_{d+2})$. Then, 
 \begin{align*}
 &T_{b,\bar d-d}(\varphi_{q})(j_{d+1},i_{d+2},\hdots,i_{\bar d},y) \\
& = \sum_{\alpha_{2},q_2=1}^{\dim S} \hdots  
\sum_{\alpha_{l},q_{l}=1}^{\dim S}  \sum_{p=1}^{\dim S}
\underbrace{\delta_{q,q_2}\alert{w}^{q,\alpha_{2}}_{1,2}(j_{d+1}, i_{d+2})}_{\bar v^{(q,j_{d+1}),(q_2,\alpha_2)}_{d+2}(i_{d+2})}   \hdots \underbrace{ \delta_{q_{l-1},q_{l}}\alert{w}^{q_{l-1},\alpha_{l-1},\alpha_{l}}_{l}(i_{\bar d})}_{\bar v^{(q_{l-1},\alpha_{l-1}),(q_l,\alpha_l)}_{\bar d}(i_{\bar d})} 
\underbrace{\alert{w}^{q_{l},\alpha_{l},p}_{l+1}}_{\bar v_{\bar d+1}^{(q_l,\alpha_l),p}} \varphi_{p}(y) 
    \end{align*}
    with $\bar v_{d+2} \in \R^{b\times (b\dim S) \times (\dim S)^2}$, $\bar v_\nu \in \R^{b\times (\dim S)^2\times (\dim S)^2}$   for $d+3\le \nu\le \bar d$,  and $\bar v_{\bar d+1} \in \R^{(\dim S)^2\times \dim S}$. Then, we have 
$\varphi \in \mathcal{R}_{b,\bar d,S,\overline{\bs r}}(\overline{\mathbf{v}})$ with $\overline{\mathbf{v} }= (\bar v_1,\hdots,v_{d},\bar v_{d+1}, \hdots,\bar v_{\bar d+1}) $, with $\bar v_\nu$ defined above for $\nu>d,$ and $\bar r_\nu = r_\nu$ for $\nu \le d$, $\bar r_{d+1} = b \dim S$, and $\bar r_{\nu} = (\dim S)^2$ for $d+1<\nu \le \bar d$. 
 From the definition of $\bar v_\nu$, we easily deduce that $\Vert \bar v_{d+1} \Vert_{\ell_0} =\alert{b}\Vert v_{d+1} \Vert_{\ell_0}$, $\Vert \bar v_{d+2} \Vert_{\ell^0}\le b^2 (\dim S)^2$, $\Vert \bar v_\nu \Vert_{\ell_0} \le
 \alert{b}(\dim S)^3$ for $d+3\le \nu\le \bar d$,  and $\Vert \bar v_{\bar d+1} \Vert_{\ell_0} \le (\dim S)^3$. Then, for $ l =\bar d - d \ge 2$, 
 we obtain 
 \begin{align*}
    \cost_{\mathcal{S}}(\overline{\mathbf{v}})& = \sum_{\nu=1}^{d+1} \Vert \bar v_\nu \Vert_{\ell_0} +  \sum_{\nu=d+2}^{\bar d+1} \Vert \bar v_\nu \Vert_{\ell_0} \le  \alert{b}\cost_{\mathcal{S}}({\mathbf{v}}) + b^2 (\dim S)^2 +  \alert{b}(\dim S)^3 (\bar d - d - \alert{2})+\alert{(\dim S)^3}.
  \end{align*}
For $l =1$, we have a representation
 \begin{align*}
 T_{b,\bar d-d}(\varphi_{q})(j_{d+1},y)  =  \sum_{p=1}^{\dim S}  \bar v_{d+2}^{(q,j_{d+1}),p} \varphi_p(y)
    \end{align*}
    with $\bar v_{d+2} \in \R^{\alert{(b\dim S)\times\dim S}}$ such that $\bar v_{d+2}^{(q,j_{d+1}),p} =  \sum_{\alpha_1=1}^{\dim S}\alert{w}^{q,\alpha_{1}}_1(j_{d+1})
\varphi^{q,\alpha_{1},p}_{2} $. Then, for $l=1$, $\varphi \in \mathcal{R}_{b,\bar d,S,\overline{\bs r}}(\overline{\mathbf{v}})$ with $\overline{\mathbf{v} }= (\bar v_1,\hdots,v_{d},\bar v_{d+1},\bar v_{\bar d+2}) $, and 
 \begin{align*}
    \cost_{\mathcal{S}}(\overline{\mathbf{v}})& \le  \alert{b}\cost_{\mathcal{S}}({\mathbf{v}}) + \alert{b(\dim S)^2} = \alert{b}\cost_{\mathcal{S}}({\mathbf{v}}) + \alert{b(\dim S)^2 (\bar d-d)}.
  \end{align*}
  For any $l\ge 1$, we then deduce 
   \begin{align*}
    \cost_{\mathcal{S}}(\overline{\mathbf{v}})& \le  \alert{b} \cost_{\mathcal{S}}({\mathbf{v}}) + \alert{b^2(\dim S)^3(\bar d - d)}.
  \end{align*}
\end{proof}

\begin{proof}[Proof of \Cref{sum-functions-sparse}]
\label{proof:sumsparse}
$\varphi_A$ and $\varphi_B$ admit representations
\begin{align*}
&T_{b,d}(\varphi_C)(i_1,\hdots,i_d,y) = \sum_{k_1=1}^{r_1^C}\cdots\sum_{k_d=1}^{r_d^C}\sum_{q=1}^{\dim S} v_{1}^{C,k_1}(i_1)   \cdots v_d^{C,k_{d-1},k_d}(i_d) v^{C,k_d,q}_{d+1} \varphi_{q}(y),\end{align*}
with $C=A$ or $B$. Then, $\varphi_A+\varphi_B$ admit the representation 
\begin{align*}
&T_{b,d}(\varphi_A + \varphi_B)(i_1,\hdots,i_d,y) = \sum_{k_1=1}^{r_1^A+r_1^B}\cdots\sum_{k_d=1}^{r_d^A+r^B_d}\sum_{q=1}^{\dim S} v_{1}^{k_1}(i_1)   \cdots v_d^{k_{d-1},k_d}(i_d) v^{k_d,q}_{d+1} \varphi_{q}(y),\end{align*}
with $v_1^{k_1} = v_1^{A,k_1}$ if $1\le k_1\le r_1^A$ and $v_1^{k_1} = v_1^{B,k_1}$ if $r_1^A < k_1\le r_1^A +  r_1^B$, $$v_\nu^{k_{\nu-1},k_\nu} =  \begin{cases} 
v^{A,k_{\nu-1},k_\nu} & \text{if  $1\le k_{\nu-1} , k_\nu \le r_1^A$}\\ 
v^{B,k_{\nu-1},k_\nu} & \text{if  $r_1^A < k_{\nu-1} , k_\nu \le r_1^A + r_1^B$}\\
0 & \text{elsewhere},
\end{cases} $$
and $v_{d+1}^{k_d,q} = v_{d+1}^{A,k_d,q}$ if $1\le k_d\le r_1^A$ and $v_{d+1}^{k_d,q} = v_{d+1}^{B,k_d,q}$ if $r_1^A < k_d \le r_1^A +  r_1^B$. From the above, we deduce that $\Vert v^C \Vert_{\ell_0} \le\Vert v^A \Vert_{\ell_0}  + \Vert v^B \Vert_{\ell_0}$, so that 
$$
 \cost_{\mathcal{S}}(\mathbf{v}) = \sum_{\nu=1}^{d+1} \Vert v^C \Vert_{\ell_0}  \le \cost_{\mathcal{S}}(\mathbf{v}) \le \cost_{\mathcal{S}}(\mathbf{v}_A) + \cost_{\mathcal{S}}(\mathbf{v}_B).
$$
\end{proof}

\begin{proof}[Proof of \Cref{lemma:p6}]
\label{proof:p6}
    The norm defined in \Cref{thm:tensorizationmap}
    is a reasonable crossnorm (see \Cref{lemma:reasonablecross}) and
    thus, in particular, not weaker than the injective norm on $\Vbd{S}$. Thus,
    by \cite[Lemma 8.6]{Hackbusch2012}
    $\TT{\Vbd{S}}{\bs r}$ for $\bs r\in\N^d$ is a weakly closed subset
    of $\Lp$.
    Moreover, the set $\tool$, with either $\tool=\tool^{\mathcal{N}}$ or $\tool^{\mathcal{C}}$, is a finite union of the sets $\TT{\Vbd{S}}{\bs r}$
    for different $d\in\N$ and $\bs r\in\N^d$.
    Since finite unions of closed sets (in the weak topology) are closed,
    it follows that $\tool$ is weakly closed in $\Lp$, and a fortiori, $\tool$ is also closed in the strong topology.
    Since $\Lp$ is reflexive for $1<p<\infty$ and $\tool$ is weakly closed,
    $\tool$ is proximinal in $\Lp$
    (see \cite[Theorem 4.28]{Hackbusch2012}).

Now consider that $S$ is finite-dimensional. There exists $d$ such that $\tool \subset \Vbd{S}$ and $\Vbd{S}$ is finite-dimensional. Since $\tool$ is a closed subset of a finite-dimensional space $\Vbd{S}$, it is proximinal in $\Lp$ for any $1\le p\le \infty$.
\end{proof}

\begin{proof}[Proof of \Cref{comparing-complexities}]
\label{proof:cfcomplex}
Consider a function $0\neq \varphi \in V_{b,S}$ and let $d=d(\varphi)$ and $\bs r = \bs r(\varphi)$.
We have $$\cost_{\mathcal{N}}({\varphi}) = \sum_{\nu=1}^{d} r_\nu  \le br_1 + \sum_{r_\nu=2}^{d} b r_{\nu-1} r_{\nu} + b \dim S = \cost_{\mathcal{C}}({\varphi}) ,$$ which implies $\Phi_n^{\mathcal{C}}  \subset  \Phi_n^{\mathcal{N}}$. Also 
\begin{align*}
\cost_{\mathcal{C}}(\varphi)& \le  br_1 + b  (\sum_{\nu=1}^{d-1} r_{\nu}^2)^{1/2}(\sum_{\nu=2}^{d} r_{\nu}^2)^{1/2}+ b \dim S\le  br_1 + b ( \sum_{\nu=1}^{d-1} r_{\nu})(\sum_{\nu=2}^{d} r_{\nu})+ b \dim S\\
&\le b (\sum_{\nu=1}^{d} r_{\nu})^2 + b \dim S = b \cost_{\mathcal{N}}({\varphi})^2 + b \dim S,
\end{align*}
which yields $\Phi_n^{\mathcal{N}} \subset \Phi_{b\dim S + b n^2}^{\mathcal{C}}$.
Also, we clearly have $
\cost_{\mathcal{S}}(\varphi) \le \cost_{\mathcal{C}}({\varphi})$, which implies $\Phi_n^{\mathcal{C}} \subset \Phi_n^{\mathcal{S}} $.
Now consider any tensor network $\mathbf{v} \in \mathcal{P}_{b,d,S,\bs r}$ such that $\varphi=\mathcal{R}_{b,d,S,\bs r}(\mathbf{v})$, with $d(\varphi) \le d$ and $\bs r(\varphi) \le \bs r$.
We have that $r_1(\varphi) \le \dim \{ v_1^{k_1}(\cdot) \in \R^{b} : 1\le k_1\le r_1\} \le \Vert v_1 \Vert_{\ell_0}$ and 
for $2\le \nu \le d $, $r_\nu(\varphi) \le \dim \{v_\nu^{\cdot,k_\nu}(\cdot) \in \R^{b\times r_{\nu-1}} : 1\le k_\nu \le r_\nu\} \le \Vert v_\nu \Vert_{\ell^0}$. Therefore $$\cost_{\mathcal{N}}(\varphi) = \sum_{\nu=1}^d r_\nu(\varphi) \le \sum_{\nu=1}^d \Vert v_\nu \Vert_{\ell^0} \le \cost_{\mathcal{S}}(\mathbf{v}).$$ The inequality being true for any tensor network $\mathbf{v}$ such that $\varphi=\mathcal{R}_{b,d,S,\bs r}(\mathbf{v})$, we deduce $\cost_{\mathcal{N}}(\varphi) \le \cost_{\mathcal{S}}(\varphi)$, which yields $ \Phi_n^{\mathcal{S}} \subset \Phi_n^{\mathcal{N}}$.
\end{proof}

\begin{proof}[Proof of \Cref{lemma:cpnop4}]
\label{proof:cpnop4}
(i). Consider $\varphi_A,\varphi_B \in \Phi^{\mathcal{R}}_n,$ and let $d_A = d(\varphi_A)$, $d_B = d(\varphi_B)$, $r_A = r(\varphi_A)$ and $r_B = r(\varphi_B). $ Assume w.l.o.g. that $d_A\le d_B.$ The function $\varphi_A$ admits a representation 
$$
T_{b,d_A}\varphi_A (i_1,\hdots,i_{d_A},y)= \sum_{k=1}^{r_A} w_{1}^{A,k}(i_1)\hdots w_{d}^{A,k}(i_d) w_{d+1}^{A,k}(y),
$$
and 
$$
T_{b,d_B}\varphi_A (i_1,\hdots,i_{d_B},y)= \sum_{k=1}^{r_A} w_{1}^{A,k}(i_1)\hdots w_{d}^{A,k}(i_d) T_{b,d_B}(w_{d+1}^{A,k})(y).
$$
From the assumption on $S$, we have $T_{b,d_B}(w_{d+1}^{A,k})$ of rank $1$, so that $r(T_{b,d_B}\varphi_A) \le r_A$. We easily deduce that $r(\varphi_A + \varphi_B) \le r_A + r_B$ and $\cost_{\mathcal{R}}(\varphi_A + \varphi_B) \le b d_B (r_A+r_B) + (r_A+r_B) b\dim S \le  2 n + b r_A (d_B - d_A) \le 2n + n^2 \le 3n^2.$ \\
\alert{(ii). The proof idea is analogous to \Cref{prop:maxrank}: we take
a rank-one tensor $\varphi_B\in\tool^{\mc R}$ such that $d_B\sim n$
and a full-rank tensor $\varphi_A\in\tool^{\mc R}$
with $d_A<d_B$ such that
$r_A\sim b^{d_A}\sim n$. Then, as in
\Cref{prop:maxrank}, $\cost_{\mc R}(\varphi_A+\varphi_B)\sim n^2$.}
\end{proof}

\begin{proof}[Proof of \Cref{lemma:canonical}]
\label{proof:canonical}
Let $\varphi \in \Phi^{\mathcal{R}}_n$, $d=d(\varphi)$, $r=r(\varphi)$.
The function $\varphi$ admits a representation 
$$
T_{b,d}\varphi (i_1,\hdots,i_{d},y)= \sum_{k=1}^{r} \sum_{q=1}^{\dim S}w_{1}^{k}(i_1)\hdots w_{d}^{k}(i_d) w_{d+1}^{q,k} \varphi_q(y).
$$
Letting $v_1 = w_1$,  $v_{d+1} =w_{d+1}$ and $v_\nu \in \R^{b\times r \times r} $ such that $v_\nu^{k_{\nu-1},k_\nu} = \delta_{k_{\nu-1},k_\nu} w_\nu^{k_\nu}$ for $2\le \nu \le d$, and letting $\bs r = (r,\hdots,r) \in \N^d$, we have 
$$
T_{b,d}\varphi (i_1,\hdots,i_{d},y) = \sum_{k_1=1}^{r} \hdots \sum_{k_d=1}^r \sum_{q=1}^{\dim S}v_{1}^{k}(i_1)\hdots w_{d}^{k_{d-1},k_d}(i_d) w_{d+1}^{q,k_d} \varphi_q(y),
$$
which proves that $T_{b,d}  \varphi \in \Phi_{b,d,S,\bs r}$ with 
$$
\cost_{\mathcal{S}}(\varphi) = \sum_{\nu=1}^{d+1} \Vert v_\nu \Vert_{\ell_0} =    \sum_{\nu=1}^{d+1} \Vert w_\nu \Vert_{\ell_0} \le brd + r \dim S = \cost_{\mathcal{R}}(\varphi) \le n,
$$
that is $\varphi \in \Phi^{\mathcal{S}}_n.$
\end{proof}

\end{document}